\pgfplotsset{compat=1.15}
\newtheorem{theorem}{Theorem}[section]
\newtheorem{remark}[theorem]{Remark}
\newtheorem{lemma}[theorem]{Lemma}
\newtheorem{corollary}[theorem]{Corollary}
\newtheorem{proposition}[theorem]{Proposition}
\theoremstyle{definition}
\newtheorem{definition}[theorem]{Definition}
\def\R{{\mathbb R}}
\def\E{{\mathbb E}}
\title{Global regularity estimates for optimal transport via entropic regularisation}
\author{Nathael Gozlan}
\address{N. G. : Université Paris Cité, CNRS, MAP5, F-75006 Paris}
\email{nathael.gozlan@u-paris.fr}
\author{Maxime Sylvestre}
\address{M. S. : CEREMADE, Université Paris-Dauphine, Université PSL, CNRS, MOKAPLAN, Inria Paris, 75016 Paris, France}
\email{maxime.sylvestre@dauphine.psl.eu}
\keywords{Optimal transport, Entropy regularized Optimal Transport, Caffarelli contraction Theorem}
\subjclass{60A10, 49J55, 60G42}
\thanks{N.G. is supported by a grant of the Agence nationale de la recherche (ANR), Grant ANR-23-CE40-0017 (Project SOCOT)}
\date{\today}
\begin{document}

\maketitle

\begin{abstract}
    We develop a general approach to prove global regularity estimates for quadratic optimal transport using the entropic regularisation of the problem and the Prékopa–Leindler inequality.
\end{abstract}

\section{Introduction}
The aim of the paper is to obtain general global regularity estimates for the Brenier optimal transport map between two probability measures on $\R^n$, in the spirit of the celebrated Caffarelli's contraction theorem, recalled below:
\begin{theorem}[Caffarelli \cite{Caffarelli2000,Caf02}]\label{thm:Caf-intro}
Let $\mu(dx) = e^{-V(x)}dx$, $\nu(dy) = e^{-W(y)} dy$ be two probability measures on $\R^n$ such that $\mathrm{dom} V = \R^n$ and $\mathrm{dom}W$ is convex with nonempty interior. Further assume that $V,W$ are twice continuously differentiable on the interior of their domains and satisfy
\begin{equation*}
    \nabla^2 V \leq \alpha_V I_n, \quad \nabla^2 W \geq \beta_W I_n,
\end{equation*}
with $\alpha_V,\beta_W >0$.
Then the optimal transport map for the quadratic transport problem from $\mu$ to $\nu$ is $\sqrt{\alpha_V/\beta_W}$-Lipschitz.
\end{theorem}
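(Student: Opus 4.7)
The strategy is to smooth the Brenier problem via entropic regularisation at level $\varepsilon>0$, prove the Lipschitz estimate uniformly in $\varepsilon$ for the entropic Brenier map $T_\varepsilon$, and then send $\varepsilon\to 0$. Let $\pi_\varepsilon$ be the minimizer of the Schrödinger problem with marginals $\mu,\nu$ and cost $\tfrac12|x-y|^2$; its Lebesgue density takes the form
$$\pi_\varepsilon(x,y) \;=\; \exp\!\Big(\tfrac{1}{\varepsilon}\bigl[\alpha_\varepsilon(x)+\beta_\varepsilon(y)-\tfrac12|x-y|^2\bigr]\Big)$$
for Schrödinger potentials $(\alpha_\varepsilon,\beta_\varepsilon)$ determined by the marginal constraints. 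Set $T_\varepsilon(x):=\mathbb{E}_{\pi_\varepsilon}[Y\mid X=x]$, the entropic barycentric projection. A direct differentiation under the integral sign, using the orthogonality relation forced by marginalisation, yields the identity
$$DT_\varepsilon(x) \;=\; \tfrac{1}{\varepsilon}\,\mathrm{Cov}_{\pi_\varepsilon(\cdot\mid x)}(Y),$$
so the claim reduces to the pointwise conditional covariance bound $\mathrm{Cov}_{\pi_\varepsilon(\cdot|x)}(Y)\preceq \varepsilon\sqrt{\alpha_V/\beta_W}\,I_n$.

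To estimate the conditional covariance I will invoke the Brascamp--Lieb inequality, which is the infinitesimal form of Prékopa--Leindler: for a log-concave density $\propto e^{-U(y)}$ with $\nabla^2 U\succeq\Lambda$, one has $\mathrm{Cov}\preceq\Lambda^{-1}$. Here $\pi_\varepsilon(y\mid x)\propto \exp(-W(y)+\beta_\varepsilon(y)/\varepsilon -|x-y|^2/(2\varepsilon))$, so the target covariance bound translates into the pointwise matrix inequality
$$\nabla^2 W(y) + \tfrac{1}{\varepsilon}\bigl(I_n - \nabla^2\beta_\varepsilon(y)\bigr) \;\succeq\; \tfrac{1}{\varepsilon}\sqrt{\beta_W/\alpha_V}\,I_n.$$
Given $\nabla^2 W\succeq\beta_W I_n$, this further reduces to producing an upper bound on $\nabla^2\beta_\varepsilon$.

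The Hessian of $\beta_\varepsilon$ is controlled via the Schrödinger fixed-point equation
$$\beta_\varepsilon(y)+\varepsilon W(y) \;=\; -\varepsilon\log\!\int \exp\!\bigl(\tfrac{1}{\varepsilon}[\alpha_\varepsilon(x)-\tfrac12|x-y|^2]\bigr)\,dx$$
together with its symmetric counterpart obtained by swapping the roles of the two marginals. A quantitative (matrix) version of Prékopa--Leindler propagates Hessian bounds through these log-Laplace transforms: an upper bound on $\nabla^2\alpha_\varepsilon$ combined with the quadratic joint Hessian of the cost yields an upper bound on $\nabla^2\beta_\varepsilon$. The hypotheses $\nabla^2 V\le\alpha_V I_n$ and $\nabla^2 W\ge\beta_W I_n$ feed into this propagation and produce a coupled pair of matrix inequalities on $(\nabla^2\alpha_\varepsilon,\nabla^2\beta_\varepsilon)$ whose fixed point delivers the required estimate on $\nabla^2\beta_\varepsilon$, uniformly in $\varepsilon$ as $\varepsilon\to 0$.

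Finally, passing $\varepsilon\to 0^+$ concludes: $T_\varepsilon$ converges to the Brenier map $T_0$ (for instance in $L^2(\mu)$) by standard stability results for entropic OT, and pointwise Lipschitz bounds are preserved under such convergence. The main obstacle is extracting the \emph{sharp} constant $\sqrt{\alpha_V/\beta_W}$ from the coupled Hessian system. The square-root ratio is dictated by the Gaussian-to-Gaussian test case (where the Brenier map is linear and the bound saturates), and producing it seems to demand a symmetric, balanced application of Prékopa--Leindler on the two Schrödinger equations, so that the two hypotheses interact through a geometric mean rather than through a cruder additive combination. Working in well-chosen rescaled variables should make the fixed-point computation transparent.
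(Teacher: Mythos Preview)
Your outline is essentially the Chewi--Pooladian argument \cite{Chewi2022entropic}, which is correct but \emph{not} the route taken in this paper. Two remarks on the proposal itself, then a comparison.

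First, a notational slip: if $\pi_\varepsilon$ has the displayed form as a \emph{Lebesgue} density, then the conditional $\pi_\varepsilon(\cdot\mid x)$ does not carry the extra factor $e^{-W}$; that factor appears only if your $(\alpha_\varepsilon,\beta_\varepsilon)$ are defined relative to $\mu\otimes\nu$. This is harmless once fixed. Second, and more substantively, Pr\'ekopa--Leindler/Brascamp--Lieb alone only gives the \emph{upper} covariance bound. To close the loop you need the reverse direction as well: a \emph{lower} bound on $\mathrm{Cov}_{\pi_\varepsilon(\cdot\mid y)}(X)$, equivalently an upper bound on $\nabla^2\beta_\varepsilon$, which comes from the Cram\'er--Rao inequality (upper Hessian bound $\Rightarrow$ lower covariance bound). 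Your sentence ``an upper bound on $\nabla^2\alpha_\varepsilon$ \ldots yields an upper bound on $\nabla^2\beta_\varepsilon$'' has the implication pointing the wrong way; the cycle is: upper bound on $\nabla^2\phi_\varepsilon$ (via BL) $\leftarrow$ lower bound on $\nabla^2\psi_\varepsilon$ (via CR) $\leftarrow$ upper bound on $\nabla^2\phi_\varepsilon$. Once this is set up, the scalar fixed-point $\beta_W a^2+\varepsilon\alpha_V\beta_W a-\alpha_V\le 0$ yields $a\le\sqrt{\alpha_V/\beta_W}$ in the limit, so the sharp constant does fall out without heroics.

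The paper's proof is organised differently. Rather than Hessians and covariances, it works with the scalar \emph{modulus of smoothness} $\sigma_{\phi_\varepsilon}:\R_+\to\R_+$ of the entropic potential. Pr\'ekopa--Leindler (Proposition~\ref{prop:rho-convex}) and H\"older (Proposition~\ref{prop:sigma-smooth}) replace BL and CR, respectively, yielding the functional fixed-point inequality $\sigma_{\phi_\varepsilon}\le(\sigma_{\phi_\varepsilon}+\varepsilon\sigma_V)\,\square\,\varepsilon\rho_W^*(\cdot/\varepsilon)$; a differential-inequality argument then gives the $\varepsilon$-free bound $\sigma_{\phi_\varepsilon}(r)\le\int_0^r(\rho_W^{**})^{-1}(\sigma_V)$. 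In the quadratic case this reduces exactly to your computation. The payoff of the modulus formulation is generality: it handles arbitrary $\sigma,\rho$ (hence Kolesnikov's H\"older estimates, $\ell^p$ versions, etc.) with no extra work, whereas the covariance approach is tied to quadratic bounds. The limit step in the paper is also taken on potentials $\phi_\varepsilon\to\phi$ rather than on maps $T_\varepsilon\to T$, which is slightly cleaner since the smoothness inequality passes directly under pointwise convergence.
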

Here, and throughout the paper, optimality refers to the quadratic transport problem on $\R^n$. Given two probability measures $\mu,\nu$ on $\R^n$ with finite second moments, recall that the quadratic transport cost between $\mu$ and $\nu$ is the quantity
\begin{equation}\label{eq:W_2^2}
W_2^2(\mu,\nu) = \inf_{\pi \in \Pi(\mu,\nu)} \int \vert y-x\vert^2\,\pi(dxdy),
\end{equation}
where $\vert\,\cdot\,\vert$ denotes the standard Euclidean norm on $\R^n$ and $\Pi(\mu,\nu)$ the set of all couplings between $\mu$ and $\nu$ that is the set of all probability measures $\pi$ on $\R^n \times \R^n$ such that $\pi$ has marginals $\mu$ and $\nu$. A map $T:\R^n \to \R^n$ is called optimal for this transport problem, whenever $\nu = T_\# \mu$ and 
\[
\int \vert T(x)-x\vert^2\,\mu(dx) = W_2^2(\mu,\nu),
\]
which means that the coupling $\pi(dxdy) = \mu(dx)\delta_{T(x)}(dy)$ induced by $T$ achieves the minimal value in \eqref{eq:W_2^2}. According to Brenier's Theorem \cite{Bre91}, if $\mu$ is absolutely continuous with respect to Lebesgue, then there is a $\mu$ almost surely unique transport map which is of the form $T=\nabla \phi$ where $\phi$ is a convex function on $\R^n.$

While in general studying the regularity of $\nabla \phi$ is a delicate question \cite{Caf92}, the interest of Caffarelli's result recalled above is to furnish simple conditions on the potentials $V$ and $W$ assuring that the optimal map is globally Lipschitz with an explicit and dimension free Lipschitz constant. This result has also found several applications in the field of geometric and functional inequalities \cite{Har99, Har01, Cor02, Mil18}. Namely, Caffarelli's contraction theorem applies in particular when $\mu = \gamma_n$ is the standard Gaussian probability measure and $\nu$ has a log concave density with respect to $\gamma_n$, and implies in this case that the Brenier transport map sending $\gamma_n$ onto $\nu$ is $1$-Lipschitz. This can be used for instance to transfer functional inequalities (Poincar\'e, Log-Sobolev, Gaussian Isoperimetric inequalities) known for $\gamma_n$ to all  of its log-concave perturbations with the same dimension free constant as $\gamma_n.$ 
In a slightly different spirit, Caffarelli's contraction result can also be used to control the deficit in some functional inequalities \cite{DPF17, CFP18}.
Caffarelli's contraction theorem has been extended to perturbations of log concave densities in \cite{CFJ17} and, more recently, to $1/d$-concave densities \cite{CFS24}. Construction of Lipschitz (but not necessarily optimal) transport maps between regular probability measures is also an active field of research, mainly motivated by geometric and functional inequalities. See for instance \cite{KM12, FMS24, MS23, MS24}.

The original proof of Theorem \ref{thm:Caf-intro} relies upon the Monge-Ampère equation satisfied by the function $\phi$ such that $T=\nabla \phi$ and in particular the regularity theory for such equations. Recently, Fathi, Prod'homme and the first author have proposed in \cite{FGP20} a different proof of Theorem \ref{thm:Caf-intro}, based on the entropic regularisation of the quadratic transport problem, that circumvents the use of Monge-Ampère equations. A simpler implementation of this idea has then been proposed by Chewi and Pooladian \cite{Chewi2022entropic}. Both proofs have in common to involve geometric tools related to the functional version of the Brunn-Minkowski inequality due to Prékopa-Leindler \cite{Pre71, Lei72, Pre73} (the stability of the set of log-concave functions under the Ornstein-Uhlenbeck semigroup in the case of \cite{FGP20} or the Poincaré type Brascamp-Lieb inequality in the case of \cite{Chewi2022entropic}). We refer to \cite{BL00} for a panorama of applications of the Prékopa-Leindler inequality to functional inequalities. In this paper, we propose a new adaptation of the entropic regularisation approach of \cite{FGP20,Chewi2022entropic} that enables us to obtain other explicit global regularity results for the Brenier map. We recover in particular the H\"older regularity result by Kolesnikov \cite{Kol10}. In this new twist of the entropic regularisation approach, the Prékopa-Leindler inequality will play a central role. 

Before presenting into more details this entropic regularization approach, let us give a flavor of the kind of global regularity estimates this method can reach. One of the main result of the paper is the following (which is a direct consequence of Corollary \ref{cor:OT-reg}): suppose that $\mu(dx) = e^{-V(x)}\,dx$ and $\nu(dy) = e^{-W(y)}\,dy$ are two probability measures on $\R^n$ with finite second moments associated to potentials $V:\R^n \to \R$ and $W:\R^n \to \R\cup \{+\infty\}$ such that
\begin{equation}\label{eq:V-intro}
V((1-t)x_0+tx_1) + t(1-t) \sigma(\vert x_1-x_0\vert) \geq (1-t)V(x_0)+tV(x_1),\qquad \forall x_0,x_1 \in \R^n, \forall t\in [0,1]
\end{equation}
and 
\begin{equation}\label{eq:W-intro}
W((1-t)y_0+ty_1) + t(1-t) \rho(\vert y_1-y_0\vert) \leq (1-t)W(y_0)+tW(y_1)\qquad \forall y_0,y_1\in \R^n, \forall t\in [0,1],
\end{equation}
where $\sigma: \R_+ \to \R_+$ and $\rho:\R_+ \to \R\cup\{+\infty\}$ are two measurable functions, then the Brenier transport map $T$ that takes $\mu$ on $\nu$ satisfies
\begin{equation}\label{eq:reg-intro}
    \vert T(x) - T(y) \vert \leq \frac{2}{\vert x-y \vert} \int_0^{\vert x-y \vert} (\rho^{\ast\ast})^{-1}(\sigma(s))\,ds, \qquad \forall x\neq y,
\end{equation}
where $h^*(v)=\sup_{u\geq0} \{uv-h(u)\}$, $v\geq 0$, is the monotone conjugate of a function $h:\R_+\to \R$ and $(\rho^{\ast\ast})^{-1}$ is the generalized inverse of the non-decreasing function $\rho^{\ast\ast}$ defined by
\[
(\rho^{\ast\ast})^{-1}(s) = \sup\{u \geq 0 : \rho^{\ast\ast}(u)\leq s\},\qquad s\geq0.
\]
Under the assumptions of Theorem \ref{thm:Caf-intro}, \eqref{eq:V-intro} holds with $\sigma(u)=\alpha_V \frac{u^2}{2}$ and \eqref{eq:W-intro} with $\rho(u) = \beta_W \frac{u^2}{2}$, and so the bound \eqref{eq:reg-intro} exactly yields 
\[
\vert T(x)-T(y)\vert \leq \sqrt{\frac{\alpha_V}{\beta_W}} \vert x-y\vert,\qquad \forall x,y\in \R^n.
\]
A function $V$ satisfying \eqref{eq:V-intro} is called $\sigma$-smooth and a function $W$ satisfying \eqref{eq:W-intro} is called $\rho$-convex. These classes of functions have been introduced in \cite{VNC78,Zal83} (see also \cite{AP} and \cite{Zal02}) and can be characterized in terms of growth properties of the gradient (see Section \ref{sec:ex-R-conv-S-smooth} for some explicit examples).
For instance, if $V$ satisfies \eqref{eq:V-intro} with $\sigma(u)= \alpha_V u^p$, and $\rho(u) = \beta_W u^q$, with $p\leq 2 \leq q$, then, one obtains from \eqref{eq:reg-intro} that the Brenier map $T$ from $\mu$ to $\nu$ is H\"older of exponent $p/q$:
\[
    \vert T(x)-T(y)\vert \leq \left(\frac{q}{p}\right)^{\frac{p}{p+q}} \left(\frac{\alpha_V}{\beta_W}\right)^{\frac{1}{q}} \vert x-y \vert^{\frac{p}{q}},\qquad \forall x,y\in \R^n.
\]
This latter bound recovers a result by Kolesnikov \cite{Kol10}.

According to a classical result (recalled in Proposition \ref{eq:continuity} below), if  $\phi$ is a convex function assumed to be $\sigma_\phi$-smooth for some function $\sigma_\phi : \R_+ \to \R_+$, then it holds
\[
\vert\nabla \phi(x) - \nabla \phi(y)\vert \leq 2\frac{\sigma_\phi(\vert x-y\vert)}{\vert x-y\vert},\qquad \forall x\neq y.
\]
(and this last inequality implies back the $\sigma_\phi$-smoothness up to constant, if $\sigma_\phi$ is convex).
Therefore, in order to prove that the Brenier map $T = \nabla \phi$ pushing $\mu$ onto $\nu$ satisfies the bound \eqref{eq:reg-intro} it is enough to prove that the convex function $\phi$ is such that
\begin{equation}\label{eq:phi-intro}
\phi((1-t)x_0+tx_1) + \bar{\sigma} (\vert x_1-x_0\vert) \geq (1-t)\phi(x_0) + t\phi(x_1),\qquad \forall x_0,x_1 \in \R^n, \forall t\in [0,1],
\end{equation}
for the modulus of smoothness $\bar{\sigma}$ given by 
\[
\bar{\sigma}(u) = \int_0^u (\rho^{\ast\ast})^{-1}(\sigma(s))\,ds,\qquad u\geq0.
\]
A key feature of the bound \eqref{eq:phi-intro} is that it is stable under pointwise convergence. This will play a crucial role in the entropic regularisation method that we shall now present. First of all, let us informally recall the entropy regularized transport problem.
Given two probability measures $\mu,\nu$ on $\R^n$ having finite second moments, the entropic regularized transport cost between $\mu$ and $\nu$ is given, for all $\epsilon>0$, by
\[
\mathcal{C}_\epsilon(\mu,\nu) = \inf_{\pi \in \Pi(\mu,\nu)}\int \frac{1}{2}\vert x-y\vert^2\,d\pi + \epsilon H(\pi | \mu\otimes \nu),
\]
where the relative entropy of $\pi$ with respect to $\mu \otimes \nu$, is given by $H(\pi| \mu \otimes \nu) = \int \log \frac{d\pi}{d(\mu \otimes \nu)}\,d\pi$, if $\pi \ll\mu \otimes \nu$ (and is $+\infty$ otherwise).
It is now well known \cite{Carlier2017}, that $\mathcal{C}_\epsilon(\mu,\nu) \to \frac{1}{2}W_2^2(\mu,\nu)$ 
as $\epsilon \to 0$. Contrary to the classical transport problem \eqref{eq:W_2^2}, the minimizer $\pi^\epsilon$ of the entropic regularized transport problem (which exists and is unique) can be easily computed from $\mu,\nu$ using a fixed point algorithm. More precisely, the optimal coupling $\pi_\epsilon$ is of the form
\[
\pi_\epsilon(dxdy) = e^{\frac{\langle x,y \rangle - \phi_\epsilon(x)-\psi_\epsilon(y)}{\epsilon}}\,\mu(dx)\nu(dy),
\]
with $(\phi_\epsilon,\psi_\epsilon)$ a couple of convex functions solution of the following system
\begin{align*}
\phi_\epsilon(x) &= \epsilon \log \left(\int e^{\frac{\langle x,y \rangle-\psi_\epsilon(y)}{\epsilon}}\,\nu(dy)\right),\qquad \forall x\in \R^n\\
\psi_\epsilon(y) &= \epsilon \log \left(\int e^{\frac{\langle x,y \rangle-\phi_\epsilon(x)}{\epsilon}}\,\mu(dx)\right),\qquad \forall y\in \R^n.
\end{align*}
Moreover, the so-called entropic Kantorovich potential $\phi_\epsilon$ converges $\mu$-almost everywhere (along some sequence $\epsilon_k$) to the Kantorovich potential $\phi$ such that $T=\nabla \phi$ \cite{Nutz2021}. 
To get the desired inequality \eqref{eq:phi-intro} for $\phi$, we prove it for $\phi_\epsilon$ and then we let $\epsilon \to 0$. In view of the above system of equations, the main problem is to understand the effect of the Laplace transform on $\sigma$-smooth and $\rho$-convex functions. Let us introduce the operator $\mathcal{L}_{\epsilon}$ acting on measurable functions $f:\R^n \to \R\cup\{\pm \infty\}$ as follows:
\[
\mathcal{L}_{\epsilon} f(x) = \epsilon \log \left(\int e^{\frac{\langle x,y \rangle-f(y)}{\epsilon}}\,dy\right),\qquad \forall x\in \R^n.
\]
The second main contributions of this paper are Propositions \ref{prop:rho-convex} and \ref{prop:sigma-smooth} and can be be summarized as follows: if $f:\R^n \to \R$ and $g:\R^n \to \R\cup\{+\infty\}$, then 
\begin{enumerate}
\item[(i)] if $f$ is $\sigma$-smooth, then $\mathcal{L}_{\epsilon}f$ is $\sigma^*$-convex,
\item[(ii)] if $g$ is $\rho$-convex, then $\mathcal{L}_{\epsilon}g$ is $\rho^*$-smooth.
\end{enumerate}
Note that, as $\epsilon \to 0$, the rescaled log-Laplace operator $\mathcal{L}_\epsilon$ converges to the Fenchel-Legendre transform. Letting $\epsilon\to 0$ in $(i)$ and $(ii)$, one recovers the well known property \cite{AP} that the sets of $\sigma$-smooth and $\rho$-convex functions are in duality with respect to the Fenchel-Legendre conjugation (see Proposition \ref{prop:AP2} for a precise statement). 
While Property $(i)$ is a simple consequence of H\"older inequality, the proof of Property $(ii)$ requires the use of the Prékopa-Leindler inequality. Using $(i)$ and $(ii)$, the fixed point system of equations satisfied by $(\phi_\epsilon,\psi_\epsilon)$ yields an estimate of the modulus of smoothness of $\phi_\epsilon$ that does not depend on $\epsilon$, which eventually allows to get a global regularity estimate for $\nabla \phi$ as explained above.
More precisely, to obtain information on the  smoothness modulus  $\sigma_{\phi_\epsilon}$ of the potential $\phi_\epsilon$ (that is defined as the smallest $\sigma'$ for which $\phi_\epsilon$ is $\sigma'$-smooth), the key step is to establish the following fixed point inequality of independent interest:
\begin{equation}\label{eq:fixed-point}
\sigma_{\phi_\epsilon} \leq (\sigma_{\phi_\epsilon}+ \epsilon \sigma) \square \epsilon \rho^*(\,\cdot\,/\epsilon),
\end{equation}
where $\square$ denotes the infimum convolution operator. The relation \eqref{eq:fixed-point} follows by analysing the system linking $\phi_\epsilon$ and $\psi_\epsilon$ with the help of the duality properties $(i)$ and $(ii)$ stated above. Then, $\epsilon$ independent bounds for $\sigma_{\phi_\epsilon}$, such as $\bar{\sigma}$ can be deduced from \eqref{eq:fixed-point} using elementary arguments.

The scheme of proof sketched above turns out to be very flexible. It can be modified to allow directional dependencies in the convexity and smoothness moduli, which then allows to derive anisotropic control on the Brenier transport maps. The most general version of Caffarelli's contraction theorem obtained in this work is stated in Theorem \ref{thm:phi-epsilon-dir-smoothness} and Corollary \ref{cor:OT-reg-dir}. As a first application of this general principle, we prove in Theorem \ref{thm:CP}, that if $\mu(dx) = e^{-V(x)}dx$ and $\nu(dy) = e^{-W(y)}dy$ are such that $\nabla^2V \leq A^{-1}$ and $\nabla^2 W \geq B^{-1}$ with $A,B$ two symmetric positive definite matrices, then the Brenier map $\nabla \phi$ from $\mu$ to $\nu$ satisfies
\[
    \nabla^2 \phi \leq B^{1/2}\left(B^{-1/2}A^{-1}B^{-1/2}\right)^{1/2}B^{1/2},
\]
with equality in the case $\mu=\mathcal{N}(0, A)$ and $\nu = \mathcal{N}(0,B)$.
This result was first obtained by Chewi and Pooladian \cite{Chewi2022entropic} (generalizing a preceding result by Valdimarsson \cite{Val07}) under the additional assumption that the matrices $A,B$ commute, which is removed here. A second application is considered in Theorem \ref{thm:lpnorm}, where a new version of the Caffarelli contraction theorem involving $p$-norms is obtained.

Another modification of the entropic method, based on a slightly modified Prékopa-Leindler inequality (stated in Proposition \ref{prop:QPL}), enables us to recover and improve a recent result by De Philippis and Shenfeld \cite{DePS24} on the Laplacian of the Brenier map between a log-subharmonic probability measure $\mu$ and a strongly log-concave probability measure $\nu$. More precisely, De Philippis and Shenfeld proved that whenever $\mu(dx) = e^{-V(x)}\,dx$ with $\Delta V \leq \alpha_V n$ and $\nu(dy) = e^{-W(y)}\,dy$ with $\nabla^2W \geq \beta_W I_n$ with $\alpha_V,\beta_W >0$, then the Kantorovich potential $\varphi$ is such that 
\[
\Delta \varphi \leq \sqrt{\frac{\alpha_V}{\beta_W}}n.
\]
As we will see, this bound on the Laplacian actually holds under a strictly weaker assumption on $W$. We refer to Theorem \ref{thm:DePS} for a precise statement.

\medskip

The paper is organized as follows. In Section 2, we recall the definitions and properties of $\sigma$-smooth and $\rho$-convex functions. We also introduce directional variants of these notions, leading in particular to anisotropic regularity properties on the gradient. The section is completed by a collection of concrete examples. Section 3 deals with the proof of Items $(i)$ and $(ii)$ above. A directional variant is also considered. Section 4 implements the entropic regularization method for global regularity of optimal transport to prove \eqref{eq:reg-intro} as well as directional variants of this result. Section 5 presents several applications of the results of the preceding section and their links with the existing literature. Finally, Section 6 studies how $\mathcal{L}_\epsilon$ affects superharmonic functions and builds upon a recent result presented in  \cite{DePS24} on the subharmonicity of the optimal transport potential.

\bigskip

\textbf{Acknowledgments.} The authors would like to thank Giovanni Conforti and Guillaume Carlier for insightful discussions during the preparation of this work.

\section{Modulus of convexity and modulus of smoothness}
\subsection{Modulus of convexity/smoothness}
In order to extend regularity results on the hessian of (convex) functions we will look at quantification of convexity or default of convexity which will imply lower/upper bounds on the hessian if it exists. Thus we turn to the notion of modulus of convexity/smoothness studied in \cite{VNC78, Zal83, AP}. We refer to \cite[Chapter 3, Section 3.5]{Zal02} for a synthetic presentation of these notions. These notions of moduli of convexity and smoothness are close to the notions of semi concavity/convexity \cite{Cannarsa2004}. 
 
In all what follows, $f: \R^n \to \mathbb{R}\cup \{+\infty\}$ will be a function whose domain $\mathrm{dom}f = \{x\in \R^n : f(x)<+\infty\}$ is assumed to be convex. For any $x_0,x_1 \in \R^n$ and $t \in ]0,1[$, introduce the mean deviation of $f$ as
\begin{equation*}
    M_t^f(x_0,x_1) = (1-t)f(x_0)+tf(x_1) - f((1-t)x_0+tx_1).
\end{equation*}
This is a well defined quantity in the following two cases: 
\begin{itemize}
\item $x_0,x_1 \in \mathrm{dom}\,f$, in which case, by convexity of the domain, $M_t^f(x_0,x_1) \in \R$
\item $(1-t)x_0+tx_1 \in \mathrm{dom}f$, in which case $M_t^f(x_0,x_1)\in \R\cup \{+\infty\}$.
\end{itemize}
Following \cite{VNC78, Zal83, AP}, we introduce now the \emph{modulus of convexity} of $f$ as 
\begin{equation*}
    \rho_f(r) = \inf\left\{\frac{1}{t(1-t)}M_t^f(x_0,x_1)\mid t \in ]0,1[ , \vert x_0-x_1 \vert = r,x_0,x_1 \in \mathrm{dom}f\right\},\qquad r\geq0
\end{equation*}
and the \emph{modulus of smoothness} of $f$ as
\[
    \sigma_f(r) = \sup\left\{\frac{1}{t(1-t)}M_t^f(x_0,x_1)\mid t \in ]0,1[ , \vert x_0-x_1 \vert = r,x_0,x_1 \text{s.t.} (1-t)x_0+tx_1 \in \mathrm{dom} f\right\}, \qquad r\geq0,
\]
where, in all the paper, $\vert\,\cdot\,\vert$ denotes the standard Euclidean norm on $\R^n$.
Note that by definition $\rho_f(0) = \sigma_f(0)=0$. Note also that, for $r>0$, $\rho_f(r)\in \R\cup\{\pm\infty\}$ and takes the value $+\infty$ if $r$ is greater than the diameter of $\mathrm{dom}f$. For $r>0$, $\sigma_f(r) \in \R\cup\{\pm\infty\}$ and $\sigma_f(r)=-\infty$ for some $r>0$ if and only if the domain of $f$ is empty.

The function $\rho_f$ is the greatest function $\rho:\R_+ \to \R\cup\{\pm\infty\}$ such that 
\begin{equation}\label{eq:rho-convex}
f((1-t)x_0+tx_1) +t(1-t) \rho(\vert x_1-x_0 \vert)\leq (1-t)f(x_0)+tf(x_1),\qquad\forall  t \in ]0,1[, \forall x_0,x_1 \in \mathrm{dom} f,
\end{equation}
and the function $\sigma_f$ is the smallest function $\sigma:\R_+ \to \R\cup\{\pm\infty\}$ such that
\begin{equation}\label{eq:sigma-smooth-strong}
f((1-t)x_0+tx_1)+t(1-t) \sigma(\vert x_1-x_0 \vert)\geq (1-t)f(x_0)+tf(x_1),\qquad \forall  t \in ]0,1[, \forall x_0,x_1 \in \R^n \text{ s.t } (1-t)x_0+tx_1 \in \mathrm{dom}f.
\end{equation}
When \eqref{eq:rho-convex} is satisfied for some function $\rho$, we say that $f$ is $\rho$-convex, and when \eqref{eq:sigma-smooth-strong} is satisfied for some function $\sigma$, we say that $f$ is $\sigma$-smooth.
In particular, the function $f$ is convex if and only if $\rho_f \geq0$.
The finiteness of $\sigma_f$ ensures that $f$ has full domain as shown in the following simple result (see also Lemma 2.3 of \cite{AP} and Proposition 3.5.2 of \cite{Zal02}).
\begin{lemma}
If $\mathrm{dom}f$ is nonempty and convex and $\sigma_f(r_o) <+\infty$ for some $r_o >0$, then $\mathrm{dom} f= \R^n$.
\end{lemma}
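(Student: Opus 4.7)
The plan is to extract a rigid geometric consequence of $\sigma_f(r_0) < +\infty$ and then bootstrap it to cover all of $\R^n$.

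The starting observation is just to unpack the definition. Writing $C := \sigma_f(r_0) < +\infty$, the inequality \eqref{eq:sigma-smooth-strong} (applied with $\sigma = \sigma_f$) says: for any $x_0,x_1 \in \R^n$ with $|x_0 - x_1| = r_0$ and any $t \in (0,1)$ such that $x_t := (1-t)x_0 + tx_1 \in \mathrm{dom} f$,
\[
(1-t)f(x_0) + t f(x_1) \leq f(x_t) + t(1-t)C < +\infty.
\]
Since $f$ takes values in $\R \cup \{+\infty\}$, the finiteness of the right-hand side forces both $f(x_0) < +\infty$ and $f(x_1) < +\infty$. Geometrically: whenever the open segment between two points at distance exactly $r_0$ meets $\mathrm{dom} f$, both endpoints already lie in $\mathrm{dom} f$.

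The next step promotes this into an open-ball statement. Fix $p \in \mathrm{dom} f$. For each unit vector $e \in \R^n$ and each $a \in (0, r_0)$, set $x_0 = p + a e$, $x_1 = p - (r_0 - a) e$, and $t = a/r_0 \in (0,1)$. Then $|x_0 - x_1| = r_0$ and $x_t = p \in \mathrm{dom} f$, so the previous observation yields $p + a e = x_0 \in \mathrm{dom} f$. Letting $a$ range over $(0, r_0)$ and $e$ over the unit sphere, one concludes $B(p, r_0) \subseteq \mathrm{dom} f$.

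The conclusion then follows by iteration: for any target $z \in \R^n$, pick a chain $p = p_0, p_1, \ldots, p_N = z$ with $|p_i - p_{i-1}| < r_0$ for each $i$ (possible as soon as $N \geq |z - p|/r_0$); induction on $i$ using the previous step gives $p_i \in \mathrm{dom} f$ for all $i$, so $z \in \mathrm{dom} f$ and hence $\mathrm{dom} f = \R^n$. The main step is the second one, which converts the analytic bound $\sigma_f(r_0) < +\infty$ into the strong geometric statement that $\mathrm{dom} f$ contains an open $r_0$-ball around each of its points; the remaining paragraph is then just a connectedness chain.
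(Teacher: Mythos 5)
Your proof is correct and follows essentially the same route as the paper: observe that $\sigma_f(r_0)<+\infty$ forces $B(p,r_0)\subset \mathrm{dom}f$ for every $p\in\mathrm{dom}f$, then iterate to fill $\R^n$. The only cosmetic difference is that the paper passes through the reparametrized expression \eqref{eq:Zal-alt} for $\sigma_f$ (endpoints written as $x-try$ and $x+(1-t)ry$ around a midpoint $x$), while you parametrize the endpoints directly as $x_0=p+ae$, $x_1=p-(r_0-a)e$ with $t=a/r_0$; the two are identical up to relabeling.
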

For the sake of completeness, we include the short proof of this elementary fact .
\proof
Note that $\sigma_f$ admits the following slightly different expression:
\begin{equation}\label{eq:Zal-alt}
\sigma_f(r) = \sup\left\{\frac{(1-t)f(x-try)+tf(x+(1-t)ry)-f(x)}{t(1-t)} \mid t\in ]0,1[, x \in \mathrm{dom}\,f, \vert y \vert = 1\right\}.
\end{equation}
In particular, if $\sigma_f(r_0)<+\infty$ and $x \in \mathrm{dom}f$, then $B(x,r_0) \subset \mathrm{dom}f$, denoting by $B(x_0,r)$ the closed ball of radius $r_0$ centered at $x$. In other words, $\mathrm{dom}f + B(0,r_0) \subset \mathrm{dom} f$. Iterating, yields that $\mathrm{dom}f=\R^n.$
\endproof

In the applications, we will only consider functions $f$ for which $\sigma_f >0$ on $\R_+^*$, because of the following lemma.
\begin{lemma}
Let $f:\R^n \to \R$ be such that $\sigma_f(r_0) \leq 0$ for some $r_0>0$, then the function $e^{-f}$ is not integrable.
\end{lemma}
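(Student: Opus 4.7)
The plan is to show that the hypothesis $\sigma_f(r_0) \leq 0$ forces $f$ to be concave along every line in $\R^n$, and to exploit this one-dimensional concavity via a slicing argument to deduce that $\int e^{-f} = +\infty$.

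First, I would unfold the definition: $\sigma_f(r_0) \leq 0$ means that for every $x_0, x_1 \in \R^n$ with $\vert x_0 - x_1 \vert = r_0$ and every $t \in (0,1)$,
\[
f((1-t)x_0 + tx_1) \geq (1-t) f(x_0) + t f(x_1).
\]
Embedding shorter segments into length-$r_0$ ones, $f$ is concave on every segment of length at most $r_0$. Fix a unit vector $y$ and a point $z \in y^\perp$, and define $g(s) = f(z + sy)$. Then $g : \R \to \R$ is concave on each interval $[a, a+r_0]$. I would then promote this to full concavity on $\R$, which is the main technical step: the right derivative $g'_+$ exists and is non-increasing on each window $[a, a+r_0)$, and by chaining overlapping windows of length $r_0$, $g'_+$ is non-increasing on all of $\R$; combined with the continuity of $g$ (automatic from local concavity on open intervals), this yields global concavity of $g$.

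Second, any concave function $g : \R \to \R$ satisfies $\int_\R e^{-g(s)}\,ds = +\infty$. Indeed, the supporting-line inequalities $g(s) \leq g(0) + g'_+(0) s$ for $s \geq 0$ and $g(s) \leq g(0) + g'_-(0) s$ for $s \leq 0$ give $e^{-g(s)} \geq e^{-g(0) - g'_+(0) s}$ on $[0, \infty)$ and $e^{-g(s)} \geq e^{-g(0) - g'_-(0) s}$ on $(-\infty, 0]$. Since $g'_-(0) \geq g'_+(0)$, at least one of $g'_+(0) \leq 0$ and $g'_-(0) \geq 0$ must hold, in which case the corresponding exponential integral is infinite. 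Applying Tonelli with the orthogonal decomposition $\R^n = y^\perp \oplus \R y$,
\[
\int_{\R^n} e^{-f(x)}\,dx \;=\; \int_{y^\perp} \left( \int_\R e^{-f(z + sy)}\,ds \right) dz \;=\; +\infty,
\]
since every inner integral is $+\infty$.

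The main obstacle is the extension of concavity from a single scale $r_0$ to global concavity along lines; this is the only step requiring some care, and it relies on the continuity of concave functions on open intervals together with the transitivity of the monotonicity of $g'_+$ across overlapping length-$r_0$ windows. Once this is settled, the divergence of $\int e^{-f}$ reduces to a routine one-dimensional exponential integrability check.
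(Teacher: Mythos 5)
There is a genuine gap at the very first step. You claim that $\sigma_f(r_0)\leq 0$ forces $f$ to be concave on every segment of length at most $r_0$ by ``embedding shorter segments into length-$r_0$ ones.'' This is not correct: the hypothesis controls only the chord inequality at the \emph{single} scale $r_0$, i.e.\ that $f$ restricted to any segment of length exactly $r_0$ lies above the chord joining its \emph{endpoints}. It says nothing about chords joining two interior points of such a segment, which is what concavity requires, and the embedding trick compares $f$ to the wrong chord. A concrete one-dimensional counterexample (take $r_0=1$) is
\[
g(x) = -\tfrac12\, d(x,\Z) - x^2,
\]
where $d(\cdot,\Z)$ is the distance to the nearest integer. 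One checks directly, using the $\Z$-periodicity and $1$-Lipschitz continuity of $d(\cdot,\Z)$, that for all $a\in\R$ and $s\in(0,1)$,
\[
g(a+s) - (1-s)g(a) - s\,g(a+1) = -\tfrac12\bigl[d(a+s,\Z)-d(a,\Z)\bigr] + s(1-s) \;\geq\; -\tfrac12\cdot 2s(1-s) + s(1-s) = 0,
\]
so $\sigma_g(1)\leq 0$. Yet $g$ is \emph{not} concave: at $x=1/2$ the left derivative is $-3/2$ and the right derivative is $-1/2$, a convex kink. In fact $\sigma_g(r)>0$ for small $r>0$, so the single-scale hypothesis genuinely fails to propagate to shorter scales. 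Consequently the whole ``slice and observe concavity along lines, then chain'' strategy cannot be carried out: the right derivative $g'_+$ need not be monotone on any window, and Step 2 (divergence of $\int e^{-g}$ via supporting lines) never gets off the ground because there are no supporting lines. Note that the lemma is stated for general $f:\R^n\to\R$, so you cannot invoke the monotonicity of $\sigma_f$ from Proposition 2.5, which is only established for convex $f$.

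The paper's proof sidesteps concavity entirely. It fixes $u$ with $|u|=r_0$, uses $\sigma_f(r_0)\leq 0$ to write $e^{-f(x+tu)} \leq \bigl(e^{-f(x)}\bigr)^{1-t}\bigl(e^{-f(x+u)}\bigr)^{t}$ for all $x$, integrates in $x$, and applies H\"older with exponents $1/(1-t)$ and $1/t$ to get $\int e^{-f} \leq \int e^{-f}$. If $\int e^{-f}<+\infty$, equality in H\"older forces $e^{-f(x)} = \lambda\, e^{-f(x+u)}$ for a.e.\ $x$ and some $\lambda>0$; integrating shows $\lambda=1$, so $e^{-f}$ is (a.e.) periodic in the direction $u$, and a strictly positive periodic function cannot be integrable on $\R^n$. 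This works at the single scale $r_0$ and makes no attempt to upgrade the hypothesis to smaller scales, which is precisely what your argument needs and cannot obtain.
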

\begin{remark}\label{rem:sigma_V}
It follows immediately from the lemma that, if $\mu(dx) = e^{-V(x)}\,dx$ is a probability measure on $\R^n$ such that $\sigma_V <+\infty$, then one also has $\sigma_V>0$ on $\R_+^*.$
\end{remark}
\proof
Suppose that $\sigma_f(r_0) \leq 0$ for some $r_0>0$. Take $u \in \R^n$ such that $\vert u\vert=r_0$ ; then we get
\[
e^{-f(x+tu)} \leq e^{-(1-t)f(x)}e^{-tf(x+u)},\qquad \forall x\in \R^n.
\]
So integrating with respect to $x$ and applying H\"older inequality yields
\[
\int e^{-f} \leq \int e^{-(1-t)f(x)}e^{-tf(x+u)}\,dx \leq \left(\int e^{-f}\,dx\right)^{1-t} \left(\int e^{-f}\,dx\right)^{t} = \int e^{-f}\,dx.
\]
If $\int e^{-f} <+\infty$, then, by the equality case in H\"older inequality, one gets that that there is some constant $\lambda>0$ such that $e^{-f(x)} =\lambda e^{-f(x+u)}$ for almost every $x\in \R^n$. Integrating with respect to $x$ gives a contradiction. Thus $e^{-f}$ is not integrable.
\endproof
Moreover note that the $\sigma$-smoothness of $f$ implies a control on the growth at infinity.
\begin{lemma}\label{lem:sigma-growth}
Let $f:\R^n\to \R$ be a $\sigma$-smooth function with $\sigma:\R_+\to \R_+$. Let $x\in \R^n$ such that $f$ is differentiable at $x$. Then for any $y \in \R^n$ we have
\begin{equation*}
    f(y) \leq f(x) + \langle \nabla f(x), y-x\rangle + \sigma(\vert y-x\vert)
\end{equation*}
\end{lemma}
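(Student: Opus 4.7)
The strategy is direct: apply the defining inequality of $\sigma$-smoothness along the segment from $x$ to $y$, then divide by $t$ and let $t\to 0^+$, using differentiability at $x$ to recover the directional derivative.

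More precisely, I would fix $x,y\in\R^n$ with $x\neq y$ (the case $x=y$ being trivial) and apply \eqref{eq:sigma-smooth-strong} with $x_0=x$, $x_1=y$, and an arbitrary $t\in\,]0,1[$. Since $\mathrm{dom}f=\R^n$ (by the preceding discussion, as $\sigma_f$ is finite) the inequality becomes
\begin{equation*}
f(x+t(y-x)) + t(1-t)\,\sigma(\vert y-x\vert) \geq (1-t)f(x) + t f(y).
\end{equation*}
Subtracting $f(x)$ from both sides and dividing by $t>0$ yields
\begin{equation*}
\frac{f(x+t(y-x)) - f(x)}{t} + (1-t)\,\sigma(\vert y-x\vert) \geq f(y) - f(x).
\end{equation*}

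Now I would let $t\to 0^+$. Since $f$ is differentiable at $x$, the difference quotient converges to $\langle \nabla f(x), y-x\rangle$; the prefactor $(1-t)$ converges to $1$; and $\sigma(\vert y-x\vert)$ is a finite nonnegative constant (this is where the hypothesis $\sigma:\R_+\to\R_+$ is used, so the limit is unambiguous). Passing to the limit gives
\begin{equation*}
\langle \nabla f(x), y-x\rangle + \sigma(\vert y-x\vert) \geq f(y) - f(x),
\end{equation*}
which is exactly the claimed bound.

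There is no substantial obstacle here: the only subtlety is noting that differentiability at $x$ suffices for the one-sided directional limit along $y-x$, and that $\sigma(\vert y-x\vert)<+\infty$ guarantees we can separate the two terms when passing to the limit. The proof does not require any monotonicity or continuity of $\sigma$ beyond what is built into the assumption that $\sigma$ takes values in $\R_+$.
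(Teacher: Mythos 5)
Your proof is correct and follows exactly the same route as the paper: apply the $\sigma$-smoothness inequality at $x_0=x$, $x_1=y$, subtract $f(x)$, divide by $t$, and let $t\to 0^+$ using differentiability at $x$. The only cosmetic difference is that the paper writes the resulting inequality in the form $f(y)-f(x)-\tfrac{f(x_t)-f(x)}{t}\le (1-t)\sigma(\vert y-x\vert)$ before passing to the limit, which is algebraically identical to your intermediate step.
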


\proof
Let $x,y \in \R^n$ and for $t\in ]0,1[$ set $x_t = (1-t)x+ty$. By $\sigma$-smoothness of $f$ we have
\begin{equation*}
    f(y)-f(x)-\frac{f(x_t)-f(x)}{t} \leq (1-t)\sigma(\vert y-x \vert)
\end{equation*}
Letting $t\to0$ grants the result.
\endproof
Recall that if $h:\R^n \to \R\cup\{+\infty\}$, then $h^*$ denotes its Legendre transform defined by 
\[
h^*(y) = \sup_{x\in \R^n} \{\langle x,y\rangle - h(x)\},\qquad y\in \R^n.
\]
In the case of a function $\alpha : \R_+ \to \R\cup\{\pm\infty\}$, $\alpha^*$ denotes the monotone conjugate of $\alpha$, defined by 
\[
\alpha^*(u) = \sup_{t\geq0} \{tu - \alpha(t)\},\qquad u\geq0.
\]
The following result shows that $\rho$-convexity and $\sigma$-smoothness are properties dual from each other:

\begin{proposition}\label{prop:AP2}Let $f,g:\R^n \to \R\cup\{+\infty\}$ be two functions with  nonempty convex domains and $\sigma,\rho: \mathbb{R}_+ \to \mathbb{R}\cup\{\pm\infty\}$.
\begin{itemize}
\item[(i)] If $f$ is  $\sigma$-smooth, then $f^*$ is $\sigma^*$-convex.
\item[(ii)] If $g$ is $\rho$-convex, then $g^*$ is  $\rho^*$-smooth.
\end{itemize}
\end{proposition}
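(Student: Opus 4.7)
The plan is to treat both items in parallel via a single change of variables in the defining supremum for the Legendre transform. Fix $t \in (0,1)$ and $y_0, y_1 \in \R^n$, and reparametrize each pair $(x_0, x_1) \in \R^n \times \R^n$ as
\[
z := (1-t)x_0 + tx_1, \qquad u := x_0 - x_1, \qquad \text{i.e.}\quad x_0 = z+tu, \ x_1 = z - (1-t)u.
\]
This is a linear bijection of $\R^n \times \R^n$, and a short computation yields the key bilinear identity
\[
(1-t)\langle x_0, y_0\rangle + t\langle x_1, y_1\rangle = \langle z, (1-t)y_0 + ty_1\rangle + t(1-t)\langle u, y_0 - y_1\rangle.
\]
Combined with the elementary identity $\sup_{u \in \R^n}\{\langle u, v\rangle - \alpha(|u|)\} = \alpha^*(|v|)$ (obtained by writing $u = s\hat{u}$ with $s \geq 0$ and $|\hat{u}|=1$ and optimizing first over $\hat u$), this is essentially all that is needed.

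For (i), I would start from
\[
(1-t)f^*(y_0) + tf^*(y_1) = \sup_{x_0, x_1 \in \R^n}\bigl\{(1-t)\langle x_0, y_0\rangle + t\langle x_1, y_1\rangle - (1-t)f(x_0) - tf(x_1)\bigr\},
\]
where finiteness of $\sigma$ forces $\mathrm{dom}\,f = \R^n$ (by the lemma already established in the paper), so the sup is unrestricted. Using $\sigma$-smoothness to get $(1-t)f(x_0) + tf(x_1) \leq f(z) + t(1-t)\sigma(|u|)$, substituting in the bilinear identity, and recognizing that the integrand splits as a function of $z$ plus a function of $u$, the sup decouples and gives
\[
(1-t)f^*(y_0) + tf^*(y_1) \geq f^*((1-t)y_0 + ty_1) + t(1-t)\sigma^*(|y_0 - y_1|),
\]
which is exactly the $\sigma^*$-convexity of $f^*$.

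For (ii), the argument is dual. Restricting the analogous sup to $x_0, x_1 \in \mathrm{dom}\,g$ (outside, the expression is $-\infty$) and using $\rho$-convexity as $(1-t)g(x_0) + tg(x_1) \geq g(z) + t(1-t)\rho(|u|)$, the integrand is bounded above by a sum of a function of $z$ and a function of $u$. Enlarging the domain of optimization from the set $\{(z, u)\,:\, z+tu,\, z-(1-t)u \in \mathrm{dom}\,g\}$ (which, by convexity of $\mathrm{dom}\,g$, is contained in $\mathrm{dom}\,g \times \R^n$) to all of $\mathrm{dom}\,g \times \R^n$ only increases the supremum, and after decoupling yields
\[
(1-t)g^*(y_0) + tg^*(y_1) \leq g^*((1-t)y_0 + ty_1) + t(1-t)\rho^*(|y_0 - y_1|),
\]
i.e.\ the $\rho^*$-smoothness of $g^*$. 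The only real subtlety, and the reason (ii) is not completely symmetric with (i), is the domain bookkeeping for $g$: the reparametrization does not send $\mathrm{dom}\,g \times \mathrm{dom}\,g$ onto $\mathrm{dom}\,g \times \R^n$, but since we are upper-bounding a supremum this only requires the trivial monotonicity of $\sup$ in its feasible set.
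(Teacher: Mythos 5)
Your proof is correct. For item (ii) your argument is in substance the paper's own (given in the proof of Proposition \ref{prop:directional-AP2}, which Proposition \ref{prop:AP2} specializes): the change of variables $(z,u)=((1-t)x_0+tx_1,\,x_0-x_1)$ that you make explicit is exactly the bilinear rearrangement $(1-t)\langle x_0,y_0\rangle+t\langle x_1,y_1\rangle - \langle x_t,y_t\rangle = t(1-t)\langle x_1-x_0,y_1-y_0\rangle$ that the paper manipulates in-line before optimizing over $x_0,x_1\in\mathrm{dom}\,g$. For item (i), however, you take a genuinely different route: the paper picks an $\epsilon$-approximate maximizer $x$ for $f^*(y_t)$, applies smoothness at $x$ to control $f(x-td)$ and $f(x+(1-t)d)$, invokes the defining inequality of $f^*$ at $y_0$ and $y_1$, and finally optimizes over $d$; you instead express $(1-t)f^*(y_0)+tf^*(y_1)$ as a single joint supremum that decouples after the same substitution used for (ii). This buys a unified treatment of both items and dispenses with the $\epsilon$-bookkeeping. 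The one subtlety is that the unrestricted supremum in (i) needs $\mathrm{dom}\,f=\R^n$, which you correctly reduce to the paper's Lemma on $\sigma$-smooth functions; in the residual degenerate case $\sigma\equiv+\infty$ on $(0,\infty)$, one has $\sigma^*\equiv -\sigma(0)\le 0$, and the $\sigma^*$-convexity of $f^*$ is immediate from its plain convexity, so nothing is lost.
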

The proof of Proposition \ref{prop:AP2} for convex functions can be found in \cite[Proposition 2.6]{AP} or \cite[Proposition 3.5.3]{Zal02}. In the general case, the proof is exactly the same. The argument is recalled, for the sake of completeness, in the proof of the more general Proposition \ref{prop:directional-AP2} below.

In this paper, we will mainly be interested in the situation where $f$ is a convex function. In this case, the following result gathers known properties of $\sigma_f$ and $\rho_f$.

\begin{proposition}\label{prop:AP1}Let $f:\R^n \to \R\cup \{+\infty\}$ be a proper convex and lower semicontinuous function.
\begin{itemize}
\item[(a)] The modulus $\rho_f:\R_+ \to \R_+\cup\{+\infty\}$ is such that
\[
\rho_f(ct)\geq c^2\rho_f(t),\qquad \forall c\geq 1, \forall t\geq0,
\]
and is, in particular, non-decreasing. 
\item[(b)] The modulus $\sigma_f:\R_+ \to \R_+\cup\{+\infty\}$ is convex, non-decreasing and lower semicontinuous
and it holds $\sigma_f = (\rho_{f^*})^*$.
\item[(c)] Either $\mathrm{dom} \sigma_f = \{0\}$ or $\mathrm{dom} \sigma_f = \R_+$.
\end{itemize}
\end{proposition}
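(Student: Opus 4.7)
Fix $c\ge 1$ and $t\ge 0$. For any pair $x_0,x_1\in\mathrm{dom}\,f$ with $|x_0-x_1|=ct$ and any $s\in(0,1)$, I would aim for
\[
M_s^f(x_0,x_1)\ge c^2\,s(1-s)\,\rho_f(t).
\]
The strategy is to restrict $f$ to the segment $[x_0,x_1]$, reducing the problem to the 1D convex function $g(\lambda)=f((1-\lambda)x_0+\lambda x_1)$ on $[0,1]$, and to apply $\rho_f$-convexity to (roughly) $c$ equally spaced sub-pairs of mutual distance $t$. Combining these lower bounds with the convexity of $f$ through a telescoping identity---in which the weights reflect the quadratic nature of second-order differences of convex functions---produces the $c^2$ factor. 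This is the argument laid out in Proposition~3.5.1 of \cite{Zal02} (see also \cite{AP}). Monotonicity of $\rho_f$ then follows at once: convexity of $f$ gives $\rho_f\ge 0$, so for $0<r\le r'$ we have $\rho_f(r')\ge (r'/r)^2\rho_f(r)\ge \rho_f(r)$.

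\textbf{Part (b).} Starting from the alternative formula \eqref{eq:Zal-alt}, note that for every fixed $(t,x,y)$ with $t\in(0,1)$, $x\in\mathrm{dom}\,f$, $|y|=1$, the map
\[
r\mapsto \frac{(1-t)f(x-try)+tf(x+(1-t)ry)-f(x)}{t(1-t)}
\]
is convex and lower semicontinuous in $r\ge 0$, being a positive combination of compositions of the convex lsc function $f$ with affine functions of $r$. Taking the supremum over $(t,x,y)$ preserves both properties, so $\sigma_f$ is convex and lsc on $\R_+$. The convexity of $f$ forces $M_t^f\ge 0$, so $\sigma_f\ge 0$; combined with $\sigma_f(0)=0$, convexity then gives $\sigma_f(s)\le (s/r)\sigma_f(r)$ for $0<s\le r$, so $\sigma_f$ is non-decreasing. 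For the identity $\sigma_f=(\rho_{f^*})^*$, I would apply Proposition~\ref{prop:AP2} in both directions. Item~(ii) applied to $f^*$ (which is $\rho_{f^*}$-convex by definition of $\rho_{f^*}$) yields that $f=f^{**}$ is $(\rho_{f^*})^*$-smooth, hence $\sigma_f\le (\rho_{f^*})^*$. Item~(i) applied to the $\sigma_f$-smooth function $f$ gives that $f^*$ is $\sigma_f^*$-convex, so $\sigma_f^*\le \rho_{f^*}$; passing to the monotone conjugate (which is order-reversing) and using $\sigma_f^{**}=\sigma_f$---the Fenchel--Moreau identity for the monotone conjugate, which holds after extending $\sigma_f$ by $+\infty$ on $(-\infty,0)$ since the extension is proper convex lsc---yields the reverse inequality $\sigma_f\ge (\rho_{f^*})^*$.

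\textbf{Part (c).} From (b), $\mathrm{dom}\,\sigma_f$ is an interval containing $0$, so it only remains to exclude the case $\mathrm{dom}\,\sigma_f=[0,\alpha]$ or $[0,\alpha)$ with $0<\alpha<+\infty$. Assume $\sigma_f(r_0)<+\infty$ for some $r_0>0$. By $\sigma_f=(\rho_{f^*})^*$ this rewrites as $\rho_{f^*}(t)\ge r_0 t-\sigma_f(r_0)$ for all $t\ge 0$, so $\rho_{f^*}(t_0)>0$ for some $t_0>0$. Applying the scaling of part~(a) to the proper convex lsc function $f^*$, we obtain $\rho_{f^*}(t)\ge (t/t_0)^2\rho_{f^*}(t_0)$ for $t\ge t_0$, i.e.\ $\rho_{f^*}$ has at least quadratic growth at infinity. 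But then
\[
(\rho_{f^*})^*(u)=\sup_{t\ge 0}\{tu-\rho_{f^*}(t)\}<+\infty\qquad\text{for every } u\ge 0,
\]
so $\sigma_f$ is finite on all of $\R_+$.

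The main obstacle, in my view, is the quadratic scaling in part~(a). A naive argument---embedding a pair at distance $t$ inside the segment $[x_0,x_1]$ of length $ct$ and using a single application of $\rho_f$-convexity together with the convexity of $f$---only produces $\rho_f(ct)\ge \rho_f(t)$, i.e.\ plain monotonicity, and loses the $c^2$ factor. Recovering the genuine quadratic scaling requires the finer aggregation over several equally spaced sub-pairs sketched above, which is the technical core of \cite[Prop.~3.5.1]{Zal02}; parts~(b) and~(c) then reduce essentially to bookkeeping involving the duality of Proposition~\ref{prop:AP2} and Fenchel--Moreau.
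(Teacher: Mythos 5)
Your proposal is correct and, for parts (a) and (b), follows essentially the same path as the paper: for (a) both you and the paper defer to \cite[Prop.\ 3.5.1]{Zal02} (the paper additionally proves a directional generalization in Proposition~\ref{prop:directional-AP1}), and for (b) both derive $\sigma_f=(\rho_{f^*})^*$ from the two directions of Proposition~\ref{prop:AP2} together with $\sigma_f^{**}=\sigma_f$. One cosmetic difference in (b): you get convexity/lower semicontinuity directly from the representation \eqref{eq:Zal-alt} and then invoke Fenchel--Moreau, whereas the paper obtains $\sigma_f^{**}=\sigma_f$ first (from minimality of $\sigma_f$ applied to $f=f^{**}$) and reads off convexity/lsc as a byproduct — the paper even remarks parenthetically that your route is the alternative. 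For part (c) your argument is genuinely different from the paper's: you translate finiteness of $\sigma_f(r_0)$ into strict positivity of $\rho_{f^*}$ at some $t_0>0$, invoke the scaling of (a) for $f^*$ to get at-least-quadratic growth of $\rho_{f^*}$ at infinity, and conclude that the monotone conjugate $(\rho_{f^*})^*$ is finite on all of $\R_+$; the paper instead conjugates the superhomogeneity of the convexity modulus to get the subhomogeneity $\sigma_f(cr)\geq c^2\sigma_f(r)$ for $c\in[0,1]$ and iterates ($\sigma_f(u)<\infty\Rightarrow\sigma_f(2u)\leq 4\sigma_f(u)<\infty$). Both are valid and of comparable length. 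One small remark on your closing discussion of (a): you say recovering the $c^2$ factor "requires" aggregation over several equally spaced sub-pairs, but the paper's own proof of the directional version does it more economically, for $c\in(1,2)$, with a single auxiliary point $x_c=(1-c^{-1})x_0+c^{-1}x_1$ and a two-term decomposition of $M_t^f(x_0,x_1)$, followed by iteration on $c$; so the many-pair aggregation is one way but not the only one.
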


For the sake of completeness, we include a sketch of proof of Proposition \ref{prop:AP1} (we refer to \cite{Zal02} or \cite{AP} for detailed arguments).
\proof Item $(a)$ was first proved in \cite{VNC78} ; a proof can be found in \cite[Proposition 3.5.1]{Zal02}. See also Item $(a)$ of Proposition \ref{prop:directional-AP1} for a proof in a more general context. Let us prove $(b)$. Since $f$ is a proper convex and lower semicontinuous function, it satisfies $f=f^{**}$. Thus, applying $(i)$ and $(ii)$ of Proposition \ref{prop:AP2}, we first conclude that $f$ is $\sigma_f^{**}$ convex.
Thus, by minimality of $\sigma_f$, $\sigma_f^{**} \geq \sigma_f$. Since the inequality $\sigma_f^{**} \leq \sigma_f$ is always true, one gets that $\sigma_f^{**}=\sigma_f$, which proves that $\sigma_f$ is convex and lower semicontinuous (a property that can also be derived directly from \eqref{eq:Zal-alt}). On the other hand, by $(ii)$ of Proposition \ref{prop:AP2}, $f$ is also  $(\rho_{f^*})^*$-smooth, and so $\sigma_f \leq (\rho_{f^*})^*$.
Since $f^*$ is $\sigma_f^*$-convex, one gets, by maximality of $\rho_{f^*}$, that $\rho_{f^*} \geq \sigma_f^*$ and so, taking the conjugate, $\sigma_f = \sigma_f^{**} \geq (\rho_{f^*})^*$, which proves that $\sigma_f = (\rho_{f^*})^*$ (and in particular $\sigma_f$ is non-decreasing as a monotone conjugate). Now, let us sketch the proof of $(c)$. By conjugating the super-homogeneity property of  convexity modulus (Item $(a)$), one sees that $\sigma_f$ satisfies the following subhomogeneity property: for all $r \geq0$ and $c\in [0,1]$, it holds $\sigma_f(cr) \geq c^2 \sigma_f(r)$.   Therefore, if $\sigma_f(u) <+\infty$ then $\sigma_f(2u)<+\infty$ which yields the claim.
\endproof

Let us conclude this section by recalling the following interesting consequence of $\sigma$-smoothness, that will play a crucial role in Section \ref{sec:OT}.
Recall that a subgradient of $f$ at some point $x \in \mathrm{dom}f$ is a vector $a \in \R^n$ such that
\begin{equation}\label{eq:subgradient}
f(y) \geq f(x)+\langle a, y-x \rangle,\qquad \forall y\in \R^n.
\end{equation}
We denote by $\partial f(x)$ the set of all subgradients of $f$ at $x$. Note that $\partial f(x)$ may be empty. Whenever $f$ is convex and $x$ belongs to the relative interior of the domain of $f$, is is well known that $\partial f(x) \neq \emptyset$.

\begin{proposition}\label{prop:continuity} Let $f:\R^n \to \R \cup \{+\infty\}$ be a function with a nonempty convex domain.
Then for all $x_0\neq x_1 \in \mathrm{dom} f$ and $y_0 \in \partial f(x_0)$, $y_1 \in \partial f(x_1)$, it holds 
\begin{equation}\label{eq:continuity-improved}
\sigma_f^*(\vert y_1-y_0\vert) \leq \sigma_f(\vert x_1-x_0\vert)
\end{equation}
and
\begin{equation}\label{eq:continuity}
\vert y_1-y_0\vert \leq 2 \frac{\sigma_f(\vert x_1-x_0\vert)}{\vert x_1-x_0\vert}.
\end{equation}
\end{proposition}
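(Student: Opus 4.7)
My plan is to prove Proposition~\ref{prop:continuity} by Legendre--Fenchel duality combined with a Baillon--Haddad-type co-coercivity inequality for the subdifferential of $f^*$. Since $f$ is $\sigma_f$-smooth by definition, Proposition~\ref{prop:AP2}(i) yields that $f^*$ is $\sigma_f^*$-convex. Moreover, the hypothesis $y_i \in \partial f(x_i)$ gives the Fenchel--Young equality $f(x_i)+f^*(y_i) = \langle x_i, y_i\rangle$, hence $f^{**}(x_i) = f(x_i)$, and therefore $x_i \in \partial f^*(y_i)$ for $i=0,1$.

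The heart of the argument is to establish the co-coercivity estimate
\[
\langle x_1-x_0,\, y_1-y_0\rangle \ge 2\,\sigma_f^*(|y_1-y_0|).
\]
Writing $y_t = (1-t)y_0+ty_1$, I would combine the $\sigma_f^*$-convexity bound
\[
f^*(y_t) \le (1-t)f^*(y_0)+tf^*(y_1)-t(1-t)\sigma_f^*(|y_1-y_0|)
\]
with the subgradient inequality $f^*(y_t) \ge f^*(y_0)+t\langle x_0, y_1-y_0\rangle$. After dividing through by $t>0$ and letting $t\to 0^+$, this yields
\[
f^*(y_1)-f^*(y_0) \ge \langle x_0, y_1-y_0\rangle + \sigma_f^*(|y_1-y_0|).
\]
Repeating the argument with the subgradient inequality at $y_1$ and letting $t\to 1^-$ gives the mirror bound $f^*(y_1)-f^*(y_0) \le \langle x_1, y_1-y_0\rangle - \sigma_f^*(|y_1-y_0|)$. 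Subtracting the two displays produces the desired co-coercivity.

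The conclusion then follows by chaining Cauchy--Schwarz with Fenchel--Young: $\langle x_1-x_0, y_1-y_0\rangle \le |x_1-x_0||y_1-y_0| \le \sigma_f(|x_1-x_0|)+\sigma_f^*(|y_1-y_0|)$. Substituting into the co-coercivity inequality and cancelling one copy of $\sigma_f^*(|y_1-y_0|)$ yields \eqref{eq:continuity-improved}. The Lipschitz-type bound \eqref{eq:continuity} is then immediate from the definition of the monotone conjugate: evaluating $\sigma_f^*(|y_1-y_0|) \ge r|y_1-y_0|-\sigma_f(r)$ at $r=|x_1-x_0|$ and combining with \eqref{eq:continuity-improved} gives $|x_1-x_0||y_1-y_0| \le 2\sigma_f(|x_1-x_0|)$. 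I expect the main obstacle to be securing the factor of $2$ in the co-coercivity inequality: using only one of the two subgradient inequalities would produce the weaker monotonicity $\langle x_1-x_0, y_1-y_0\rangle \ge \sigma_f^*(|y_1-y_0|)$, which after Cauchy--Schwarz and Fenchel--Young collapses to the tautology $\sigma_f^*(|y_1-y_0|) \le \sigma_f(|x_1-x_0|)+\sigma_f^*(|y_1-y_0|)$. The trick is to use both subgradients and push the parameter $t$ to opposite endpoints of $(0,1)$, so that the $(1-t)$ and $t$ weights in front of $\sigma_f^*(|y_1-y_0|)$ can both be driven to~$1$.
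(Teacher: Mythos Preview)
Your proposal is correct and follows essentially the same route as the paper's own argument (given in full for the directional version, Proposition~\ref{prop:directional-regularity}): pass to $f^*$ via Proposition~\ref{prop:AP2}(i), use $x_i\in\partial f^*(y_i)$ together with the $\sigma_f^*$-convexity inequality, push $t$ to each endpoint to obtain the two one-sided bounds, and add them to get $\langle x_1-x_0,y_1-y_0\rangle\ge 2\sigma_f^*(|y_1-y_0|)$; the paper then concludes directly by Fenchel--Young, and derives \eqref{eq:continuity} from \eqref{eq:continuity-improved} exactly as you do. One minor remark: the detour through $f^{**}(x_i)=f(x_i)$ is unnecessary, since $y_i\in\partial f(x_i)$ implies $x_i\in\partial f^*(y_i)$ directly from the definitions (as the paper notes, this holds even without convexity of $f$).
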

Note that \eqref{eq:continuity-improved} implies \eqref{eq:continuity} because $\sigma^\ast(s) \geq rs - \sigma(r)$. Observe also that if $f = \frac{\vert\,\cdot\,\vert^2}{2}$, then $\sigma_f(r) = \frac{r^2}{2}$ and there is equality in \eqref{eq:continuity-improved} and \eqref{eq:continuity}.
Proposition \ref{prop:continuity} is usually stated for convex functions, but is true also for non-convex functions (provided the subdifferentials at $x_0$ and $x_1$ are assumed nonempty). The arguments of the proof of Proposition \ref{prop:continuity} can be found in \cite[Theorem 3.5.6]{Zal02}. We will give a more general result in Proposition \ref{prop:directional-regularity} below, with a complete proof.

\subsection{Directional moduli of convexity and smoothness}
The notions of modulus of convexity and modulus of smoothness introduced in the last section only depend on the distance separating the endpoints between which the mean deviation is computed. Here, in order to derive results in anisotropic situations, we introduce a generalized version of the moduli which also depends on the direction. 

Let $f : \R^n \to \R \cup \{+ \infty\}$ be a function with convex domain. The \emph{directional modulus of convexity} of $f$ is
\begin{equation*}
    R_f(d) = \inf \left\{\frac{1}{t(1-t)}M_t^f(x_0,x_1) \mid t \in ]0,1[, x_1-x_0 = d, x_0,x_1 \in \mathrm{dom}f\right\}, \qquad d \in \R^n
\end{equation*}
and the \emph{directional modulus of smoothness} is
\begin{equation*}
    S_f(d) = \sup \left\{\frac{1}{t(1-t)}M_t^f(x_0,x_1) \mid t \in ]0,1[, x_1-x_0 = d, (1-t)x_0+tx_1 \in \mathrm{dom}f\right\}, \qquad d \in \R^n.
\end{equation*}
By their very definition, the functions $R_f$ and $S_f$ are automatically even:
\[
R_f(-d)=R_f(d)\qquad \text{and} \qquad S_f(-d)=S_f(d),\qquad \forall d\in \R^n.
\]
As in the non directional case $R_f$ is the greatest even function $R: \R^n \to \R \cup \{\pm \infty\}$ such that
\begin{equation}\label{eq:dir-rho-convex}
f((1-t)x_0+tx_1)+t(1-t) R(x_1-x_0)\leq (1-t)f(x_0)+tf(x_1), \quad\forall  t \in ]0,1[, \forall x_0,x_1 \in \mathrm{dom} f.
\end{equation}
If \eqref{eq:dir-rho-convex} is satisfied, we will say that $f$ is directionally $R$-convex or $R$-convex if there is no ambiguity. Note that $R$ is allowed to take negative values, and therefore an $R$-convex function is not necessarily convex.

Likewise $S_f$ is the smallest even function $S:\R^n \to \R \cup \{\pm \infty\}$ such that
\begin{equation}\label{eq:dir-sigma-smooth}
f((1-t)x_0+tx_1)+t(1-t) S( x_1-x_0 )\geq (1-t)f(x_0)+tf(x_1),\qquad \forall t\in ]0,1[,\qquad \forall x_0,x_1 \in \R^n \text{ s.t } (1-t)x_0+tx_1 \in \mathrm{dom}f.
\end{equation}
If \eqref{eq:dir-sigma-smooth} is satisfied, we will say that $f$ is directionally $S$-smooth or $S$-smooth if there is no ambiguity. When an $S$-smooth function $f$ has a nonempty domain, then $S>-\infty$ everywhere. Note that as before the smoothness of a function has a strong implication on its domain.
\begin{lemma}\label{lem:smooth-domain}
If $\mathrm{dom}f$ is nonempty and convex and $S_f(v) < +\infty$ for some $v$, then $\mathrm{dom}f+\mathrm{Vect}(v)\subset \mathrm{dom}f$.
\end{lemma}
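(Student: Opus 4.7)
The plan is to mirror the strategy used for the non-directional Lemma 2.2: first rewrite $S_f(v)$ with a single center point $x \in \mathrm{dom}\, f$, use finiteness to propagate membership of $\mathrm{dom}\, f$ along the direction $v$, and then iterate a Minkowski-sum argument to cover the whole line $\mathrm{Vect}(v)$.

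For the rewriting step, I would use the change of variables $x_0 = x - t v$, $x_1 = x + (1-t) v$, so that $(1-t)x_0 + t x_1 = x$ and $x_1 - x_0 = v$. Conversely, any admissible pair $(x_0, x_1)$ in the definition of $S_f(v)$ arises this way with $x \in \mathrm{dom}\, f$ and $t \in (0,1)$. This yields the representation
\begin{equation*}
S_f(v) \;=\; \sup\left\{\frac{(1-t)\,f(x - t v) + t\,f(x + (1-t) v) - f(x)}{t(1-t)} \;\Big|\; t \in (0,1),\ x \in \mathrm{dom}\, f\right\},
\end{equation*}
which is the direct analogue of the alternative formula \eqref{eq:Zal-alt} used in the proof of Lemma 2.2. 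Since $f(x)$ is finite and $S_f(v) < +\infty$, both $f(x - t v)$ and $f(x + (1-t) v)$ must be finite for every $t \in (0,1)$ and every $x \in \mathrm{dom}\, f$. Letting $t$ vary in $(0,1)$ and recalling that $0 \in \mathrm{dom}\, f - \mathrm{dom}\, f$ is trivial, we conclude that $x + s v \in \mathrm{dom}\, f$ for every $s \in (-1,1)$, i.e.\ $\mathrm{dom}\, f + (-1,1)\,v \subset \mathrm{dom}\, f$.

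To upgrade this local inclusion to $\mathrm{dom}\, f + \mathbb{R} v \subset \mathrm{dom}\, f$, I would simply iterate: applying the inclusion to a point $x + s_1 v \in \mathrm{dom}\, f$ with $s_1 \in (-1,1)$ shows $\mathrm{dom}\, f + (-1,1)v + (-1,1)v \subset \mathrm{dom}\, f$, and by induction $\mathrm{dom}\, f + (-n, n)\,v \subset \mathrm{dom}\, f$ for every $n \in \mathbb{N}^*$, whence the conclusion by taking the union over $n$.

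No serious obstacle is expected; the one point that requires attention is that in the definition of $S_f$ the endpoints $x_0, x_1$ are allowed to lie outside $\mathrm{dom}\, f$, so the finiteness of $S_f(v)$ is precisely what forces $f(x_0), f(x_1)$ to be finite once the midpoint sits in $\mathrm{dom}\, f$. This is the mechanism that makes the argument work, and it is the direct counterpart of the use of \eqref{eq:Zal-alt} in the isotropic case.
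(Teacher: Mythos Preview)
Your proof is correct and follows essentially the same route as the paper's: exploit finiteness of $S_f(v)$ to force the endpoints $x-tv$ and $x+(1-t)v$ into $\mathrm{dom}\,f$, then iterate. The only cosmetic difference is that the paper fixes $t=1/2$ (yielding $x\pm\tfrac12 v\in\mathrm{dom}\,f$) and uses convexity of the domain to fill the closed segment $\mathrm{dom}\,f+\tfrac12[-v,v]$, whereas you let $t$ range over $(0,1)$ to cover the open interval $(-1,1)v$ directly; both iterate identically from there.
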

\proof
Let $x \in \mathrm{dom}f$. Since $f$ is $S_f$ smooth, then for $t =1/2$
\begin{equation*}
    +\infty > f(x) + \frac{1}{4}S_f(v) \geq \frac{1}{2}\left(f(x-\frac{1}{2}v)+f(x+\frac{1}{2}v)\right).
\end{equation*}
Thus $x-\frac{1}{2}v,x+\frac{1}{2}v \in \mathrm{dom}f$ for any $x$ in the domain of $f$. The convexity of the domain ensures that $\mathrm{dom}f + \frac{1}{2}[-v,v]\subset \mathrm{dom}f$ where $[a,b]$ is the line joining $a$ and $b$. The result follows by iteration. 
\endproof
The results that hold for the moduli of convexity and smoothness still apply to the directional moduli. 
We will make repeated use of the following proposition which extends Proposition \ref{prop:AP2}.
\begin{proposition}\label{prop:directional-AP2}
Let $f,g: \R^n \to \R\cup \{+\infty\}$ be two functions with nonempty convex domains and $S,R :\R^n \to \R \cup \{\pm\infty\}$ be even functions.
\begin{itemize}
\item[(i)] If $f$ is  $S$-smooth, then $f^*$ is $S^*$-convex.
\item[(ii)] If $g$ is $R$-convex, then $g^*$ is  $R^*$-smooth.
\end{itemize}
\end{proposition}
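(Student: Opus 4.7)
\medskip

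\noindent\textbf{Proof plan.} The argument is the directional analogue of the classical Fenchel-duality proof of Proposition \ref{prop:AP2} given in \cite[Proposition 2.6]{AP}. The central algebraic observation, replacing the scalar identity used in the monotone-conjugate setting, is the bilinear polarization identity
\[
(1-t)\langle x_0, y_0\rangle + t\langle x_1, y_1\rangle \;=\; \langle (1-t)x_0 + tx_1,\, (1-t)y_0 + ty_1\rangle + t(1-t)\langle x_1 - x_0,\, y_1 - y_0\rangle,
\]
valid for any $x_0, x_1, y_0, y_1 \in \R^n$ and $t\in[0,1]$, together with the bijective change of variables $(x_0, x_1) \leftrightarrow (x_t, d)$ with $x_t = (1-t)x_0 + tx_1$ and $d = x_1 - x_0$, under which the sums separate into an $x_t$-part and a $d$-part.

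\medskip

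\noindent To prove (ii), fix $y_0, y_1 \in \R^n$, $t\in (0,1)$, and set $y_t = (1-t)y_0 + ty_1$. Starting from
\[
(1-t)g^*(y_0) + tg^*(y_1) = \sup_{x_0, x_1 \in \mathrm{dom}\,g}\Bigl\{(1-t)(\langle x_0, y_0\rangle - g(x_0)) + t(\langle x_1, y_1\rangle - g(x_1))\Bigr\},
\]
I would substitute the bilinear identity for the linear part and the $R$-convexity inequality $-(1-t)g(x_0) - tg(x_1) \leq -g(x_t) - t(1-t)R(x_1-x_0)$ for the $g$-part. Switching to the variables $(x_t, d)$ (observing $\mathrm{dom}\, g$ is convex, so $x_t \in \mathrm{dom}\,g$), the expression inside the supremum becomes $\langle x_t, y_t\rangle - g(x_t) + t(1-t)[\langle d, y_1-y_0\rangle - R(d)]$, and the supremum splits into a supremum over $x_t\in\mathrm{dom}\,g$ giving $g^*(y_t)$ and a supremum over $d\in\R^n$ giving $R^*(y_1 - y_0)$. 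This yields exactly $(1-t)g^*(y_0) + tg^*(y_1) \leq g^*(y_t) + t(1-t)R^*(y_1-y_0)$, i.e.\ $R^*$-smoothness of $g^*$.

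\medskip

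\noindent The proof of (i) is dual: the $S$-smoothness inequality $(1-t)f(x_0) + tf(x_1) \leq f(x_t) + t(1-t) S(x_1 - x_0)$ now produces a lower bound, and the same change of variables gives
\[
(1-t)f^*(y_0) + tf^*(y_1) \;\geq\; \sup_{x_t \in \mathrm{dom}\,f,\, d \in \R^n}\Bigl\{\langle x_t, y_t\rangle - f(x_t) + t(1-t)[\langle d, y_1-y_0\rangle - S(d)]\Bigr\} \;=\; f^*(y_t) + t(1-t) S^*(y_1-y_0),
\]
which is the $S^*$-convexity of $f^*$. The only point requiring care is that the $S$-smoothness inequality is only formulated for pairs $(x_0, x_1)$ with midpoint in $\mathrm{dom}\,f$: if for some choice of $(x_t, d)$ one of $x_0 = x_t - td$ or $x_1 = x_t + (1-t)d$ escapes $\mathrm{dom}\,f$, then by Lemma \ref{lem:smooth-domain} (applied in the direction $d$) one must have $S(d) = +\infty$, so the corresponding term in the sup is $-\infty$ and does not disturb the splitting. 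This bookkeeping is the only mildly delicate step; no new ideas beyond the non-directional case are needed.
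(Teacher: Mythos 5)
Your proof is correct and follows essentially the same route as the paper: the bilinear polarization identity is the same algebraic fact the paper uses (expanded term by term rather than stated up front), and in both arguments the $S$-smoothness or $R$-convexity inequality is combined with the Fenchel-inequality bounds $\langle y_i, x_i\rangle - f(x_i) \leq f^*(y_i)$ after the change of variables $(x_0,x_1)\leftrightarrow(x_t,d)$. The paper phrases (i) via an $\epsilon$-approximate maximizer for $f^*(y_t)$ and then lets $\epsilon\to 0$, whereas you work directly with the supremum over $(x_t,d)\in\mathrm{dom}\,f\times\R^n$ and observe that it factors because the constraint set is a product; this is a tidier packaging but the same computation. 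Your domain-escape remark is correct, though it can be shortened: Lemma \ref{lem:smooth-domain} is about $S_f$ rather than $S$, so one should note $S_f\leq S$; alternatively, the $S$-smoothness inequality at $(x_t,d)$ already forces $(1-t)f(x_0)+tf(x_1)\leq f(x_t)+t(1-t)S(d)<+\infty$ when $x_t\in\mathrm{dom}\,f$ and $S(d)<+\infty$, so $x_0,x_1\in\mathrm{dom}\,f$ without appeal to the lemma.
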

\proof
We begin with the proof of $(i)$. Since the domain of $f$ is nonempty, $S >-\infty$ everywhere. 
Let us prove that for all $y_0,y_1 \in \mathrm{dom} f^\ast$ and $t \in ]0,1[$, it holds
\begin{equation*}
    f^\ast(y_t) + t(1-t)S^\ast(y_1-y_0) \leq (1-t)f^\ast(y_0) +t f^\ast(y_1),
\end{equation*}
where $y_t := (1-t)y_0+ty_1.$ 
By convexity of $f^*$, one gets $f^\ast(y_t) <+\infty$, and since the domain of $f$ is nonempty, $f^\ast(y_t) > - \infty$.
Let $\epsilon > 0$ ; there is $x$ such that
\begin{equation*}
    -\infty < f^\ast(y_t) \leq \langle y_t,x \rangle - f(x) + \epsilon.
\end{equation*}
Thus $f(x) < + \infty$ and by $S$-smoothness of $f$ for $d\in \mathrm{dom}S$ we have
\begin{equation*}
    f(x) + t(1-t)S(d) \geq (1-t)f(x-td) + tf(x+(1-t)d).
\end{equation*}
Combining the two preceding inequalities grants
\begin{equation*}
    f^\ast(y_t) - \epsilon \leq \langle y_t,x\rangle - (1-t)f(x-td) - tf(x+(1-t)d) + t(1-t)S(d).
\end{equation*}
Thus by definition of $f^\ast$
\begin{align*}
    f^\ast(y_t) - \epsilon &\leq \langle y_t,x\rangle - (1-t)\langle y_0,x-td\rangle - t\langle y_1,x+(1-t)d\rangle \\
    &+ (1-t)f^\ast(y_0) + tf^\ast(y_1) + t(1-t)S(d)\\
    &\leq t(1-t)(S(d) - \langle y_1-y_0,d\rangle ) + (1-t) f^\ast(y_0) + tf^\ast(y_1).
\end{align*}
Finally minimizing over $d$ grants
\begin{equation*}
    f^\ast((1-t)y_0+ty_1) + t(1-t)S^\ast(y_1-y_0) -\epsilon \leq (1-t)f^\ast(y_0) +t f^\ast(y_1) 
\end{equation*}
The result follows by letting $\epsilon \to 0$.

The proof of $(ii)$ is similar but we include it for completeness. Let us prove that for all $y_0,y_1 \in \R^n$ such that $y_t = (1-t)y_0+ty_1 \in \mathrm{dom}g^\ast$, it holds 
\[
g^*(y_t) + t(1-t)R^*(y_1-y_0) \geq (1-t)g^*(y_0)+t g^*(y_1).
\]
Let $x_0,x_1 \in \mathrm{dom}f$. By $R$-convexity of $g$ we have
\begin{equation*}
    (1-t) g(x_0) + t g(x_1) \geq t(1-t) R(x_1-x_0) + g(x_t)
\end{equation*}
with $x_t = (1-t)x_0+tx_1$.
Thus
\begin{align*}
    (1-t)(\langle x_0,y_0 \rangle - g(x_0)) + t (\langle x_1,y_1 \rangle - g(x_1)) &\leq (1-t) \langle x_0,y_0 \rangle + t \langle x_1,y_1 \rangle -g(x_t) - t(1-t) R(x_1-x_0)\\
    &\leq t(1-t) (\langle x_1-x_0,y_1-y_0\rangle - R(x_1-x_0)) + g^\ast(y_t)\\
    &\leq t(1-t) R^\ast(y_1-y_0) + g^\ast(y_t)
\end{align*}
where the second and last inequalities hold by definition of $g^\ast$ and $R^\ast$. Finally optimizing over $x_0,x_1$ grants the result.
\endproof

In case the function $f$ is convex, Proposition \ref{prop:AP1} can also be extended in the following way.
\begin{proposition}\label{prop:directional-AP1}Let $f:\R^n \to \R\cup \{+\infty\}$ be a proper convex and lower semicontinuous function.
\begin{itemize}
\item[(a)] The modulus $R_f:\R^n \to \R_+\cup\{+\infty\}$ is such that
\[
R_f(cv)\geq c^2R_f(v),\qquad \forall c\geq 1, \forall v \in \R^n.
\]
\item[(b)] The modulus $S_f:\R^n \to \R_+\cup\{+\infty\}$ is convex, non-decreasing and lower semicontinuous
and it holds $S_f = (R_{f^*})^*$.
\item[(c)] The domain of $S_f$ is a vector space.
\end{itemize}
\end{proposition}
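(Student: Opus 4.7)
The plan is to follow the three-step structure of the non-directional Proposition~\ref{prop:AP1}, using the directional duality of Proposition~\ref{prop:directional-AP2} in place of Proposition~\ref{prop:AP2}, and reducing item $(a)$ to the scalar case already proven.

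For $(a)$, I would fix $c\geq 1$, $v\in\R^n$ and arbitrary $z_0,z_1\in\mathrm{dom}\,f$ with $z_1-z_0 = cv$, and consider the one-dimensional restriction $g(s) = f((1-s)z_0 + s z_1)$ for $s\in[0,1]$, extended by $+\infty$ elsewhere; this $g$ is proper, convex and lower semicontinuous on $\R$. For any pair $(s_0,s_1)\in[0,1]^2$ with $|s_1-s_0| = 1/c$, the corresponding points on the segment differ by $\pm v$, so the evenness of $R_f$ together with its very definition yield $M_t^g(s_0,s_1)\geq t(1-t)\,R_f(v)$ for every $t\in(0,1)$, hence $\rho_g(1/c)\geq R_f(v)$. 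Applying Proposition~\ref{prop:AP1}$(a)$ to $g$ with scaling factor $c$ gives $\rho_g(1)\geq c^2 \rho_g(1/c)\geq c^2 R_f(v)$, and since $\rho_g(1)\leq M_t^g(0,1)/[t(1-t)] = M_t^f(z_0,z_1)/[t(1-t)]$ for every $t\in(0,1)$, varying $(z_0,z_1,t)$ concludes $R_f(cv)\geq c^2 R_f(v)$.

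For $(b)$, Proposition~\ref{prop:directional-AP2} does most of the work. By definition $f$ is $S_f$-smooth and $f^\ast$ is $R_{f^\ast}$-convex, so Proposition~\ref{prop:directional-AP2} gives that $f^\ast$ is $S_f^\ast$-convex and $f = f^{\ast\ast}$ is $R_{f^\ast}^\ast$-smooth. The maximality of $R_{f^\ast}$ and the minimality of $S_f$ then force $R_{f^\ast}\geq S_f^\ast$ and $S_f\leq R_{f^\ast}^\ast$. Conjugating the first inequality (which reverses direction) yields $R_{f^\ast}^\ast\leq S_f^{\ast\ast}\leq S_f$, and combining with $S_f\leq R_{f^\ast}^\ast$ forces $S_f = S_f^{\ast\ast} = R_{f^\ast}^\ast$. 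Thus $S_f$ is the Legendre conjugate of an even function, hence convex, lower semicontinuous and even. Non-negativity $S_f\geq 0$ is immediate from $M_t^f\geq 0$ for convex $f$, and combined with $S_f(0) = 0$ and convexity this forces $t\mapsto S_f(tv)$ to be non-decreasing on $[0,\infty)$.

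For $(c)$, the key step is a sub-homogeneity relation $S_f(cv)\leq c^2 S_f(v)$ for $c\geq 1$, dual to the super-homogeneity of $R_{f^\ast}$. Indeed item $(a)$ applied to the proper convex lower semicontinuous function $f^\ast$ yields $R_{f^\ast}(cy)\geq c^2 R_{f^\ast}(y)$ for $c\geq 1$, and the change of variables $x = cy$ in $S_f(cv) = \sup_x\{\langle x,cv\rangle - R_{f^\ast}(x)\}$ then produces $S_f(cv)\leq \sup_y\{c^2\langle y,v\rangle - c^2 R_{f^\ast}(y)\} = c^2 S_f(v)$. Combined with the ray-non-decreasing property from $(b)$ and with the evenness of $S_f$, this shows $\mathrm{dom}\,S_f$ is stable under scalar multiplication; convexity of $S_f$ then closes $\mathrm{dom}\,S_f$ under addition, since $v,w\in\mathrm{dom}\,S_f$ gives $(v+w)/2\in\mathrm{dom}\,S_f$, whence $v+w = 2\cdot(v+w)/2\in\mathrm{dom}\,S_f$. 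The main technical point to verify is the one-dimensional reduction in $(a)$: one must carefully check that the segment restriction produces a proper convex lower semicontinuous function on $\R$ to which Proposition~\ref{prop:AP1}$(a)$ legitimately applies, and that the direction of $v$ is faithfully encoded as the distance-$1/c$ constraint in the scalar problem; once this is in place, $(b)$ and $(c)$ are essentially mechanical manipulations with Proposition~\ref{prop:directional-AP2} and with the Legendre conjugation.
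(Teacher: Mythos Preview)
Your proofs of $(b)$ and $(c)$ are essentially identical to the paper's: both run the duality of Proposition~\ref{prop:directional-AP2} twice on $f=f^{**}$ to squeeze $S_f$ between $S_f^{**}$ and $(R_{f^*})^*$, and both derive the sub-homogeneity $S_f(cv)\le c^2 S_f(v)$ from the super-homogeneity of $R_{f^*}$ via the identity $S_f=(R_{f^*})^*$ to conclude that $\mathrm{dom}\,S_f$ is a vector space. Your explicit remark that radial monotonicity follows from $S_f\ge 0$, $S_f(0)=0$ and convexity is a welcome clarification the paper leaves implicit.

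For $(a)$ you take a genuinely different route. The paper argues directly in $\R^n$: it picks $c\in(1,2)$, a near-optimal triple $(x_0,x_1,t)$ with $x_1-x_0=cv$, introduces the intermediate point $x_c=(1-c^{-1})x_0+c^{-1}x_1$ (so that $x_c-x_0=v$), and uses $R_f$-convexity twice along the segment to squeeze out the factor $c^2$; an induction then covers all $c\ge 1$. You instead restrict $f$ to the segment $[z_0,z_1]$, reparametrize as a one-dimensional convex function $g$ on $[0,1]$, identify $\rho_g(1/c)\ge R_f(v)$, and invoke the already-proven scalar Proposition~\ref{prop:AP1}$(a)$ to get $\rho_g(1)\ge c^2\rho_g(1/c)$. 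This is correct and arguably cleaner, since it recycles the scalar result rather than rerunning its proof. The only caveat is one of internal organization: in this paper the proof of Proposition~\ref{prop:AP1}$(a)$ is itself deferred to the present proposition (``See also Item $(a)$ of Proposition~\ref{prop:directional-AP1}''), so your reduction is circular \emph{within the paper}; it is nonetheless logically sound because Proposition~\ref{prop:AP1}$(a)$ is established independently in \cite{VNC78,Zal02}.
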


\proof 
We start with the proof of item $(a)$. Let $c \in ]1,2[$ and $v \in \R^n\setminus\{0\}$. First if $R_f(cv) = +\infty$ there is nothing to prove. Otherwise let $\epsilon > 0$, then there are $x_0,x_1$ and $t \in ]0,\frac{1}{2}[$ such that $x_1-x_0 = cv$ and
\begin{equation*}
    t(1-t)(R_f(cv) + \epsilon) > M^f_t(x_0,x_1).
\end{equation*}
Set $x_t = (1-t)x_0+tx_1$ and $x_c = (1-c^{-1})x_0+c^{-1}x_1$. Notice that $x_c-x_0 = v$ and $x_t = (1-ct)x_0 + ct x_c$. By $R_f$-convexity of $f$ we have
\begin{align*}
     M^f_t(x_0,x_1) &= (1-t)f(x_0) +tf(x_1) - ((1-ct)f(x_0)+ctf(x_c)) + M_{ct}^f(x_0,x_c) \\
     &\geq ct\left[(1-c^{-1})f(x_0) + c^{-1}f(x_1) - f(x_c)\right] + ct(1-ct) R_f(v)\\
     &\geq ct(1-c^{-1})c^{-1} R_f(cv)+ct(1-ct) R_f(v)
\end{align*}
because $ct \in ]0,1[$. Thus using the first inequality we deduce by letting $\epsilon \to 0$
\begin{equation*}
    t(1-t) R_f(cv) \geq ct(1-c^{-1})c^{-1} R_f(cv)+ct(1-ct) R_f(v)
\end{equation*}
Dividing by $t$ and rearranging the terms grants
\begin{equation*}
    \frac{1-ct}{c}R_f(cv) \geq c(1-ct)R_f(v)
\end{equation*}
which is the wanted inequality. An induction ensures that $R_f(c^n v) \geq (c^n)^2R_f(v)$ which gives the result.
Let us prove $(b)$. Since $f$ is a proper convex and lower semicontinuous function, it satisfies $f=f^{**}$. Thus, applying $(i)$ and $(ii)$ of Proposition \ref{prop:directional-AP2}, we first conclude that $f$ is $S_f^{**}$ convex.
Thus, by minimality of $S_f$, $S_f^{**} \geq S_f$. Since the inequality $S_f^{**} \leq S_f$ is always true, one gets that $S_f^{**}=S_f$, which proves that $S_f$ is convex and lower semicontinuous. On the other hand, by $(ii)$ of Proposition \ref{prop:directional-AP2}, $f$ is also  $(R_{f^*})^*$-smooth, and so $S_f \leq (R_{f^*})^*$.
Since $f^*$ is $S_f^*$-convex, one gets, by maximality of $R_{f^*}$, that $R_{f^*} \geq S_f^*$ and so, taking the conjugate, $S_f = S_f^{**} \geq (R_{f^*})^*$, which proves that $S_f = (R_{f^*})^*$. 
Now let us prove $(c)$. First $0\in \mathrm{dom}S_f$. By the preceding $S_f$ is a convex function thus its domain is convex. Moreover it is even and in order to prove that its domain is a vector space it remains to prove that it is a cone. If $\mathrm{dom}S_f = \{0\}$ there is nothing to prove. Otherwise let $d \in \mathrm{dom}S_f$ and $c > 1$ then since $S_f = R_{f^\ast}^\ast$ we have
\begin{align*}
    S_f(cd) &= \sup_v \langle cd,v\rangle - R_{f^\ast}(v)\\
    &= \sup_v \langle cd,cv\rangle - R_{f^\ast}(cv)\\
    &\leq \sup_v c^2\langle d,v\rangle - c^2R_{f^\ast}(v) = c^2S_f(d)
\end{align*}
where the inequality holds by item $(a)$. This inequality shows that $cd \in \mathrm{dom}S_f$. The case $c\in [0,1]$ is managed by convexity of the domain. Finally the domain is a convex cone containing $0$ thus it is a vector space.
\endproof

The following classical lemma is useful to establish $R$-convexity for convex functions.

\begin{lemma}\label{lem:BL}
Let $f:\R^n \to \R\cup \{+\infty\}$ be a proper convex and lower semicontinuous function, and $R : \R^n \to \R_+\cup \{+\infty\}$. 
If the function $f$ is such that
\begin{equation}\label{eq:BL}
f\left(\frac{x+y}{2}\right) \leq \frac{1}{2}f(x)+\frac{1}{2}f(y) - \frac{1}{2}R(y-x),\qquad \forall x,y \in \mathrm{dom} f,
\end{equation}
then, $f$ is $R$-convex.
\end{lemma}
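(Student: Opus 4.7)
Fix $x_0,x_1\in\mathrm{dom}\,f$, set $u:=x_1-x_0$ and $z_t:=(1-t)x_0+tx_1$; by convexity of $\mathrm{dom}\,f$, $z_t\in\mathrm{dom}\,f$ for every $t\in[0,1]$. The goal \eqref{eq:dir-rho-convex} reads
\[
f(z_t)+t(1-t)R(u)\leq (1-t)f(x_0)+tf(x_1),\qquad t\in[0,1].
\]
The plan is to establish this inequality first for dyadic $t$ by induction on the dyadic depth, then to pass to arbitrary $t\in[0,1]$ using the lower semicontinuity of $f$. Note first that the midpoint hypothesis \eqref{eq:BL} applied at $x_0,x_1$ forces $R(u)<+\infty$ for every pair in $\mathrm{dom}\,f$ (otherwise the right-hand side would be finite while the left would be $+\infty$), so all the terms below are well defined.

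\textbf{Dyadic step.} The induction hypothesis at level $n\geq 0$ is that
\[
f(z_{k/2^n})+\tfrac{k}{2^n}\!\left(1-\tfrac{k}{2^n}\right)R(u)\leq \left(1-\tfrac{k}{2^n}\right)f(x_0)+\tfrac{k}{2^n}f(x_1)\qquad \text{for all } k\in\{0,\dots,2^n\}.
\]
The boundary cases $k=0,2^n$ are trivial, and $n=1,\,k=1$ follows from \eqref{eq:BL} applied to $(x_0,x_1)$ after discarding $\frac{1}{4}R(u)\geq 0$. For the inductive step it suffices to treat odd $k\in\{1,\dots,2^{n+1}-1\}$; set $t_\pm:=(k\pm 1)/2^{n+1}$, which are multiples of $1/2^n$, with $z_{t_+}-z_{t_-}=u/2^n$. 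Applying \eqref{eq:BL} to the pair $(z_{t_-},z_{t_+})$ and combining with the induction hypothesis at $t_-$ and $t_+$ gives
\[
f(z_{k/2^{n+1}})\leq (1-t)f(x_0)+tf(x_1)-\tfrac{1}{2}\bigl[t_-(1-t_-)+t_+(1-t_+)\bigr]R(u)-\tfrac{1}{2}R(u/2^n),
\]
with $t=k/2^{n+1}$. The elementary identity $\frac{1}{2}[t_-(1-t_-)+t_+(1-t_+)]=t(1-t)+\frac{(t_+-t_-)^2}{4}=t(1-t)+\frac{1}{4\cdot 4^n}$ reduces the required inequality to
\[
0\leq \tfrac{1}{4\cdot 4^n}R(u)+\tfrac{1}{2}R(u/2^n),
\]
which holds because $R\geq 0$. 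This closes the induction.

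\textbf{Passage to the limit.} For arbitrary $t\in(0,1)$, choose a sequence of dyadic rationals $t_m\to t$ in $[0,1]$; then $z_{t_m}\to z_t$ in $\R^n$ and the lower semicontinuity of $f$ yields
\[
f(z_t)\leq \liminf_{m\to\infty}f(z_{t_m})\leq \liminf_{m\to\infty}\bigl[(1-t_m)f(x_0)+t_m f(x_1)-t_m(1-t_m)R(u)\bigr]=(1-t)f(x_0)+tf(x_1)-t(1-t)R(u),
\]
using $R(u)<+\infty$ for the last equality. This proves \eqref{eq:dir-rho-convex}, hence the $R$-convexity of $f$ (replacing $R$ by its even part $u\mapsto \max\{R(u),R(-u)\}$ if needed, noting that \eqref{eq:BL} is symmetric in $x,y$).

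\textbf{Main obstacle.} The only non-routine ingredient is the bookkeeping of the $R(u)$ coefficients across the dyadic induction; the decisive point is that the midpoint hypothesis contributes the extra nonnegative term $\frac{1}{2}R(u/2^n)$ which, together with the discrepancy $\frac{1}{4\cdot 4^n}$ coming from the identity above, makes the induction close trivially thanks to $R\geq 0$. In other words the hypothesis is \emph{stronger} than what is strictly needed, and non-negativity of $R$ absorbs all the slack.
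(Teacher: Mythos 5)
There is a genuine gap, stemming from a sign error in the key algebraic identity of your dyadic step. Write $\delta = 1/2^{n+1}$, so that $t_\pm = t\pm\delta$ and $t_+-t_-=2\delta$. Since $s\mapsto s(1-s)$ is concave, one has
\[
\tfrac{1}{2}\bigl[t_-(1-t_-)+t_+(1-t_+)\bigr] = t(1-t) - \delta^2 = t(1-t) - \tfrac{1}{4\cdot 4^{n}},
\]
with a \emph{minus} sign, not the plus you wrote. (Your claimed identity would assert that a concave function satisfies a reversed midpoint inequality, which is impossible.) With the corrected sign, combining \eqref{eq:BL} at the pair $(z_{t_-},z_{t_+})$ with the induction hypothesis gives
\[
f(z_{k/2^{n+1}}) + t(1-t)R(u) \leq (1-t)f(x_0)+t f(x_1) + \tfrac{1}{4\cdot 4^n}R(u) - \tfrac{1}{2}R(u/2^n),
\]
so closing the induction requires $\tfrac{1}{2}R(u/2^n)\geq\tfrac{1}{4\cdot 4^n}R(u)$. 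This is \emph{not} a consequence of $R\geq 0$: it is a nontrivial lower bound on the values of $R$ near the origin (quadratically small in $u$), and it is exactly the quantity you discard as "slack". Nothing in the hypotheses forbids $R$ from vanishing on a neighbourhood of $0$ while being positive at $u$, in which case the required inequality fails. So your proof, as written, does not go through; the sign error made the obstruction invisible.

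A secondary warning sign is that your argument never uses the \emph{convexity} of $f$, only its lower semicontinuity, whereas the conclusion (with coefficient $t(1-t)$ rather than $\min(t,1-t)$) genuinely relies on convexity. The paper's proof exploits it directly: for fixed $x_0,x_1$ the mean deviation $L(t):=(1-t)f(x_0)+tf(x_1)-f((1-t)x_0+tx_1)$ is \emph{concave} in $t$, and the midpoint hypothesis gives $L(1/2)\geq\tfrac12 R(u)$, $L(0)=L(1)=0$. Comparing the concave $L$ with the piecewise linear function $\min(t,1-t)R(u)$ on each of $[0,1/2]$ and $[1/2,1]$ yields $L(t)\geq\min(t,1-t)R(u)$, and then $\min(t,1-t)\geq t(1-t)$ together with $R\geq 0$ finishes. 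If you prefer a dyadic route, the correct way to run it is to target the intermediate inequality $L(k/2^n)\geq \min(k/2^n,1-k/2^n)R(u)$: the function $s\mapsto\min(s,1-s)$ is piecewise \emph{affine}, so its dyadic midpoint averages do \emph{not} produce the troublesome $-\delta^2$ defect (except across $s=1/2$, where the hypothesis handles the base case directly). That variant closes using only $R\geq 0$ and the extra $\tfrac12 R(u/2^n)$ term; one then concludes as in the paper via $\min(t,1-t)\geq t(1-t)$ and lower semicontinuity.
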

\proof
The simple argument below is taken from \cite{BL00} and is recalled for the sake of completeness.
Assume that $f$ satisfies \eqref{eq:BL} and fix $x,y\in \mathrm{dom} f$, and consider the inequality 
\begin{equation}\label{eq:BL+}
(1-t)f(x)+tf(y)-f((1-t)x+ty)\geq  \min(t ; 1-t)R(y-x).
\end{equation}
 For $t \in [0,1/2]$, the function in the left hand side of \eqref{eq:BL+} is concave in $t$ while the function in the right hand side is linear. The inequality \eqref{eq:BL+} holds for $t=0$ and for $t=1/2$, so it holds for all $t \in [0,1/2]$. Similarly, \eqref{eq:BL+} holds for all $t \in [1/2,1]$. Finally, since $R$ is non-negative and $\min(t ; 1-t) \geq (1-t)t$, \eqref{eq:BL+} implies that $f$ is $R$-convex.
\endproof

We are now ready to state a directional estimate on the gradient of $S$-smooth functions which is similar to Proposition \ref{prop:continuity} (and implies it). Recall the definition of the subgradient of a function given in \eqref{eq:subgradient}.
\begin{proposition}\label{prop:directional-regularity}
Let $f: \R^n \to \R\cup \{+\infty\}$ be a function with a nonempty convex domain and $S:\R^n \to \R \cup \{+\infty\}$ be even. If $f$ is $S$-smooth, then for all $x_0, x_1 \in \mathrm{dom}f$ and $y_0 \in \partial f(x_0),y_1 \in \partial f(x_1)$, it holds 
\begin{equation*}
    S^\ast(y_1-y_0) \leq S(x_1-x_0).
\end{equation*}
\end{proposition}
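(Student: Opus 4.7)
The plan is to argue by Legendre–Fenchel duality. Since $f$ is $S$-smooth, Proposition \ref{prop:directional-AP2}(i) gives that $f^\ast$ is $S^\ast$-convex. The hypothesis $y_i\in\partial f(x_i)$ translates moreover into the Fenchel–Young equality $f^\ast(y_i)=\langle x_i,y_i\rangle-f(x_i)$ for $i=0,1$, because the supremum defining $f^\ast(y_i)$ is then attained at $z=x_i$. The whole strategy will be to combine the $S^\ast$-convexity inequality for $f^\ast$ at the pair $(y_0,y_1)$ with the bare lower bound $f^\ast(y)\geq \langle z,y\rangle-f(z)$ applied at $z=x_0$ and at $z=x_1$, and then to close via a short bootstrap exploiting the definition of $S^\ast$.

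Concretely, the $S^\ast$-convexity of $f^\ast$ at $(y_0,y_1)$ with $t\in\,]0,1[$ reads
\[
f^\ast((1-t)y_0+ty_1)+t(1-t)S^\ast(y_1-y_0)\leq (1-t)f^\ast(y_0)+t f^\ast(y_1).
\]
Using the Fenchel–Young lower bound with $z=x_0$ on the first term, substituting the explicit values of $f^\ast(y_0)$ and $f^\ast(y_1)$, and then dividing by $t$ and letting $t\to 0^+$ will yield
\[
S^\ast(y_1-y_0)\leq \langle x_1-x_0,y_1\rangle-(f(x_1)-f(x_0)).
\]
The symmetric choice $z=x_1$, followed by division by $1-t$ and the limit $t\to 1^-$, will yield
\[
S^\ast(y_1-y_0)\leq (f(x_1)-f(x_0))-\langle y_0,x_1-x_0\rangle.
\]
Summing these two inequalities and dividing by two eliminates the values of $f$ and produces the intermediate bound
\[
S^\ast(y_1-y_0)\leq \tfrac12\langle y_1-y_0,x_1-x_0\rangle.
\]

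To close, I exploit the trivial lower bound $S^\ast(y_1-y_0)\geq \langle y_1-y_0,x_1-x_0\rangle-S(x_1-x_0)$, obtained by plugging $d=x_1-x_0$ into $S^\ast(y_1-y_0)=\sup_d\{\langle y_1-y_0,d\rangle-S(d)\}$. Combined with the intermediate bound above, this forces $\tfrac12\langle y_1-y_0,x_1-x_0\rangle\leq S(x_1-x_0)$, whence $S^\ast(y_1-y_0)\leq \tfrac12\langle y_1-y_0,x_1-x_0\rangle\leq S(x_1-x_0)$, as required. The main hurdle, I expect, is not the computations (essentially bookkeeping of the kind already appearing in the proof of Proposition \ref{prop:directional-AP2}) but spotting this closing bootstrap: the intermediate estimate does not visibly involve $S(x_1-x_0)$, and it is only by feeding the definition of $S^\ast$ back into the argument that the conclusion materializes.
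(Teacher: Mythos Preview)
Your proof is correct and follows essentially the same route as the paper's: both use Proposition~\ref{prop:directional-AP2}(i) to pass to the $S^\ast$-convexity of $f^\ast$, derive the intermediate bound $2S^\ast(y_1-y_0)\leq\langle x_1-x_0,y_1-y_0\rangle$ by combining the $S^\ast$-convexity inequality with the Fenchel--Young identities at $y_0,y_1$ (the paper phrases this via $x_i\in\partial f^\ast(y_i)$, which is the same thing), and then close via the definition of $S^\ast$. The only cosmetic difference is that the paper writes the final bootstrap as $S^\ast(y_1-y_0)\leq\langle x_1-x_0,y_1-y_0\rangle-S^\ast(y_1-y_0)\leq S(x_1-x_0)$, while you route it through the inequality $\tfrac12\langle y_1-y_0,x_1-x_0\rangle\leq S(x_1-x_0)$; these are equivalent.
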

Note that in Section \ref{sec:OT}, this result will be applied to a Kantorovich potential, which is naturally convex.  
\proof
Let $x_0,x_1 \in \mathrm{dom}f$ and $y_0 \in \partial f(x_0)$, $y_1 \in \partial f(x_1)$. 
If follows from the definition of a subgradient that, for $i=0,1$, $f^*(y_i) = \langle y_i, x_i \rangle - f(x_i) <+\infty$. Thus $y_0,y_1 \in \mathrm{dom} f^*$. 
Since $f$ is $S$-smooth, it follows from Proposition \ref{prop:directional-AP2} that the function $f^\ast$ is $S^\ast$-convex. Thus, for any $t \in ]0,1[$,
\begin{equation*}
    (1-t)f^\ast(y_0) + t f^\ast(y_1) \geq t(1-t) S^\ast(y_1-y_0) + f^\ast(y_t),
\end{equation*}
where $y_t = (1-t)y_0+ty_1$.
Moreover, since $y_0 \in \partial f(x_0)$ we have $x_0 \in \partial f^\ast(y_0)$ (this is always true, even if $f$ is not convex) and thus
\begin{equation*}
    (1-t)f^\ast(y_0) + t f^\ast(y_1) \geq t(1-t) S^\ast(y_1-y_0) + f^\ast(y_0) + t\langle x_0,y_1-y_0\rangle.
\end{equation*}
Subtracting $f^\ast(y_0)$, dividing by $t$ and letting $t \to 0$ grants
\begin{equation*}
    f^\ast(y_1) - f^\ast(y_0) \geq S^\ast(y_1-y_0) + \langle x_0,y_1-y_0\rangle.
\end{equation*}
A symmetric statement holds by exchanging $y_0,y_1$ and $x_0,x_1$. Since $S^\ast$ is even, summing the two inequalities obtained that way gives
\begin{equation*}
    \langle x_1-x_0,y_1-y_0\rangle \geq 2S^\ast(y_1-y_0)
\end{equation*}
and thus
\begin{equation*}
    S^\ast(y_1-y_0) \leq \langle x_1-x_0,y_1-y_0\rangle - S^\ast(y_1-y_0) \leq S(x_1-x_0)
\end{equation*}
which is the desired result.
\endproof

\subsection{Examples of $R$-convex and $S$-smooth functions}\label{sec:ex-R-conv-S-smooth}
As we have seen in Propositions \ref{prop:continuity} and \ref{prop:directional-regularity}, the modulus of smoothness implies some form of regularity of the subgradient of a function (similarly, the modulus of convexity is related to expansivity properties of the subgradient). 
In this section, we investigate the converse implication and show how regularity of the (sub)-gradient implies estimates on the smoothness/convexity moduli for some classical class of functions.
\paragraph{\textbf{Quadratic functions.}}
Let $\alpha \in \R$ and $f:\R^n \to \R$ such that $f(x) = \alpha \vert x \vert^2$ then 
\begin{equation*}
    \rho_f(r) = \sigma_f(r) = \alpha r^2, \quad R_f(d) = S_f(d) = \alpha \vert d \vert^2.
\end{equation*}
More generally, if $A$ is a $d\times d$ matrix, then the function $f(x) = \langle x,A x\rangle$ admits the following moduli
\begin{align*}
    \rho_f(r) &= r^2 \min \mathrm{Sp}\left(\frac{A+A^T}{2}\right), \, \sigma_f(r) = r^2 \max \mathrm{Sp}\left(\frac{A+A^T}{2}\right), \qquad r\geq0\\
    R_f(d) &= S_f(d) = \langle d,A d\rangle,\qquad d\in \R^n,
\end{align*}
where $\mathrm{Sp} (M)$ denotes the spectrum of a matrix $M$.

\paragraph{Functions with a bounded hessian.} Let $f$ be a twice continuously differentiable function on $\R^n$; we have the following equality for any $x_0,x_1 \in \R^n$
\begin{equation*}
    M^f_t(x_0,x_1) = \int_0^1 \langle x_1-x_0,\nabla^2f(x_s) (x_1-x_0)\rangle \min(s(1-t),t(1-s)) ds
\end{equation*}
where $x_s = x_0 + s(x_1-x_0)$. Since $\int_0^1 \min(s(1-t),t(1-s)) ds = t(1-t)/2$ we deduce the following moduli
\begin{align*}
    \rho_f(r) \geq \frac{1}{2}r^2 \inf_x \min \mathrm{Sp}\nabla^2 f(x), &\quad \sigma_f(r) \leq \frac{1}{2}r^2 \sup_x \max \mathrm{Sp}\nabla^2 f(x), \qquad r\geq0\\
    R_f(d) \geq \frac{1}{2}\inf_x \langle d,\nabla^2 f(x) d \rangle, &\quad S_f(d) \leq \frac{1}{2}\sup_x \langle d,\nabla^2 f(x) d\rangle,\qquad d\in \R^n.
\end{align*}

Note that these bounds on the moduli are non trivial if and only if we have a uniform lower or upper bound on the hessian.

\paragraph{\textbf{Functions with a uniformly continuous gradient.}} When one does not have a control on the Hessian but only a uniform continuity estimate for the gradient, one can still obtain explicit bounds on the modulus of smoothness. More precisely, suppose that $f$ is a continuously differentiable function such that $\nabla f$ admits a non-decreasing modulus of continuity $\omega$ then 
\[
\sigma_f(r) \leq 2r\omega(r),\qquad r\geq0.
\]
Indeed let $x_0,x_1 \in \R^n$ and $t \in ]0,1[$. Set $x_t = (1-t)x_0+tx_1$, then for $i=0,1$,
\begin{equation*}
    f(x_i) - f(x_t) - \langle \nabla f(x_t), x_i-x_t\rangle = \int_0^1 \langle \nabla f(x_t + s (x_i -x_t))-\nabla f(x_t),x_i-x_t \rangle ds \leq \vert x_i-x_t \vert \omega(\vert x_i-x_t \vert).
\end{equation*}
Multiplying this inequality by $t$ if $i=1$ and $(1-t)$ if $i=0$ and summing grants
\begin{equation*}
    M^f_t(x_0,x_1) \leq t(1-t)\vert x_1-x_0 \vert (\omega(t\vert x_1-x_0\vert)+\omega((1-t)t\vert x_1-x_0\vert)) \leq  t(1-t)2\vert x_1-x_0 \vert \omega(\vert x_1-x_0\vert)
\end{equation*}
which proves the claim.

For instance, this case encompasses functions $f \in C^{1,\alpha}(\R^n)$, with $0<\alpha\leq 1$. Indeed, in this case $\omega(r)=\| f \|_{1,\alpha}r^\alpha$, $r\geq 0,$ with $\| f \|_{1,\alpha}$ the H\"older constant of $\nabla f$, and so we have 
\[
\sigma_f(r) \leq 2r^{1+\alpha}\| f \|_{1,\alpha},\qquad r\geq0.
\]
Finally, Lipschitz functions are also encompassed by this case. Indeed we have $\omega(r) = 2L$ with $L$ the Lipschitz constant of the function $f$.

\paragraph{\textbf{Radial functions.}}
The following result is due to Vladimirov, Nesterov and Chekanov \cite{VNC78}. It is stated, without proof, in \cite{Zal83}. Since Reference \cite{VNC78} is difficult to find, an elementary proof of this result is given in Appendix \ref{app:proof}.

\begin{proposition}\label{prop:VNC78}
Let $\alpha : \R_+\to\R_+$ be a non-decreasing function such that $\alpha(ct)\geq c\alpha(t)$ for all $t\geq0$ and $c\geq1$. Define $A(r) = \int_0^r \alpha(u)\,du$, $r\geq0$, and $f_\alpha(x)=A(\vert x\vert)$, $x\in \R^n$. Then, the function $f_\alpha$ is $\rho$-convex, with $\rho(r) = 2A(r/2)$, $r\geq0$.
\end{proposition}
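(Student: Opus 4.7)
The strategy is to invoke Lemma~\ref{lem:BL}, which reduces the required $\rho$-convexity (with $\rho(r)=2A(r/2)$) to the midpoint inequality
\[
f_\alpha\left(\tfrac{x_0+x_1}{2}\right) \leq \tfrac{1}{2}f_\alpha(x_0)+\tfrac{1}{2}f_\alpha(x_1) - A(|x_1-x_0|/2).
\]
Setting $u=(x_0+x_1)/2$ and $v=(x_1-x_0)/2$, this is equivalent to showing
\[
A(|u|)+A(|v|) \leq \tfrac{1}{2}\bigl(A(|u+v|)+A(|u-v|)\bigr), \qquad u,v \in \R^n.
\]
Lemma~\ref{lem:BL} applies because $A$ is non-decreasing and convex (as the antiderivative of the non-decreasing non-negative function $\alpha$), so $f_\alpha=A(|\cdot|)$ is proper, convex and continuous, and $\rho\geq 0$.

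The second step is to exploit rotational invariance. The right-hand side depends on $u,v$ only through $p=|u|$, $q=|v|$ and the angle $\theta\in[0,\pi]$ between them, so I set
\[
a(\theta)=\sqrt{p^2+q^2+2pq\cos\theta}=|u+v|,\qquad b(\theta)=\sqrt{p^2+q^2-2pq\cos\theta}=|u-v|,
\]
and $G(\theta):=A(a(\theta))+A(b(\theta))$. A direct differentiation yields
\[
G'(\theta) = pq\sin\theta\left(\frac{\alpha(b(\theta))}{b(\theta)}-\frac{\alpha(a(\theta))}{a(\theta)}\right).
\]
The hypothesis $\alpha(ct)\geq c\alpha(t)$ for $c\geq 1$ is equivalent to $t\mapsto\alpha(t)/t$ being non-decreasing on $\R_+^\ast$, and $a\geq b$ precisely when $\cos\theta\geq 0$. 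Hence $G$ is non-increasing on $[0,\pi/2]$ and non-decreasing on $[\pi/2,\pi]$, so $G$ is minimized at $\theta=\pi/2$, which gives
\[
A(|u+v|)+A(|u-v|) \;\geq\; 2A(\sqrt{p^2+q^2}).
\]

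The remaining (and probably most delicate) step is to prove the scalar inequality $A(\sqrt{p^2+q^2})\geq A(p)+A(q)$. Assuming without loss of generality that $q\leq p$ and performing the substitution $s=\sqrt{p^2+r^2}$, $r\in[0,q]$, in the integral representation transforms the difference into
\[
A(\sqrt{p^2+q^2})-A(p) \;=\; \int_0^q \alpha(\sqrt{p^2+r^2})\,\frac{r}{\sqrt{p^2+r^2}}\,dr.
\]
Since $\sqrt{p^2+r^2}\geq r$, the monotonicity of $\alpha(t)/t$ gives $\alpha(\sqrt{p^2+r^2})/\sqrt{p^2+r^2}\geq \alpha(r)/r$, so the integrand is at least $\alpha(r)$, whence the integral is at least $A(q)$. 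Chaining this with the previous display completes the argument.

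The only potential subtleties are the degenerate configurations $p=0$, $q=0$, or $\theta\in\{0,\pi\}$ with $p=q$, where $a$ or $b$ vanishes and the quotient $\alpha/\mathrm{id}$ is taken in an extended sense; all of these are handled by continuity of both sides in $(p,q,\theta)$ and the trivial validity at $v=0$ or $u=0$.
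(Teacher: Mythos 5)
Your proof is correct, and it actually does more than the paper's own argument: Appendix~\ref{app:proof} explicitly states that it only establishes the \emph{weaker} modulus $\rho(r)=A(r/2)$, deferring the factor~$2$ to \cite{VNC78}, whereas your argument delivers the full $\rho(r)=2A(r/2)$ of the statement. Both proofs start by invoking Lemma~\ref{lem:BL} to reduce to a midpoint inequality and then exploit a one-parameter angular variation, but the decompositions are genuinely different. The paper parametrizes by the angle between $x_0$ and $x_1$ with $|x_0|,|x_1|$ held fixed, shows (after a symmetrization to dispense with the sign of $x_0\cdot x_1$) that the extremal configuration is colinear, and there invokes the super-additivity of $\alpha$; tracing the constants, this route gives only half the required modulus. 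You instead set $u=(x_0+x_1)/2$, $v=(x_1-x_0)/2$, hold $p=|u|$ and $q=|v|$ fixed, and vary the angle $\theta$ between $u$ and $v$; then $G(\theta)=A(|x_1|)+A(|x_0|)$ is smallest at $\theta=\pi/2$, i.e.\ when $|x_0|=|x_1|$, and the residual scalar claim is the Pythagoras-type inequality $A(\sqrt{p^2+q^2})\geq A(p)+A(q)$, which your substitution $s=\sqrt{p^2+r^2}$ combined with the monotonicity of $t\mapsto\alpha(t)/t$ proves cleanly; chaining the two steps gives the full bound. One minor technical caveat: since $\alpha$ is only assumed non-decreasing, $A$ is locally Lipschitz but need not be $C^1$, so the identity for $G'(\theta)$ should be read almost everywhere; this does not affect the conclusion $G(\theta)\geq G(\pi/2)$, because $G$ is itself locally Lipschitz and hence the integral of its a.e.\ derivative. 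Your treatment of the degenerate configurations ($p=0$, $q=0$, or $\theta\in\{0,\pi\}$ with $p=q$) by continuity is appropriate.
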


\begin{corollary}\label{cor:VNC78}
Suppose that $\alpha:\R_+ \to \R_+$ is an increasing continuous function such that $\alpha(ct) \geq c\alpha(t)$ for all $t\geq 0$ and $c\geq1$. Denote by $\alpha^{-1}:\R_+ \to \R_+$ the converse function of $\alpha$. Then, the function $(f_\alpha)^*$ is $\sigma$-smooth with $\sigma(r) = 2\int_0^r \alpha^{-1}(u)\,du$, $r\geq0$.
\end{corollary}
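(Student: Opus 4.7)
The plan is to deduce Corollary \ref{cor:VNC78} as a direct consequence of Proposition \ref{prop:VNC78} combined with the duality statement in Proposition \ref{prop:AP2}, and then identify the resulting monotone conjugate with the claimed integral expression.

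First, I would apply Proposition \ref{prop:VNC78} to conclude that $f_\alpha$ is $\rho$-convex with $\rho(r) = 2A(r/2)$. Then, Proposition \ref{prop:AP2}(ii) directly gives that $(f_\alpha)^*$ is $\rho^*$-smooth. The whole task thus reduces to computing the monotone conjugate $\rho^*$ and showing that $\rho^*(v) = 2\int_0^v \alpha^{-1}(u)\,du$ for $v\geq 0$.

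For this computation, I would use the classical Young-type duality between a non-decreasing continuous function and its inverse. Setting $B(v) = \int_0^v \alpha^{-1}(u)\,du$, the hypotheses (continuity and strict monotonicity of $\alpha$, with $\alpha(0)\geq 0$) ensure that $\alpha^{-1}$ is well defined and continuous, and that $A$ and $B$ are convex conjugates in the sense that for every $r,v\geq 0$,
\[
rv \leq A(r) + B(v),
\]
with equality if and only if $v = \alpha(r)$ (equivalently $r = \alpha^{-1}(v)$). Applying this inequality with $r = t/2$ yields $tv - 2A(t/2) \leq 2B(v)$ with equality at $t = 2\alpha^{-1}(v)$. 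Taking the supremum over $t\geq 0$ gives $\rho^*(v) = 2B(v) = \sigma(v)$, as desired.

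The only non-routine point is justifying the Young identity $A(r)+B(v) = rv$ at $v=\alpha(r)$ without assuming differentiability of $\alpha$; but since $\alpha$ is continuous and strictly increasing from $\R_+$ to its range, this is standard and follows for example from a direct geometric argument on the area under the graph of $\alpha$ (the regions $\{(u,s):0\le u\le r,\,0\le s\le \alpha(u)\}$ and $\{(u,s):0\le s\le v,\,0\le u\le \alpha^{-1}(s)\}$ partition the rectangle $[0,r]\times[0,v]$ when $v=\alpha(r)$). I do not expect any genuine obstacle in this corollary; it is essentially a bookkeeping exercise combining Proposition \ref{prop:VNC78} with Proposition \ref{prop:AP2}(ii) and the Young duality between $\alpha$ and $\alpha^{-1}$.
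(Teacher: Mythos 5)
Your proposal is correct and follows exactly the route taken by the paper, which simply states ``This follows immediately from Propositions \ref{prop:AP2} and \ref{prop:VNC78}'' without spelling out the Legendre-transform computation. Your filling in of the Young duality step $\rho^*(v) = 2A^*(v) = 2\int_0^v \alpha^{-1}(u)\,du$ (where the required normalization $\alpha(0)=0$ follows from the subhomogeneity hypothesis applied at $t=0$) is precisely the bookkeeping the authors left implicit.
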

\proof
This follows immediately from Propositions \ref{prop:AP2} and \ref{prop:VNC78}.
\endproof
\section{Entropic Legendre transform}\label{sec:Entropic-Legendre}
As we have seen in the preceding section, the class of $R$-convex and $S$-smooth functions are in duality with respect to the classical Legendre transform. The goal of this section is to show a similar correspondence for the \emph{entropic Legendre transform} introduced below. The motivation comes from Section \ref{sec:OT} where our goal will be to derive estimates on the moduli of smoothness and convexity of the entropic Kantorovich potentials which are tied together via this entropic Legendre transform.

Given $\epsilon > 0$ and a positive Borel measure $m$ on $\R^n$, the entropic Legendre transform $\mathcal{L}_{\epsilon,m}(\psi)$ of a measurable function $\psi:\R^n \to \mathbb{R}\cup\{+\infty\}$ with respect to the measure $m$ is defined by
\begin{equation*}
    \mathcal{L}_{\epsilon,m}(\psi)(x) = \epsilon \log\left(\int\exp\left(\frac{\langle x,y \rangle - \psi(y)}{\epsilon}\right)dm(y)\right),\qquad x \in \R^n.
\end{equation*} 
This function is always well defined in $\mathbb{R} \cup \{\pm \infty\}$, but might have an empty domain. 
If $\psi$ is such that $\mathrm{dom} \psi$ has a positive $m$ measure,  then $\mathcal{L}_{\epsilon,m}(\psi)(x)>-\infty$ for all $x\in \R^n$. Note that for $\epsilon=1$, $\mathcal{L}_{1,m}(\psi)$ is the log-Laplace of the measure $e^{-\psi}\,dm$.
Thanks to Hölder's inequality $\mathcal{L}_{\epsilon,\mathcal{H}^n}(\psi)$ is convex and thus has a convex domain. It is also lower semicontinuous, thanks to Fatou's lemma. Thus, provided its domain is nonempty, $\mathcal{L}_{\epsilon,\mathcal{H}^n}(\psi)$ is a proper convex lower semicontinuous function.

In the limit $\epsilon\to 0$, the entropic Legendre transform approaches the classical Legendre transform. More precisely, denoting by $\mathcal{H}^n$ the Lebesgue measure on $\R^n$ and assuming that $\mathcal{L}_{\epsilon,\mathcal{H}^n}(\psi)(x) <+\infty$ for all $\epsilon>0$ small enough, then 
\[
    \mathcal{L}_{\epsilon,\mathcal{H}^n}(\psi)(x) \to \mathrm{essup}_{y\in \R^n} \{\langle x,y \rangle - \psi(y)\}
\]
as $\epsilon \to 0$. In particular, if $\psi$ is continuous and has a superlinear growth at $\infty$, then for all $x\in \R^n$
\[
    \mathcal{L}_{\epsilon,\mathcal{H}^n}(\psi)(x) \to \sup_{y\in \R^n} \{\langle x,y \rangle - \psi(y)\} = \psi^*(x)
\]
as $\epsilon \to 0$.

In what follows, we will in particular extend properties $(i)$ and $(ii)$ of Propositions \ref{prop:AP2} and \ref{prop:directional-AP2} to the entropic Legendre transform for $m=\mathcal{H}^n$.

\subsection{$\psi$ $\rho$-convex implies $\mathcal{L}_{\epsilon,\mathcal{H}^n}(\psi)$ $\rho^\ast$-smooth}
The following result is an entropic analog of property $(ii)$ of Propositions \ref{prop:AP2} and \ref{prop:directional-AP2} (and formally gives back these results by letting $\epsilon \to 0$). For any $\ell \leq n$, we will denote by $\mathcal{H}^\ell$ the $\ell$ dimensional Hausdorff measure.
\begin{proposition}\label{prop:rho-convex}\
\begin{itemize}
\item[(a)] If $\psi: \R^n \to \R\cup\{+\infty\}$ has a convex domain with positive $\mathcal{H}^n$ measure and is such that $\rho_\psi > -\infty$, then the function $\mathcal{L}_{\epsilon,\mathcal{H}^n}(\psi)$ is  $\rho_\psi^*$-smooth. In other words, $\sigma_{\mathcal{L}_{\epsilon,\mathcal{H}^n}(\psi)} \leq \rho_\psi^*$.
\item[(b)]More generally, if $\psi : \R^n \to  \R\cup\{+\infty\}$ has a nonempty convex domain and is directionally $R$-convex, that is such that
\[
\psi((1-t)y_0+ty_1) + t(1-t)R(y_1-y_0) \leq (1-t)\psi(y_0) + t \psi(y_1),\qquad \forall y_0,y_1 \in \mathrm{dom} \psi, \forall t\in [0,1],
\]
where $R:\R^n \to \R \cup\{+ \infty\}$, then denoting by $m = \mathcal{H}^\ell_{|L}$ where $L$ is the affine hull of the domain of $\psi$ and $\ell$ the dimension of $H$, the function $\mathcal{L}_{\epsilon,m}(\psi)$ is directionally $R^\ast$-smooth which means it satisfies:
\[
\mathcal{L}_{\epsilon,m}(\psi)((1-t)x_0+tx_1)+ t(1-t)R^*(x_1-x_0) \geq (1-t) \mathcal{L}_{\epsilon,m}(\psi)(x_0) + t\mathcal{L}_{\epsilon,m}(\psi)(x_1) ,
\]
for all $x_0,x_1 \in \R^n$ such that $(1-t)x_0+(1-t)x_1 \in \mathrm{dom}\mathcal{L}_{\epsilon,m}(\psi).$
\end{itemize}
\end{proposition}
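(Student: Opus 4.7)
My plan is to derive both statements from a single application of the Prékopa–Leindler (PL) inequality. Part (a) is in fact the specialization of part (b) to $L=\R^n$ with $R(v)=\rho_\psi(\vert v\vert)$: an elementary radial optimization gives $R^*(v)=\rho_\psi^*(\vert v\vert)$, and directional $R^*$-smoothness of a radial modulus is the same as $\rho_\psi^*$-smoothness in the sense of Section~2.1. So I would only prove the directional statement (b).

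The idea is to feed PL with three well-chosen nonnegative functions on the affine hull $L$. Fix $t\in(0,1)$ and $x_0,x_1\in\R^n$ such that $x_t:=(1-t)x_0+tx_1$ lies in $\mathrm{dom}\,\mathcal{L}_{\epsilon,m}(\psi)$, and set
\begin{align*}
F(y) &= \exp\!\left(\tfrac{\langle x_0,y\rangle-\psi(y)}{\epsilon}\right), \qquad G(y) = \exp\!\left(\tfrac{\langle x_1,y\rangle-\psi(y)}{\epsilon}\right),\\
H(y) &= \exp\!\left(\tfrac{t(1-t)R^*(x_1-x_0)}{\epsilon}\right) \exp\!\left(\tfrac{\langle x_t,y\rangle-\psi(y)}{\epsilon}\right).
\end{align*}
The heart of the proof is checking the pointwise PL hypothesis $H((1-t)y_0+ty_1)\geq F(y_0)^{1-t}G(y_1)^t$ for all $y_0,y_1\in L$. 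Outside $\mathrm{dom}\,\psi$ this is automatic; otherwise $y_t:=(1-t)y_0+ty_1\in\mathrm{dom}\,\psi$ by convexity of the domain, and I combine three ingredients: the elementary bilinear identity
\[
(1-t)\langle x_0,y_0\rangle+t\langle x_1,y_1\rangle = \langle x_t,y_t\rangle + t(1-t)\langle x_1-x_0,y_1-y_0\rangle,
\]
the directional $R$-convexity $(1-t)\psi(y_0)+t\psi(y_1)\geq \psi(y_t)+t(1-t)R(y_1-y_0)$, and the Fenchel–Young inequality $\langle x_1-x_0,y_1-y_0\rangle - R(y_1-y_0)\leq R^*(x_1-x_0)$. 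Plugging these in reduces the check to an algebraic cancellation.

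Once the pointwise bound is established, applying PL on $L$ (identified with $\R^\ell$ by a translation and rigid motion, which preserves both the linear forms in the exponents and $\mathcal{H}^\ell$) gives $\int_L H\,dm\geq(\int_L F\,dm)^{1-t}(\int_L G\,dm)^t$, and taking $\epsilon\log$ together with the definition of $\mathcal{L}_{\epsilon,m}(\psi)$ yields exactly the directional $R^*$-smoothness inequality. The main obstacle I expect is mostly bookkeeping rather than conceptual: I need to verify that the assumption $x_t\in\mathrm{dom}\,\mathcal{L}_{\epsilon,m}(\psi)$, combined with the positive $m$-measure of $\mathrm{dom}\,\psi$, forces the integrals $\int_L F\,dm$ and $\int_L G\,dm$ to be finite (so that the $\epsilon\log$ step makes sense), and I must be careful about applying PL in the lower-dimensional subspace $L$ in part (b). These are precisely the points where the hypotheses on $\mathrm{dom}\,\psi$ and the choice of $m$ are used.
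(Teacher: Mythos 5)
Your proposal is correct and follows essentially the same route as the paper: prove (b) via Pr\'ekopa--Leindler applied to the same three functions (you simply absorb the constant $\exp(t(1-t)R^*(x_1-x_0)/\epsilon)$ into $H$ rather than carrying it on the right-hand side), verify the pointwise hypothesis using the bilinear identity, directional $R$-convexity and Fenchel--Young, then deduce (a) by specializing $R(v)=\rho_\psi(|v|)$ and computing $R^*(v)=\rho_\psi^*(|v|)$. One small clarification on the bookkeeping you flag: what the positive $m$-measure of $\mathrm{dom}\,\psi$ actually buys you is that $\int_L F\,dm$ and $\int_L G\,dm$ are strictly \emph{positive} (so $\epsilon\log$ is not $-\infty$); if one of them is $+\infty$, the finiteness of $\int_L H\,dm$ (from $x_t\in\mathrm{dom}\,\mathcal{L}_{\epsilon,m}(\psi)$) combined with PL forces $R^*(x_1-x_0)=+\infty$ and the target inequality holds vacuously, which is how the paper handles it.
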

The proof of this result will make use of the well known Prékopa-Leindler \cite{Pre71, Pre73, Lei72} inequality, that we now recall : if $f_0,f_1, h : L \to \R_+$ are measurable functions defined on some affine subspace $L$ of $\R^n$ such that, for some $t \in ]0,1[$, it holds
\[
h((1-t)y_0 + ty_1) \geq f_0^{1-t}(y_0)f_1^t(y_1),\qquad \forall y_0,y_1\in L
\]
then 
\[
\int_L h \geq \left(\int_L f_0\right)^{1-t}\left(\int_L f_1\right)^{t},
\]
where integration is understood with respect to the Lebesgue measure on $L$, that is $\mathcal{H}^\ell_{|L}$ with $\ell$ the dimension of $L$. This result is usually stated for $L=\R^n$ but its extension to a general affine subspace is classical and straightforward.

\proof Let us first prove Item $(b)$.
Let $x_0,x_1 \in \R^n$  and $y_0,y_1 \in \mathrm{dom}\psi$. For $t \in ]0,1[$ set $x_t = (1-t)x_0 +t x_1$, $y_t = (1-t)y_0 +t y_1$, and assume that $x_t \in\mathrm{dom}\mathcal{L}_{\epsilon,m}(\psi)$.
By $R$-convexity of $\psi$ since $R > -\infty$ we have $y_t \in \mathrm{dom}\psi$ and a direct computation grants the following inequalities
\begin{align*}
    \langle x_t,y_t \rangle - \psi(y_t) &\geq t(1-t) (R(y_1-y_0) -  \langle x_1-x_0,y_1-y_0 \rangle)+(1-t)(\langle x_0,y_0\rangle - \psi(y_0)) + t(\langle x_1,y_1\rangle - \psi(y_1))\\
    &\geq -t(1-t)R^\ast(x_1-x_0)+(1-t)(\langle x_0,y_0\rangle - \psi(y_0)) + t(\langle x_1,y_1\rangle - \psi(y_1))
\end{align*}
Note that the last inequality holds trivially if $R^\ast(x_1-x_0) = +\infty$.
Therefore, the functions $f_0,f_1,h$ defined for all $y \in L$ by
\begin{equation*}
h(y) = \exp\left(\frac{\langle x_t,y \rangle - \psi(y)}{\epsilon}\right),\qquad f_0(y) =  \exp\left(\frac{\langle x_0,y \rangle - \psi(y)}{\epsilon}\right),\qquad  f_1(y) =  \exp\left(\frac{\langle x_1,y \rangle - \psi(y)}{\epsilon}\right)
\end{equation*}
satisfy
\begin{equation*}
    h(y_t) \geq \exp\left(\frac{-t(1-t)R^\ast(x_1-x_0)}{\epsilon}\right)f_0^{1-t}(y_0) f_1^t(y_1),
\end{equation*}
for all $y_0,y_1 \in \mathrm{dom}\psi$. Note that this inequality is also true if $y_0$ or $y_1$ is not in the domain of $\psi$, indeed the inequality will hold trivially because $f_i(y_i) = 0$ and $h \geq 0$. Therefore by Prékopa-Leindler inequality we have
\begin{equation*}
     \int_L h \geq \exp\left(\frac{-t(1-t)R^\ast(x_1-x_0)}{\epsilon}\right)\left(\int_L f_0\right)^{1-t} \left(\int_L f_1\right)^t.
\end{equation*}
Since $m(\mathrm{dom}\psi) > 0$ the integrals are strictly positive. Taking the logarithm and multiplying by $\epsilon$ yields
\begin{equation*}
\mathcal{L}_{\epsilon,m}(\psi)(x_t) \geq -t(1-t)R^*( x_1-x_0)+ (1-t)\mathcal{L}_{\epsilon,m}(\psi)(x_0)+t\mathcal{L}_{\epsilon,m}(\psi)(x_1),
\end{equation*}
where once again the inequality is vacuous if $R^\ast(x_1-x_0) = + \infty$. This holds for any $x_0,x_1$ such that $x_t \in \mathrm{dom}\mathcal{L}_{\epsilon,m}(\psi)$ and thus item $(b)$ is proven.\\
To prove $(a)$, observe that if $R(u)=\rho_\psi(\vert u\vert)$, $u\in \R^n$, then $R^*(v) = \rho_\psi^*(\vert v\vert)$, $v\in \R^n$,  and  $\psi$ is directionnally $R$-convex.
 
So, according to $(b)$, for all $x_0,x_1 \in \R^n$, it holds 
\begin{equation*}
   \mathcal{L}_{\epsilon,\mathcal{H}^n}(\psi)((1-t)x_0+tx_1)+ t(1-t)\rho_\psi^*(\vert x_1-x_0\vert) \geq (1-t) \mathcal{L}_{\epsilon,\mathcal{H}^n}(\psi)(x_0) + t\mathcal{L}_{\epsilon,\mathcal{H}^n}(\psi)(x_1).
\end{equation*}
So, by definition of $ \sigma_{\mathcal{L}_{\epsilon,\mathcal{H}^n}(\psi)}$, we get $\rho_\psi^\ast(r) \geq \sigma_{\mathcal{L}_{\epsilon,\mathcal{H}^n}(\psi)}(r)$ for all $r\geq0$, which completes the proof.
\endproof

\subsection{$\psi$ $\sigma$-smooth implies $\mathcal{L}_{\epsilon,\mathcal{H}^n}(\psi)$ $\sigma^\ast$-convex}
The following result now establishes an entropic analog of property $(i)$ of Propositions \ref{prop:AP2} and \ref{prop:directional-AP2}.
\begin{proposition}\label{prop:sigma-smooth}\
\begin{itemize}
    \item[(a)] Let $\psi : \R^n \to \R$ be a function such that $\sigma_\psi > -\infty$ ; then $\mathcal{L}_{\epsilon,\mathcal{H}^n}(\psi)$ is $ \sigma_\psi^*$-convex. In other words, 
$\rho_{\mathcal{L}_{\epsilon,\mathcal{H}^n}(\psi)} \geq  \sigma_\psi^*$. 
    \item[(b)] More generally let $\psi : \R^n \to \R \cup \{ + \infty \}$ be such that $K:=\mathrm{dom} \psi$ is an affine subspace of $\R^n$. 
    Moreover assume that $\psi$ is directionally $S$-smooth i.e.
    \begin{equation*}
    \psi((1-t)y_0+ty_1)+ t(1-t)S(y_1-y_0) \geq (1-t)\psi(y_0) + t \psi(y_1), \forall t\in [0,1],
    \end{equation*}
    for all $y_0,y_1 \in \R^n$ such that $(1-t)y_0+ty_1 \in \mathrm{dom}\psi$, 
    where $S:\R^n \to \R \cup \{+\infty\}$ is such that $\mathrm{dom}S \subset \overset{\longrightarrow}{K}$. Denote by $k$ the dimension of $K$ and set $ m =\mathcal{H}^k_{| K}$. Then $\mathcal{L}_{\epsilon,m}(\psi)$  is $S^\ast$-convex, that is
    \begin{equation*}
        \mathcal{L}_{\epsilon,m} (\psi)((1-t)x_0+tx_1) + t(1-t)S^\ast(x_1-x_0) \leq (1-t)\mathcal{L}_{\epsilon,m}(\psi)(x_0) + t \mathcal{L}_{\epsilon,m}(\psi)(x_1)
    \end{equation*}
    for all $x_0,x_1 \in \R^n$.
\end{itemize}

\end{proposition}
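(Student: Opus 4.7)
My plan is to establish part $(b)$ first, by combining the directional $S$-smoothness of $\psi$ with a translation change of variables and H\"older's inequality; part $(a)$ then follows as a specialization to $K=\R^n$ and $S(d)=\sigma_\psi(\vert d\vert)$.

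Fix $x_0,x_1\in\R^n$, $t\in\,]0,1[$, write $x_t=(1-t)x_0+tx_1$, and pick an arbitrary $d\in\mathrm{dom}\,S\subset\overset{\longrightarrow}{K}$. Because $d$ lies in the direction subspace of $K$, for every $y\in K=\mathrm{dom}\,\psi$ both $y-td$ and $y+(1-t)d$ also belong to $K$, so the directional $S$-smoothness of $\psi$ applied to this pair (whose $t$-barycenter is $y$) is available and gives
\begin{equation*}
(1-t)\psi(y-td)+t\psi(y+(1-t)d)\leq\psi(y)+t(1-t)S(d).
\end{equation*}
Combined with the elementary identity
\begin{equation*}
\langle x_t,y\rangle=(1-t)\langle x_0,y-td\rangle+t\langle x_1,y+(1-t)d\rangle-t(1-t)\langle x_1-x_0,d\rangle,
\end{equation*}
this yields the pointwise bound
\begin{equation*}
e^{(\langle x_t,y\rangle-\psi(y))/\epsilon}\leq e^{t(1-t)(S(d)-\langle x_1-x_0,d\rangle)/\epsilon}\,f(y-td)^{1-t}\,g(y+(1-t)d)^t,
\end{equation*}
where $f(z):=\exp((\langle x_0,z\rangle-\psi(z))/\epsilon)$ and $g(z):=\exp((\langle x_1,z\rangle-\psi(z))/\epsilon)$.

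Integrating against $m=\mathcal{H}^k_{\vert K}$, I apply H\"older's inequality with conjugate exponents $1/(1-t)$ and $1/t$ to the factor $f(y-td)^{1-t}g(y+(1-t)d)^t$, and then use the translation invariance of $m$ along $\overset{\longrightarrow}{K}$ to identify the two resulting integrals with $\int_K f\,dm$ and $\int_K g\,dm$. Taking $\epsilon\log$ gives
\begin{equation*}
\mathcal{L}_{\epsilon,m}(\psi)(x_t)\leq t(1-t)\bigl(S(d)-\langle x_1-x_0,d\rangle\bigr)+(1-t)\mathcal{L}_{\epsilon,m}(\psi)(x_0)+t\mathcal{L}_{\epsilon,m}(\psi)(x_1),
\end{equation*}
and minimizing the right-hand side over $d\in\mathrm{dom}\,S$ replaces the bracket by $-S^\ast(x_1-x_0)$, which is the announced $S^\ast$-convexity. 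For part $(a)$, $\psi:\R^n\to\R$ is directionally $S$-smooth with $S(d):=\sigma_\psi(\vert d\vert)$ and $K=\R^n$, so $(b)$ yields that $\mathcal{L}_{\epsilon,\mathcal{H}^n}(\psi)$ is $S^\ast$-convex; writing $d=r\omega$ with $r\geq 0$ and $\vert\omega\vert=1$ and optimizing first over $\omega$ reduces $S^\ast(v)=\sup_{d\in\R^n}\{\langle v,d\rangle-\sigma_\psi(\vert d\vert)\}$ to $\sup_{r\geq 0}\{r\vert v\vert-\sigma_\psi(r)\}=\sigma_\psi^\ast(\vert v\vert)$, which is precisely the $\sigma_\psi^\ast$-convexity in the sense of Proposition \ref{prop:AP2}$(i)$.

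The only delicate point is to ensure that both shifted arguments $y-td$ and $y+(1-t)d$ remain in $K=\mathrm{dom}\,\psi$ for every $y\in K$, so that the $S$-smoothness inequality can be applied without creating $+\infty$ contributions and the translation $y\mapsto y+\alpha d$ preserves $m$; this is exactly the role of the hypothesis $\mathrm{dom}\,S\subset\overset{\longrightarrow}{K}$. Once this is set up, the remaining ingredients (H\"older on $K$ and translation invariance of $\mathcal{H}^k_{\vert K}$) are entirely standard, in contrast with Proposition \ref{prop:rho-convex} where Pr\'ekopa--Leindler was needed.
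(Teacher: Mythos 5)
Your proof is correct and follows essentially the same approach as the paper's: apply the directional $S$-smoothness of $\psi$ pointwise, use H\"older's inequality with exponents $1/(1-t)$ and $1/t$, and exploit the translation invariance of $\mathcal{H}^k_{|K}$ along $\overset{\longrightarrow}{K}$. The only difference is a cosmetic change of variables in the choice of segment endpoints: the paper uses $(y_0,y_1)=(y,\,y+v)$ with barycenter $y+tv$, whereas you centre the barycenter at $y$ by taking $(y_0,y_1)=(y-td,\,y+(1-t)d)$; after integrating and translating, the two computations coincide.
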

\proof
As in Proposition \ref{prop:rho-convex} let us start with the proof of Item $(b)$.
Let us denote by $K$ the domain of $\psi$ which is assumed to be an affine subspace of $\R^n$ and let $\overset{\longrightarrow}{K} =  \{x_1-x_0 : x_0,x_1 \in K\}$ be the associated vector subspace of directions.
Let $x_0,x_1 \in \mathrm{dom} \mathcal{L}_{\epsilon,m}(\psi)$ (otherwise there is nothing to prove) and $v \in \overset{\longrightarrow}{K}$. For $y \in \R^n$, applying the $S$-convexity of $\psi$ to $y_0=y$ and $y_1=y+v$, grants
\begin{equation*}
    \langle x_t,y + tv \rangle - \psi(y+tv) \leq t(1-t)(S(v)-\langle x_1-x_0,v\rangle) + (1-t)(\langle x_0,y\rangle - \psi(y)) + t(\langle x_1,y+v \rangle - \psi(y+v)).
\end{equation*}
Note that if $y+tv \notin \mathrm{dom}\psi$ the inequality holds vacuously.
Therefore, the functions $f_0,f_1,h$ defined by
\begin{equation*}
h(y) = \exp\left(\frac{\langle x_t,y+tv \rangle - \psi(y+tv)}{\epsilon}\right),\, f_0(y) =  \exp\left(\frac{\langle x_0,y \rangle - \psi(y)}{\epsilon}\right),\,  f_1(y) =  \exp\left(\frac{\langle x_1,y+v \rangle - \psi(y+v)}{\epsilon}\right)
\end{equation*}
satisfy
\begin{equation*}
    h(y) \leq \exp\left(\frac{t(1-t)(S(v)-\langle x_1-x_0,v \rangle)}{\epsilon}\right)f_0^{1-t}(y) f_1^t(y).
\end{equation*}
Thus by Hölder's inequality we have
\begin{equation*}
    \int h dm \leq \exp\left(\frac{t(1-t)(S(v)-\langle x_1-x_0,v \rangle)}{\epsilon}\right) \left(\int f_0(y) dm\right)^{1-t} \left(\int f_1(y)dm\right)^t
\end{equation*}
Note that by the  invariance of the measure $m$ with respect to the translation by a vector in $\overset{\longrightarrow}{K}$ we have
\begin{equation*}
    \epsilon \log\left(\int h dm\right) =  \mathcal{L}_{\epsilon,m}(\psi)(x_t), \quad  \epsilon \log\left(\int f_1(y)dm\right) = \mathcal{L}_{\epsilon,m}(\psi)(x_1).
\end{equation*}
Thus taking the logarithm and multiplying by $\epsilon$ in the inequality above grants
\begin{equation*}
    \mathcal{L}_{\epsilon,m}(\psi)(x_t) \leq (1-t)\mathcal{L}_{\epsilon,m}(\psi)(x_0) + t \mathcal{L}_{\epsilon,m}(\psi) (x_1) + t(1-t)( S(v)- \langle x_1-x_0,v \rangle)
\end{equation*}
for any $v \in \overset{\longrightarrow}{K}$. Finally, optimizing over $v\in \overset{\longrightarrow}{K}$ grants the result (since $\mathrm{dom} S \subset \overset{\longrightarrow}{K}$ by assumption).\\
The proof of Item $(a)$ follows by setting $S(v) = \sigma_\psi(\vert v \vert)$ in a similar fashion to Proposition \ref{prop:rho-convex}.
\endproof
\subsection{A remark on $\rho$-convexity along the HJB flow}

Let $g \in C^1(\mathbb{R}^n)$ and define for $0 \leq t \leq T$ and $x \in \mathbb{R}^n$ the following function
\begin{equation*}
    U_t^{T,g}(x) = - \log\left(\frac{1}{(2\pi(T-t))^{n/2}} \int \exp\left(-\frac{\vert y - x \vert^2}{2(T-t)}- g(y)\right)dy\right).
\end{equation*}
It is well known that under mild assumptions on $g$, the map $[0,T]\times \mathbb{R}^n \ni (t,x) \mapsto U_t^{T,g}(x)$ is a classical solution of the HJB equation 
\begin{equation*}
    \begin{cases}
    \partial_t \phi_t(x) + \frac{1}{2} \Delta \phi_t(x) -\frac{1}{2} \vert \nabla \phi_t(x) \vert^2 = 0,\\
    \phi_T(x) = g(x).
    \end{cases}
\end{equation*}
It is also classical that as soon as $g$ is convex so is $U_t^{T,g}$ for any $t \in [0,T]$. More generally it has been proven in \cite{Conforti2024} that other classes of functions are stable. The following notion of weak convexity is introduced.
\begin{definition}
Let $\psi:\mathbb{R}^n\to \mathbb{R}$ be a $C^1$ function. We define for $r\geq0$ the following modulus of weak semiconvexity
\begin{equation*}
    \kappa_\psi(r) = \inf\{\langle \nabla \psi(x)-\nabla \psi(y),x-y\rangle \mid \vert x-y \vert=r\}.
\end{equation*}
\end{definition}
First observe the link between the notion of $\kappa$ weak semiconvexity and $\rho$-convexity.
\begin{lemma}
Let $\psi:\mathbb{R}^n \to \mathbb{R}$ be a $C^1$ function then we have the following inequalities
    \begin{equation*}
        \kappa_\psi(r)\geq 2\rho_\psi(r)/r^2, \quad \rho_\psi(r) \geq r^2\int_0^1 \kappa_\psi(tr) t dt .
    \end{equation*}
\end{lemma}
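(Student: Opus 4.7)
The two inequalities pass between a finite-scale convexity modulus $\rho_\psi$ and the infinitesimal gradient quantity $\kappa_\psi$, so the natural strategy is to differentiate $\rho$-convexity to recover $\kappa_\psi$, and to integrate $\kappa_\psi$ back to recover $\rho_\psi$.

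For the first inequality, I would start from the defining estimate of $\rho_\psi$-convexity,
\[
(1-t)\psi(x)+t\psi(y)-\psi\bigl(x+t(y-x)\bigr) \geq t(1-t)\rho_\psi(|x-y|),
\]
divide by $t$ and send $t\to 0^+$. Since $\psi\in C^1$, the difference quotient on the left converges to $\langle \nabla\psi(x),y-x\rangle$, which gives the first-order consequence
\[
\psi(y)-\psi(x)-\langle \nabla\psi(x),y-x\rangle \geq \rho_\psi(|x-y|).
\]
Swapping the roles of $x$ and $y$ and summing the two inequalities cancels the function values and yields the co-coercivity estimate $\langle \nabla\psi(x)-\nabla\psi(y),x-y\rangle\geq 2\rho_\psi(|x-y|)$; taking the infimum over pairs with $|x-y|=r$ (and normalizing by $r^2$ as in the claim) concludes.

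For the second inequality, fix $x_0\neq x_1$ with $v:=x_1-x_0$, $|v|=r$, and consider $h(s):=\psi(x_0+sv)$. Expanding $h(0)$ and $h(1)$ around $h(t)$ by the fundamental theorem of calculus, the $h'(t)$ cross terms cancel and one obtains the clean identity
\[
M_t^\psi(x_0,x_1) = t\int_t^1 \bigl(h'(s)-h'(t)\bigr)\,ds \;-\;(1-t)\int_0^t \bigl(h'(s)-h'(t)\bigr)\,ds.
\]
Since $h'(s)-h'(t) = \frac{1}{s-t}\langle \nabla\psi(x_s)-\nabla\psi(x_t),x_s-x_t\rangle$ with $|x_s-x_t|=|s-t|r$, the defining infimum of $\kappa_\psi$ yields $h'(s)-h'(t)\geq \kappa_\psi(|s-t|r)/|s-t|$ for $s>t$ (and the reverse sign for $s<t$), so both bracketed integrals contribute non-negatively. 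Changing variables $u=|s-t|$ then converts the right-hand side into integrals of $\kappa_\psi(ur)/u$ against explicit kernels, and dividing by $t(1-t)$ gives a lower bound on $M_t^\psi/(t(1-t))$ independent of $(x_0,x_1)$; taking the infimum over $t\in(0,1)$ produces the desired integral lower bound on $\rho_\psi(r)$.

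The main obstacle is the last step: after substituting the $\kappa_\psi$ inequality and performing the change of variables, one still faces a one-parameter infimum in $t$, and must match it to the claimed integral form. This reduces to a purely real-variable manipulation, but it uses the super-homogeneity $\kappa_\psi(ct)\geq c\,\kappa_\psi(t)$ (which itself follows from the first inequality combined with Proposition \ref{prop:AP1}(a)) to control the inner integrand and produce a single clean integral. The first inequality, in contrast, is a routine symmetrization of a first-order convexity estimate.
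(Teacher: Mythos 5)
Your argument for the first inequality is the paper's own and is correct.

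For the second inequality there is a genuine gap exactly at the step you defer to ``a purely real-variable manipulation''. Bounding each increment $h'(s)-h'(t)$ by $\kappa_\psi$ and changing variables $u=|s-t|$, your identity yields (with the normalized $\kappa_\psi(\rho)=\inf\{\langle\nabla\psi(x)-\nabla\psi(y),x-y\rangle/\rho^{2}:|x-y|=\rho\}$, which is what the first inequality of the lemma forces -- the displayed definition in the paper is missing the factor $\rho^{-2}$ -- and writing $k(u):=\kappa_\psi(ur)\,u\,r^2$)
\[
\frac{M_t^\psi(x_0,x_1)}{t(1-t)}\;\geq\;\frac{1}{t}\int_0^t k(u)\,du \;+\;\frac{1}{1-t}\int_0^{1-t}k(u)\,du .
\]
To conclude you would need the right-hand side to dominate $\int_0^1 k=r^2\int_0^1\kappa_\psi(tr)t\,dt$ for \emph{every} $t\in(0,1)$. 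That holds with equality when $k$ is linear (i.e.\ $\kappa_\psi$ constant), but for $k$ strictly superlinear -- the generic case; e.g.\ $\psi=|\cdot|^3/6$ on $\R$ has $\kappa_\psi(u)=u/4$ and $k(u)=r^3u^2/4$ -- the right-hand side equals $\bigl(t^2+(1-t)^2\bigr)\int_0^1 k$, which is strictly below $\int_0^1 k$ on $(0,1)$ and drops to half the target at $t=1/2$. Super-homogeneity of $\kappa_\psi$ only pushes $k$ further into the superlinear regime, so it widens rather than closes the gap.

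The fix is to pair the two integrals rather than anchor every $s$ to the fixed point $t$. Your identity simplifies to $M_t^\psi=t\int_t^1 h'-(1-t)\int_0^t h'$; substituting $s=1-\lambda(1-t)$ in the first integral and $s=\lambda t$ in the second, both over $\lambda\in[0,1]$, gives
\[
\frac{M_t^\psi(x_0,x_1)}{t(1-t)}\;=\;\int_0^1\Bigl[h'\bigl(1-\lambda(1-t)\bigr)-h'(\lambda t)\Bigr]\,d\lambda .
\]
The two arguments sit at parameter distance $\bigl(1-\lambda(1-t)\bigr)-\lambda t=1-\lambda$, hence at Euclidean distance $(1-\lambda)r$, so each integrand is $\geq\kappa_\psi\bigl((1-\lambda)r\bigr)(1-\lambda)r^2$. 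Integrating over $\lambda$ gives $r^2\int_0^1\kappa_\psi(ur)u\,du$ \emph{uniformly in $t$}, and taking infima yields the claim. (Equivalently, this is the layer-cake decomposition of the tent kernel $\min(s(1-t),t(1-s))$ into the nested intervals $(\lambda t,\,1-\lambda(1-t))$.) For the record, the paper's proof expands the first-order gap $\psi(y)-\psi(x)-\langle\nabla\psi(x),y-x\rangle$ around the endpoint $x$ and then reads off the bound on $\rho_\psi$; that inference also needs the pairing above, since a lower bound $G$ on that gap only gives $\rho_\psi(r)\geq\inf_t\{G(tr)/t+G((1-t)r)/(1-t)\}$, which is strictly below $G(r)$ whenever $G$ grows faster than quadratically. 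Your expansion around $x_t$ is the cleaner route to $\rho_\psi$; it just needs this last change of variables.
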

\proof
Since $\psi$ is $\rho_\psi$-convex we have as in the proof of proposition \ref{prop:directional-regularity}
\begin{equation*}
    \psi(y)\geq \psi(x)+\langle \nabla \psi(x),y-x\rangle +\rho_\psi(\vert y-x\vert)
\end{equation*}
and by symmetry the same holds with $x$ and $y$ exchanged. Adding the two inequalities gives
\begin{equation*}
    \langle \nabla \psi(x)-\nabla \psi(y),x-y\rangle \geq 2\rho_\psi(\vert x-y\vert).
\end{equation*}
Thus $\kappa_\psi(r)\geq 2\rho_\psi(r)/r^2$ for all $r \geq 0$.
Let $x,y$ such that $\vert x-y \vert = r$ we have
\begin{equation*}
    \psi(y) - \psi(x)-\langle \nabla \psi(x),y-x\rangle = \int_0^1 \langle \nabla \psi(x+t(y-x))-\nabla \psi(x),t(y-x)\rangle \frac{1}{t}dt \geq r^2\int_0^1 \kappa_\psi(tr) t dt 
\end{equation*}
which ensures $\rho_\psi(r) \geq r^2\int_0^1 \kappa_\psi(tr) t dt $ for $r\geq 0$.
\endproof
We are now ready to state the aforementioned result.
\begin{theorem}[\cite{Conforti2024}]\label{thm:conforti}
Let $L \in \mathbb{R}$ and $f:\mathbb{R}_+\to\mathbb{R}$ be a function such that $ff^\prime(r) + f^{\prime\prime}(r) = 0$ and $f(0)=0,f^\prime(0) = L$. If $g$ is such that $\kappa_g(r) \geq -r^{-1}f(r)$ for every $r>0$, then for $0 \leq t \leq T < + \infty$ we have $\kappa_{U_t^{T,g}}(r) \geq-r^{-1}f(r)$ for every $r>0$.
\end{theorem}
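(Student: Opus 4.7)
The plan is to use a synchronous coupling along the optimally controlled diffusion associated with the HJB equation. Via the Cole--Hopf transform, $u_s(x) := e^{-U_s^{T,g}(x)}$ solves the backward heat equation $\partial_s u_s + \tfrac{1}{2}\Delta u_s = 0$ with $u_T = e^{-g}$, so that $U_s^{T,g}$ is the value function of the standard stochastic control problem whose optimal feedback yields the SDE $dX_\tau = -\nabla U_\tau(X_\tau)\,d\tau + dW_\tau$ on $[s,T]$.

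Fix $t \in [0,T]$ and $x \ne y$ in $\R^n$, and consider two copies $(X_\tau)_{\tau\in[t,T]}$ and $(Y_\tau)_{\tau\in[t,T]}$ of this SDE with $X_t = x$, $Y_t = y$, driven by the same Brownian motion $W$. Setting $Z_\tau := X_\tau - Y_\tau$ and $M_\tau := \nabla U_\tau(X_\tau) - \nabla U_\tau(Y_\tau)$, the noise cancels in the difference, so that $dZ_\tau = -M_\tau\,d\tau$. The cornerstone observation is that $(M_\tau)$ is a local martingale: a direct It\^o computation using the spatial gradient of the HJB equation $\partial_\tau U_\tau + \tfrac{1}{2}\Delta U_\tau = \tfrac{1}{2}|\nabla U_\tau|^2$ shows that the drift of $\nabla U_\tau(X_\tau)$ vanishes identically, giving $d\nabla U_\tau(X_\tau) = \nabla^2 U_\tau(X_\tau)\,dW_\tau$.

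Next introduce $r_\tau := |Z_\tau|$ and $D_\tau := \langle M_\tau, Z_\tau\rangle$, for which $\tfrac{d}{d\tau}r_\tau^2 = -2D_\tau$ and $dD_\tau = -|M_\tau|^2\,d\tau + dN_\tau$ with $N$ a local martingale. The proof reduces to showing that the defect $\Phi_\tau := D_\tau + r_\tau^{-1}f(r_\tau)$ stays nonnegative on $[t,T]$, knowing that $\Phi_T \ge 0$ almost surely by the terminal hypothesis $\kappa_g(r)\ge -r^{-1}f(r)$. Evaluating at $\tau = t$, where $(X_t, Y_t)=(x,y)$ is deterministic, then yields $\langle \nabla U_t^{T,g}(x) - \nabla U_t^{T,g}(y), x-y\rangle \ge -|x-y|^{-1}f(|x-y|)$, and since $(x,y)$ with $|x-y|=r$ is arbitrary, this is exactly the desired bound $\kappa_{U_t^{T,g}}(r) \ge -r^{-1}f(r)$.

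The main obstacle is controlling the It\^o dynamics of $\Phi_\tau$. Writing $F(r) := r^{-1}f(r)$, a direct computation gives the drift of $\Phi_\tau$ as $-|M_\tau|^2 - F'(r_\tau)D_\tau/r_\tau$, and the Cauchy--Schwarz inequality $|M_\tau|^2 \ge D_\tau^2/r_\tau^2$ saturates precisely when $M_\tau$ and $Z_\tau$ are collinear. Reducing to this scalar extremal case, the drift at the touching locus $\{D_\tau = -F(r_\tau)\}$ is nonpositive exactly under the algebraic condition $rf'\le 2f$; this is a direct consequence of the first integral $f'=L-f^2/2$ of the ODE (equivalent to $ff'+f''=0$ with $f(0)=0, f'(0)=L$), as one checks by noting that $h(r):=2f(r)-rf'(r)$ satisfies $h(0)=0$ and $h'(r)=f'(r)(1+rf(r))$, which is nonnegative in the regime of interest. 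A careful maximum-principle/comparison argument then excludes any first backward violation of $\Phi_\tau \ge 0$, completing the proof.
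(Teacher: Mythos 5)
The theorem is attributed to \cite{Conforti2024} and is stated in the paper without proof, so there is no in-paper argument to compare against; I therefore assess your proposal on its own terms. Your route is indeed the one used by Conforti: the Cole--Hopf change of variables, the synchronous coupling of the controlled diffusions $dX_\tau=-\nabla U_\tau(X_\tau)\,d\tau+dW_\tau$, and the observation that $\nabla U_\tau(X_\tau)$ is a local martingale because the drift of $\nabla U$ along $X$ cancels under the gradient of the HJB equation. These steps are correct and form the engine of the argument, and your identities $dZ_\tau=-M_\tau\,d\tau$, $\tfrac{d}{d\tau}r_\tau^2=-2D_\tau$, $dD_\tau=-|M_\tau|^2\,d\tau+dN_\tau$ are all right.

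Two steps you label as routine are, however, genuine gaps. First, the process $\Phi_\tau=D_\tau+F(r_\tau)$ carries a nondegenerate martingale part $dN_\tau$, so the "first backward violation" argument you invoke is not a legitimate maximum principle: the martingale increment is of order $\sqrt{d\tau}$ and dominates the drift on short time scales, and a pathwise first-crossing analysis does not close for an SDE. The usable identity is $D_t=\E\big[D_T+\int_t^T|M_\tau|^2\,d\tau\big]$, which is deterministic on the left since $(X_t,Y_t)=(x,y)$ is deterministic; one must then turn $|M_\tau|^2\ge D_\tau^2/r_\tau^2$ plus a Jensen-type estimate into a differential inequality for the deterministic modulus $\kappa_{U_\tau}(r)$ in $\tau$ and run a genuine comparison with the ODE $ff'+f''=0$. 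That comparison, not a pathwise barrier argument, is the content of the theorem. Second, your algebraic reduction is also off: at the touching locus $\{D=-F(r)\}$ with $F(r)=f(r)/r$, the extremal drift equals $f(r)\big(f'(r)r-2f(r)\big)/r^4$, so what you need is $f\cdot(2f-rf')\ge 0$, not $2f-rf'\ge 0$; these coincide only when $f\ge 0$. For $L>0$ one has $f\ge 0$, $f'=L-f^2/2\ge 0$ and $1+rf\ge 1$, so $h:=2f-rf'$ increases from $0$ and $fh\ge 0$. But for $L<0$ the solution $f(r)=-\sqrt{2|L|}\tan\big(\sqrt{|L|/2}\,r\big)$ is negative with a vertical asymptote at $r=\pi/\sqrt{2|L|}$, and one checks that $h$ changes sign before the asymptote, so $fh<0$ on part of the domain. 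Your assertion that $h'(r)=f'(r)(1+rf(r))$ is "nonnegative in the regime of interest" is exactly the unverified (and, as stated, false for $L<0$) point; handling this regime, together with the restriction of $r$ to the maximal interval of existence of $f$, is the delicate part of the proof and cannot be waved away.
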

Proposition \ref{prop:rho-convex} also allows to link the $\rho$-convexity of $g$ with the one of $U_t^{T,g}$. However it does not exhibit a stable class of $\rho$-convexity. Indeed, first remark that 
\begin{equation*}
    U_t^{T,g}(x) = \frac{\vert x \vert^2}{2(T-t)}-\frac{1}{T-t}\mathcal{L}_{(T-t)}((T-t)g+\frac{\vert\,.\,\vert^2}{2})(x)+\frac{n}{2}\log(2\pi(T-t)).
\end{equation*}
Thus Proposition \ref{prop:rho-convex} ensures that $U_t^{T,g}$ is $\rho_g \square \frac{(\,.\,)^2}{2(T-t)}$-convex. This estimates degrades as $T \to + \infty$ whereas the estimate of \cite{Conforti2024} is stable, provided that $\kappa_g$ satisfies the hypothesis of Theorem \ref{thm:conforti}.

\subsection{A remark on  the Cram\'er transform}
Let $X$ be a random vector with values in $\R^n$. We recall that the log-Laplace transform of $X$ is the function $\Lambda_X : \R^n \to \R\cup\{+\infty\}$ defined by
\[
\Lambda_X(x) = \log\E[e^{\langle x,X\rangle}], \qquad x\in \R^n.
\]
When $X$ has a density of the form $e^{-\psi}$ with respect to Lebesgue, with $\psi:\R^n \to \R \cup \{+\infty\}$, then 
\[
\Lambda_X = \mathcal{L}_{1,\mathcal{H}^n}(\psi)
\]
The Cram\'er transform of $X$, denoted $\Lambda_X^*$, is defined as the Legendre transform of $\Lambda_X$:
\[
\Lambda_X^*(y) = \sup_{x\in \R^n}\{\langle x,y \rangle - \Lambda_X(x)\},\qquad y\in \R^n
\]
and plays an important role in Large Deviation Theory, see e.g \cite{DZ}.
Combining the results of Propositions \ref{prop:rho-convex}, \ref{prop:sigma-smooth} and \ref{prop:AP2}, we obtain the following:
\begin{proposition}Let $X$ be a random vector with density $e^{-\psi}$ with respect to Lebesgue.
\begin{enumerate}
\item The function $\Lambda_X^*$ is $\rho_{\psi}^{**}$-convex. In other words, $\rho_{\Lambda_X^*} \geq \rho_{\psi}^{**}$.
\item If $\psi$ is finite valued, then  $\Lambda_X^*$ is  $\sigma_\psi$-smooth. In other words, $\sigma_{\Lambda_X^*} \leq  \sigma_{\psi}$.
\end{enumerate}
\end{proposition}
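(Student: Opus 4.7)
The plan is to chain together the entropic results of Propositions \ref{prop:rho-convex} and \ref{prop:sigma-smooth} with the classical duality of Proposition \ref{prop:AP2}, exploiting the identification $\Lambda_X = \mathcal{L}_{1,\mathcal{H}^n}(\psi)$ noted just before the statement. The key bookkeeping identity is that, for the monotone conjugate on $\R_+$, one has $\rho^{***} = \rho^*$, and consequently $(\rho^*)^* = \rho^{**}$.

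For item (1), I would start from $\psi$ being $\rho_\psi$-convex by definition. Since $X$ has density $e^{-\psi}$ with respect to Lebesgue measure, $\mathrm{dom}\,\psi$ has positive $\mathcal{H}^n$-measure (otherwise $\int e^{-\psi}\,dx = 0$), and $\rho_\psi > -\infty$ (else $\psi$ would not satisfy the Jensen-type inequality with any finite $\rho$, contradicting convexity of the set where it is finite). Then Proposition \ref{prop:rho-convex}(a) applies and yields that $\Lambda_X$ is $\rho_\psi^*$-smooth. Applying (i) of Proposition \ref{prop:AP2} to $\Lambda_X$ with $\sigma = \rho_\psi^*$ gives that $\Lambda_X^* = (\Lambda_X)^*$ is $(\rho_\psi^*)^*$-convex, i.e. $\rho_\psi^{**}$-convex, which by the maximality property defining $\rho_{\Lambda_X^*}$ reads $\rho_{\Lambda_X^*} \geq \rho_\psi^{**}$.

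For item (2), when $\psi$ is finite-valued, $\psi$ is $\sigma_\psi$-smooth by definition, and $\sigma_\psi > -\infty$ is automatic since $\mathrm{dom}\,\psi = \R^n$ is nonempty. Proposition \ref{prop:sigma-smooth}(a) then yields that $\Lambda_X$ is $\sigma_\psi^*$-convex. Applying (ii) of Proposition \ref{prop:AP2} with $\rho = \sigma_\psi^*$ shows that $\Lambda_X^*$ is $(\sigma_\psi^*)^* = \sigma_\psi^{**}$-smooth. Since $\sigma_\psi^{**} \leq \sigma_\psi$ pointwise (the double monotone conjugate is the largest convex lower semicontinuous minorant), one obtains $\sigma_{\Lambda_X^*} \leq \sigma_\psi^{**} \leq \sigma_\psi$, as announced.

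The main conceptual point — not really an obstacle, but the piece to get right — is the composition of the two conjugations: the entropic step produces a modulus which is a single monotone conjugate of the input, and the classical Legendre step produces another monotone conjugate, so the overall operation is double conjugation. The presence of $\rho_\psi^{**}$ (rather than $\rho_\psi$) in (1) and the need to dominate $\sigma_\psi^{**}$ by $\sigma_\psi$ in (2) are precisely what this double conjugation enforces; both are harmless here because in (1) we are asserting a lower bound on $\rho_{\Lambda_X^*}$, while in (2) the direction $\sigma^{**} \leq \sigma$ goes the right way for establishing smoothness.
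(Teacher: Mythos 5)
Your argument is correct and follows exactly the route the paper indicates: identify $\Lambda_X=\mathcal{L}_{1,\mathcal{H}^n}(\psi)$, apply Propositions \ref{prop:rho-convex} and \ref{prop:sigma-smooth} to get moduli for $\Lambda_X$, then apply Proposition \ref{prop:AP2} to pass to $\Lambda_X^*$, using $\sigma_\psi^{**}\le\sigma_\psi$ at the end of item (2). One small correction: convexity of $\mathrm{dom}\,\psi$ does not by itself force $\rho_\psi>-\infty$ as you assert, but if $\rho_\psi$ does take the value $-\infty$ somewhere then $\rho_\psi^*\equiv+\infty$ and hence $\rho_\psi^{**}\equiv-\infty$, making item (1) vacuous, so the conclusion stands either way.
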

A similar correspondence could be stated for directional moduli.

\section{Application to optimal transport}\label{sec:OT}

Let  $\mu,\nu$ be two probability measures on $\R^n$ with finite second moments. In this section, we consider the entropic regularization of the quadratic optimal transport problem between $\mu$ and $\nu$: for all $\epsilon>0$, let 
\begin{equation}\label{eq:Reg-ent}
\mathcal{C}_\epsilon(\mu,\nu) = \inf_{\pi \in \Pi(\mu,\nu)}\int \frac{1}{2}\vert x-y\vert^2\,d\pi + \epsilon H(\pi | \mu\otimes \nu),
\end{equation}
where $\Pi(\mu,\nu)$ is the set of probability measures on $\R^n \times \R^n$ admitting $\mu,\nu$ as marginals. We refer to e.g. \cite{Nutz21} for a detailed introduction to this topic. 
According to \cite{Carlier2017}, 
\[
\mathcal{C}_\epsilon(\mu,\nu) \to \frac{1}{2}W_2^2(\mu,\nu)
\]
as $\epsilon \to 0$. 
For any $\epsilon>0$, the problem \eqref{eq:Reg-ent} admits a unique minimizer $\pi_\epsilon$, which is of the form
\[
\pi_\epsilon(dxdy) = e^{\frac{f_\epsilon(x) +g_\epsilon(y)}{\epsilon}- \frac{\vert x-y\vert^2}{2\epsilon}}\,\mu(dx)\nu(dy),
\]
with $f_\epsilon, g_\epsilon: \R^n \to \R$ solutions of the following system of non linear equations :
\begin{align*}
f_\epsilon(x) &= - \epsilon \log \left(\int e^{\frac{g_\epsilon(y)}{\epsilon}- \frac{\vert x-y\vert^2}{2\epsilon}}\,\nu(dy)\right),\qquad \forall x\in \R^n\\
g_\epsilon(y) &= - \epsilon \log \left(\int e^{\frac{f_\epsilon(x)}{\epsilon}- \frac{\vert x-y\vert^2}{2\epsilon}}\,\mu(dx)\right),\qquad \forall y\in \R^n.
\end{align*}
Let us consider the functions $\phi_\epsilon,\psi_\epsilon$ defined by
 \[
 \phi_\epsilon(x) = \frac{1}{2}\vert x\vert^2 - f_\epsilon(x),\qquad x\in \R^n, \qquad \text{and}\qquad  \psi_\epsilon(y) = \frac{1}{2}\vert y\vert^2 - g_\epsilon(y),\qquad y\in \R^n.
 \] 
 The couple $(\phi_\epsilon,\psi_\epsilon)$ is now solution of the following system
\begin{align}
\label{eq:sinkhorn1}\phi_\epsilon(x) &= \epsilon \log \left(\int e^{\frac{\langle x,y \rangle-\psi_\epsilon(y)}{\epsilon}}\,\nu(dy)\right),\qquad \forall x\in \R^n\\
\label{eq:sinkhorn2}\psi_\epsilon(y) &= \epsilon \log \left(\int e^{\frac{\langle x,y \rangle-\phi_\epsilon(x)}{\epsilon}}\,\mu(dx)\right),\qquad \forall y\in \R^n,
\end{align}
which can be rewritten as
\begin{equation*}
    \mathcal{L}_{\epsilon,\nu}(\psi_\epsilon) = \phi_\epsilon,\quad  \mathcal{L}_{\epsilon,\mu}(\phi_\epsilon) = \psi_\epsilon.
\end{equation*}
In this section, we apply the results of Section \ref{sec:Entropic-Legendre} to estimate the modulus of smoothness of the convex function $\phi_\epsilon$. In doing so we will be able to deduce an upper bound on the modulus of smoothness of $\phi$ the Kantorovich potential of the optimal transport problem ($\epsilon =0$). 

\begin{remark}
We use here the same entropic regularization as in \cite{Chewi2022entropic}. 
In \cite{FGP20}, a different entropic regularization of the quadratic transport problem was considered, related to Schr\"odinger bridges for the Ornstein-Uhlenbeck process (we refer to \cite{Leo14} for a survey on the  Schr\"odinger problem).  
\end{remark}

\subsection{Case of absolutely continuous measures}\label{subsec:abs-cont-measures}
Let us first assume that $\mu$ and $\nu$ are both absolutely continuous with respect to Lebesgue:
\[
\mu(dx)=e^{-V(x)}\,dx\qquad \text{and}\qquad\nu(dy) = e^{-W(y)}\,dy
\] 
with $V:\R^n \to \R$ and $W:\R^n \to \R\cup \{+\infty\}$  (in particular $\mu$ has full support). In this case, the system of equations \eqref{eq:sinkhorn1}, \eqref{eq:sinkhorn2} satisfied by $(\phi_\epsilon,\psi_\epsilon)$ is equivalent to 
\begin{equation*}
    \phi_\epsilon = \mathcal{L}_{\epsilon,\mathcal{H}^n}(\psi_\epsilon + \epsilon W),\quad \psi_\epsilon = \mathcal{L}_{\epsilon,\mathcal{H}^n}(\phi_\epsilon + \epsilon V).
\end{equation*}
The following result, based on Propositions \ref{prop:rho-convex} and \ref{prop:sigma-smooth}, provides an ($\epsilon$-independent) estimate of the smoothness modulus of $\phi_\epsilon$ in terms of the smoothness modulus of $V$ and the convexity modulus of $W$.
\begin{theorem}\label{thm:smoothness-phi-epsilon}
Let $\mu(dx) = e^{-V(x)}dx$ and $\nu(dy) = e^{-W(y)}dy$ be two measures on $\R^n$ such that $V$ is $\sigma$-smooth with $\mathrm{dom}\sigma = \R^n$ and $W$ such that $\rho_W>-\infty$ and $\rho_W^*(v)<+\infty$ for some $v>0$. Then it holds for all $\epsilon>0$
\[
    \sigma_{\phi_\epsilon}(r) \leq \int_0^r (\rho_W^{\ast\ast})^{-1}(\sigma_V(s))\,ds,\qquad \forall r\geq0,
\]
where 
\[
(\rho_W^{\ast\ast})^{-1}(t) = \sup \{s \geq0 \mid \rho_W^{\ast\ast}(s) \leq t\},\qquad t\geq0.
\]
\end{theorem}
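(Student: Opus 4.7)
The plan is to derive the fixed-point inequality \eqref{eq:fixed-point} for the unknown $\tau := \sigma_{\phi_\epsilon}$ by alternately applying the two duality results of Section \ref{sec:Entropic-Legendre} to the pair of Schr\"odinger equations, and then to convert this implicit bound into the announced explicit integral estimate via convexity of $\tau$ and a one-variable Fenchel computation.

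First, rewrite the optimality system as $\phi_\epsilon = \mathcal{L}_{\epsilon,\mathcal{H}^n}(\psi_\epsilon + \epsilon W)$ and $\psi_\epsilon = \mathcal{L}_{\epsilon,\mathcal{H}^n}(\phi_\epsilon + \epsilon V)$. Note that $\phi_\epsilon$ is convex as a log-Laplace transform, so by Proposition \ref{prop:AP1}(b) the modulus $\tau$ is convex, non-decreasing and lsc on $\R_+$ with $\tau(0) = 0$. Subadditivity of the smoothness modulus gives $\sigma_{\phi_\epsilon + \epsilon V} \leq \tau + \epsilon \sigma_V$, so Proposition \ref{prop:sigma-smooth}(a) yields $\rho_{\psi_\epsilon} \geq (\tau + \epsilon \sigma_V)^*$; superadditivity of the convexity modulus then produces $\rho_{\psi_\epsilon + \epsilon W} \geq (\tau + \epsilon \sigma_V)^* + \epsilon \rho_W$, and a second application, this time of Proposition \ref{prop:rho-convex}(a), yields
\[
\tau \;\leq\; \bigl[(\tau + \epsilon \sigma_V)^* + \epsilon \rho_W \bigr]^*.
\]
Properness, convexity and lsc of $\tau + \epsilon \sigma_V$ allow one to invoke the monotone-conjugate identity $(A^* + B)^* = A \,\square\, B^*$, which combined with $(\epsilon \rho_W)^*(\cdot) = \epsilon \rho_W^*(\cdot/\epsilon)$ turns the preceding bound into the fixed-point inequality
\[
\tau \;\leq\; (\tau + \epsilon \sigma_V)\,\square\, \epsilon \rho_W^*(\cdot/\epsilon).
\]

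Next, I would expand the inf-convolution pointwise: for every $r \geq 0$ and $t \in [0, r/\epsilon]$,
\[
\tau(r) \leq \tau(r - \epsilon t) + \epsilon \sigma_V(r - \epsilon t) + \epsilon \rho_W^*(t).
\]
Convexity of $\tau$ yields $\tau(r) - \tau(r - \epsilon t) \geq \tau'_+(r - \epsilon t)\,\epsilon t$, so by letting $s := r - \epsilon t$ vary freely in $\R_+$ (take $r = s + \epsilon t$) one obtains
\[
\tau'_+(s) \;\leq\; \frac{\sigma_V(s) + \rho_W^*(t)}{t}, \qquad \forall s \geq 0,\ \forall t > 0.
\]
A direct computation from the definition of the monotone conjugate shows $\inf_{t > 0}(a + \rho_W^*(t))/t = (\rho_W^{**})^{-1}(a)$ for every $a \geq 0$: indeed, $m \leq (a + \rho_W^*(t))/t$ for all $t > 0$ is equivalent to $\sup_{t \geq 0}(mt - \rho_W^*(t)) = \rho_W^{**}(m) \leq a$, hence to $m \leq (\rho_W^{**})^{-1}(a)$. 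Applied at $a = \sigma_V(s)$, this gives $\tau'_+(s) \leq (\rho_W^{**})^{-1}(\sigma_V(s))$, and integration of the right derivative of the convex function $\tau$ from $0$ to $r$ (with $\tau(0) = 0$) produces the announced bound.

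The main subtlety lies in the first step: one has to check that the monotone-conjugate identity $(A^* + B)^* = A \,\square\, B^*$ genuinely applies on $\R_+$, and that the right-hand side of the fixed-point inequality is not identically $+\infty$, which is exactly what the standing hypothesis $\rho_W^*(v) < +\infty$ for some $v > 0$ (together with non-decreasingness of $\rho_W^*$) is there to guarantee. The remaining steps are elementary one-variable manipulations.
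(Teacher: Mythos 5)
Your proof is correct and follows essentially the same route as the paper: derive the fixed-point inequality $\sigma_{\phi_\epsilon} \leq (\sigma_{\phi_\epsilon}+\epsilon\sigma_V)\,\square\,\epsilon\rho_W^*(\cdot/\epsilon)$ by alternating Propositions~\ref{prop:rho-convex} and~\ref{prop:sigma-smooth} on the Schr\"odinger system, then use convexity of $\sigma_{\phi_\epsilon}$ to bound its right derivative by $(\rho_W^{**})^{-1}(\sigma_V(\cdot))$ and integrate. The only cosmetic difference is that you phrase the last step as computing $\inf_{t>0}(\sigma_V(s)+\rho_W^*(t))/t = (\rho_W^{**})^{-1}(\sigma_V(s))$, whereas the paper takes a supremum in $v$ to get $\rho_W^{**}(\sigma'_{\phi_\epsilon}(u))\leq\sigma_V(u)$ and inverts; also, you do not need equality in $(A^*+B)^*=A\,\square\,B^*$ — the unconditional inequality $(A^*+B)^*\leq A^{**}\square B^*\leq A\square B^*$ suffices, which is all the paper uses.
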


Note that $\rho_W^*(v)<+\infty$ for some $v>0$ is equivalent to $\rho_W^{**}$ is not identically $0$. 

In the proof below, we will make use of the infimum convolution operator $\square$ defined as follows: for any functions $f,g: \R^n \to \R \cup \{+\infty\}$, 
\[
f\square g (x) = \inf_{y\in \R^n} \{f(y)+g(x-y)\},\qquad x\in \R^n.
\]
\proof
As stated above, $\psi_\epsilon = \mathcal{L}_{\epsilon,\mathcal{H}^n}(\phi_\epsilon + \epsilon V)$. Since $\mathrm{dom}V = \R^n$ and $\psi_\epsilon \neq + \infty$ on the nonempty support of $\nu$, we deduce that $\psi_\epsilon$ is a proper l.s.c. convex function, thus $\rho_{\psi_\epsilon} \geq 0$. 
Moreover by hypothesis $\rho_W > -\infty$, this ensures that $\rho_{\psi_\epsilon+\epsilon W}\geq \rho_{\psi_\epsilon}+\epsilon \rho_W > - \infty$. Thus, using Proposition \ref{prop:rho-convex}, we deduce that $\sigma_{\phi_\epsilon} \leq \rho_{\psi_\epsilon + \epsilon W}^\ast \leq (\rho_{\psi_\epsilon} + \epsilon \rho_W)^\ast $. 
Similarly to $\psi_\epsilon$, the function $\phi_\epsilon$ is a convex proper l.s.c. function, thus $\sigma_{\phi_\epsilon} \geq 0$ and by hypothesis $\sigma_V > - \infty$. 
This allows us to apply Proposition \ref{prop:sigma-smooth} to get $\rho_{\psi_\epsilon}  \geq (\sigma_{\phi_\epsilon} + \epsilon \sigma_V)^\ast$. 
Combining both inequalities above yields 
\[
\sigma_{\phi_\epsilon} \leq ((\sigma_{\phi_\epsilon}+ \epsilon \sigma_V)^\ast+\epsilon \rho_W)^\ast
\]
because the Legendre transform is non-increasing.
Recall that if $f,g:\R_+ \to \R\cup \{+\infty\}$ are arbitrary functions, then 
$(f+g)^* \leq f^*\square g^*$ and $f^{\ast\ast} \leq f$.
Thus 
\[
\sigma_{\phi_\epsilon} \leq (\sigma_{\phi_\epsilon}+ \epsilon \sigma_V) \square \epsilon \rho_W^*(\,\cdot\,/\epsilon).
\]
Let $v>0$ be such that $\rho_W^*(v)<+\infty$. Applying this inequality at $\epsilon v$ gives
\[
\sigma_{\phi_\epsilon}(\epsilon v) \leq (\sigma_{\phi_\epsilon}+ \epsilon \sigma_V) \square \epsilon \rho_W^*(\,\cdot\,/\epsilon) (\epsilon v) \leq \epsilon \rho_W^*(v) <+\infty.
\]
Therefore $\mathrm{dom} \sigma_{\phi_\epsilon}$ is not reduced to $\{0\}$ and so, according to Item $(c)$ of Proposition \ref{prop:AP1},  $\mathrm{dom} \sigma_{\phi_\epsilon} = \R_+$. Take $u, v\geq0$ ; evaluating now the inequality at $u+\epsilon v$, one gets
\begin{equation*}
    \frac{\sigma_{\phi_\epsilon}(u+\epsilon v)- \sigma_{\phi_\epsilon}(u)}{\epsilon} \leq \sigma_V(u) + \rho_W^\ast(v).
\end{equation*}
Now, by the convexity of $\sigma_{\phi_\epsilon}$ given by Proposition \ref{prop:AP1}, we get
\[
     v  \sigma_{\phi_\epsilon}'(u) - \rho_W^\ast(v) \leq \sigma_V(u),
\]
where $\sigma_{\phi_\epsilon}'$ denotes say the right derivative of $\sigma_{\phi_\epsilon}.$
Taking the supremum in $v$ grants 
\[
\rho_W^{\ast\ast}(\sigma_{\phi_\epsilon}'(u)) \leq \sigma_V(u)
\] 
and so, by definition of $(\rho_W^{\ast\ast})^{-1}$, 
\[
\sigma_{\phi_\epsilon}'(u) \leq (\rho_W^{\ast\ast})^{-1}(\sigma_V(u)).
\] 
Finally integration grants
\[
\sigma_{\phi_\epsilon}(r) \leq \int_0^r (\rho_W^{\ast\ast})^{-1}(\sigma_V(s))\,ds.
\]
\endproof

\begin{corollary}\label{cor:OT-reg}
Under the assumptions of Theorem \ref{thm:smoothness-phi-epsilon}, the Brenier transport map $T$ sending $\mu$ onto $\nu$ satisfies the following: for $\mu$ almost all $x\neq y\in \R^n$,
\begin{equation}\label{eq:bound-T}
    \vert T(x) - T(y) \vert \leq \frac{2}{\vert x-y \vert} \sigma(|x-y|),
\end{equation}
and 
\begin{equation}\label{eq:bound-Tbis}
\sigma^*(|T(x)-T(y)|) \leq \sigma(|x-y|)
\end{equation}
with $\sigma(r)=\int_0^{r} (\rho_W^{\ast\ast})^{-1}(\sigma_V(s))\,ds,$ $r\geq0$. 
\end{corollary}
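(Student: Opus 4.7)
\emph{Proof strategy.} The plan is to extract an $\epsilon$-independent modulus of smoothness for the entropic Kantorovich potentials $\phi_\epsilon$ and then pass to the limit $\epsilon\to 0$ to transfer it to the Brenier potential $\phi$ with $T=\nabla\phi$. Setting $\sigma(r):=\int_0^r (\rho_W^{\ast\ast})^{-1}(\sigma_V(s))\,ds$, Theorem \ref{thm:smoothness-phi-epsilon} yields $\sigma_{\phi_\epsilon}\leq \sigma$ uniformly in $\epsilon>0$, which unpacks as
\begin{equation*}
\phi_\epsilon((1-t)x_0+tx_1)+t(1-t)\sigma(\vert x_1-x_0\vert)\geq (1-t)\phi_\epsilon(x_0)+t\phi_\epsilon(x_1)
\end{equation*}
for every $x_0,x_1\in\R^n$ and every $t\in[0,1]$, with a bound independent of $\epsilon$.

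Next I would invoke the convergence result recalled in the introduction (from \cite{Nutz2021}): after a suitable additive normalization, there is a sequence $\epsilon_k\to 0$ along which $\phi_{\epsilon_k}\to\phi$ $\mu$-almost everywhere, where $\phi$ is the convex Brenier potential associated with $T$. Since the assumption $\mathrm{dom}V=\R^n$ implies that $\mu$ has full support on $\R^n$, the convergence holds on a dense subset; combined with the convexity of the $\phi_{\epsilon_k}$ and the uniform local Lipschitz control provided by applying Proposition \ref{prop:continuity} with the common modulus $\sigma$, this upgrades to locally uniform convergence on $\R^n$. Passing to the limit in the displayed inequality then gives $\sigma_\phi\leq\sigma$.

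Finally, because $\phi:\R^n\to\R$ is convex with full domain, $\partial\phi(x)$ is nonempty at every $x$ and reduces to $\{T(x)\}$ at every point of differentiability, which occurs for $\mu$-a.e.\ $x$. Applying Proposition \ref{prop:continuity} to $\phi$ at two such points $x\neq y$ with $y_0=T(x)$, $y_1=T(y)$ yields
\begin{equation*}
\sigma_\phi^\ast(\vert T(x)-T(y)\vert)\leq \sigma_\phi(\vert x-y\vert)\qquad\text{and}\qquad \vert T(x)-T(y)\vert\leq \frac{2\sigma_\phi(\vert x-y\vert)}{\vert x-y\vert}.
\end{equation*}
Combining these with $\sigma_\phi\leq\sigma$ (so that $\sigma_\phi^\ast\geq \sigma^\ast$ pointwise) and the monotonicity of $\sigma_\phi$ respectively produces \eqref{eq:bound-T} and \eqref{eq:bound-Tbis}.

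The main obstacle I expect is the careful justification of the limiting procedure in the second paragraph: the entropic potentials are only defined up to additive constants and must be normalized so the limit truly coincides with the Brenier potential, and the $\mu$-a.e.\ convergence from \cite{Nutz2021} needs to be strengthened enough that the $\sigma$-smoothness inequality for $\phi_{\epsilon_k}$ passes to every pair $(x_0,x_1)$, not merely to an a.e.\ subset. Both issues are resolved by the convex-analysis argument sketched above, which is made possible precisely by the $\epsilon$-uniformity supplied by Theorem \ref{thm:smoothness-phi-epsilon}.
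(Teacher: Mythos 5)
Your proof is correct and follows essentially the same route as the paper: apply Theorem \ref{thm:smoothness-phi-epsilon} to obtain an $\epsilon$-independent modulus $\sigma$ for $\phi_\epsilon$, invoke the $\mu$-a.e.\ convergence $\phi_{\epsilon_k}\to\phi$ from \cite{Nutz2021}, transfer the $\sigma$-smoothness inequality to the limit $\phi$, and apply Proposition \ref{prop:continuity} together with the trivial comparisons $\sigma_\phi\leq\sigma\Rightarrow\sigma_\phi^\ast\geq\sigma^\ast$. The one cosmetic difference is in how the a.e.\ limit is upgraded to all pairs $(x_0,x_1)$: you argue via locally uniform convergence of convex functions with a common modulus, while the paper passes the inequality a.e.\ and then closes the gap using the continuity of the finite-valued convex limit $\phi$; both are valid and equally routine.
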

Note that in the case where the function $\frac{1}{r}\int_0^{r} (\rho_W^{\ast\ast})^{-1}(\sigma_V(s))\,ds \to 0$ as $r \to 0^+$, the function $T$ is uniformly continuous and the bound \eqref{eq:bound-T} holds for all $x\neq y\in \R^n.$

\proof By Theorem \ref{thm:smoothness-phi-epsilon} $\phi_\epsilon$ is $\sigma$-smooth for any $\epsilon > 0$ where $\sigma(r) = \int_0^r (\rho_W^{\ast\ast})^{-1}(\sigma_V(s))\,ds$, $r\geq0.$
According to \cite[Theorem 1.1]{Nutz2021}, as $\epsilon \to 0$, the potential $\phi_\epsilon$ converges in $L^1(\mu)$ towards $\phi$ the (up to constant unique) Brenier potential for the transport of $\mu$ onto $\nu$, up to extraction of a subsequence. Thus, up to extraction of a subsequence, $\phi_\epsilon$ converges $\mu$ almost everywhere to $\phi$. Since $\mu$ has full support this convergence holds almost everywhere. Moreover, for all $\epsilon>0$, it holds 
\[
\phi_{\epsilon}((1-t)x_0+tx_1)+t(1-t) \sigma(\vert x_1-x_0 \vert)\geq (1-t)\phi_{\epsilon}(x_0)+t\phi_{\epsilon}(x_1),\qquad \forall x_0,x_1 \in \R^n, \quad\forall  t \in ]0,1[
\]
So passing to the limit when $\epsilon \to 0$, one sees that $\phi$ satisfies the same inequality almost everywhere. But $\phi$ has full domain and is convex thus it is continuous and the inequality holds everywhere. In other words, $\phi$ is $\sigma$-smooth. Applying Proposition \ref{prop:continuity} gives that, for all $x\neq y \in \R^n$ and $a\in \partial \phi(x)$, $b\in \partial \phi(y)$,
\[
\vert b-a\vert \leq \frac{2}{\vert y-x\vert}\sigma(|y-x|)\qquad \text{and}\qquad \sigma^*(|b-a|) \leq \sigma(|y-x|).
\]
Since $\phi$ is almost everywhere differentiable and $\partial \phi(x) = \{\nabla \phi(x)\} = \{T(x)\}$ at every point of differentiability, this completes the proof.
\endproof

\subsection{Case of measures supported on affine subspaces}

The purpose of this section is twofold: adapt the Theorem \ref{thm:smoothness-phi-epsilon} to directional moduli in order to take into account anisotropies of the measures and use this result to derive regularity estimates of the optimal transport between singular measures (which is a pathological case of anisotropy). Here we assume that there are $K,L$ two affine subspaces of $\R^n$ and two functions $V,W$ defined respectively on $K,L$ such that: 
\begin{equation*}
    \mu(dx) = e^{-V(x)}\,\mathcal{H}_K(dx)\qquad \text{and}\qquad\nu(dy) = e^{-W(y)}\,\mathcal{H}_L(dy),
\end{equation*}
where $\mathcal{H}_K$ and $\mathcal{H}_L$ denote the Lebesgue measures on $K,L$, that is $\mathcal{H}_K = \mathcal{H}_{|K}^k$ and $\mathcal{H}_{L} = \mathcal{H}_{|L}^{\ell}$ with $k,l$ the respective dimensions of $K$ and $L$. 
Similarly to paragraph \ref{subsec:abs-cont-measures}, the Schrödinger potentials $(\phi_\epsilon,\psi_\epsilon)$ satisfy :
\begin{equation*}
    \phi_\epsilon = \mathcal{L}_{\epsilon,\mathcal{H}_L}(\psi_\epsilon + \epsilon W),\quad \psi_\epsilon = \mathcal{L}_{\epsilon,\mathcal{H}_K}(\phi_\epsilon + \epsilon V).
\end{equation*}
In the same fashion we derive an ($\epsilon$-independent) estimate on the directional smoothness modulus of $\phi_\epsilon$.
\begin{theorem}\label{thm:phi-epsilon-dir-smoothness}
Let $K,L$ be two affine subspaces of $\R^n$. Let $\mu(dx) = e^{-V(x)}\mathcal{H}_K(dx)$ and $\nu(dy) = e^{-W(y)}\mathcal{H}_L(dy)$ be two probability measures. Assume that $V$ is $S$-smooth, for some $S:\R^n \to \R_+\cup \{+\infty\}$ such that $\mathrm{dom} S = \overset{\longrightarrow}{K}$ and $W$ is $R$-convex with $R:\R^n \to \R \cup\{+\infty\}$ such that $R^\ast(v) < + \infty$ for some $v \neq 0$. Further assume that $\mathrm{dom}S \subset \mathrm{Vect}(\mathrm{dom}R^\ast)$. Then, for all $\epsilon > 0$, the following holds 
\begin{equation}\label{eq:Majoration-S_ep}
    S_{\phi_\epsilon}(d) \leq \int_0^1 \sup_{R^{\ast\ast}(p) \leq S (td)} \langle p,d\rangle \,dt,\qquad \forall d\in \R^n.
\end{equation}
\end{theorem}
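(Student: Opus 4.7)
The plan follows the scheme of Theorem \ref{thm:smoothness-phi-epsilon}, using the directional versions of the dualities between convexity and smoothness moduli established in Section \ref{sec:Entropic-Legendre}. I first apply Proposition \ref{prop:sigma-smooth}$(b)$ to $\phi_\epsilon+\epsilon V$, a proper function with affine domain $K$. Since $\phi_\epsilon$ is convex (it is an entropic Legendre transform), the sum $\phi_\epsilon+\epsilon V$ is $(S_{\phi_\epsilon}+\epsilon S)$-smooth with direction set contained in $\overset{\longrightarrow}{K}=\mathrm{dom}\,S$, so Proposition \ref{prop:sigma-smooth}$(b)$ applied to the Sinkhorn equation $\psi_\epsilon=\mathcal{L}_{\epsilon,\mathcal{H}_K}(\phi_\epsilon+\epsilon V)$ yields $R_{\psi_\epsilon}\geq (S_{\phi_\epsilon}+\epsilon S)^\ast$. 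Since $\psi_\epsilon$ is itself convex, $\psi_\epsilon+\epsilon W$ is $\bigl((S_{\phi_\epsilon}+\epsilon S)^\ast+\epsilon R\bigr)$-convex. Feeding this into Proposition \ref{prop:rho-convex}$(b)$ applied to $\phi_\epsilon=\mathcal{L}_{\epsilon,\mathcal{H}_L}(\psi_\epsilon+\epsilon W)$ gives $S_{\phi_\epsilon}\leq \bigl((S_{\phi_\epsilon}+\epsilon S)^\ast+\epsilon R\bigr)^\ast$. Using successively $(f+g)^\ast\leq f^\ast\square g^\ast$, $f^{\ast\ast}\leq f$, and $(\epsilon R)^\ast(v)=\epsilon R^\ast(v/\epsilon)$, I arrive at the fixed-point inequality
\begin{equation*}
S_{\phi_\epsilon}\;\leq\;(S_{\phi_\epsilon}+\epsilon S)\,\square\,\epsilon R^\ast(\,\cdot\,/\epsilon),
\end{equation*}
which rewrites, for every $d'\in\R^n$ and $u\in\R^n$, as $S_{\phi_\epsilon}(d'+\epsilon u)\leq S_{\phi_\epsilon}(d')+\epsilon S(d')+\epsilon R^\ast(u)$.

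Next, I convert this into a pointwise bound on the subdifferential of $S_{\phi_\epsilon}$. By Proposition \ref{prop:directional-AP1}$(b)$ the function $S_{\phi_\epsilon}$ is convex, so for any $p\in\partial S_{\phi_\epsilon}(d')$ the subgradient inequality combined with the displayed bound yields
\begin{equation*}
\epsilon\langle p,u\rangle \;\leq\; S_{\phi_\epsilon}(d'+\epsilon u)-S_{\phi_\epsilon}(d') \;\leq\; \epsilon S(d')+\epsilon R^\ast(u),
\end{equation*}
i.e.\ $\langle p,u\rangle-R^\ast(u)\leq S(d')$ for every $u\in\R^n$. Taking the supremum in $u$ produces the key bound $R^{\ast\ast}(p)\leq S(d')$ valid for every $p\in\partial S_{\phi_\epsilon}(d')$.

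To conclude, I integrate along the segment from $0$ to $d$. The convex function $g:[0,1]\to\R_+$ defined by $g(t)=S_{\phi_\epsilon}(td)$ satisfies $g(0)=0$ and $g(1)=S_{\phi_\epsilon}(d)$. If $d\notin\mathrm{Vect}(\mathrm{dom}\,R^\ast)$ then the supremum on the right-hand side of \eqref{eq:Majoration-S_ep} is $+\infty$ on any $t$ with $S(td)<+\infty$ and the inequality is trivial; the assumption $\mathrm{dom}\,S\subset \mathrm{Vect}(\mathrm{dom}\,R^\ast)$ ensures that the interesting case reduces to $d\in\mathrm{Vect}(\mathrm{dom}\,R^\ast)$, where the sublevel sets $\{p:R^{\ast\ast}(p)\leq S(td)\}$ yield finite directional suprema, so $g$ is finite and absolutely continuous on $[0,1]$. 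Its right derivative satisfies $g'_+(t)=\sup_{p\in\partial S_{\phi_\epsilon}(td)}\langle p,d\rangle\leq \sup_{R^{\ast\ast}(p)\leq S(td)}\langle p,d\rangle$ by the previous step. Integrating this bound on $[0,1]$ produces precisely \eqref{eq:Majoration-S_ep}. The main technical hurdle is the first paragraph: one must carefully check that the affine-domain and direction-set hypotheses required for the directional versions of Propositions \ref{prop:sigma-smooth}$(b)$ and \ref{prop:rho-convex}$(b)$ are satisfied at each step (in particular that $\mathrm{dom}\,S_{\phi_\epsilon+\epsilon V}$ is compatible with $\overset{\longrightarrow}{K}$), so that the chain of dualities producing the fixed-point inequality runs unobstructed.
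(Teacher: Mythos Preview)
Your approach is essentially the paper's: apply Proposition~\ref{prop:sigma-smooth}(b) then Proposition~\ref{prop:rho-convex}(b) to the Sinkhorn system, derive the fixed-point inequality $S_{\phi_\epsilon}\leq (S_{\phi_\epsilon}+\epsilon S)\square \epsilon R^\ast(\cdot/\epsilon)$, convert it into the subgradient bound $R^{\ast\ast}(p)\leq S(d')$ for $p\in\partial S_{\phi_\epsilon}(d')$, and integrate along the ray.

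There is one genuine gap in your write-up. In the final paragraph you assert that for $d\in\mathrm{Vect}(\mathrm{dom}\,R^\ast)$ the function $g(t)=S_{\phi_\epsilon}(td)$ is finite, justifying this by saying the sublevel sets of $R^{\ast\ast}$ yield finite directional suprema. But finiteness of the right-hand side does not by itself force $S_{\phi_\epsilon}(td)<+\infty$; you must first show $\mathrm{Vect}(\mathrm{dom}\,R^\ast)\subset \mathrm{dom}\,S_{\phi_\epsilon}$. The paper does this explicitly: evaluate the fixed-point inequality at $d'=0$, $u=v\in\mathrm{dom}\,R^\ast$, which gives $S_{\phi_\epsilon}(\epsilon v)\leq \epsilon R^\ast(v)<+\infty$, and then invoke Proposition~\ref{prop:directional-AP1}(c) (the domain of $S_{\phi_\epsilon}$ is a vector space). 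This step is also what guarantees $\partial S_{\phi_\epsilon}(td)\neq\emptyset$ at every point of the segment, since a convex function whose domain is a linear subspace is subdifferentiable everywhere on it; you use nonemptiness implicitly when writing $g'_+(t)=\sup_{p\in\partial S_{\phi_\epsilon}(td)}\langle p,d\rangle$ but never verify it. Once you insert this one line, your argument matches the paper's proof.
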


\proof
The proof is essentially the same as the one of Theorem \ref{thm:smoothness-phi-epsilon}.
Note that $\psi_\epsilon$ is a proper l.s.c. convex function. 
In particular $R_{\psi_\epsilon} \geq 0$ and, by hypothesis, $R_{\psi_\epsilon} +\epsilon R > -\infty$ thus, by Proposition \ref{prop:rho-convex}, $S_{\phi_\epsilon} \leq (R_{\psi_\epsilon} +\epsilon R)^\ast$.
Likewise, $\psi_\epsilon$ is a proper l.s.c. convex function and thus $S_{\phi_\epsilon} \geq 0$. Moreover, $\mathrm{dom}(S_{\phi_\epsilon} + \epsilon S) \subset \overset{\longrightarrow}{K}$. 
Since $\phi_\epsilon + \epsilon V$ is $(S_{\phi_\epsilon} + \epsilon S)$-smooth, Proposition \ref{prop:sigma-smooth} ensures $R_{\psi_\epsilon} \geq (S_{\phi_\epsilon} + \epsilon S)^\ast$. 
Combining the two inequalities above gives $S_{\phi_\epsilon} \leq ((S_{\phi_\epsilon}+\epsilon S)^\ast + \epsilon R)^\ast$, from which we deduce 
\begin{equation}\label{eq:S^*R^*}
S_{\phi_\epsilon} \leq \left(S_{\phi_\epsilon}+\epsilon S\right) \square \epsilon R^\ast(\,\cdot\,/\epsilon).
\end{equation}
If $ v \in \mathrm{dom}R^\ast$, then \eqref{eq:S^*R^*} yields
\begin{equation*}
    S_{\phi_\epsilon}(\epsilon v) \leq \epsilon R^\ast(v) < + \infty.
\end{equation*}
Thus $\epsilon v \in \mathrm{dom}S_{\phi_\epsilon}$. Since, according to Proposition \ref{prop:directional-AP1}, $\mathrm{dom}S_{\phi_\epsilon}$ is a vector space, we conclude that $\mathrm{Vect}(\mathrm{dom}R^\ast)\subset \mathrm{dom}S_{\phi_\epsilon}$. 
 
Let $u \in \mathrm{dom} S_{\phi_\epsilon}$ and $v \in \R^n$, then \eqref{eq:S^*R^*} yields
\begin{equation*}
   \frac{S_{\phi_\epsilon}(u+\epsilon v) - S_{\phi_\epsilon}(u) }{\epsilon} \leq S(u) + R^\ast(v)
\end{equation*}
which grants, by convexity of $S_{\phi_\epsilon}$, that for any $a \in \partial S_{\phi_\epsilon}(u)$, it holds
\begin{equation*}
    \langle a,v\rangle - R^\ast(v) \leq S(u).
\end{equation*}
Note that $\partial S_{\phi_\epsilon}(u)$ is not empty since the domain of $S_{\phi_\epsilon}$ is a vector subspace of $\R^n.$
Optimizing over $v$ gives $R^{\ast\ast}(a) \leq S(u)$, for all $a\in \partial S_{\phi_\epsilon}(u)$.
Finally, for any $d \in \mathrm{dom} S_{\phi_\epsilon}$, it holds
\[
S_{\phi_\epsilon}(d) = \int_0^1 s'(t)\,dt
\]
with $s'(t) = \lim_{r\to 0^+} \frac{S_{\phi_\epsilon}((t+r)d)- S_{\phi_\epsilon}(td)}{r}$, the directional derivative of $S_{\phi_\epsilon}$ at $td$ in the direction $d$. Using the well known identity $s'(t) = \sup_{a \in \partial S_{\phi_\epsilon}(td)} \langle a,d\rangle$, one gets
\[
s'(t) \leq \sup_{R^{\ast\ast}(p) \leq S (td)} \langle p,d\rangle
\]
which proves \eqref{eq:Majoration-S_ep} for $d\in \mathrm{dom} S_{\phi_\epsilon}$. 
Since, by hypothesis, $\mathrm{dom}S \subset \mathrm{Vect}(\mathrm{dom}R^\ast)$ and we have seen that $\mathrm{Vect}(\mathrm{dom}R^\ast)\subset \mathrm{dom}S_{\phi_\epsilon}$, we conclude that $\mathrm{dom} S \subset \mathrm{dom} S_{\phi_\epsilon}$. Thus, if $d \notin \mathrm{dom} S_{\phi_\epsilon}$, then for any $t>0$, $td \notin \mathrm{dom} S_{\phi_\epsilon}$ and so $td \notin \mathrm{dom} S$, in which case 
\[
\sup_{R^{\ast\ast}(p) \leq S (td)} \langle p,d\rangle =\sup_{p \in \R^n} \langle p,d\rangle= +\infty
\] for all $t\in (0,1]$. Thus \eqref{eq:Majoration-S_ep} is also true in this case, which completes the proof.
\endproof

Letting $\epsilon \to 0$ enables to estimate the directional modulus of smoothness of Kantorovich potentials between $\mu$ and $\nu$. Recall that a Kantorovich potential for the transport of $\mu$ onto $\nu$ is a lower semi continuous convex function $\phi : \R^n \to \R \cup \{+\infty\}$ such that 
\[
\int \phi\,d\mu + \int \phi^*\,d\nu = \sup_{\pi \in \Pi(\mu,\nu)} \iint \langle x,y \rangle \,\pi(dxdy).
\]
Kantorovich potentials are not always unique. Nevertheless, under the assumptions of Theorem \ref{thm:phi-epsilon-dir-smoothness}, Kantorovich potentials are uniquely determined on $K$ (see e.g \cite[Proposition B.2]{BGN}): if $\phi_1$ and $\phi_2$ are two Kantorovich potentials, then $\phi_1-\phi_2$ is constant on $K$. In all what follows, we will always work with the Kantorovich potential $\phi$ such that $\phi=+\infty$ on $\R^n \setminus K$ (which is thus uniquely determined on $\R^n$ up to constant).

\begin{corollary}\label{cor:OT-reg-dir}
Under the assumptions of Theorem \ref{thm:phi-epsilon-dir-smoothness}, the Kantorovich potential $\phi$  for the optimal transport problem from $\mu$ to $\nu$ satisfies
\begin{equation}\label{eq:S-Kant-pot}
\phi((1-t)x_0+tx_1)+t(1-t) \bar{S}(x_1-x_0)\geq (1-t)\phi(x_0)+t\phi(x_1),
\end{equation}
for all $x_0,x_1 \in \R^n$ and for all  $t \in ]0,1[$, where 
\begin{equation*}
    \bar{S}(d) = \int_0^1 \sup_{R^{\ast\ast}(p) \leq S(td)} \langle p,d\rangle \,dt,\qquad d\in \R^n.
\end{equation*}
\end{corollary}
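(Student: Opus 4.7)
The plan is to mirror the proof of Corollary~\ref{cor:OT-reg}, replacing the non-directional smoothness bound of Theorem~\ref{thm:smoothness-phi-epsilon} by the directional estimate of Theorem~\ref{thm:phi-epsilon-dir-smoothness}, and accounting for the fact that $\mu$ and $\nu$ may be supported on proper affine subspaces. The first step is to apply Theorem~\ref{thm:phi-epsilon-dir-smoothness} to get that, for every $\epsilon > 0$, the entropic potential $\phi_\epsilon$ is directionally $\bar S$-smooth: by definition of the directional modulus $S_{\phi_\epsilon}$ and the bound $S_{\phi_\epsilon}(d) \leq \bar S(d)$, one has
\[
\phi_\epsilon((1-t)x_0+tx_1) + t(1-t)\bar S(x_1-x_0) \geq (1-t)\phi_\epsilon(x_0) + t\phi_\epsilon(x_1)
\]
for all $t \in ]0,1[$ and all $x_0,x_1 \in \R^n$ with $(1-t)x_0+tx_1 \in \mathrm{dom}\phi_\epsilon$. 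Crucially, this bound is uniform in $\epsilon$.

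Next, I would invoke a stability result in the spirit of \cite{Nutz2021} (adapted to the present setting where $\mu$ is supported on the affine subspace $K$), asserting that along some subsequence $\epsilon_k \to 0$, $\phi_{\epsilon_k} \to \phi$ in $L^1(\mu)$, and hence $\mu$-almost everywhere. Since $V$ is finite on $K$, the measure $\mu$ is equivalent to $\mathcal{H}_K$ on $K$, so the convergence also holds $\mathcal{H}_K$-a.e. on $K$. A Fubini-type argument---exploiting that, for each $t \in ]0,1[$, the affine map $(x_0,x_1) \mapsto (1-t)x_0+tx_1$ from $K \times K$ to $K$ pulls $\mathcal{H}_K$-null sets back to $\mathcal{H}_K\otimes \mathcal{H}_K$-null sets---then lets one pass to the limit in the above inequality for $\mathcal{H}_K\otimes \mathcal{H}_K$-almost every pair $(x_0,x_1)\in K\times K$.

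The remaining task is to extend the a.e. inequality on $K \times K$ to every pair in $\R^n \times \R^n$. If $x_0 \notin K$ or $x_1 \notin K$, then either the direction $x_1-x_0$ fails to lie in $\overset{\longrightarrow}{K}$, in which case $\bar S(x_1-x_0)=+\infty$ (indeed, since $\mathrm{dom} S = \overset{\longrightarrow}{K}$, one has $S(td) = +\infty$ for all $t>0$ whenever $d \notin \overset{\longrightarrow}{K}$, so the supremum defining $\bar S(d)$ is over all of $\R^n$ and equals $+\infty$ for $d \neq 0$), or the midpoint $(1-t)x_0+tx_1$ also lies outside $K$ and $\phi((1-t)x_0+tx_1)=+\infty$; in either subcase the left-hand side is $+\infty$ and the inequality is trivial. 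For pairs $(x_0,x_1) \in K\times K$, the extension uses that the convex lower semicontinuous function $\phi$ is continuous on the relative interior of $\mathrm{dom}\phi \cap K$, allowing the a.e. inequality to be extended to this open set by continuity, and then to boundary points by lower semi-continuity of $\phi$ together with convexity of $\mathrm{dom}\phi$. The main obstacle is this last extension step: treating boundary points of $\mathrm{dom}\phi \cap K$ and ruling out pathological configurations where only one of $x_0, x_1$, or $(1-t)x_0+tx_1$ lies outside $\mathrm{dom}\phi$ while $\bar S(x_1-x_0)$ is finite, which is ultimately handled by the convexity of $\mathrm{dom}\phi$ together with the geometric observations above.
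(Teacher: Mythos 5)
Your overall route matches the paper's: apply Theorem~\ref{thm:phi-epsilon-dir-smoothness} to get a uniform-in-$\epsilon$ directional smoothness bound on $\phi_\epsilon$, pass to the limit $\mu$-a.e.\ (your Fubini step to handle the midpoint is a fair way to make this precise), dispose of the case where $x_0$ or $x_1 \notin K$ via the fact that $\mathrm{dom}\,\bar S \subset \overset{\longrightarrow}{K}$ and $K$ is affine, and finally extend from a.e.\ to everywhere. The weak point is the last extension step, where you describe a ``main obstacle'' at boundary points of $\mathrm{dom}\,\phi\cap K$ and propose to handle them by lower semicontinuity of $\phi$. That fallback does not actually work: in the inequality $\phi(x_t)+t(1-t)\bar S(x_1-x_0)\geq(1-t)\phi(x_0)+t\phi(x_1)$, passing to a limit with lower semicontinuity alone pushes both sides up, so it cannot close the argument. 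Fortunately, there is nothing to fall back to: the paper's short observation, which your write-up does not record, is that $\mathrm{dom}\,\phi$ contains the support of $\mu$, namely all of $K$. Since $K$ is an affine subspace (hence its own relative interior) and $\phi$ is convex, $\phi$ is \emph{finite and continuous} everywhere on $K$ in the relative topology, so the extension from $\mathcal{H}_K\otimes\mathcal{H}_K$-a.e.\ to every pair in $K\times K$ is by plain continuity; no boundary cases exist. If you want to derive this containment from your own a.e.\ reasoning: $\phi$ is $\mu$-a.e.\ finite on $K$, $\mathrm{dom}\,\phi$ is convex, and a convex set of full $\mathcal{H}_K$-measure in an affine subspace $K$ must be all of $K$. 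Replacing the vague ``ultimately handled by convexity'' sentence with this one observation would make your proof complete.
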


\proof
If $\phi((1-t)x_0 + tx_1) =+\infty$ or $\bar{S}(x_1-x_0)=+\infty$, then \eqref{eq:S-Kant-pot} obviously holds.
Suppose that $\phi((1-t)x_0 + tx_1) <+\infty$ and $\bar{S}(x_1-x_0)<+\infty$.
Note that $\mathrm{dom} \bar{S} \subset \mathrm{dom} S = \overset{\rightarrow}{K}$. Thus, $(1-t)x_0+tx_1 \in K$ and $x_1-x_0 \in \overset{\rightarrow}{K}$. Since $K$ is an affine subspace, we conclude that $x_0 \in K$ and $x_1\in K$.
So it is enough to prove \eqref{eq:S-Kant-pot} when $x_0,x_1 \in K$.
Following the proof of Corollary \ref{cor:OT-reg}, we conclude with the same limiting argument that, for a fixed $t \in ]0,1[$, $\phi$ satisfies \eqref{eq:S-Kant-pot} for $\mu$-almost every $x_0,x_1$.
However $\mathrm{dom}\phi$ contains the support of $\mu$ which is $K$. Thus $\phi$ is continuous on $K$ with respect to the relative topology on $K$ and this ensures that \eqref{eq:S-Kant-pot} holds for any $x_0,x_1 \in K$, and completes the proof.
\endproof

\section{Examples}

\subsection{Global H\"older estimates}
First let us give a proof of the initial contraction result by Caffarelli \cite{Caffarelli2000} stated in Theorem \ref{thm:Caf-intro}, using the arguments developed in this paper.

\proof[Proof of Theorem \ref{thm:Caf-intro}.]
Using the result of Section \ref{sec:ex-R-conv-S-smooth} we deduce that 
\begin{equation*}
    \sigma_V(r) = \alpha_V \frac{r^2}{2}, \qquad\text{and}\qquad \rho_W(r) = \beta_W \frac{r^2}{2},\qquad r\in\R_+.
\end{equation*}
Thus Corollary \ref{cor:OT-reg} ensures
\begin{equation*}
    \vert \nabla \phi(x) -\nabla \phi(y) \vert \leq \frac{2}{\vert x-y \vert} \int_0^{\vert x-y \vert} (\rho_W^{\ast\ast})^{-1}(\sigma_V(s)) ds
\end{equation*}
for any $x,y \in \R^n$. A direct computation grants $(\rho_W^{\ast\ast})^{-1}(\sigma_V(s)) = \sqrt{\alpha_V/\beta_W} s$ and the result follows by integration.
\endproof
The following generalization of Theorem \ref{thm:Caf-intro}, was obtained by Kolesnikov in \cite{Kol10}, extending the result to less regular densities and obtaining global Hölder estimates:
\begin{theorem}[Kolesnikov \cite{Kol10}]\label{thm:kol} Let $\mu(dx) = e^{-V(x)}dx$ and $\nu(dy) = e^{-W(y)}dy$ be two measures on $\R^n$ where  $V,W \in C^1$ are such that $\sigma_V(r) \leq \alpha_V r^{p}$ and $\rho_W(r) \geq \beta_W r^{q}$ with $1 \leq p \leq 2 \leq q$. Then, the optimal transport map $\nabla \phi$ from $\mu$ to $\nu$ satisfies
\begin{equation*}
    \vert \nabla \phi(x)-\nabla \phi(y)\vert \leq \left(\frac{q}{p}\right)^{\frac{p}{p+q}} \left(\frac{\alpha_V}{\beta_W}\right)^{\frac{1}{q}} \vert x-y \vert^{\frac{p}{q}},\qquad \forall x,y \in \R^n.
\end{equation*}
\end{theorem}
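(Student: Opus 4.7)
\proof[Proof proposal for Theorem \ref{thm:kol}.]
The plan is to invoke Corollary \ref{cor:OT-reg}, using in particular its sharper form \eqref{eq:bound-Tbis}, and then to specialize to the power-function hypotheses. First, since $q\geq 2\geq 1$, the function $u\mapsto \beta_W u^q$ is convex and non-decreasing on $\R_+$ with value $0$ at $0$. The assumption $\rho_W(r)\geq \beta_W r^q$ therefore gives $\rho_W^{\ast\ast}(r)\geq \beta_W r^q$ (the biconjugate being the largest such minorant below $\rho_W$), and so by definition of the generalized inverse,
\[
(\rho_W^{\ast\ast})^{-1}(t) \leq (t/\beta_W)^{1/q}, \qquad t\geq 0.
\]
Combining this with the bound $\sigma_V(s) \leq \alpha_V s^p$ yields
\[
(\rho_W^{\ast\ast})^{-1}(\sigma_V(s)) \leq (\alpha_V/\beta_W)^{1/q} s^{p/q},
\]
and integrating from $0$ to $r$ produces the explicit majorant
\[
\sigma(r):=\int_0^r (\rho_W^{\ast\ast})^{-1}(\sigma_V(s))\,ds \leq \tilde\sigma(r):=(\alpha_V/\beta_W)^{1/q}\,\frac{q}{p+q}\,r^{(p+q)/q}.
\]

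Next, the hypotheses of Theorem \ref{thm:smoothness-phi-epsilon} are readily verified ($\sigma_V$ has full domain, $\rho_W\geq 0$, and $\rho_W^\ast$ is finite everywhere since $\rho_W$ grows faster than linearly), so Corollary \ref{cor:OT-reg} applies and in particular delivers
\[
\sigma^\ast(|T(x)-T(y)|) \leq \sigma(|x-y|) \leq \tilde\sigma(|x-y|), \qquad \mu\otimes\mu\text{-a.e. } (x,y).
\]
Since $\sigma\leq \tilde\sigma$, monotone conjugation reverses the inequality, giving $\sigma^\ast \geq \tilde\sigma^\ast$, hence
\[
\tilde\sigma^\ast(|T(x)-T(y)|) \leq \tilde\sigma(|x-y|).
\]
The computation of $\tilde\sigma^\ast$ for a pure power $\tilde\sigma(r)=Cr^\alpha$ with $\alpha=(p+q)/q>1$ is classical: optimizing $rv - Cr^\alpha$ over $r\geq 0$ gives
\[
\tilde\sigma^\ast(v) = \frac{\alpha-1}{\alpha}\,\frac{v^{\alpha/(\alpha-1)}}{(C\alpha)^{1/(\alpha-1)}} = \frac{p}{p+q}\,(\alpha_V/\beta_W)^{-1/p}\,v^{(p+q)/p}.
\]

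Plugging this expression into $\tilde\sigma^\ast(|T(x)-T(y)|)\leq \tilde\sigma(|x-y|)$, rearranging, and taking the power $p/(p+q)$ on both sides yields
\[
|T(x)-T(y)| \leq \left(\frac{q}{p}\right)^{p/(p+q)} (\alpha_V/\beta_W)^{(1/p+1/q)\cdot p/(p+q)} |x-y|^{p/q} = \left(\frac{q}{p}\right)^{p/(p+q)} (\alpha_V/\beta_W)^{1/q} |x-y|^{p/q},
\]
using that $(1/p+1/q)\cdot p/(p+q)=1/q$. Finally, since $T=\nabla\phi$ is continuous on the full-measure set where it is defined, and the bound is a pointwise Hölder estimate on a dense subset, it extends to all $x,y\in \R^n$ by the same continuity argument used in the proof of Corollary \ref{cor:OT-reg}. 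The only genuine subtlety is noticing that the elementary bound $|T(x)-T(y)|\leq \frac{2}{|x-y|}\sigma(|x-y|)$ from \eqref{eq:bound-T} yields the suboptimal constant $2q/(p+q)$, so one must use the sharper conjugate-form inequality \eqref{eq:bound-Tbis} to recover Kolesnikov's constant $(q/p)^{p/(p+q)}$.
\endproof
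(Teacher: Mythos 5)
Your proposal is correct and follows essentially the same route as the paper's proof: pass through Corollary \ref{cor:OT-reg}, identify the explicit power-law smoothness modulus $\tilde\sigma$ for the Kantorovich potential, compute its monotone conjugate, and apply the sharper bound \eqref{eq:bound-Tbis}; your closing observation that \eqref{eq:bound-T} gives only the suboptimal constant $2q/(p+q)$ is exactly the content of the remark that follows Theorem \ref{thm:kol} in the paper. The only small difference is purely cosmetic: you insert an intermediate majorant $\tilde\sigma\geq\sigma$ and compare conjugates, while the paper directly asserts $\phi$ is $S$-smooth for the power-law $S$ before conjugating — logically these are the same step.
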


\proof
The reasoning is the same as before. Note that $\rho:r \mapsto \beta_W r^q$ is convex therefore $\rho^{\ast\ast} = \rho$. From this we deduce that
$(\rho_W^{\ast\ast})^{-1}(\sigma_V(s)) \leq \left(\frac{\alpha_V}{\beta_W}\right)^{\frac{1}{q}} s^{\frac{p}{q}}$. Therefore, $\phi$ is $S$-smooth, with $S(t) = \left(\frac{\alpha_V}{\beta_W}\right)^{1/q} \frac{p}{p+q} t^{1+\frac{p}{q}}$, $t\geq0$. A simple calculation shows that $S^*(u) = \left(\frac{\alpha_V}{\beta_W}\right)^{-1/p} \frac{u^{1+\frac{q}{p}}}{1+\frac{q}{p}}$, $u\geq0$. A direct application of \eqref{eq:bound-Tbis} of Corollary \ref{cor:OT-reg} gives the result.
\endproof
\begin{remark}
Note that applying \eqref{eq:bound-T} instead of \eqref{eq:bound-Tbis} in the proof above, yields to the bound
\begin{equation*}
    \vert \nabla \phi(x)-\nabla \phi(y)\vert \leq \frac{2q}{p+q} \left(\frac{\alpha_V}{\beta_W}\right)^{\frac{1}{q}} \vert x-y \vert^{\frac{p}{q}},\qquad \forall x,y \in \R^n,
\end{equation*}
which is slightly worse, since $\left(\frac{q}{p}\right)^{\frac{p}{p+q}} \leq \frac{2q}{p+q}$ as follows from Jensen inequality.
\end{remark}
The regularity results presented in Section \ref{sec:OT} allow for more freedom in the choice of the smoothness modulus of $V$. The two following results on the regularity of the optimal transport from a log-Lipschitz measure to a Gaussian and from a Cauchy distribution to a Gaussian demonstrate this. As underlined in Section \ref{sec:ex-R-conv-S-smooth}, the modulus of smoothness of a given function $f$ is finite when it is Lipschitz. This situation encompasses the transport of the exponential measure to the Gaussian and is the object of the following result which is an application of Theorem \ref{thm:kol} in the particular case $p=1,q=2$ and $\alpha_V = 4L,\beta_W = 1/2$.
 
\begin{corollary}
Let $\mu(dx) = e^{-V(x)}dx$ be a probability measure with finite second moment such that $V$ is $L$-Lipschitz and $\nu(dy) = e^{-W(y)}dy = \gamma(dy)$ be the standard Gaussian measure on $\R^n$. Then the optimal transport map $\nabla \phi$ from $\mu$ to $\nu$ is $\frac{1}{2}$-H\"older with a H\"older norm equal to $\frac{8\sqrt{2L}}{3}$.
\end{corollary}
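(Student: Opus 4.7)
The plan is a direct application of Corollary \ref{cor:OT-reg} (equivalently, the remark following Theorem \ref{thm:kol}) once the relevant moduli are identified for the two densities. I expect no real obstacle: the main work is to read off the correct parameters $p, q, \alpha_V, \beta_W$ and then carry out a short integration.

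\textbf{Step 1 (smoothness modulus of $V$).} First I would use that, since $V$ is $L$-Lipschitz, $\nabla V$ is bounded by $L$ in norm wherever it is defined; hence the constant function $\omega \equiv 2L$ is a modulus of continuity for $\nabla V$. By the Lipschitz boundary case of the ``uniformly continuous gradient'' example in Section \ref{sec:ex-R-conv-S-smooth}, this yields
\[
\sigma_V(r) \le 2r\,\omega(r) = 4L\,r, \qquad r \ge 0,
\]
corresponding to $p=1$ and $\alpha_V = 4L$ in the hypotheses of Theorem \ref{thm:kol}. (If one wanted, the very same estimate follows directly from the definition of $\sigma_V$ by writing the mean deviation $M_t^V(x_0,x_1)$ as $(1-t)(V(x_0)-V(x_t)) + t(V(x_1)-V(x_t))$ and using the Lipschitz bound.)

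\textbf{Step 2 (convexity modulus of $W$).} Since $\nu = \gamma$ is the standard Gaussian, $W(y) = \tfrac{1}{2}|y|^2 + \tfrac{n}{2}\log(2\pi)$ is (up to an additive constant) a pure quadratic, so by the quadratic example of Section \ref{sec:ex-R-conv-S-smooth} one has $\rho_W(r) = r^2/2$, corresponding to $q = 2$ and $\beta_W = 1/2$. In particular $\rho_W$ is already convex, so $\rho_W^{\ast\ast} = \rho_W$, and the generalized inverse is $(\rho_W^{\ast\ast})^{-1}(t) = \sqrt{2t}$.

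\textbf{Step 3 (the bound).} I would then plug these moduli into the bound \eqref{eq:bound-T} of Corollary \ref{cor:OT-reg}. With $r = |x-y|$ and $T = \nabla\phi$,
\[
(\rho_W^{\ast\ast})^{-1}(\sigma_V(s)) \;\le\; \sqrt{2 \cdot 4L\,s} \;=\; 2\sqrt{2L\,s},
\]
so integrating gives $\int_0^r 2\sqrt{2L\,s}\,ds = \tfrac{4\sqrt{2L}}{3}\,r^{3/2}$, and multiplying by $2/r$ yields
\[
|\nabla\phi(x) - \nabla\phi(y)| \;\le\; \frac{8\sqrt{2L}}{3}\,|x-y|^{1/2}, \qquad x,y \in \R^n,
\]
which is the claim. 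The only point requiring any care is the Step 1 identification of $\sigma_V$ in the merely Lipschitz (possibly non-$C^1$) setting; as noted above, this can be done either via the example table or by a one-line direct computation, so it is not a genuine obstacle.
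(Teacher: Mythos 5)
Your proposal is correct and follows essentially the same route as the paper's proof: identify $\sigma_V(r)\le 4Lr$ via the Lipschitz case of the uniformly-continuous-gradient example, take $\rho_W(r)=r^2/2$ for the Gaussian potential, plug into \eqref{eq:bound-T} of Corollary \ref{cor:OT-reg}, and integrate. The only difference is cosmetic: you spell out the integrand as $2\sqrt{2Ls}$ where the paper writes $\sqrt{8Ls}$, and you offer an optional direct verification of $\sigma_V\le 4Lr$, but the argument is identical.
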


\proof
As stated in Section \ref{sec:ex-R-conv-S-smooth}, the smoothness modulus of $V$ satisfies $\sigma_V(r) \leq 4Lr$ and the convexity modulus of $W$ satisfies $\rho_W(r) \geq \frac{r^2}{2}$. Thus by Corollary \ref{cor:OT-reg} the optimal transport map $\nabla \phi$ from $\mu$ to $\nu$ satisfies 
\begin{equation*}
    \vert \nabla \phi(x) - \nabla \phi(y) \vert \leq \frac{2}{\vert x-y \vert} \int_0^{\vert x-y \vert} \sqrt{8Ls}ds = \frac{8\sqrt{2L}}{3} \vert x-y \vert^{\frac{1}{2}}
\end{equation*}
\endproof
Finally, when $\mu(dx) = e^{-V(x)}dx$ is a Cauchy distribution, then $V(x) = n\log(1+\vert x\vert^2) + C$ with $C$ the normalizing constant. In particular, note that the modulus of smoothness of $V$ is quadratic close to $0$ but enjoys a linear behaviour at infinity. Thus, the optimal transport from $\mu$ to a Gaussian is Lipschitz at close range and H\"older at long range as demonstrated by the following proposition.
\begin{proposition}
Let $\mu(dx) = e^{-V(x)}dx$ be a probability measure on $\R^n$ with $V(x) = n\log(1+\vert x\vert^2) + C$ with $C$ the normalizing constant and $\nu(dy) = \gamma(dy)$ be the standard Gaussian measure on $\R^n$. Then the optimal transport map $\nabla \phi$ from $\mu$ to $\nu$ satisfies
\begin{equation*}
    \vert \nabla \phi(x) -\nabla \phi(y) \vert \leq  \min\left(2\sqrt{3n}\vert x-y \vert,\frac{8\sqrt{2}}{3}\vert x-y \vert^{\frac{1}{2}}\right).
\end{equation*}
\end{proposition}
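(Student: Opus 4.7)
The plan is to apply Corollary \ref{cor:OT-reg} to the pair $(\mu,\nu)$ with two complementary estimates on the modulus of smoothness $\sigma_V$ of the Cauchy potential, and to take the minimum of the two resulting controls on the Brenier map. Since $\nu = \gamma$, the target potential is $W(y) = \frac{1}{2}\vert y\vert^2 + C_W$, which is $\rho$-convex with $\rho_W(r) = \frac{r^2}{2}$. As $\rho_W$ is already convex, $\rho_W^{\ast\ast} = \rho_W$ and the generalized inverse simplifies to $(\rho_W^{\ast\ast})^{-1}(t) = \sqrt{2t}$, so the integrand in Corollary \ref{cor:OT-reg} reduces to $\sqrt{2\sigma_V(s)}$.

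Next, I would analyze $V$ in two different regimes using the general estimates of Section \ref{sec:ex-R-conv-S-smooth}. A direct computation yields
\[
\nabla V(x) = \frac{2nx}{1+\vert x\vert^2}, \qquad \nabla^2 V(x) = \frac{2n}{1+\vert x\vert^2}I_n - \frac{4n}{(1+\vert x\vert^2)^2}\,xx^\top.
\]
Elementary maximization gives $\vert\nabla V(x)\vert = \frac{2n\vert x\vert}{1+\vert x\vert^2} \leq n$ (attained at $\vert x\vert = 1$), so $V$ is globally $n$-Lipschitz; the Lipschitz-based bound of Section \ref{sec:ex-R-conv-S-smooth} then yields a \emph{linear} estimate on the smoothness modulus of the form $\sigma_V(r) \leq C_1\,n\,r$. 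Similarly, the operator norm of $\nabla^2 V(x)$ is bounded by its largest eigenvalue $\frac{2n}{1+\vert x\vert^2}\leq 2n$, and the bounded-Hessian bound of the same section then yields a \emph{quadratic} estimate $\sigma_V(r) \leq C_2\,n\,r^2$ valid for all $r\geq 0$.

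Finally, I would plug each of these two estimates into Corollary \ref{cor:OT-reg} separately. The quadratic bound on $\sigma_V$ produces a linear integrand in $s$, which integrates to a Lipschitz-type estimate of the form $\vert \nabla\phi(x)-\nabla\phi(y)\vert \lesssim \sqrt{n}\,\vert x-y\vert$, recovering the first term in the minimum. The linear bound on $\sigma_V$ produces a square-root integrand proportional to $\sqrt{ns}$, which integrates to a H\"older-$\tfrac12$ estimate $\vert \nabla\phi(x)-\nabla\phi(y)\vert \lesssim \sqrt{n}\,\vert x-y\vert^{1/2}$, recovering the second term. Taking the minimum of the two bounds and collecting the explicit prefactors from the maximization of $\vert\nabla V\vert$ and $\|\nabla^2 V\|_{\mathrm{op}}$ yields the claim. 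No real obstacle is anticipated: the argument is a direct application of the general scheme of Section \ref{sec:OT} to a concrete singular potential whose smoothness modulus exhibits two different regimes, quadratic at short range and linear at long range.
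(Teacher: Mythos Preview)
Your approach is essentially the paper's: apply Corollary~\ref{cor:OT-reg} with $\rho_W(r)=r^2/2$ and two complementary bounds on $\sigma_V$, one quadratic (producing the Lipschitz estimate) and one linear (producing the H\"older-$\tfrac12$ estimate), then take the minimum. The one implementation difference is in how you obtain the quadratic bound on $\sigma_V$: you use the Hessian estimate $\nabla^2V\le 2nI_n$ directly (giving $\sigma_V(r)\le nr^2$ via the bounded-Hessian formula of Section~\ref{sec:ex-R-conv-S-smooth}), whereas the paper instead bounds the modulus of continuity of $\nabla V$ by an explicit computation, obtaining $\omega(r)\le\min(3nr,2n)$ and hence $\sigma_V(r)\le\min(6nr^2,4nr)$. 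Your route is shorter and yields a tighter Lipschitz constant ($\sqrt{2n}$ rather than $2\sqrt{3n}$), which of course still implies the stated bound. For the linear regime both you and the paper use the same Lipschitz bound $|\nabla V|\le n$, so the H\"older constant comes out identically as $\tfrac{8\sqrt{2n}}{3}$ (the missing $\sqrt{n}$ in the displayed constant appears to be a typo in the statement, not a gap in either argument).
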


\proof
As before the result will follow from a direct application of Corollary \ref{cor:OT-reg}. To derive the desired property of the smoothness modulus of $V$ we will use the link between the continuity modulus of the gradient of $V$ and the smoothness modulus of $V$ showed in Section \ref{sec:ex-R-conv-S-smooth}. We denote by $\omega$ the continuity modulus of $\nabla V$. Let $r\in \R_+^*$ and $x,y \in \R^n$ be such that $\vert x-y\vert = r$. Assume w.l.o.g. that $\vert x \vert \geq \vert y\vert$, then
\begin{align*}
    \vert \nabla V(x) -\nabla V(y) \vert  &= \vert \frac{2n}{1+\vert x\vert^2}x - \frac{2n}{1+\vert y\vert^2}y\vert\\
    &\leq \frac{2n}{1+\vert x\vert^2} \vert x-y \vert + \vert \frac{2n}{1+\vert x\vert^2} - \frac{2n}{1+\vert y\vert^2} \vert \vert y \vert\\
    &\leq 2nr + \frac{2n\langle x-y,y+x\rangle}{(1+\vert x \vert^2)(1+\vert y \vert^2)}\vert y \vert\\
    &\leq 2nr  + \frac{2n\vert x-y\vert \vert y+x\vert}{(1+\vert x \vert^2)(1+\vert y \vert^2)}\vert y \vert\\
    &\leq 2nr + 2nr \frac{\vert y \vert\vert y+x\vert}{(1+\vert x \vert^2)(1+\vert y \vert^2)}\\
    &\leq 3nr
\end{align*}
because of Cauchy-Schwarz inequality for the third inequality and $\frac{\vert y \vert\vert y+x\vert}{(1+\vert x \vert^2)(1+\vert y \vert^2)} \leq \frac{1}{2}$. Moreover observe that $\vert \nabla V(x) \vert \leq n$ thus
\begin{equation*}
     \omega(r)\leq \min(3nr,2n).
\end{equation*}
Using Section \ref{sec:ex-R-conv-S-smooth}, we deduce that $\sigma_V(r) \leq \min(6nr^2,4nr)$. Since $\nu$ is the standard Gaussian measure, Corollary \ref{cor:OT-reg} ensures
\begin{equation*}
    \vert \nabla \phi(x) -\nabla \phi(y) \vert \leq \frac{2}{\vert x-y \vert}\int_0^{\vert x-y \vert} \sqrt{\min(12s^2,8s)}ds \leq \min\left(2\sqrt{3n}\vert x-y \vert,\frac{8\sqrt{2}}{3}\vert x-y \vert^{\frac{1}{2}}\right).
\end{equation*}
\endproof

\subsection{Anisotropic measures}
The next theorem gives an anisotropic extension of Caffarelli's theorem.
\begin{theorem}\label{thm:CP}
Let $\mu(dx) = e^{-V(x)}dx$ and $\nu(dy) = e^{-W(y)}dy$ be two probability measures on $\R^n$ where $V,W \in C^2$ and are such that $\nabla^2V \leq A^{-1}$ and $\nabla^2 W \geq B^{-1}$ with $A,B$ two symmetric positive definite matrices. Then the optimal transport map $\nabla \phi$ from $\mu$ to $\nu$ satisfies
\begin{equation}\label{eq:Hess-Bound}
    \nabla^2 \phi \leq B^{1/2}\left(B^{-1/2}A^{-1}B^{-1/2}\right)^{1/2}B^{1/2}.
\end{equation}
Equality holds when $\mu=\mathcal{N}(0, A)$ and $\nu = \mathcal{N}(0,B)$.
\end{theorem}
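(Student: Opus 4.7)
The plan is to run the entropic machinery of Section \ref{sec:OT} with a matrix-valued quadratic ansatz for the directional smoothness of $\phi_\epsilon$, rather than applying Corollary \ref{cor:OT-reg-dir} directly. With the quadratic data of the theorem, the latter would yield the non-quadratic modulus $\bar S(d)=\tfrac{1}{2}\sqrt{\langle d,A^{-1}d\rangle\langle d,Bd\rangle}$, which in general exceeds $\tfrac{1}{2}\langle d,Md\rangle$ and is therefore insufficient to control $\nabla^2\phi$. First, from $\nabla^2V\le A^{-1}$ and $\nabla^2W\ge B^{-1}$ with $V,W\in C^2$, the examples in Section \ref{sec:ex-R-conv-S-smooth} yield that $V$ is directionally $S$-smooth and $W$ is directionally $R$-convex with
\[
S(d)=\tfrac{1}{2}\langle d,A^{-1}d\rangle,\qquad R(d)=\tfrac{1}{2}\langle d,B^{-1}d\rangle.
\]

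Next, I propagate a quadratic ansatz through the Sinkhorn system $\phi_\epsilon=\mathcal{L}_{\epsilon,\mathcal{H}^n}(\psi_\epsilon+\epsilon W)$, $\psi_\epsilon=\mathcal{L}_{\epsilon,\mathcal{H}^n}(\phi_\epsilon+\epsilon V)$. If $S_{\phi_\epsilon}(d)\le\tfrac{1}{2}\langle d,Kd\rangle$ for some PD matrix $K$, then $\phi_\epsilon+\epsilon V$ is $\tfrac{1}{2}\langle\cdot,(K+\epsilon A^{-1})\cdot\rangle$-smooth; Proposition \ref{prop:sigma-smooth} gives that $\psi_\epsilon$ is $\tfrac{1}{2}\langle\cdot,(K+\epsilon A^{-1})^{-1}\cdot\rangle$-convex; hence $\psi_\epsilon+\epsilon W$ is $\tfrac{1}{2}\langle\cdot,((K+\epsilon A^{-1})^{-1}+\epsilon B^{-1})\cdot\rangle$-convex; and Proposition \ref{prop:rho-convex} finally yields
\[
S_{\phi_\epsilon}(d)\le\tfrac{1}{2}\langle d,F(K)d\rangle,\qquad F(K):=\bigl((K+\epsilon A^{-1})^{-1}+\epsilon B^{-1}\bigr)^{-1}.
\]
An initial quadratic bound is obtained by discarding the convexity of $\psi_\epsilon$: since $\psi_\epsilon+\epsilon W$ is at least $\tfrac{\epsilon}{2}\langle\cdot,B^{-1}\cdot\rangle$-convex, Proposition \ref{prop:rho-convex} gives $S_{\phi_\epsilon}(d)\le\tfrac{1}{2\epsilon}\langle d,Bd\rangle$, so $K_0:=B/\epsilon$ is admissible. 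Operator-monotone calculus shows that $F$ is operator-monotone increasing on the PD cone (it alternates two inversions with two additions of PD matrices), and $F(K_0)\le K_0$ reduces to the trivial inequality $(K_0+\epsilon A^{-1})^{-1}\ge 0$. Hence the iterates $K_{n+1}:=F(K_n)$ form a decreasing sequence of PD matrices, converging to the unique PD fixed point $K_\epsilon$ of $F$, and $S_{\phi_\epsilon}(d)\le\tfrac{1}{2}\langle d,K_\epsilon d\rangle$ for every $\epsilon>0$.

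Rearranging $K_\epsilon^{-1}=(K_\epsilon+\epsilon A^{-1})^{-1}+\epsilon B^{-1}$ gives the matrix Riccati-type equation
\[
K_\epsilon B^{-1}(K_\epsilon+\epsilon A^{-1})=A^{-1}.
\]
At $\epsilon=0$ this reads $K_0 B^{-1}K_0=A^{-1}$, whose unique PD solution is the geometric mean $M=B\#A^{-1}=B^{1/2}(B^{-1/2}A^{-1}B^{-1/2})^{1/2}B^{1/2}$. A continuity argument, using that the family $\{K_\epsilon\}$ is monotone and uniformly bounded, gives $K_\epsilon\to M$ as $\epsilon\to 0$. Exactly as in the proof of Corollary \ref{cor:OT-reg}, the $\mu$-a.e.\ convergence $\phi_\epsilon\to\phi$ from \cite{Nutz2021} transfers the quadratic smoothness inequality for $\phi_\epsilon$ to the limit, so that $\phi$ is $\tfrac{1}{2}\langle\cdot,M\cdot\rangle$-smooth, equivalently $\nabla^2\phi\le M$. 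For the Gaussians $\mu=\mathcal{N}(0,A)$, $\nu=\mathcal{N}(0,B)$, the Brenier map is linear with matrix $N$ satisfying $NAN=B$, i.e.\ $N=A^{-1}\#B=B\#A^{-1}=M$ by symmetry of the geometric mean, giving equality.

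The main obstacle is the non-commutativity of $A$ and $B$: when they commute, one may simultaneously diagonalize and reduce to the scalar Caffarelli bound, which is essentially the Chewi–Pooladian strategy \cite{Chewi2022entropic}. Here the argument must remain intrinsic at the matrix level, and the two technical points that require care are the operator-monotone structure of $F$ (and the monotone convergence of the iterates) and the identification of the limit fixed point with the geometric mean via the Riccati equation, both performed without any commutation hypothesis.
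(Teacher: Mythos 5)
Your proposal is correct in outline and takes a route that genuinely departs from the paper's after the common starting point. Both proofs use the same quadratic moduli $S(d)=\tfrac12\langle d,A^{-1}d\rangle$, $R(d)=\tfrac12\langle d,B^{-1}d\rangle$ and the same propagation through the Sinkhorn system, which for a quadratic ansatz $\tfrac12\langle\cdot,K\cdot\rangle$ is exactly your matrix map $F(K)=\bigl((K+\epsilon A^{-1})^{-1}+\epsilon B^{-1}\bigr)^{-1}$; the difference is in how the $\epsilon$-independent bound is extracted. The paper unrolls the inequality $S_{\phi_\epsilon}\le(S_{\phi_\epsilon}+\epsilon S)\,\square\,\epsilon R^*(\cdot/\epsilon)$ to an infimum of a quadratic functional over $\ell^2(\R^n)$-sequences, solves the Euler--Lagrange second-order recurrence by diagonalizing $C:=B^{-1/2}A^{-1}B^{-1/2}$, sums the resulting geometric series, and then lets $\epsilon\to0$. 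You instead exploit operator-monotonicity of $F$ and the initial bound $K_0=B/\epsilon$ to produce a monotone decreasing sequence of PD matrices converging to a PD fixed point $K_\epsilon$, and identify the $\epsilon\to0$ limit from the Riccati equation $K_\epsilon B^{-1}(K_\epsilon+\epsilon A^{-1})=A^{-1}$. This avoids the spectral decomposition entirely and is a clean, matrix-intrinsic alternative.

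The one step that needs repair is the passage $K_\epsilon\to M$. You invoke monotonicity of $\epsilon\mapsto K_\epsilon$, which is neither proved nor obviously true in the non-commuting setting, and uniform boundedness, which is asserted but not shown. Only boundedness is actually needed, and it follows from the Riccati identity itself: setting $L_\epsilon:=B^{-1/2}K_\epsilon B^{-1/2}$ and $C:=B^{-1/2}A^{-1}B^{-1/2}$, the fixed-point relation rewrites as $C=L_\epsilon^2+\epsilon L_\epsilon C$, so $\mathrm{tr}(L_\epsilon^2)\le\mathrm{tr}(C)$ because $\mathrm{tr}(L_\epsilon C)\ge0$ for PD matrices. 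Every subsequential limit $L$ is then PSD with $L^2=C$, hence $L=C^{1/2}$, and $K_\epsilon\to B^{1/2}C^{1/2}B^{1/2}=M$. Two minor overclaims that do no harm: the uniqueness of the PD fixed point of $F$ is asserted but neither proved nor used (the limit of the monotone iterates is automatically a fixed point by continuity of $F$, and it is PD because $K_n=F(K_{n-1})\ge F(0)>0$), and the claimed monotonicity in $\epsilon$ should simply be dropped.
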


Equality case in \eqref{eq:Hess-Bound} is well known and stems from the fact that 
the linear map
\[
T(x) = B^{1/2}\left(B^{-1/2}A^{-1}B^{-1/2}\right)^{1/2}B^{1/2} x = A^{-1/2}\left(A^{1/2}BA^{1/2}\right)^{1/2}A^{-1/2}x
\]
is optimal for the transport between $\mu=\mathcal{N}(0, A)$ and $\nu = \mathcal{N}(0,B)$, see \cite{DL82,Gel90}.

The conclusion of Theorem \ref{thm:CP} was first obtained by Valdimarsson \cite[Theorem 1.2]{Val07} in the special case where $\mu$ is a Gaussian measure and the matrices $A,B$ commute, in which case the bound simply reads 
\[
\nabla^2 \phi \leq A^{-1/2}B^{1/2}.
\] 
In \cite{Chewi2022entropic}, Chewi and Pooladian recovered Valdimarsson's result for a general $\mu$ using their entropic regularization method. 
Here we finally remove the commutation assumption.

It turns out that the $\epsilon$-independent bound provided by Theorem \ref{thm:phi-epsilon-dir-smoothness} is suboptimal in this case.
Namely, a simple calculation shows that under the assumptions of Theorem \ref{thm:CP}, Theorem \ref{thm:phi-epsilon-dir-smoothness} yields
\[
S_{\phi}(d) \leq \frac{1}{2}\vert A^{-1/2}d\vert \vert B^{1/2}d\vert,\qquad \forall d\in \R^n,
\]
which is suboptimal by Cauchy-Schwarz (say in the commuting case). Note however that this bound coincides with the conclusion of Theorem \ref{thm:CP} in the case $A^{-1} = B$.

In the proof below, we adapt the reasoning of Theorem \ref{thm:phi-epsilon-dir-smoothness} to obtain an explicit optimal $\epsilon$-dependent bound for the smoothness modulus of the entropic potentials $\phi_\epsilon$ and we derive the desired inequality by letting $\epsilon \to 0.$

\proof
As shown in the proof of Theorem \ref{thm:phi-epsilon-dir-smoothness}, the smoothness modulus $S_{\phi_\epsilon}$ satisfies the following functional inequality:
\[
S_{\phi_\epsilon} \leq (S_{\phi_\epsilon}+\epsilon S)\square \epsilon R^*(\,\cdot\,/\epsilon),
\]
with, by assumption on $V,W$, 
\begin{equation*}
S(d) = \frac{1}{2}\langle d, A^{-1}d\rangle,\quad R(d) = \frac{1}{2}\langle d, B^{-1}d\rangle,\qquad d\in \R^n.
\end{equation*}

Noticing that $R^*(d) = \frac{1}{2}\langle d, Bd\rangle$, $d\in \R^n$, and that $S_{\phi_\epsilon}(0)=0$, we deduce from the functional inequality above that for all integer $N\geq 1$ and sequence of points $u_0,u_1,\ldots, u_N$ with $u_0=d$ and $u_N=0$ it holds
\[
S_{\phi_\epsilon}(d) \leq \epsilon \sum_{i=1}^N S(u_i) + \frac{1}{\epsilon} \sum_{i=0}^{N-1} R^*(u_i-u_{i+1}).
\]
Denoting by $\ell^2(\R^n)$ the space of sequences $u=(u_i)_{i\geq0}$ with values in $\R^n$ such that $\sum_{i\geq0} \vert u_i\vert^2<+\infty$, we thus get that
\[
S_{\phi_\epsilon}(d) \leq \inf_{u \in \ell^2(\R^n), u_0=d}\left\{ \epsilon \sum_{i=1}^{+\infty} S(u_i) + \frac{1}{\epsilon} \sum_{i=0}^{+\infty} R^*(u_i-u_{i+1})\right\}:= S_\epsilon(d).
\]
Let us calculate explicitly $S_\epsilon$. Since the value of $S_\epsilon$ is given by a minimization problem involving a strictly convex l.s.c proper and coercive function on a closed convex set of the Hilbert space $\ell^2(\R^n)$, there exists a unique $\bar{u}\in \ell^2(\R^n)$ (depending on $d$ and $\epsilon$) such that $\bar{u}_0=d$ and
\[
S_\epsilon(d) = \epsilon \sum_{i=1}^{+\infty} S(\bar{u}_i) + \frac{1}{\epsilon} \sum_{i=0}^{+\infty} R^*(\bar{u}_i-\bar{u}_{i+1}).
\]
By the first order condition, we get 
\[
\epsilon A^{-1}\bar{u}_i+\frac{1}{\epsilon}\left(2B(\bar{u}_i) -B\bar{u}_{i+1} -B\bar{u}_{i-1}\right)=0,\qquad i\geq 1.
\]
Let $\bar{v}_i = B^{1/2}\bar{u}_i$, $i\geq 1$.
Then, the sequence $\bar{v}$ satisfies 
\[
\bar{v}_{i+1} = (\epsilon^2M+2I)\bar{v}_i - \bar{v}_{i-1},
\]
with $M=B^{-1/2}A^{-1}B^{-1/2}$. Since $M$ is symmetric, there exists an othogonal matrix $P$ such that $P^TMP=D$ with $D$ a diagonal matrix. Set $\bar{w}_i = P^T \bar{v}_i$, $i\geq 1$, then
\[
\bar{w}_{i+1} = (\epsilon^2 D +2 I) \bar{w}_i - \bar{w}_{i-1},\qquad i\geq 1,
\]
and $\bar{w}_0=P^TB^{1/2}d.$
Denote by $a_1,\ldots,a_n>0$ the diagonal coefficients of $D$. Then, for all $k\in \{1,\ldots, n\}$, one gets
\begin{equation}\label{eq:rec}
\bar{w}_{i+1,k} = \left(2+a_k\epsilon^2\right)\bar{w}_{i,k}-\bar{w}_{i-1,k},\qquad \forall i\geq 1.
\end{equation}
The characteristic equation of this second order one dimensional scheme is $r^2 - \left(2+a_k\epsilon^2\right)r+1=0$, whose roots are the following:
\[
r_k(\epsilon) = \frac{1}{2}\left(2+a_k\epsilon^2 - \sqrt{\left(2+a_k\epsilon^2\right)^2-4}\right)\quad \text{and}\quad s_k(\epsilon)=\frac{1}{2}\left(2+a_k\epsilon^2 + \sqrt{\left(2+a_k\epsilon^2\right)^2-4}\right).
\]
Therefore, we conclude that $(\bar{w}_{i,k})_{i\geq 0}$ is a linear combination of 
$(r_k^i)_{i\geq0}$ and $(s_k^i)_{i\geq0}$. Since $s_k >1$, the sequence $(s_k^i)_{i\geq0}$ does not belong to $\ell^2(\R)$. Therefore, the only possibility is that $\bar{w}_{i,k} = \bar{w}_{0,k} r_k^i$, $i\geq0$.
Note that
\[
S(\bar{u}_i) = \frac{1}{2}\bar{u}_i^TA^{-1}\bar{u}_i =\frac{1}{2}\bar{v}_i^TM\bar{v}_i= \frac{1}{2} w_i^T Dw_i = \frac{1}{2}\sum_{k=1}^n a_k\bar{w}_{i,k}^2 = \frac{1}{2}\sum_{k=1}^n a_kr_k^{2i}\bar{w}_{0,k}^2
\]
and, similarly,
\[
R^*(\bar{u}_i-\bar{u}_{i+1}) = \frac{1}{2}(\bar{w}_i-\bar{w}_{i+1})^TI(\bar{w}_i-\bar{w}_{i+1})=\frac{1}{2}\sum_{k=1}^n(1-r_k)^2 r_k^{2i}\bar{w}_{0,k}^2.
\]
Thus,
\begin{align*}
S_\epsilon(d) &= \frac{\epsilon}{2} \sum_{i=1}^{+\infty} \sum_{k=1}^n a_kr_k^{2i}\bar{w}_{0,k}^2 + \frac{1}{2\epsilon} \sum_{i=0}^{+\infty}\sum_{k=1}^n(1-r_k)^2 r_k^{2i}\bar{w}_{0,k}^2\\
& = \frac{\epsilon}{2}  \sum_{k=1}^n a_k \frac{r_k^2}{1-r_k^2}\bar{w}_{0,k}^2 + \frac{1}{2\epsilon} \sum_{k=1}^n(1-r_k)^2 \frac{1}{1-r_k^2}\bar{w}_{0,k}^2\\
\end{align*}
A simple calculation shows that 
\[
r_k(\epsilon) = 1 - \sqrt{a_k}\epsilon + o(\epsilon)
\]
as $\epsilon\to 0$. This easily implies that
\[
S_\epsilon(d) \to \frac{1}{2} \sum_{k=1}^n \sqrt{a_k}\bar{w}_{0,k}^2 = \frac{1}{2} \bar{w}_0^T D^{1/2} \bar{w}_0 = \frac{1}{2} \bar{v}_0^T M^{1/2}\bar{v}_0 =\frac{1}{2} d^TB^{1/2} M^{1/2}B^{1/2}d,
\]
as $\epsilon \to 0$.
With the same limiting argument as in Corollary \ref{cor:OT-reg}, we conclude that the Kantorovich potential $\phi$ from $\mu$ to $\nu$ satisfies 
\[
S_\phi(d) \leq \frac{1}{2} \langle d,B^{1/2}M^{1/2}B^{1/2}d \rangle,\qquad \forall d\in \R^n,
\]
which completes the proof.
\endproof

It is also possible to look at degenerate anisotropies, in particular when $\mu,\nu$ are supported on affine subspaces. Before stating the result, let us recall some elementary facts about subgradient of convex functions defined on strict affine subspaces of $\R^n$. Let $\phi : \R^n \to \R\cup\{+\infty\}$ be such that $\mathrm{dom} \phi = K$ where $K$ is some affine subspace of $\R^n$. Then 
\begin{equation}\label{eq:subgradient-dec}
\partial \phi (x) = \partial \phi_K(x)+ \overset{\rightarrow}{K}^{\perp},
\end{equation}
where $\overset{\rightarrow}{K}^{\perp}$ is the orthogonal complement of $\overset{\rightarrow}{K}$ and $\partial \phi_K(x) \subset \overset{\rightarrow}{K}$ is the subgradient at $x$ of $\phi_K : K \to \R : y \mapsto \phi(y)$ (the restriction of $\phi$ to $K$). In other words, $v\in \overset{\rightarrow}{K}$ belongs to $\partial \phi_K(x)$ if 
\[
\phi(y) \geq \phi(x) + \langle v,y-x \rangle,\qquad \forall y \in K.
\]

\begin{theorem}\label{thm:Caf-subspaces}
Let $\mu(dx) = e^{-V(x)}\,\mathcal{H}_K(dx)$ and $\nu(dy) = e^{-W(y)}\,\mathcal{H}_L(dy)$ be two probability measures where $K,L$ are affine subspaces of $\R^n$ and $V,W$ are twice continuously differentiable on $K$ and $L$ respectively and satisfy $\nabla^2V \leq \alpha_V I_n$ and $\nabla^2W \geq \beta_W I_n$, with $\alpha_V,\beta_W>0$, and let $\phi$ be the Kantorovich potential $\phi$ from $\mu$ to $\nu$. Then the restriction $\phi_K$ of $\phi$ to the affine subspace $K$ is continuously differentiable and $\nabla \phi_K : K \to \overset{\rightarrow}{K}$ satisfies: for all $x_0,x_1 \in K$ 
\[
\vert\nabla \phi_K(x_1)-\nabla \phi_K(x_0)\vert \leq \sqrt{\frac{\alpha_V}{\beta_W}} C_{K,L} \vert x_1-x_0\vert,
\]
with $C_{K,L}:= \sup_{e \in\overset{\rightarrow}{K}, \vert e\vert=1 } \vert\mathrm{Proj}_{\overset{\rightarrow}{L}}(e)\vert$ and where $\mathrm{Proj}_{\overset{\rightarrow}{L}}$  denotes the orthogonal projection on $L$.

\end{theorem}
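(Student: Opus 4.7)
The approach is to apply Corollary \ref{cor:OT-reg-dir} with the directional moduli dictated by the Hessian bounds on $V$ and $W$, compute the resulting modulus $\bar S$ explicitly, and conclude via the gradient regularity estimate of Proposition \ref{prop:continuity} read inside the Euclidean space $K$.

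Following Section \ref{sec:ex-R-conv-S-smooth}, the hypothesis $\nabla^{2}V \leq \alpha_V I_n$ on $K$ (with $V\equiv +\infty$ off $K$) yields that $V$ is $S$-smooth with $S(d) = \frac{\alpha_V}{2}|d|^{2}$ for $d\in \overset{\longrightarrow}{K}$ and $S(d) = +\infty$ otherwise, the extension being consistent since the defining inequality is non-vacuous only when $y_0,y_1\in K$; similarly $W$ is $R$-convex with $R(d)=\frac{\beta_W}{2}|d|^{2}$ on $\overset{\longrightarrow}{L}$ and $+\infty$ elsewhere. A decomposition of $p$ along $\overset{\longrightarrow}{L}\oplus \overset{\longrightarrow}{L}^{\perp}$ gives
\[
R^{*}(p)=\frac{1}{2\beta_W}|\mathrm{Proj}_{\overset{\longrightarrow}{L}}(p)|^{2} \quad\text{and}\quad R^{**}(p)=\begin{cases}\frac{\beta_W}{2}|p|^{2} & \text{if } p\in \overset{\longrightarrow}{L}\\ +\infty & \text{otherwise};\end{cases}
\]
in particular $R^{*}$ is finite everywhere, so the assumption $\mathrm{dom}\,S\subset \mathrm{Vect}(\mathrm{dom}\,R^{*})=\R^{n}$ of Corollary \ref{cor:OT-reg-dir} is automatically satisfied.

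For $d\in\overset{\longrightarrow}{K}$, the constraint $R^{**}(p)\leq S(td) = \frac{\alpha_V}{2}t^{2}|d|^{2}$ is equivalent to $p\in\overset{\longrightarrow}{L}$ and $|p|\leq t\sqrt{\alpha_V/\beta_W}\,|d|$. Since $\langle p,d\rangle=\langle p,\mathrm{Proj}_{\overset{\longrightarrow}{L}}(d)\rangle$ for $p\in\overset{\longrightarrow}{L}$, the supremum in the definition of $\bar S$ equals $t\sqrt{\alpha_V/\beta_W}\,|d|\,|\mathrm{Proj}_{\overset{\longrightarrow}{L}}(d)|$, and integration in $t\in[0,1]$ yields
\[
\bar S(d)=\tfrac12 \sqrt{\alpha_V/\beta_W}\,|d|\,|\mathrm{Proj}_{\overset{\longrightarrow}{L}}(d)|\leq \tfrac12 \sqrt{\alpha_V/\beta_W}\,C_{K,L}\,|d|^{2}
\]
by the very definition of $C_{K,L}$.

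Corollary \ref{cor:OT-reg-dir} then provides the $\bar S$-smoothness of the Kantorovich potential $\phi$; restricting to $x_0,x_1\in K$ (where $\phi$ is finite and continuous by the same corollary) shows that $\phi_K$ is $\sigma$-smooth on $K$ with $\sigma(r)=\tfrac12 \sqrt{\alpha_V/\beta_W}\,C_{K,L}\,r^{2}$. Applying Proposition \ref{prop:continuity} inside the Euclidean space $K$, and using the decomposition \eqref{eq:subgradient-dec} to identify $\partial\phi_K(x)\subset \overset{\longrightarrow}{K}$, yields
\[
|y_1-y_0|\leq 2\,\frac{\sigma(|x_1-x_0|)}{|x_1-x_0|}=\sqrt{\alpha_V/\beta_W}\,C_{K,L}\,|x_1-x_0|
\]
for every $y_i\in\partial\phi_K(x_i)$; the case $x_1=x_0$ forces $\partial\phi_K(x)$ to be a singleton at every $x\in K$, so $\phi_K\in C^{1}(K)$ and the inequality above is precisely the announced Lipschitz bound on $\nabla \phi_K$. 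The main technical point is to match correctly the directional moduli to the geometry of $K$ and $L$ and to recognize the constraint set in the definition of $\bar S$ as a ball inside $\overset{\longrightarrow}{L}$; projecting onto $\overset{\longrightarrow}{L}$ then produces the constant $C_{K,L}$ essentially for free.
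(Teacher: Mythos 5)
Your proof is correct and reaches the stated conclusion, but it takes a slightly different — and in fact lighter — route than the paper's. You compute the same anisotropic modulus $\bar S(d) = \tfrac12\sqrt{\alpha_V/\beta_W}\,|d|\,|\mathrm{Proj}_{\overset{\rightarrow}{L}}(d)|$ for $d\in \overset{\rightarrow}{K}$, but then immediately majorize it by the isotropic quadratic $\sigma(|d|)=\tfrac12\sqrt{\alpha_V/\beta_W}\,C_{K,L}\,|d|^2$ using $|\mathrm{Proj}_{\overset{\rightarrow}{L}}(d)|\leq C_{K,L}|d|$, then work entirely inside the Euclidean space $K$ and invoke the radial Proposition \ref{prop:continuity} to get the Lipschitz bound on $\nabla\phi_K$. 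The paper instead keeps $\bar S$ anisotropic, computes the Legendre dual $\bar S^*(s)= \frac{1}{2}\sqrt{\beta_W/\alpha_V}\sup_{e\in\overset{\rightarrow}{K},|e|=1}\langle s,e\rangle^2/|\mathrm{Proj}_{\overset{\rightarrow}{L}}(e)|$, applies the directional Proposition \ref{prop:directional-regularity}, and extracts $C_{K,L}$ only at the very last step after observing that finiteness of $\bar S^*(y_1-y_0)$ forces $y_1-y_0$ to be orthogonal to $\overset{\rightarrow}{K}\cap\overset{\rightarrow}{L}^\perp$. Your variant avoids computing $\bar S^*$ altogether and is correspondingly shorter; what it gives up is the paper's sharper intermediate inequality
\[
\langle y_1-y_0, e\rangle^2 \leq \frac{\alpha_V}{\beta_W}\,|x_1-x_0|\,|\mathrm{Proj}_{\overset{\rightarrow}{L}}(x_1-x_0)|\,|\mathrm{Proj}_{\overset{\rightarrow}{L}}(e)|,
\]
which is finer than the stated conclusion (it retains dependence on the direction of $x_1-x_0$). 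Since the theorem only asserts the $C_{K,L}$-Lipschitz bound, nothing is lost for the purpose of proving it, and your argument for the $C^1$ regularity (taking $x_0=x_1$, so $\partial\phi_K(x)$ is a singleton) matches the paper's.
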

In the above statement, the hessian of $V$ and $W$ are understood as operators defined on the tangent spaces $\overset{\rightarrow}{K}$ and $\overset{\rightarrow}{L}$ respectively.

\begin{remark}Let $\mu,\nu$ and $\phi$ be as in Theorem \ref{thm:Caf-subspaces}. 
\begin{itemize}
\item If $K=\R^n$, then $C_{K,L}=1$ and one obviously recovers the conclusions of Theorem \ref{thm:Caf-intro}.
\item If $\pi^*$ is an optimal coupling between $\mu$ and $\nu$, then it is well known that 
\[
\pi^*(\cup_{x\in K}\{x\}\times \partial\varphi(x))=1.
\]
Equivalently, writing $\pi^*(dxdy) = \mu(dx) \pi^*_x(dy)$ the disintegration of $\pi^*$ with respect to its first marginal, one gets
\[
\pi^*_x(\partial \varphi(x)) = 1
\]
for $\mu$ almost every $x\in K$. According to Theorem \ref{thm:Caf-subspaces} and \eqref{eq:subgradient-dec}, one gets
\[
\pi^*_x(\nabla \phi_K(x)+ \overset{\rightarrow}{K}^{\perp}) = 1
\]
for $\mu$ almost every $x\in K$ and $\nabla \phi_K$ is Lipschitz continuous on $K$. In other words, the mass coming from $x$ is moved ``horizontally'' through a Lipschitz map $\nabla \phi_K$ along $\overset{\rightarrow}{K}$ and then distributed ``vertically'' along $\overset{\rightarrow}{K}^\perp.$ Only the horizontal component of mass transports is controlled.
\item In the case where $K$ and $L$ are two orthogonal vector subspaces, then $C_{K,L}=0$ and so $\nabla \phi_K$ is constant (in fact equal to $0$). The transport of mass is purely vertical in this case. In this degenerate situation, any coupling $(X,Y)$ with $X \sim \mu$ and $Y \sim \nu$ is optimal.
Indeed, by orthogonality,
\[
\vert X-Y\vert^2 = \vert X\vert^2 + \vert Y\vert^2.
\]
\item If $K,L$ are close from being orthogonal, then the constant $C_{K,L}$ can be small, and the Lipschitz constant of $\nabla \phi_K$ can be strictly smaller than $\sqrt{\alpha_V/\beta_W}.$
\end{itemize}
\end{remark}

\proof
The moduli satisfy the following $S_V(d) \leq \frac{\alpha_V}{2} \vert d \vert^2$ for $d\in \overset{\rightarrow}{K}$, $R_W(d) \geq \frac{\beta_W}{2} \vert d\vert^2$ for $d \in \overset{\rightarrow}{L}$ and $+\infty$ otherwise. Thus Corollary \ref{cor:OT-reg-dir} ensures $S_\phi(d) \leq \bar{S}(d)$, with 
\[
\bar{S}(d) = \frac{1}{2}\sqrt{\frac{\alpha_V}{\beta_W}}\vert d \vert \vert \mathrm{Proj}_{\overset{\rightarrow}{L}}(d)\vert
\]
for $d \in \overset{\rightarrow}{K}$ and $+\infty$ otherwise. A simple calculation shows that the Legendre transform of $\bar{S}$ is given by 
\[
\bar{S}^*(s)= \frac{1}{2}\sqrt{\frac{\beta_W}{\alpha_V}} \sup_{e\in \overset{\rightarrow}{K}, \vert e \vert =1} \frac{\langle s ,e\rangle^2}{\vert \mathrm{Proj}_{\overset{\rightarrow}{L}}(e)\vert},\qquad s\in \R^n,
\]
with the convention $0/0=0$. 
Note in particular that $\bar{S}^*(s) = +\infty$ if $s \in \overset{\rightarrow}{L}^\perp \setminus \{0\}$.
Thus, by Proposition \ref{prop:directional-regularity}, we have for $x_0,x_1 \in K$ and $y_0 \in \partial \phi (x_0), y_1\in \partial \phi(x_1)$
\begin{equation}\label{eq:barS}
\bar{S}^*(y_1-y_0) \leq \bar{S}(x_1-x_0).
\end{equation}
In particular, $\bar{S}^*(y_1-y_0)<+\infty$. Thus, if $e \in \overset{\rightarrow}{K} \cap \left(\overset{\rightarrow}{L}^\perp\right)$, then $\langle y_1-y_0, e\rangle = 0$.
With this remark, we then see that \eqref{eq:barS} is equivalent to 
\[
\langle y_1-y_0, e \rangle^2 \leq \frac{\alpha_V}{\beta_W} \vert x_1-x_0\vert \vert\mathrm{Proj}_{\overset{\rightarrow}{L}}(x_1-x_0)\vert \vert\mathrm{Proj}_{\overset{\rightarrow}{L}}(e)\vert,
\]
for all $e \in \overset{\rightarrow}{K}$ with $\vert e\vert=1$.
By definition of $C_{K,L}$ we deduce that
\[
\langle y_1-y_0, e \rangle^2 \leq \frac{\alpha_V}{\beta_W}C_{K,L}^2 \vert x_1-x_0\vert^2,
\]
for all $e \in \overset{\rightarrow}{K}$ with $\vert e\vert=1$. Optimizing over $e$ yields
\[
\vert\mathrm{Proj}_{\overset{\rightarrow}{K}}(y_1)-\mathrm{Proj}_{\overset{\rightarrow}{K}}(y_0) \vert \leq \sqrt{\frac{\alpha_V}{\beta_W}}C_{K,L} \vert x_1-x_0\vert.
\]
In particular, taking $x_0=x_1=x \in K$ one concludes, in view of \eqref{eq:subgradient-dec}, that $\partial \phi_K(x)$ is reduced to a single point. This proves that $\phi_K$ is continuously differentiable on $K$ and $\partial \phi_K(x) = \mathrm{Proj}_{\overset{\rightarrow}{K}}(\partial \phi (x)) = \{\nabla \phi_K(x)\}$, $x\in K$, and completes the proof.
\endproof

\subsection{A non Euclidean version of Caffarelli's theorem}
For $p\geq 1$, denote by $\|\,\cdot\,\|_p$ the $p$-norm on $\R^n$, defined by $\|x\|_p^p=\sum_{i=1}^n|x_i|^p$, for all $x\in \R^n.$ 

The following lemma is very classical in the Banach spaces literature.
\begin{lemma}\ Consider the function $f_p(x) = \frac{1}{p}\|x\|_p^p$, $x\in \R^n$.
\begin{itemize}
\item If $2 \leq p<\infty$, the function $f_p$ is $R_p$-convex with $R_p := c_p \frac{\|\,\cdot\,\|_p^p}{p}$, for some constant $c_p \geq 2^{1-p}$. 
\item If $1< p\leq 2$, the function $f_p$ is $S_p$-smooth, with $S_p = c_{p^*}^{1-p} \frac{\|\,\cdot\,\|_p^p}{p}$ and $p^* = \frac{p}{p-1}.$
\end{itemize}
\end{lemma}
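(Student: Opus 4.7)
The plan is to reduce everything to the scalar Clarkson-type inequality $|\tfrac{a+b}{2}|^p+|\tfrac{a-b}{2}|^p \leq \tfrac{|a|^p+|b|^p}{2}$ for $a,b\in\R$ and $p\geq 2$, and then handle the case $1<p\leq 2$ by the duality already established in Proposition \ref{prop:directional-AP2}. For the first bullet ($p\geq 2$), I would first prove the scalar inequality by chaining three elementary facts: (1) the parallelogram identity $(\tfrac{a+b}{2})^2+(\tfrac{a-b}{2})^2 = \tfrac{a^2+b^2}{2}$; (2) the monotonicity of $\ell^q$-norms in $q$ on $\R^2$, which for $p\geq 2$ gives $|u|^p+|v|^p \leq (u^2+v^2)^{p/2}$; and (3) the convexity of $x\mapsto x^{p/2}$ on $\R_+$, which gives $\left(\tfrac{a^2+b^2}{2}\right)^{p/2}\leq \tfrac{|a|^p+|b|^p}{2}$. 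Applying (2) with $u=\tfrac{a+b}{2}$, $v=\tfrac{a-b}{2}$, then (1), then (3), produces the scalar inequality.

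Next I would sum the scalar inequality over coordinates to obtain the vector version
\[
\Big\|\tfrac{x+y}{2}\Big\|_p^p + \Big\|\tfrac{x-y}{2}\Big\|_p^p \leq \tfrac{\|x\|_p^p+\|y\|_p^p}{2}, \qquad x,y\in\R^n,
\]
which rewrites as
\[
f_p\!\left(\tfrac{x+y}{2}\right) \leq \tfrac{1}{2}f_p(x)+\tfrac{1}{2}f_p(y) - \tfrac{1}{2}\cdot \tfrac{2^{1-p}}{p}\|x-y\|_p^p.
\]
An application of Lemma \ref{lem:BL} with the even, nonnegative modulus $R_p(d)=\tfrac{2^{1-p}}{p}\|d\|_p^p$ then yields that $f_p$ is $R_p$-convex with $c_p=2^{1-p}$, in particular $c_p\geq 2^{1-p}$.

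For the second bullet ($1<p\leq 2$), the strategy is duality. Since $p^*\geq 2$, the first part applies to $f_{p^*}$, giving that $f_{p^*}$ is $R_{p^*}$-convex with $R_{p^*}(d)=c_{p^*}\tfrac{\|d\|_{p^*}^{p^*}}{p^*}$. By Proposition \ref{prop:directional-AP2}(ii), the convex conjugate $(f_{p^*})^*$ is $R_{p^*}^*$-smooth. The separable, coordinatewise Legendre computation gives $(f_{p^*})^*=f_p$. It remains to compute $R_{p^*}^*$ using the scaling identity $(c g)^*(s)=cg^*(s/c)$ with $g(d)=\tfrac{\|d\|_{p^*}^{p^*}}{p^*}$, whose Legendre transform is $g^*(s)=\tfrac{\|s\|_p^p}{p}$. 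This gives
\[
R_{p^*}^*(s) = c_{p^*}\,\tfrac{\|s/c_{p^*}\|_p^p}{p} = c_{p^*}^{\,1-p}\,\tfrac{\|s\|_p^p}{p} = S_p(s),
\]
which is exactly the modulus in the statement.

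I do not anticipate a genuine obstacle: the main step is the scalar Clarkson-type inequality, and the three-line argument above (parallelogram + norm comparison + convexity of $x^{p/2}$) is the cleanest way to reach it; the $p\leq 2$ case is then a straightforward duality computation using the machinery already developed.
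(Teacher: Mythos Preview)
Your proposal is correct and follows essentially the same route as the paper: Clarkson's inequality for $p\geq 2$ combined with Lemma \ref{lem:BL}, then duality via Proposition \ref{prop:directional-AP2} and the identity $f_p=(f_{p^*})^*$ for $1<p\leq 2$. The only difference is cosmetic: the paper quotes the vector Clarkson inequality as classical and leaves the computation of $R_{p^*}^*$ implicit, whereas you supply a self-contained scalar proof (parallelogram identity, $\ell^p$--$\ell^2$ comparison, convexity of $t\mapsto t^{p/2}$) and carry out the Legendre transform of $R_{p^*}$ explicitly.
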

\proof
Let $p\geq 2$. According to the classical Clarkson's inequality, it holds
\[
\|\frac{a+b}{2}\|_p^p+\frac{1}{2^p}\|a-b\|_p^p \leq \frac{1}{2}(\|a\|_p^p+\|b\|_p^p),\qquad \forall a,b \in \R^n.
\]
Applying Lemma \ref{lem:BL}, thus yields that $f_p$ is $\frac{1}{p}2^{1-p}\|\,\cdot\,\|_p^p$-convex. The case $1\leq p\leq 2$ follows from Proposition \ref{prop:directional-AP2}, noting that $f_p = (f_{p^*})^*$ and that $p^*\geq 2$.
\endproof
The following result provides a version of the Caffarelli's contraction theorem for the transport between a log-convex perturbation of the product measure with density proportional to $e^{-\frac{\|\,\cdot\,\|_p^p}{p}}$ for $p\leq 1/2$ and a log-concave perturbation of the product measure with density proportional to $e^{-\frac{\|\,\cdot\,\|_{p^*}^{p^*}}{p^*}}$. 
\begin{theorem}\label{thm:lpnorm}
Let $p\in (1,2]$ and consider two probability measures $\mu,\nu$ on $\R^n$ of the form 
\[
\mu(dx) = \frac{1}{Z}e^{h-\alpha f_p}\,dx\quad \text{and}\quad \nu(dy) = \frac{1}{Z'}e^{-k-\beta f_{p*}}\,dy,
\]
with $h,k$ two convex functions on $\R^n$ and $\alpha,\beta>0$. Then, the optimal transport map $\nabla \phi$ sending $\mu$ on $\nu$ is such that
\[
\|\nabla \phi(x) - \nabla \phi(y)\|_{p^*} \leq \left(\frac{\alpha}{\beta}\right)^{1/p^*} c_{p^*}^{-(p-1)} \frac{1}{(p-1)^{2/p^*}} \|x-y\|_p^{p-1},\qquad \forall x,y \in \R^n.
\]
\end{theorem}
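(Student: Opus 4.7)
The plan is to reduce the statement to Corollary \ref{cor:OT-reg-dir}, using the preceding lemma to identify directional moduli of smoothness and convexity adapted to the $\ell^p$ geometry. First, I would write $\mu(dx)=e^{-V(x)}\,dx$ with $V = \alpha f_p - h + \mathrm{const}$ and $\nu(dy)=e^{-W(y)}\,dy$ with $W = \beta f_{p^*} + k + \mathrm{const}$; the convexity of $h$ and $k$ gives $M_t^V \leq \alpha M_t^{f_p}$ and $M_t^W \geq \beta M_t^{f_{p^*}}$, and combined with the preceding lemma (applicable since $1 < p \leq 2 \leq p^*$), one obtains that $V$ is $S$-smooth and $W$ is $R$-convex with
\[
S(d) = \alpha\,c_{p^*}^{1-p}\,\frac{\|d\|_p^p}{p},\qquad R(d) = \beta\,c_{p^*}\,\frac{\|d\|_{p^*}^{p^*}}{p^*}.
\]

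Since $R$ is convex one has $R^{**}=R$, and Corollary \ref{cor:OT-reg-dir} then furnishes the function $\bar{S}(d)=\int_0^1 \sup_{R(q)\leq S(td)}\langle q,d\rangle\,dt$ as a directional modulus of smoothness for the Kantorovich potential $\phi$. The next step is to evaluate this $\bar{S}$: solving the constraint $R(q)\leq S(td)$ for $\|q\|_{p^*}$ and using the sharp Hölder bound $\langle q,d\rangle\leq \|q\|_{p^*}\|d\|_p$, integration in $t$ will yield
\[
\bar{S}(d) \leq \lambda\,\|d\|_p^p,\qquad \lambda := \frac{1}{p}\Bigl(\frac{\alpha\, p^*}{\beta\,p\,c_{p^*}^{\,p}}\Bigr)^{1/p^*}.
\]

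To conclude, I would apply Proposition \ref{prop:directional-regularity} to the convex function $\phi$ (which is $\bar{S}$-smooth) together with its gradient $T=\nabla\phi$, yielding $\bar{S}^*(T(x)-T(y))\leq \bar{S}(x-y)$ for all $x,y$. The pointwise bound $\bar{S}\leq g:=\lambda\|\cdot\|_p^p$ transfers to conjugates as $\bar{S}^*\geq g^*$, and a direct computation using positive $p$-homogeneity will produce $g^*(s) = \|s\|_{p^*}^{p^*}/(p^*(p\lambda)^{p^*/p})$. The chain of inequalities
\[
\frac{\|T(x)-T(y)\|_{p^*}^{p^*}}{p^*\,(p\lambda)^{p^*/p}} \leq \lambda\,\|x-y\|_p^p
\]
will then give the desired Hölder estimate after a bookkeeping exercise in which the identities $1+p^*/p = p^*$, $p/p^* = p-1$, and $p^*/p = 1/(p-1)$ collapse the numerical constant to $(\alpha/\beta)^{1/p^*}\,c_{p^*}^{-(p-1)}\,(p-1)^{-2/p^*}$. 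There is no substantial analytic obstacle: the whole argument is a direct instantiation of the framework already developed in the paper, and the only delicate step will be keeping the Hölder-conjugate exponents straight in order to recover the sharp constant.
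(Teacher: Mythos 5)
Your proposal is correct and follows essentially the same route as the paper: identify $V$ as $\alpha c_{p^*}^{1-p}\frac{\|\cdot\|_p^p}{p}$-smooth and $W$ as $\beta c_{p^*}\frac{\|\cdot\|_{p^*}^{p^*}}{p^*}$-convex via the Clarkson lemma, evaluate $\bar S$ from Corollary \ref{cor:OT-reg-dir} by sharp $\ell^p$--$\ell^{p^*}$ duality to get $\bar S(d)=\lambda\|d\|_p^p$, and conclude via Proposition \ref{prop:directional-regularity} and the scaling of Legendre conjugation of $f_p$. The only cosmetic difference is that you pass through an auxiliary bound $g=\lambda\|\cdot\|_p^p$ and its conjugate, while the paper works directly with $\bar S=\kappa\|\cdot\|_p^p/p$; the constants coincide.
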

For $p=2$, for which $c_2=1$, one recovers the usual Caffarelli's contraction theorem.
Note that applying Kolesnikov's theorem \ref{thm:kol} under the above assumptions on $\mu$ and $\nu$ (and comparing $p$ and $p^*$ norms to the Euclidean norm) would lead to a similar Hölder estimate but with constants depending on the dimension. Of course, in dimension $1$, one can easily check that Theorem \ref{thm:kol} gives the same conclusion.
\proof
The function $V= \alpha f_p - h$ is $\alpha c_{p^*}^{1-p}\frac{\|\,\cdot\,\|_p^p}{p}$-smooth and the function $W=k+\beta f_{p^*}$ is $\beta c_{p^*} \frac{\|\,\cdot\,\|_{p^*}^{p^*}}{p^*}$-convex. So, according to Corollary \ref{cor:OT-reg-dir}, the Kantorovich potential $\phi$ is $\bar{S}$-smooth, with 
$\bar{S}$ given by 
\begin{align*}
\bar{S}(d) &= \int_0^1 \sup_{\beta c_{p^*} \frac{\|v\|_{p^*}^{p^*}}{p^*} \leq \alpha c_{p^*}^{1-p} \frac{\|td\|_{p}^{p}}{p}} \langle v,d \rangle\,dt\\
& = \int_0^1 \sup_{ \|v\|_{p^*} \leq \left(\frac{\alpha}{\beta}\right)^{1/p^*} c_{p^*}^{-(p-1)} \frac{\|td\|_{p}^{p-1}}{(p-1)^{1/p^*}}} \langle v,d \rangle\,dt\\
& = \int_0^1 \|d\|_p\left(\frac{\alpha}{\beta}\right)^{1/p^*} c_{p^*}^{-(p-1)} \frac{\|td\|_{p}^{p-1}}{(p-1)^{1/p^*}}\,dt\\
& = \kappa\frac{\|d\|_p^p}{p},
\end{align*}
with $\kappa = \left(\frac{\alpha}{\beta}\right)^{1/p^*} c_{p^*}^{-(p-1)} \frac{1}{(p-1)^{1/p^*}}.$
According to Proposition \ref{prop:directional-regularity}, we finally get that the function $\phi$ is differentiable and, for all $x,y \in \R^n$, 
\[
\kappa^{1-p^*} \frac{\|\nabla \phi(x) - \nabla \phi(y)\|_{p^*}^{p^*}}{p^*} \leq \kappa \frac{\|x - y\|_{p}^{p}}{p}
\]
and so 
\[
\|\nabla \phi(x) - \nabla \phi(y)\|_{p^*} \leq \frac{\kappa}{(p-1)^{1/p^*}} \|x - y\|_{p}^{p-1},
\]
which ends the proof.
\endproof
 
\subsection{Growth estimates}
It is important to note that the estimation of the smoothness modulus of the optimal transport map is still valid when the target measure is not log-concave. The estimates still hold for $\rho_W$ which is not non-negative and are meaningful as soon as $\lim_{r\to + \infty} \rho_W(r) = +\infty$. In particular it is possible to obtain growth estimates for the optimal transport map (deduced from the modulus of continuity between far apart points). 
\begin{corollary}\label{cor:growth}
Let $\mu(dx) = e^{-V(x)}\,dx$ and $\nu(dy) = e^{-W(y)}\,dy$ be two probability measures of $\R^n$. Assume that $V$ is $\sigma$-smooth with $\sigma:\R \to \R_+$ non-decreasing and that $W$ is $\rho$-convex with $\rho:\R \to \R\cup \{+\infty\}$. Then the optimal transport $T$ from $\mu$ to $\nu$ satisfies
\begin{equation*}
    \vert T(x) \vert \leq \vert T(0) \vert + 2(\rho^{\ast\ast})^{-1}(\sigma(\vert x \vert)).
\end{equation*}
\end{corollary}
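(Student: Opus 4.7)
The plan is to deduce this from Corollary \ref{cor:OT-reg} by applying it with $y=0$, after replacing the intrinsic moduli $\sigma_V$ and $\rho_W$ by the given majorants $\sigma$ and minorants $\rho$, and finally exploiting that the integrand in \eqref{eq:reg-intro} is non-decreasing in $s$.

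First I would note that by minimality of $\sigma_V$ and maximality of $\rho_W$, the hypotheses on $V$ and $W$ yield $\sigma_V \leq \sigma$ and $\rho_W \geq \rho$ on $\R_+$. Since taking biconjugates is order-preserving, $\rho_W^{\ast\ast}\geq \rho^{\ast\ast}$, and by the very definition of the generalized inverse,
\[
(\rho_W^{\ast\ast})^{-1}(t) \leq (\rho^{\ast\ast})^{-1}(t),\qquad \forall t\geq 0.
\]
Combining with the pointwise bound $\sigma_V \leq \sigma$ and the monotonicity of $(\rho^{\ast\ast})^{-1}$, one gets $(\rho_W^{\ast\ast})^{-1}(\sigma_V(s)) \leq (\rho^{\ast\ast})^{-1}(\sigma(s))$ for all $s\geq 0$.

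Next I would apply Corollary \ref{cor:OT-reg} with the pair $(x,0)$, obtaining
\[
|T(x)-T(0)| \leq \frac{2}{|x|}\int_0^{|x|}(\rho_W^{\ast\ast})^{-1}(\sigma_V(s))\,ds \leq \frac{2}{|x|}\int_0^{|x|}(\rho^{\ast\ast})^{-1}(\sigma(s))\,ds.
\]
Because $\sigma$ is non-decreasing and $(\rho^{\ast\ast})^{-1}$ is non-decreasing (by definition, as a generalized inverse of a non-decreasing function), the integrand is itself non-decreasing in $s$. Hence its average over $[0,|x|]$ is controlled by its value at the endpoint:
\[
\frac{1}{|x|}\int_0^{|x|}(\rho^{\ast\ast})^{-1}(\sigma(s))\,ds \leq (\rho^{\ast\ast})^{-1}(\sigma(|x|)).
\]
The triangle inequality $|T(x)|\leq |T(0)|+|T(x)-T(0)|$ then yields the stated growth bound (the case $x=0$ being trivial).

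The only mild obstacle is verifying that the hypotheses of Corollary \ref{cor:OT-reg} (in particular, $\rho_W^{\ast}(v)<+\infty$ for some $v>0$) hold under the present, weaker hypotheses. This however is not restrictive: if $\rho^{\ast\ast}\equiv 0$ then $(\rho^{\ast\ast})^{-1}(\sigma(|x|))=+\infty$ and the asserted inequality is vacuous, so one may assume $\rho^{\ast\ast}\not\equiv 0$, in which case $\rho^{\ast}$, and hence $\rho_W^{\ast}\leq \rho^{\ast}$, is finite at some positive point and Corollary \ref{cor:OT-reg} applies.
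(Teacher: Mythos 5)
Your proposal is correct and takes essentially the same route as the paper: apply Corollary \ref{cor:OT-reg} at the pair $(x,0)$, then bound the average of the non-decreasing integrand $s \mapsto (\rho^{\ast\ast})^{-1}(\sigma(s))$ by its value at the endpoint $|x|$. The paper's proof elides the initial comparison $\sigma_V \leq \sigma$, $\rho_W \geq \rho$ and the verification of the hypotheses of Corollary \ref{cor:OT-reg}; you spell those out, which is a harmless elaboration of the same argument.
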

\proof
By Corollary \ref{cor:OT-reg} we have
\begin{equation*}
    \vert T(x) - T(0)\vert \leq \frac{2}{\vert x \vert} \int_0^{\vert x \vert} (\rho^{\ast\ast})^{-1}(\sigma(s))ds.
\end{equation*}
Since $\sigma \geq 0$ is non-decreasing and $(\rho^{\ast\ast})^{-1}$ is non-decreasing over $\R_+$ we have the result.
\endproof
This result is to be compared with the growth estimates obtained via concentration of measure in \cite{fathi2024}.
\begin{theorem}[\cite{fathi2024}]\label{thm:fathi-growth}
Let $\alpha$ be a continuous decreasing function such that $\nu(\vert \cdot \vert \geq r) \leq \alpha(r)$ for $r\geq r_0$. Then for any $\lambda > 0$ and $x \in \R^n$, we have
\begin{equation*}
    \vert T(x) \vert \leq \max(3r_0,3\alpha^{-1}(\mu(B(x+2\lambda u,\lambda))))
\end{equation*}
where $u(x) = \frac{T(x)}{\vert T(x)\vert}$ and $T$ is the optimal transport map from $\mu$ to $\nu$.
\end{theorem}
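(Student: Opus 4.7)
The plan is to use the monotonicity of the Brenier map together with a simple geometric observation about the cone defined by the direction $u=u(x)$. Recall that $T=\nabla \phi$ for some convex $\phi$, so for every $x,y \in \R^n$ one has the cyclical monotonicity inequality $\langle T(y)-T(x),y-x\rangle \geq 0$. The strategy is to show that if $y$ lies in the ball $B(x+2\lambda u,\lambda)$, then $|T(y)|$ is forced to be large, comparable to $|T(x)|$. Pushing this information forward under $T$ will transform the measure of that ball under $\mu$ into a tail bound for $\nu$.

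First I would carry out the elementary geometric step. For $y=x+2\lambda u+z$ with $|z|\leq \lambda$, a direct computation gives
\[
\langle u,y-x\rangle = 2\lambda + \langle u,z\rangle \geq \lambda, \qquad |y-x|\leq 3\lambda.
\]
Combining this with monotonicity and the identity $T(x)=|T(x)|u$ yields
\[
3\lambda\, |T(y)| \geq \langle T(y), y-x\rangle \geq \langle T(x),y-x\rangle = |T(x)|\langle u,y-x\rangle \geq \lambda\, |T(x)|,
\]
so that $|T(y)|\geq |T(x)|/3$ for every $y\in B(x+2\lambda u,\lambda)$.

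Second, I would push this forward through $T_\#\mu = \nu$. The previous step shows the inclusion
\[
B(x+2\lambda u,\lambda) \subset \{y : |T(y)|\geq |T(x)|/3\},
\]
so taking $\mu$-measures gives $\mu(B(x+2\lambda u,\lambda)) \leq \nu(\{z:|z|\geq |T(x)|/3\})$. If $|T(x)|/3 \geq r_0$, the tail assumption on $\nu$ applies and, together with the fact that $\alpha$ is continuous and strictly decreasing (so invertible) on $[r_0,\infty)$, yields $|T(x)|/3 \leq \alpha^{-1}(\mu(B(x+2\lambda u,\lambda)))$. Otherwise $|T(x)|<3r_0$ trivially, and the stated maximum handles both cases at once.

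I expect no serious obstacle here: the main subtle point is the geometric lower bound $\langle u,y-x\rangle \geq \lambda$ which relies crucially on the center of the ball being shifted by $2\lambda u$ rather than $\lambda u$, so that the full ball of radius $\lambda$ still lies in the half-space $\{\langle u,\cdot - x\rangle \geq \lambda\}$. The only other care needed is a measurability/well-definedness comment that $u(x)$ is only defined when $T(x)\neq 0$, in which case the bound at points with $T(x)=0$ is vacuous.
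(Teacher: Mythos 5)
Your proof is correct: the monotonicity inequality $\langle T(y)-T(x),y-x\rangle\geq 0$, the geometric bounds $\langle u,y-x\rangle\geq\lambda$ and $|y-x|\leq 3\lambda$ on the shifted ball, the resulting inclusion $B(x+2\lambda u,\lambda)\subset\{|T|\geq |T(x)|/3\}$ (up to a $\mu$-null set where $T$ is undefined, which you rightly note), and the pushforward $T_\#\mu=\nu$ followed by inversion of $\alpha$ give exactly the stated bound, with the $3r_0$ term absorbing the case $|T(x)|<3r_0$ where the tail hypothesis does not apply. Note that the paper itself gives no proof of this statement --- it is quoted from \cite{fathi2024} --- and your argument is precisely the expected one from that reference, so there is nothing to compare beyond the minor bookkeeping that the inequality holds for $\mu$-a.e.\ $x$ (points of differentiability of the Brenier potential) and that $\alpha^{-1}$ should be read as the generalized inverse on $[r_0,\infty)$.
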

In order to compare the two results above let us first recall some results on the concentration of measure phenomenon.
\begin{proposition}[\cite{Gozlan2010}]
Let $p \in \mathcal{P}(\R^n)$ such that
\begin{equation*}
    \alpha(W_1(p,q)) \leq H(q \mid p)
\end{equation*}
for any $q \in \mathcal{P}(\R^n)$ where $\alpha:\R_+\to \R_+$ is a non-decreasing function such that $\alpha(0)= 0$. We say that $p$ satisfies a transport-entropy inequality of modulus $\alpha$. Then $p$ satisfies the following concentration property: for any $1$-Lipschitz function $f$ and $r\geq0$ we have
\begin{equation*}
    p\left(f>\int f dp + r_0 + r\right) \leq e^{-\alpha(r)}
\end{equation*}
where $r_0 = \alpha^{-1}(\log(2))$.
\end{proposition}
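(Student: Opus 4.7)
\proof[Proof proposal.]
The plan is to exploit the transport-entropy inequality directly by conditioning $p$ on the deviation set and comparing the resulting probability measure to $p$ via the Kantorovich--Rubinstein duality. Fix a $1$-Lipschitz function $f$ with $\int f\,dp$ finite and $r\geq 0$, and set $A = \{x\in\R^n : f(x) > \int f\,dp + r_0 + r\}$. The goal is to show $p(A) \leq e^{-\alpha(r)}$. If $p(A)=0$ there is nothing to prove, so assume $p(A)>0$ and introduce the probability measure
\[
q = \frac{\mathbf{1}_A}{p(A)}\,p,
\]
which is absolutely continuous with respect to $p$.

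The first routine step is to compute $H(q\mid p) = -\log p(A)$, directly from the explicit form of the density $dq/dp = \mathbf{1}_A/p(A)$. The second step is to lower bound $W_1(p,q)$. By the Kantorovich--Rubinstein duality, for any $1$-Lipschitz function $g$, $W_1(p,q) \geq \int g\,dq - \int g\,dp$. Applying this to $g=f$ and using that $f > \int f\,dp + r_0 + r$ on $A$ (which supports $q$),
\[
W_1(p,q) \geq \int f\,dq - \int f\,dp = \frac{1}{p(A)}\int_A f\,dp - \int f\,dp > r_0 + r.
\]

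The final step is to invoke the hypothesis $\alpha(W_1(p,q)) \leq H(q\mid p)$. Combining with the two previous steps and the fact that $\alpha$ is non-decreasing,
\[
\alpha(r_0 + r) \leq \alpha(W_1(p,q)) \leq H(q\mid p) = -\log p(A),
\]
hence $p(A) \leq e^{-\alpha(r_0 + r)} \leq e^{-\alpha(r)}$, since $r_0 \geq 0$. The only mild point to address is the well-definedness of $r_0 = \alpha^{-1}(\log 2)$: interpreting $\alpha^{-1}$ as the generalized inverse $\alpha^{-1}(s) = \inf\{u\geq 0 : \alpha(u) \geq s\}$, the argument only uses monotonicity of $\alpha$ and not its invertibility.

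There is essentially no obstacle here beyond checking that $\int f\,dp$ is finite, which is implicit in the statement of the concentration inequality (it is guaranteed, e.g., when $p$ has a finite first moment, a condition already ensured by the existence of a transport-entropy inequality with a nontrivial modulus). The whole argument boils down to the observation that the ``cheapest'' way to produce a measure with high relative entropy is to concentrate mass on an exceptional set, while the Kantorovich--Rubinstein duality prevents such concentration from being too far (in $W_1$) without paying the corresponding entropic price.
\endproof
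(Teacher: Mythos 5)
Your proof is correct. The paper itself does not prove this proposition but cites \cite{Gozlan2010}; the argument you give is the standard direct one for $W_1$ and it is clean. Two remarks are worth making. First, you should notice that your argument never uses the particular value $r_0 = \alpha^{-1}(\log 2)$: setting $A = \{f > \int f\,dp + s\}$ and repeating your three steps gives $W_1(p,q) > s$ and hence $p(A) \leq e^{-\alpha(s)}$ for every $s \geq 0$, which is strictly stronger than the stated bound (take $s = r_0 + r$ and use monotonicity of $\alpha$, exactly as you do at the end). The shift $r_0$ in the proposition as stated is an artefact of the other classical route to this result — Marton's two-set argument — where one compares the conditional measures $q_A = p|_A/p(A)$ with $p(A)\geq 1/2$ and $q_B = p|_B/p(B)$ with $B$ the deviation set, applies the transport inequality to each, and uses the triangle inequality $d(A,B)\le W_1(q_A,p)+W_1(p,q_B)$; that argument naturally produces concentration around a median with the additive shift $\alpha^{-1}(\log 2)$, and it has the advantage of extending to costs other than $W_1$ (e.g. $W_2$), where Kantorovich--Rubinstein duality is no longer available. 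Second, you correctly identify the only integrability point to check: finiteness of $\int f\,dp$ (hence of $\int f\,dq$ since $q \le p/p(A)$ and $|f|$ has at most linear growth), which is implicit in the statement and, in practice, guaranteed whenever the modulus $\alpha$ is nontrivial.
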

In order to deduce a concentration result for $\nu$ it is sufficient to prove that under the conditions of Corollary \ref{cor:growth} $\nu$ satisfies a transport-entropy inequality. This is the goal of the following lemma.
\begin{lemma}
Let $\nu(dx) = e^{-W(x)}dx$ be a probability measure over $\R^n$ with finite second moment. Assume that $W$ is $\rho$-convex. Then $\nu$ satisfies
\begin{equation*}
    \rho^{\ast\ast}(W_1(\nu,p))^+ \leq H(p\mid \nu)
\end{equation*}
for any $p \in \mathcal{P}(\R^n)$. 
\end{lemma}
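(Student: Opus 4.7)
The plan is to reduce the claimed transport-entropy inequality to a Laplace transform estimate on $1$-Lipschitz test functions and prove the latter via the Prékopa--Leindler inequality, in line with the philosophy of this paper. Concretely, I will first establish that for every $1$-Lipschitz $f:\R^n\to\R$ and every $\lambda\geq 0$,
\begin{equation*}
\log\int e^{\lambda(f-\bar f)}\,d\nu \leq \rho^\ast(\lambda),\qquad \bar f := \int f\,d\nu.
\end{equation*}
Given this, the variational formula for relative entropy $\int g\,dp \leq H(p|\nu) + \log\int e^g\,d\nu$ applied to $g=\lambda(f-\bar f)$ yields $\lambda \int f\,d(p-\nu) - \rho^\ast(\lambda) \leq H(p|\nu)$. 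Taking the supremum over $\lambda\geq 0$ produces $\rho^{\ast\ast}\!\left(\int f\,d(p-\nu)\right)^{+}\leq H(p|\nu)$ by the very definition of the monotone biconjugate; taking the supremum over $1$-Lipschitz $f$ then gives the claim, using Kantorovich duality $W_1(p,\nu)=\sup_f \int f\,d(p-\nu)$ and the fact that $\rho^{\ast\ast}$ is non-decreasing on $\R_+$.

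The core step is the Laplace bound, which I will obtain by a single application of Prékopa--Leindler to a carefully tuned triple. Fix $\lambda\geq 0$ and $t\in(0,1)$, and consider
\begin{equation*}
h_0(x) = e^{\lambda f(x) - W(x)},\quad h_1(y) = e^{-(1-t)\lambda f(y)/t - W(y)},\quad h(z) = e^{-W(z)+t(1-t)\rho^\ast(\lambda/t)}.
\end{equation*}
Setting $r=|z_1-z_0|$, a direct computation combining the $\rho$-convexity of $W$ (which produces $(1-t)W(z_0)+tW(z_1)\geq W(z_t)+t(1-t)\rho(r)$) with the $1$-Lipschitz bound $f(z_0)-f(z_1)\leq r$ reduces the pointwise hypothesis $h(z_t)\geq h_0(z_0)^{1-t}h_1(z_1)^t$ to the inequality $\lambda r/t - \rho(r) \leq \rho^\ast(\lambda/t)$, which is the very definition of the monotone conjugate. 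Prékopa--Leindler then yields
\begin{equation*}
\left(\int e^{\lambda f}\,d\nu\right)^{1-t}\left(\int e^{-(1-t)\lambda f/t}\,d\nu\right)^{t} \leq e^{t(1-t)\rho^\ast(\lambda/t)};
\end{equation*}
bounding the second factor below via Jensen ($\int e^{-(1-t)\lambda f/t}\,d\nu \geq e^{-(1-t)\lambda\bar f/t}$) and extracting the $(1-t)$-th root gives $\log\int e^{\lambda f}\,d\nu - \lambda\bar f \leq t\rho^\ast(\lambda/t)$.

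The last step is to let $t\to 1^-$ in this estimate. The function $t\mapsto t\rho^\ast(\lambda/t)$ is the perspective of the convex non-decreasing function $\rho^\ast$; since $\rho^\ast(0)=0$, an elementary convex-analysis computation shows it is non-increasing in $t$ on $(0,1]$, so its infimum on $(0,1)$ is $\rho^\ast(\lambda)$ (by continuity of $\rho^\ast$ in the interior of its domain, while the bound is trivial when $\rho^\ast(\lambda)=+\infty$). I expect the only real obstacle to be guessing the correct triple $(h_0,h_1,h)$ for Prékopa--Leindler: the specific coefficient $(1-t)/t$ in front of $f$ in $h_1$ and the shift $t(1-t)\rho^\ast(\lambda/t)$ in $h$ are forced by the requirement that the Lipschitz defect from $f$ and the convexity gain from $W$ combine into an exact Legendre duality. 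Once this ansatz is in hand, all remaining pieces are routine bookkeeping.
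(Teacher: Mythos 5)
Your proof is correct in its essential structure, but it takes a genuinely different route from the paper. The paper's proof is a short reduction to the literature: from $\rho$-convexity of $W$ (and $\rho \geq \rho^{\ast\ast}$) it deduces the Bregman-gap lower bound $W(y)-W(x)-\langle \nabla W(x),y-x\rangle \geq \rho^{\ast\ast}(|y-x|)$, then cites \cite[Proposition 1.1]{Cordero2017} to obtain the transport--entropy inequality of modulus $\rho^{\ast\ast}$, and concludes by nonnegativity of entropy. You instead give a self-contained proof: a Laplace-transform estimate $\log\int e^{\lambda(f-\bar f)}\,d\nu\leq \rho^\ast(\lambda)$ for $1$-Lipschitz $f$, obtained by a tuned Prékopa--Leindler ansatz $(h_0,h_1,h)$ together with Jensen's inequality, then the relative-entropy variational formula and Kantorovich duality for $W_1$. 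This fits well with the spirit of the rest of the paper (extracting quantitative convexity information from Prékopa--Leindler), and buys independence from the external reference, at the cost of length.

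One step needs more care. You assert $\rho^\ast(0)=0$ when passing to the limit $t\to 1^-$, and deduce that $t\mapsto t\rho^\ast(\lambda/t)$ is non-increasing on $(0,1]$. In the lemma $\rho$ is only assumed to make $W$ $\rho$-convex; it is allowed (and in the intended application, Corollary \ref{cor:growth}, it is allowed) to take negative values, so $\rho^\ast(0)=-\inf_{r\geq0}\rho(r)$ can be strictly positive and the perspective function need not be monotone. This does not invalidate the conclusion: for $\lambda$ in the interior of $\mathrm{dom}\,\rho^\ast$ one still has $\lim_{t\to 1^-}t\rho^\ast(\lambda/t)=\rho^\ast(\lambda)$ by continuity of the convex function $\rho^\ast$, so $\inf_{t\in(0,1)}t\rho^\ast(\lambda/t)\leq\rho^\ast(\lambda)$ and the stated Laplace bound holds; the boundary value of $\lambda$ is then recovered by letting $\lambda_k\nearrow\lambda$ and using lower semicontinuity of $\rho^\ast$. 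Alternatively, and more cleanly, you may skip the identification of the infimum entirely: keep the two-parameter estimate $\log\int e^{\lambda(f-\bar f)}\,d\nu\leq t\rho^\ast(\lambda/t)$ and optimize jointly over $\lambda\geq0$ and $t\in(0,1)$ in the entropy variational inequality. A short computation (substituting $s=\lambda/t$) shows that this supremum is exactly $\rho^{\ast\ast}(W_1(p,\nu))^+$, which is the claim.
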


\proof
Let $x,y \in \R^n$. Following the proof of Proposition \ref{prop:continuity} we know that
\begin{equation*}
    V(y) - V(x) - \langle \nabla V(x) , y-x \rangle \geq \rho(\vert x-y\vert) \geq \rho^{\ast\ast}(\vert x-y\vert)
\end{equation*}
Thus by combining Proposition 1.1 of \cite{Cordero2017} which ensures that $\nu$ satisfies a transport-entropy inequality of modulus $\rho$ and the convexity of $\rho^{\ast\ast}$ we have that $\rho^{\ast\ast}(W_1(\nu,\cdot\,))\leq H(\,\cdot\mid \nu)$. Finally the positivity of entropy grants the result. 
\endproof
Combining the two results above ensure that in the context of Corollary \ref{cor:growth} we have the following concentration estimate 
\begin{equation*}
    \nu(\vert \,\cdot\, \vert \geq r ) \leq e^{-\rho^{\ast\ast}(r-r_0)^+}
\end{equation*} 
which holds for any $r\geq r_0 = \int \vert \,\cdot\, \vert d\nu + ((\rho^{\ast\ast})^+)^{-1}(\log(2))$. It now remains to lower bound the ball probability  of $\mu$ in the context of Corollary \ref{cor:growth}. This is the goal of the following Lemma. 

\begin{lemma}
Let $\mu(dx) = e^{-V(x)}dx$ be a probability measure over $\R^n$ with finite second moment. Assume that $V$ is $\sigma$-smooth with $\sigma$ non decreasing and that $V$ admits a minimizer $x^*$ at which it is differentiable. Then for any $x,u \in \R^n$ such that $\vert u \vert =1$ we have that
\begin{equation*}
    \log(\mu(B(x+2\vert x \vert u,\vert x \vert)) \geq - \min V - 2 \sigma(\vert  x^\ast\vert + 3\vert x\vert) + \log(\omega_n \vert x \vert^n)
\end{equation*}
where $\omega_n$ is the volume of the unit ball.
\end{lemma}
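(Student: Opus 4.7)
My plan is to produce a uniform upper bound on $V$ over the ball $B(c,\vert x\vert)$ with $c := x + 2\vert x\vert u$, and then integrate the resulting lower bound on $e^{-V}$ against Lebesgue measure.

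Since $V$ is differentiable at the minimizer $x^\ast$, Fermat's rule gives $\nabla V(x^\ast)=0$ and $V(x^\ast)=\min V$. Combined with the $\sigma$-smoothness of $V$, Lemma \ref{lem:sigma-growth} then yields
\[
V(y)\le \min V+\sigma(\vert y-x^\ast\vert),\qquad \forall y\in\R^n.
\]
For any $y\in B(c,\vert x\vert)$, two triangle inequalities together with $\vert u\vert=1$ give
\[
\vert y-x^\ast\vert \le \vert y-c\vert+\vert c\vert+\vert x^\ast\vert \le \vert x\vert+3\vert x\vert+\vert x^\ast\vert=4\vert x\vert+\vert x^\ast\vert,
\]
so monotonicity of $\sigma$ delivers the uniform estimate $V(y)\le \min V+\sigma(\vert x^\ast\vert+4\vert x\vert)$ on the ball. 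To rewrite the right-hand side in the advertised form $2\sigma(\vert x^\ast\vert+3\vert x\vert)$, I would appeal to the sub-homogeneity of the minimal smoothness modulus established via Proposition \ref{prop:AP1}: setting $\lambda=(\vert x^\ast\vert+4\vert x\vert)/(\vert x^\ast\vert+3\vert x\vert)\le 4/3$, the inequality $\sigma_V(\lambda r)\le \lambda^2 \sigma_V(r)$ applied to $r=\vert x^\ast\vert+3\vert x\vert$ combined with $\sigma_V\le \sigma$ gives
\[
\sigma_V(\vert x^\ast\vert+4\vert x\vert)\le (4/3)^2\,\sigma_V(\vert x^\ast\vert+3\vert x\vert)\le 2\,\sigma(\vert x^\ast\vert+3\vert x\vert),
\]
since $16/9\le 2$.

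Finally, integrating the pointwise bound $e^{-V(y)}\ge e^{-\min V-2\sigma(\vert x^\ast\vert+3\vert x\vert)}$ over $B(c,\vert x\vert)$, whose Lebesgue volume equals $\omega_n\vert x\vert^n$, and taking the logarithm yields the stated inequality. The main delicate point is the last step, since Proposition \ref{prop:AP1} is formally stated under convex lower semicontinuity; when $V$ is not convex one should apply the sub-homogeneity to the biconjugate $\sigma_V^{\ast\ast}$, or simply absorb the ratio $16/9\le 2$ to preserve the form stated in the lemma.
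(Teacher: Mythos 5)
Your argument diverges from the paper's at the crucial step of controlling $V$ over the ball $B(c,\vert x\vert)$, $c:=x+2\vert x\vert u$. By replacing $V$ with its pointwise supremum over the ball and applying Lemma \ref{lem:sigma-growth} at $x^\ast$, you obtain $V(y)\le \min V+\sigma(\vert y-x^\ast\vert)\le \min V+\sigma(4\vert x\vert+\vert x^\ast\vert)$, and then need $\sigma(4\vert x\vert+\vert x^\ast\vert)\le 2\,\sigma(3\vert x\vert+\vert x^\ast\vert)$. To get this you invoke the subhomogeneity $\sigma_V(\lambda r)\le\lambda^2\sigma_V(r)$, but the paper establishes this (the discussion surrounding items (b)--(c) of Proposition \ref{prop:AP1}) only for $f$ proper, lower semicontinuous and \emph{convex}, via the identity $\sigma_f=(\rho_{f^\ast})^\ast$ which requires $f=f^{\ast\ast}$; $V$ is not assumed convex here. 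Your two fallbacks do not repair this: $\sigma_V^{\ast\ast}\le\sigma_V$ goes the wrong way for the bound you need, and for a generic non-decreasing $\sigma$ the ratio $\sigma(4s)/\sigma(3s)$ is unbounded, so ``absorbing $16/9\le 2$'' is not available. This is a genuine gap.

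The paper's route sidesteps the issue by never forming a pointwise sup. It first applies Jensen's inequality to pass from $\log\int_B e^{-V}$ to $-\tfrac{1}{\omega_n\vert x\vert^n}\int_B V$, and then applies Lemma \ref{lem:sigma-growth} \emph{at the center} $c$ rather than at $x^\ast$: averaging $V(y)\le V(c)+\langle\nabla V(c),y-c\rangle+\sigma(\vert y-c\vert)$ over $B(c,\vert x\vert)$ kills the linear term by symmetry and leaves only $\sigma(\vert x\vert)$. A second application of Lemma \ref{lem:sigma-growth}, this time at $x^\ast$, gives $V(c)\le\min V+\sigma(\vert c-x^\ast\vert)\le\min V+\sigma(3\vert x\vert+\vert x^\ast\vert)$. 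Since both $\sigma(\vert x\vert)$ and $\sigma(3\vert x\vert+\vert x^\ast\vert)$ are bounded by $\sigma(3\vert x\vert+\vert x^\ast\vert)$ purely by monotonicity, the factor $2$ comes for free with no homogeneity hypothesis on $\sigma$. If you want to keep your supremum-over-the-ball argument, the honest conclusion is the weaker bound with $\sigma(4\vert x\vert+\vert x^\ast\vert)$ in place of $2\sigma(3\vert x\vert+\vert x^\ast\vert)$; to get the stated constant you should adopt the averaging-at-the-center device.
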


\proof
Let $x\in \R^n$ we have by Jensen's inequality
\begin{align*}
    \log(\mu(B(x+2\vert x \vert u,\vert x \vert)) &= \log\left(\int_{B(x+2\vert x \vert u,\vert x \vert)}e^{-V(y)}dy\right)\\
    &\geq -\frac{1}{\omega_n \vert x \vert^n}\int_{B(x+2\vert x \vert u,\vert x \vert)}V(y)dy + \log(\omega_n \vert x \vert^n).
\end{align*}
Since $V$ is $\sigma$-smooth Lemma \ref{lem:sigma-growth} ensures
\begin{align*}
    &\frac{1}{\omega_n \vert x \vert^n}\int_{B(x+2\vert x \vert u,\vert x \vert)}V(y)dy \leq\\
    &\frac{1}{\omega_n \vert x \vert^n}\int_{B(x+2\vert x \vert u,\vert x \vert)}V(x+2\vert x \vert u) + \langle\nabla V(x+2\vert x \vert u ),y-(x+2\vert x \vert u)\rangle + \sigma(\vert y - (x+2\vert x \vert u)\vert) dy\\
    &\leq V(x+2\vert x \vert u) + \sigma(\vert x\vert).
\end{align*}
Finally using the same Lemma \ref{lem:sigma-growth} we have $V(x+2\vert x \vert u) \leq V(x^\ast) + \sigma(\vert x+2\vert x \vert u-x^\ast \vert)$. We conclude using the non-decreasingness of $\sigma$. 
\endproof
Finally, under the assumptions of Corollary \ref{cor:growth}, combining the estimates for $\mu$ and $\nu$ gives the following growth estimate on the optimal transport map according to Theorem \ref{thm:fathi-growth}:
\begin{equation*}
    \vert T(x) \vert \leq 9r_0 + 3 ((\rho^{\ast\ast})^+)^{-1}\left((\min V) - n\log(\vert x \vert) - \log(\omega_n) + 2\sigma(\vert x^\ast \vert + 3\vert x \vert) \right).
\end{equation*}
So, assuming that $V$ achieves its minimum at $0$ and that $\sigma(\vert x \vert)) - n\log(\omega_n\vert x \vert) \geq 0$ as $x \to +\infty$, one gets 
\begin{equation*}
    \vert T(x) \vert \leq 6 ((\rho^{\ast\ast})^+)^{-1}\left( \sigma( 3\vert x \vert) \right),
\end{equation*}
as soon as $\vert x\vert$ is large enough, which is of the same order than the bound given by Corollary \ref{cor:growth}. This shows that Fathi's bound gives back the conclusion of Corollary \ref{cor:growth} at a large scale.

To conclude, let us observe now that if $\mu$ is the standard Gaussian measure and $\nu$ is a \emph{rotationnaly} invariant probability measure, then the bound of Corollary \ref{cor:growth} is always better than the bound given by Theorem \ref{thm:fathi-growth}. Indeed, in this case $\sigma(u)=u^2/2$ and $T(x) = \tau(\vert x\vert) \frac{x}{\vert x\vert}$, for all $x\neq 0$, with $\tau:\R_+ \to \R_+$, and so we get
\begin{align*}
\mu(B(x+2\lambda u, \lambda)) &\leq \mu \left(\{ y \in \R^n : \langle y,u \rangle \geq \langle x, u \rangle + \lambda\}\right)\\
&\leq \exp(- \frac{1}{2}(\langle x, u \rangle + \lambda)^2)\\
&= \exp(- \frac{1}{2}(\vert x\vert + \lambda)^2)\\
& \leq \exp(- \frac{1}{2}\vert x\vert^2),
\end{align*}
where $u = T(x)/\vert T(x)\vert = x/\vert x\vert$.
Denoting $\alpha(r) = e^{-\rho^{**}(r-r_0)}$, $r\geq r_0:=\int \vert \,\cdot\, \vert d\nu + ((\rho^{\ast\ast})^+)^{-1}(\log(2))$, one gets 
\begin{align*}
\inf_{\lambda>0} \max(3r_0,3\alpha^{-1}(\mu(B(x+2\lambda u,\lambda))))& \geq 3 \alpha^{-1}(\exp(- \frac{1}{2}\vert x\vert^2))\\
&= 3r_0+3 ((\rho^{\ast\ast})^+)^{-1}(\sigma(\vert x\vert))\\
&\geq \vert T(0)\vert+ 2 ((\rho^{\ast\ast})^+)^{-1}(\sigma(\vert x\vert)),
\end{align*}
since $T(0)=0.$

\subsection{Log-Lipschitz perturbations and optimal transport}
The content of this section is motivated by the following conjecture which appeared in \cite{FMS24} : If $\nu$ is a log-Lipschitz perturbation of the standard Gaussian measure  $\gamma$ on $\R^n$, then the optimal transport map sending $\gamma$ onto $\nu$ is globally Lipschitz. 

In what follows, for any Lipschitz function $a:\R^n \to \R$, we will denote by 
\[
\gamma_a(dx)= \frac{1}{Z_a}e^{-a(x) - \frac{|x|^2}{2}}\,dx
\]
with $Z_a$ a normalizing constant. We will denote by $T_{a,b}:\R^n\to \R^n$ the Brenier transport map sending $\gamma_a$ on $\gamma_b$, and by $T_a:\R^n\to \R^n$ the Brenier map from $\gamma$ to $\gamma_a$. According to \cite[Theorem 1]{CEF19}, these applications are homeomorphisms.

In relation to the above conjecture, we can prove the following 
\begin{theorem}\label{prop:Log-Lip-perturb}
For all Lipschitz functions $a,b : \R^n \to \R$, the Brenier map $T_{a,b}$ from $\gamma_a$ to $\gamma_b$ satisfies
\begin{equation}\label{eq:Tab}
-8L +  |x-y|\leq |T_{a,b}(x)-T_{a,b}(y)| \leq 8L +  |x-y|,\qquad \forall x,y \in \R^n,
\end{equation}
where $L=\max(L_a,L_b)$ and $L_a,L_b$ are the Lipschitz constants of $a,b$.

In particular, the Brenier map $T_a$ between $\gamma$ and $\gamma_a$ satisfies
\[
-8L_a + |x-y|\leq |T_a(x)-T_a(y)| \leq 8L_a +  |x-y|,\qquad \forall x,y \in \R^n
\]
and, if $a$ is even, the following growth estimate holds
\begin{equation}\label{eq:growth-Ta}
-8L_a + |x|\leq |T_a(x)| \leq8L_a+|x|,\qquad \forall x\in \R^n.
\end{equation}
\end{theorem}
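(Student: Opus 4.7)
The plan is to apply Corollary~\ref{cor:OT-reg} to the quadratic transport from $\gamma_a$ to $\gamma_b$, whose associated potentials are $V = a + |\cdot|^2/2 + \log Z_a$ and $W = b + |\cdot|^2/2 + \log Z_b$, and then to specialise to $T_a = T_{0,a}$. Using the general inequalities $\sigma_{f+g} \leq \sigma_f + \sigma_g$ and $\rho_{f+g} \geq \rho_f + \rho_g$ (both immediate from $M_t^{f+g} = M_t^f + M_t^g$), together with the direct estimate $|M_t^a(x_0,x_1)| \leq 2 t(1-t) L_a |x_1 - x_0|$ coming from the Lipschitz bound on $a$ (and similarly for $b$), one obtains
\begin{equation*}
\sigma_V(r) \leq 2 L_a r + \tfrac{r^2}{2}, \qquad \rho_W(r) \geq \tfrac{r^2}{2} - 2 L_b r =: \tilde\rho(r).
\end{equation*}

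Next I would compute $\tilde\rho^{\ast\ast}$ explicitly: a direct calculation gives $\tilde\rho^{\ast\ast}(s) = -2 L_b^2$ for $s \in [0, 2L_b]$ and $\tilde\rho^{\ast\ast}(s) = \tfrac{s^2}{2} - 2 L_b s$ for $s \geq 2 L_b$, with generalised inverse $(\tilde\rho^{\ast\ast})^{-1}(t) = 2 L_b + \sqrt{4 L_b^2 + 2 t}$ for $t \geq -2 L_b^2$. Since $\rho_W^{\ast\ast} \geq \tilde\rho^{\ast\ast}$ implies $(\rho_W^{\ast\ast})^{-1} \leq (\tilde\rho^{\ast\ast})^{-1}$, the elementary inequality $4 L_b^2 + 4 L_a s + s^2 \leq (s + 2L)^2$ yields
\begin{equation*}
(\rho_W^{\ast\ast})^{-1}(\sigma_V(s)) \leq 2 L_b + \sqrt{4 L_b^2 + 4 L_a s + s^2} \leq 2 L_b + s + 2L \leq 4L + s.
\end{equation*}
Plugging this into Corollary~\ref{cor:OT-reg} and integrating yields the upper bound in \eqref{eq:Tab}:
\begin{equation*}
|T_{a,b}(x) - T_{a,b}(y)| \leq \frac{2}{|x-y|}\int_0^{|x-y|}(4L + s)\,ds = 8L + |x-y|.
\end{equation*}

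For the lower bound, write $T_{a,b} = \nabla \phi$ with $\phi$ convex. By \cite{CEF19}, $T_{a,b}$ is a homeomorphism, so $T_{a,b}^{-1} = \nabla \phi^\ast$ is itself the gradient of a convex function pushing $\gamma_b$ to $\gamma_a$, and therefore coincides with $T_{b,a}$ by uniqueness of the Brenier map. Applying the upper bound already proved to $T_{b,a}$ at $T_{a,b}(x), T_{a,b}(y)$ yields $|x-y| \leq 8L + |T_{a,b}(x) - T_{a,b}(y)|$, which is the claimed lower bound. Taking the source function to be $0$ (so that $L$ reduces to $L_a$ in the theorem's notation) gives the two-sided estimate for $T_a$. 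Finally, when $a$ is even, both $\gamma$ and $\gamma_a$ are invariant under $x \mapsto -x$; setting $\tilde\phi(x) := \phi(-x)$, one checks that $\nabla \tilde\phi(x) = -T_a(-x)$ is another gradient of a convex function pushing $\gamma$ onto $\gamma_a$, so uniqueness forces $T_a(-x) = -T_a(x)$. In particular $T_a(0) = 0$, and setting $y = 0$ in the two-sided estimate produces \eqref{eq:growth-Ta}.

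The main technical step is the careful bookkeeping around $\tilde\rho^{\ast\ast}$, made slightly delicate by the fact that $\tilde\rho$ takes negative values on $[0, 2L_b]$ (so that $\tilde\rho^{\ast\ast}$ is constant there). The crucial simplification is the quadratic inequality $4L_b^2 + 4 L_a s + s^2 \leq (s + 2L)^2$ with $L = \max(L_a, L_b)$, which absorbs the awkward square root into a linear bound and ultimately produces the clean dimension-free constant $8L$.
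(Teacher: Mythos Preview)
Your proof is correct and follows essentially the same route as the paper. The paper factors the argument through the more general Theorem~\ref{thm:Caff-log-Lip} (allowing arbitrary $\alpha_V,\beta_W$) and then specialises to $V=W=\tfrac{|\cdot|^2}{2}$, but the resulting computation is exactly yours: the same moduli $\sigma(r)=2L_a r+r^2/2$ and $\rho(r)=r^2/2-2L_b r$, the same biconjugate calculation, the same bound $(\rho^{\ast\ast})^{-1}(\sigma(s))\le 4L+s$, and the same use of $T_{b,a}=T_{a,b}^{-1}$ for the lower estimate. One small point: Corollary~\ref{cor:OT-reg} only gives the upper bound $\mu$-almost everywhere, so you should explicitly invoke the continuity of $T_{a,b}$ from \cite{CEF19} (which you already cite for invertibility) to extend it to all $x,y$.
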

Inequality \eqref{eq:Tab} means that the Brenier map between two log-Lipschitz perturbations of the standard Gaussian is an approximate isometry. 
It follows from the first inequality in \eqref{eq:Tab} that if $T_{a,b}$ is Lipschitz, then its Lipschitz constant must be greater than or equal to $1$. 

In comparison with other results of the recent literature, we note that the dependence in $L$ in the growth estimates above is surprisingly light. 
For instance, Proposition 7 of \cite{FMS24} (relying on \cite{CF21}) establishes that whenever $\gamma_a$ is centered it holds
\[
|T_a(x)| \leq C_1 \exp(C_2 \max(L_a ; L_a^2)) \sqrt{n+|x|^2},\qquad \forall x\in \R^n.
\]
Similarly, under the assumptions of the preceding result, it is shown in \cite{FMS24}, that the (non-optimal) Langevin transport map $S_a$ between $\gamma$ and $\gamma_a$ is globally Lipschitz,  with a Lipschitz constant depending exponentially on $L_a$ (see \cite{BP24} for an improved dependence in $L_a$). As discussed in \cite{FMS24}, the exponential dependence in $L_a$ is necessary; see also the example below. 
In contrast, in Proposition \ref{prop:Log-Lip-perturb}, there is no dependence in $n$ and the dependence in $L$ or $L_a$ is linear (and only additive). In particular, if $a$ is even, we get the following precise asymptotic for the norm of $T_a$:
\[
\frac{|T_a(x)|}{|x|} \to 1, \qquad \text{ as } |x| \to \infty.
\]

Theorem \ref{prop:Log-Lip-perturb} will follow from the following Log-Lipschitz perturbation of Caffarelli's theorem.

\begin{theorem}\label{thm:Caff-log-Lip}
Let $\mu(dx) = e^{-V(x)}\,dx$ and $\nu(dy) = e^{-W(y)}\,dy$ be two probability measures with $V: \R^n \to \R$ and $W : \R^n \to \R\cup\{+\infty\}$ such that $D^2V \leq \alpha_V I_n$ and $D^2W \geq \beta_W I_n$ with $\alpha_V,\beta_W>0$. Let $a,b : \R^n \to \R$ be two Lipschitz functions with respective Lipschitz constants $L_a,L_b$, and consider $\mu_a$ and $\nu_b$ the probability distributions with densities proportional to $e^{-(a+V)}$ and $e^{-(b+W)}$ respectively. Then
the Kantorovich potential $\phi_{a,b}$ for the transport of $\mu_a$ onto $\nu_b$ is $\sigma_{a,b}$-smooth, with
\[
 \sigma_{a,b}(r)= \int_0^r \sqrt{\frac{4L_b^2}{\beta_W^2}+\frac{4L_a}{\beta_W}s+\frac{\alpha_V}{\beta_W}s^2}+\frac{2L_b}{\beta_W}\,ds,\qquad  \forall r\geq0.
\]
The Brenier map $T_{a,b}$ sending $\mu_a$ onto $\nu_b$ is such that
\begin{equation}\label{eq:Tab-gen-1}
 |T_{a,b}(x)-T_{a,b}(y)| \leq \frac{4L_b}{\beta_W}+ 4\sqrt{\frac{L_b^2}{\beta_W^2}+\frac{L_a}{\beta_W}|x-y|} + \sqrt{\frac{\alpha_V}{\beta_W}}|x-y|,\qquad \forall x,y\in \R^n.
\end{equation}
\end{theorem}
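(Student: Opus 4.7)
The plan is to apply Corollary \ref{cor:OT-reg} to the pair $(\mu_a,\nu_b)$ after producing usable one-sided bounds on the moduli $\sigma_{V+a}$ and $\rho_{W+b}$. An inspection of the proof of Theorem \ref{thm:smoothness-phi-epsilon} shows it is monotone in its inputs, so any majorant $\tilde\sigma \ge \sigma_{V+a}$ and minorant $\tilde\rho \le \rho_{W+b}$ suffice to yield
\[
\sigma_{\phi_\epsilon}(r) \le \int_0^r (\tilde\rho^{\ast\ast})^{-1}(\tilde\sigma(s))\,ds,
\]
which then passes to the limit $\epsilon \to 0$ exactly as in the proof of Corollary \ref{cor:OT-reg}.

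For a Lipschitz function $a$ of constant $L_a$, bounding each of the two differences $|a(x_i)-a((1-t)x_0+tx_1)|$ by $L_a$ times the Euclidean distance in the expression of $M_t^a$ gives $|M_t^a(x_0,x_1)| \le 2t(1-t) L_a |x_1-x_0|$, hence $\sigma_a(r) \le 2L_a r$ and $\rho_a(r) \ge -2L_a r$. Combining this with the quadratic bounds $\sigma_V(r) \le \alpha_V r^2/2$ and $\rho_W(r) \ge \beta_W r^2/2$ from Section \ref{sec:ex-R-conv-S-smooth}, and with the sub/super-additivity of the moduli under sums of functions, I obtain
\[
\sigma_{V+a}(r) \le \tilde\sigma(r) := \tfrac{\alpha_V}{2} r^2 + 2L_a r, \qquad \rho_{W+b}(r) \ge \tilde\rho(r) := \tfrac{\beta_W}{2} r^2 - 2L_b r.
\]
A direct computation gives $\tilde\rho^\ast(v) = (v+2L_b)^2/(2\beta_W) < +\infty$, so the hypotheses of Theorem \ref{thm:smoothness-phi-epsilon} are satisfied for these bounds.

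The main technical point is to compute the generalized inverse $(\tilde\rho^{\ast\ast})^{-1}$ explicitly. Although convex, $\tilde\rho$ is \emph{not} monotone on $\R_+$: it decreases on $[0,2L_b/\beta_W]$. Consequently the monotone biconjugate is the largest non-decreasing convex minorant, namely $\tilde\rho^{\ast\ast}(u) = \max(\tilde\rho(u),-2L_b^2/\beta_W)$, and inverting the quadratic branch on $[2L_b/\beta_W,+\infty)$ yields, for every $t \ge 0$,
\[
(\tilde\rho^{\ast\ast})^{-1}(t) = \frac{2L_b}{\beta_W} + \sqrt{\frac{4L_b^2}{\beta_W^2} + \frac{2t}{\beta_W}}.
\]
Substituting $t = \tilde\sigma(s)$ recovers precisely the integrand of $\sigma_{a,b}$, so the monotone form of Corollary \ref{cor:OT-reg} described above delivers $\sigma_{\phi_{a,b}} \le \sigma_{a,b}$, which is the first assertion.

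To deduce \eqref{eq:Tab-gen-1}, I apply \eqref{eq:bound-T} to get $|T_{a,b}(x)-T_{a,b}(y)| \le \frac{2}{r} \sigma_{a,b}(r)$ with $r=|x-y|$. The constant $\frac{2L_b}{\beta_W}$ inside the integrand contributes $\frac{4L_b}{\beta_W}$; for the square-root term, I use the subadditivity $\sqrt{A+Bs+Cs^2} \le \sqrt{A+Bs}+\sqrt{C}\,s$ (with $A=4L_b^2/\beta_W^2$, $B=4L_a/\beta_W$, $C=\alpha_V/\beta_W$) to split off $\sqrt{C}\,s$, whose integral is $\sqrt{\alpha_V/\beta_W}\, r$, and Jensen's inequality (concavity of $\sqrt{\,\cdot\,}$) to obtain $\frac{2}{r}\int_0^r \sqrt{A+Bs}\,ds \le 2\sqrt{A+Br/2} \le 4\sqrt{L_b^2/\beta_W^2 + L_a r/\beta_W}$. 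Summing the three contributions gives \eqref{eq:Tab-gen-1} without any further work.
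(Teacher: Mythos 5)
Your argument follows essentially the same route as the paper: show that the perturbed potentials $V_a,W_b$ satisfy $\sigma_{V_a}(r)\le \alpha_V r^2/2+2L_ar$ and $\rho_{W_b}(r)\ge \beta_W r^2/2-2L_br$, invoke (the monotone form of) Theorem \ref{thm:smoothness-phi-epsilon}, compute the inverse biconjugate, and then relax the resulting integral to reach \eqref{eq:Tab-gen-1}. Two small remarks. First, your stated formula $\tilde\rho^{\ast\ast}(u)=\max\bigl(\tilde\rho(u),-2L_b^2/\beta_W\bigr)$ is not the largest non-decreasing convex minorant of $\tilde\rho$ (since $\tilde\rho\ge -2L_b^2/\beta_W$ pointwise, this max is just $\tilde\rho$ itself); the correct object is constant equal to $-2L_b^2/\beta_W$ on $[0,2L_b/\beta_W]$ and equal to $\tilde\rho$ beyond. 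This slip is harmless because for $t\ge0$ the generalized inverse $(\tilde\rho^{\ast\ast})^{-1}(t)$ comes out the same either way, and your final expression is correct. Second, Corollary \ref{cor:OT-reg} gives \eqref{eq:bound-T} only for $\mu_a$-almost every $x,y$; to claim it for all $x,y$ as in \eqref{eq:Tab-gen-1} one should invoke continuity of $T_{a,b}$ (as the paper does via \cite{CEF19}), a point you skip over with ``without any further work''. On the final estimate you use Jensen's inequality for $\int_0^r\sqrt{A+Bs}\,ds$ where the paper simply bounds the integrand by its value at $s=r$; both relaxations land on the same constants.
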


\proof[Proof of Theorem \ref{prop:Log-Lip-perturb}.]
Applying Theorem \ref{thm:Caff-log-Lip} to $V=W = \frac{|\,\cdot\,|^2}{2}$, one gets 
 that the Kantorovich potential $\phi_{a,b}$ is $\sigma_{a,b}$-smooth, with $\sigma_{a,b}$ such that
\[
    \sigma_{a,b}(r)\leq \int_0^r \sqrt{4L^2+4Ls+s^2}+2L\,ds = 4Lr + \frac{r^2}{2}\qquad \forall r\geq0.
\]
According to Corollary \ref{cor:OT-reg}, one gets
\[
|T_{a,b}(x)-T_{a,b}(y)| \leq 8L + |x-y|,
\]
for $\gamma_a$ almost every $x, y\in \R^n.$ Since $T_{a,b}$ is continuous this inequality holds for all $x,y \in \R^n$, which proves the second inequality in \eqref{eq:Tab}.
 The map $T_{b,a}$ satisfies the same inequality. Noting that $T_{b,a} = T_{a,b}^{-1}$ completes the proof of \eqref{eq:Tab}. 
If $a$ is even, then $\gamma_a$ is symmetric, and so $T_a(0)=0$. Thus, taking $y=0$, gives the announced growth estimate for $T_a$.
\endproof

\proof[Proof of Theorem \ref{thm:Caff-log-Lip}.]
Let $x_0,x_1 \in \R^n$, $t \in ]0,1[$ and denote $x_t = (1-t)x_0+tx_1$. Since $a$ is $L_a$-Lipschitz, it holds
\[
a(x_t)-a(x_0) \geq -L_a|x_t-x_0| = -L_at|x_1-x_0|
\]
and, similarly, 
\[
a(x_t)-a(x_1) \geq -L_a(1-t)|x_1-x_0|.
\]
Multiplying the first inequality by $(1-t)$, the second by $t$ and summing gives
\begin{equation*}
a(x_t)\geq (1-t)a(x_0)+ta(x_1) - 2L_at(1-t) |x_1-x_0|.
\end{equation*}
Similarly, the function $b$ satisfies
\begin{equation*}
b(x_t)\leq (1-t)b(x_0)+tb(x_1) + 2L_bt(1-t) |x_1-x_0|.
\end{equation*}
Denote $V_a:=a+V+C_a$ and $W_b:=b+W+ D_b$, where $C_a,D_b$ are normalizing constants, so that $\mu_a(dx) = e^{-V_a(x)}\,dx$ and $\nu_b(dy)=e^{-W_b(y)}\,dy$. 
Letting $L= \max(L_a;L_b)$, we see that $V_a,W_b$ satisfy
\[
V_a((1-t)x_0+tx_1)\geq (1-t)V_a(x_0)+tV_a(x_1) - 2L_at(1-t) |x_1-x_0| - t(1-t)\alpha_V\frac{|x_1-x_0|^2}{2}
\]
\[
W_b((1-t)x_0+tx_1)\leq (1-t)W_b(x_0)+tW_b(x_1) + 2L_bt(1-t) |x_1-x_0| - t(1-t)\beta_W\frac{|x_1-x_0|^2}{2}
\]
for all $x_0,x_1 \in \R^n$ and $t\in [0,1]$. Therefore, $V_a$ is $\sigma$-smooth with 
\[
\sigma_a(u) = \alpha_V\frac{u^2}{2}+2L_au,\qquad \forall u\geq0 
\]
and $W_b$ is $\rho$-convex, with 
\[
\rho_b(u) = \beta_W\frac{u^2}{2}-2L_bu,\qquad \forall u\geq0.
\]
For all $t\geq0$,
\begin{align*}
\rho_b^*(t) = \sup_{u\geq0} \left\{(t+2L_b)u -\beta_W\frac{u^2}{2}\right\} = \frac{(t+2L_b)^2}{2\beta_W}
\end{align*}
and so, for all $u\geq \frac{2L_b}{\beta_W}$,
\begin{align*}
\rho_b^{**}(u) = \sup_{t\geq0} \left\{ut -\frac{(t+2L_b)^2}{2\beta_W}\right\}  = \frac{\beta_Wu^2}{2} - 2L_bu
\end{align*}
and for $u \in [0,(2L_b)/\beta_W]$, 
\begin{align*}
\rho_b^{**}(u) = \sup_{t\geq0} \left\{ut -\frac{(t+2L_b)^2}{2}\right\} = -2\frac{L_b^2}{\beta_W}
\end{align*}
A simple calculation shows that
\[
(\rho_b^{**})^{-1}(t) = \frac{2L_b}{\beta_W} + \sqrt{\frac{4L_b^2}{\beta_W^2}+\frac{2t}{\beta_W}},\qquad \forall t\geq0.
\]
Applying Theorem \ref{thm:smoothness-phi-epsilon} yields that the Kantorovich potential $\phi_{a,b}$ is $\sigma_{a,b}$-smooth, with $\sigma_{a,b}$ given by
\begin{align*}
    \sigma_{a,b}(r)&= \int_0^r \sqrt{\frac{4L_b^2}{\beta_W^2}+\frac{4L_a}{\beta_W}s+\frac{\alpha_V}{\beta_W}s^2}+\frac{2L_b}{\beta_W}\,ds\\
    & \leq \frac{2L_b}{\beta_W}r+ \int_0^r\sqrt{\frac{4L_b^2}{\beta_W^2}+\frac{4L_a}{\beta_W}s}\,ds + \sqrt{\frac{\alpha_V}{\beta_W}}\frac{r^2}{2}\\
        & \leq \frac{2L_b}{\beta_W}r+ r\sqrt{\frac{4L_b^2}{\beta_W^2}+\frac{4L_a}{\beta_W}r} + \sqrt{\frac{\alpha_V}{\beta_W}}\frac{r^2}{2},
\end{align*}
for all $r\geq0.$
According to Corollary \ref{cor:OT-reg}, one gets
\[
|T_{a,b}(x)-T_{a,b}(y)| \leq \frac{4L_b}{\beta_W}+ 4\sqrt{\frac{L_b^2}{\beta_W^2}+\frac{L_a}{\beta_W}|x-y|} + \sqrt{\frac{\alpha_V}{\beta_W}}|x-y|,
\]
for $\mu_a$ almost every $x, y\in \R^n.$ Since $T_{a,b}$ is continuous (according to \cite{CEF19}) this inequality holds for all $x,y \in \R^n$, which proves \eqref{eq:Tab-gen-1}.
\endproof

\textbf{Example:}  In this example, we study in details the case where $a(x) = -L|x|$ on $\R$. First of all, by the usual Caffarelli theorem, we see that the Brenier map $T_{a,0}$ between $\gamma_a$ and $\gamma$ is $1$-Lipschitz, and so its inverse $T_a$ satisfies
\[
|T_a(x)-T_a(y)| \geq |x-y|,\qquad \forall x,y \in \R.
\]
On the other hand, let us check by direct calculations that $T_a$ satisfies 
\begin{equation}\label{eq:verifdirect}
|T_a(x)| \leq c+ |x|,\qquad \forall x \in \R.
\end{equation}
for some $c\geq0$. Since $T_a$ is even, it is enough to check the inequality for $x\geq0$. Denoting by $F_a$ and $\Phi$ the distribution functions of $\gamma_a$ and $\gamma$, we want to prove that
\[
F_a^{-1}\circ \Phi (x) \leq c+x,\qquad \forall x\geq0
\]
that is
\[
\Phi(x) \leq F_a(c+x),\qquad \forall x\geq0
\]
or, equivalently,
\[
\frac{1}{\sqrt{2\pi}}\int_x^{+\infty} e^{-u^2/2}\,du \geq \frac{1}{Z_a} \int_{c+x}^{+\infty} e^{L u - \frac{u^2}{2}}\,du.
\]
Using that 
\[
\int_x^{+\infty} e^{-u^2/2}\,du \sim \frac{e^{-x^2/2}}{x} \qquad \text{and} \qquad \int_{c+x}^{+\infty} e^{L x - \frac{u^2}{2}}\,du \sim \frac{e^{L (x+c) - \frac{(x+c)^2}{2}}}{x+c-L}
\]
as $x \to +\infty$, we conclude that $c$ must be larger than $L$.
Note that
\[
\int_{c+x}^{+\infty} e^{Lu - \frac{u^2}{2}}\,du = \int_{x}^{+\infty} e^{Lu+Lc - \frac{c^2+2uc + u^2}{2}}\,du = e^{Lc - \frac{c^2}{2}} \int_{x}^{+\infty} e^{u(L-c) - \frac{u^2}{2}}\,du.
\]
If $c \geq L$ is chosen so that 
\[
e^{Lc-c^2/2} \leq \frac{Z_a}{\sqrt{2\pi}}
\]
the claim is proved.
Note that
\[
Z_a = \int_{-\infty}^{+\infty} e^{L|u|-\frac{u^2}{2}}\,du \geq \sqrt{2\pi}.
\]
Therefore, \eqref{eq:verifdirect} holds with $c=2L$. We conclude that the best constant $c_{opt}$ in \eqref{eq:verifdirect} is such that $c_{opt} \in [L, 2L]$. This shows that the dependence in $L$ in the bound \eqref{eq:growth-Ta} is sharp up to a numerical multiplicative constant.

Let us finally estimate the Lipschitz constant of $T_a$. For all $x \in \R$, we get
\[
T_a'(x) = (F_a^{-1})'\circ \Phi(x) \frac{e^{-x^2/2}}{\sqrt{2\pi}} = Z_ae^{-L |F_a^{-1}(\Phi(x))| + \frac{F_a^{-1}(\Phi(x))^2}{2}}\frac{e^{-x^2/2}}{\sqrt{2\pi}}
\]
and so, taking $x=0$,
\[
T_a'(0) = \frac{Z_a}{\sqrt{2\pi}}.
\]
But 
\[
Z_a = 2 \int_0^{+\infty} e^{Lu - \frac{u^2}{2}}\,du = 2e^{L^2/2} \int_0^{+\infty} e^{ - \frac{(u-L)^2}{2}}\,du = 2e^{L^2/2} \int_{-L}^{+\infty} e^{ - \frac{u^2}{2}}\,du
\]
and so, as $L$ to $\infty$, 
\[
T_a'(0) = \frac{Z_a}{\sqrt{2\pi}} \sim 2 e^{L^2/2}.
\]
In particular, the Lipschitz constant of $T_a$ explodes exponentially fast as $L\to +\infty$.

Let us now explain how Theorem \ref{prop:Log-Lip-perturb} can be used to obtain precise Gaussian concentration inequalities for log-Lipschitz perturbations of the Gaussian measure.

\begin{corollary}\label{cor:Gisop} Let $\nu = e^{-W}\,dx$ be a probability measure with $W:\R^n \to \R\cup \{+\infty\}$ such that $D^2W \geq \beta_W I_n$ with $\beta_W>0$. If $b : \R^n \to \R$ is a Lipschitz function with Lipschitz constant $L$, then the probability $\nu_b$ satisfies the following concentration inequality: for any Borel set $A \subset \R^n$, it holds
\begin{equation}\label{eq:Gisop}
\nu_b(A_r) \geq \Phi\left(\Phi^{-1}(\nu_b(A))+\left[\sqrt{\beta_W}r-\frac{8L}{\sqrt{\beta_W}}\right]_+\right),\qquad \forall r\geq 0,
\end{equation}
where $A_r = \{y \in \R^n : \inf_{x\in A} |x-y| \leq r\}$ denotes the the $r$-enlargement of $A$, and $\Phi(t) = \frac{1}{\sqrt{2\pi}}\int_{-\infty}^t e^{-x^2/2}\,dx$, $t\in \R$, is the distribution function of the standard one dimensional Gaussian measure. In particular, if $A\subset \R^n$ is such that $\nu_b(A) \geq 1/2$, it holds 
\begin{equation}\label{eq:concentration}
\nu_b(A_r) \geq 1 -\frac{1}{2}\exp\left(-\frac{1}{2}\left[\sqrt{\beta_W}r-\frac{8L}{\sqrt{\beta_W}}\right]_+^2\right), \qquad \forall r\geq0.
\end{equation}
Moreover, for any $1$-Lipschitz function $f:\R^n\to \R$, 
\[
\mathrm{Var}_{\nu_b}(f) \leq \frac{1}{\beta_W} + \frac{16L}{\beta_W^{3/2}}\sqrt{\frac{2}{\pi}} + \frac{64 L^2}{\beta_W^2}.
\]
\end{corollary}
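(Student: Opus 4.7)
The plan is to reduce \eqref{eq:Gisop} to the classical Gaussian isoperimetric inequality by transporting it through a Brenier map whose ``Lipschitz plus constant'' control is provided by Theorem \ref{thm:Caff-log-Lip}; the bound \eqref{eq:concentration} and the variance estimate will then follow from \eqref{eq:Gisop} via standard tail manipulations.

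First I would apply Theorem \ref{thm:Caff-log-Lip} with $V(x)=|x|^2/2$ (so that $\mu=\gamma_n$ and $\alpha_V=1$) and $a\equiv 0$ (so that $L_a=0$ and $\mu_a=\gamma_n$), keeping $W$ and $b$ as given. With $L_a=0$ the middle term of \eqref{eq:Tab-gen-1} collapses and one obtains a continuous Brenier map $T\colon \gamma_n \to \nu_b$ satisfying
\[
|T(x)-T(y)|\leq \frac{8L}{\beta_W}+\frac{1}{\sqrt{\beta_W}}\,|x-y|,\qquad \forall x,y\in\R^n.
\]
Given a Borel set $A\subset\R^n$, I would set $B=T^{-1}(A)$ (so that $\gamma_n(B)=\nu_b(A)$) and observe that the inclusion $T(B_s)\subset A_{8L/\beta_W+s/\sqrt{\beta_W}}$ combined with the Gaussian isoperimetric inequality $\gamma_n(B_s)\geq\Phi(\Phi^{-1}(\gamma_n(B))+s)$ yields \eqref{eq:Gisop} after the substitution $s=\sqrt{\beta_W}\,r-8L/\sqrt{\beta_W}$; the range $r\leq 8L/\beta_W$ is covered trivially by monotonicity of $r\mapsto\nu_b(A_r)$.

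The concentration bound \eqref{eq:concentration} follows immediately from \eqref{eq:Gisop} together with the Mills-ratio estimate $1-\Phi(u)\leq \tfrac12 e^{-u^2/2}$ valid for $u\geq 0$. For the variance bound, I would pick a median $m$ of $f$ under $\nu_b$. Since $f$ is $1$-Lipschitz, the inclusions $\{f\leq m\}_t\subset\{f\leq m+t\}$ and $\{f\geq m\}_t\subset\{f\geq m-t\}$ combined with \eqref{eq:Gisop} applied to $\{f\leq m\}$ and $\{f\geq m\}$ give
\[
\nu_b(|f-m|\geq t)\leq 2\bigl(1-\Phi\bigl([\sqrt{\beta_W}\,t-8L/\sqrt{\beta_W}]_+\bigr)\bigr).
\]
Integrating $\mathrm{Var}_{\nu_b}(f)\leq E_{\nu_b}[(f-m)^2]=\int_0^\infty 2t\,\nu_b(|f-m|\geq t)\,dt$ by splitting at $t=8L/\beta_W$ and changing variables $u=\sqrt{\beta_W}\,t-8L/\sqrt{\beta_W}$ reduces the computation to the two elementary identities $\int_0^\infty (1-\Phi(u))\,du=1/\sqrt{2\pi}$ and $\int_0^\infty u(1-\Phi(u))\,du=1/4$. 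The one subtle point I anticipate is that, in order to recover the constant $\sqrt{2/\pi}$ in the variance bound (rather than the coarser $\sqrt{\pi/2}$ that would result from plugging \eqref{eq:concentration} directly into the layer-cake formula), one has to work with $1-\Phi(u)$ itself instead of its Gaussian-tail upper bound.
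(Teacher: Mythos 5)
Your proof is correct and follows essentially the same route as the paper: apply Theorem \ref{thm:Caff-log-Lip} with $V=|\cdot|^2/2$, $a\equiv 0$ to get the additive Lipschitz control $|T(x)-T(y)|\leq \frac{8L}{\beta_W}+\frac{1}{\sqrt{\beta_W}}|x-y|$ on the Brenier map from $\gamma_n$ to $\nu_b$, pull back the Gaussian isoperimetric inequality through $T$, and then integrate the resulting deviation bound against $4t\,\overline{\Phi}$ to obtain the variance estimate. Your ``subtle point'' remark is accurate and is exactly what the paper does: one must integrate $1-\Phi$ directly (yielding $\int_0^\infty(1-\Phi)=1/\sqrt{2\pi}$ and $\int_0^\infty u(1-\Phi)\,du=1/4$), not the coarser exponential tail from \eqref{eq:concentration}, to recover the stated constants.
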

It is well known that, under the assumption of the preceding result, $\nu$ satisfies the following Gaussian type isoperimetric inequality:
\[
\nu(A_r) \geq \Phi\left(\Phi^{-1}(\nu_b(A))+\sqrt{\beta_W}r\right),\qquad \forall r\geq 0.
\]
This follows for instance immediately from the Caffarelli contraction theorem and from the isoperimetric inequality for the standard Gaussian.
We thus see that a log-Lipschitz perturbation only shifts the concentration profile of $\nu$ by a constant $8L/\sqrt{\beta_W}$. In \cite{FMS24} and \cite{BP24}, Lipschitz transport maps are constructed between $\gamma$ and $\nu_b$, but with a Lipschitz constant depending exponentially on $L$. This is used in particular in \cite{FMS24} to show that log-Lipschitz perturbations of the Gaussian satisfy a dimension free Gaussian type isoperimetric inequality (see \cite[Theorem 7]{FMS24}), but with constants also depending exponentially on $L$. Using these results, one could only achieve concentration bounds of the form
\[
\nu_b(A_r) \geq 1 - \frac{1}{2}\exp\left(-\frac{r^2}{2C}\right), \qquad \forall r\geq0,
\]
with a constant $C$ of the order $e^{L^2}$ which is clearly worse than \eqref{eq:concentration}.

\proof
According to the classical Gaussian isoperimetric inequality (\cite{ST74, Bor75}), for all Borel set $B \subset \R^n$, it holds
\[
\gamma(B_r) \geq \Phi(\Phi^{-1}(\nu_b(A))+r),\qquad \forall r\geq 0.
\]
Take a Borel set $A \subset \R^n$ and apply the preceding inequality to $B = T^{-1}(A)$, with $T$ the Brenier map transporting $\gamma$ on $\nu_b$, this yields
\[
\gamma\left((T^{-1}(A))_r\right) \geq \Phi(\Phi^{-1}(\nu_b(A))+r),\qquad \forall r\geq 0.
\]
According to Theorem \ref{thm:Caff-log-Lip}, $T$ satisfies the following inequality
\[
|T(x)-T(y)| \leq \frac{1}{\sqrt{\beta_W}}|x-y| + \frac{8L}{\beta_W},\qquad \forall x,y \in \R^n.
\]
This easily implies that
\[
(T^{-1}(A))_r \subset T^{-1}(A_{\frac{r}{\sqrt{\beta_W}}+\frac{8L}{\beta_W}})
\]
and so
\[
\nu_b\left(A_{\frac{r}{\sqrt{\beta_W}}+\frac{8L}{\beta_W}}\right) \geq \Phi(\Phi^{-1}(\nu_b(A))+r),\qquad \forall r\geq 0
\]
which ends the proof.
Now, if $f:\R^n \to \R$ is a $1$-Lipschitz function with median $m$, we have 
\begin{align*}
\mathrm{Var}_{\nu_b}(f)  \leq \int (f-m)^2 \,d\nu_b
 = \int_0^{+\infty} 2t \nu_b(|f-m|>t)\,dt
 \leq \int_0^{+\infty} 4t \overline{\Phi}\left(\left[\sqrt{\beta_W}t-\frac{8L}{\sqrt{\beta_W}}\right]_+\right)\,dt,
\end{align*}
where the last inequality comes from the deviation inequality 
\[
\nu_b(|f-m|>t)
 \leq 2 \overline{\Phi}\left(\left[\sqrt{\beta_W}t-\frac{8L}{\sqrt{\beta_W}}\right]_+\right)
\]
that easily follows from \eqref{eq:Gisop} (see \cite{Ledoux}).
A simple calculation finally shows that
\[
\int_0^{+\infty} 4t \overline{\Phi}\left(\left[\sqrt{\beta_W}t-\frac{8L}{\sqrt{\beta_W}}\right]_+\right)\,dt = \frac{1}{\beta_W} + \frac{16L}{\beta_W^{3/2}}\sqrt{\frac{2}{\pi}} + \frac{64L^2}{\beta_W^2}.
\]
\endproof

In the following corollary, we use the preceding concentration result to derive $\epsilon$ dependent hessian bounds for the entropic potential in the spirit of the paper \cite{Conforti2024}.
\begin{corollary}
Let $\mu(dx) = e^{-V(x)}\,dx$ and $\nu(dy) = e^{-W(y)}\,dy$ be two probability measures with $V: \R^n \to \R$ and $W : \R^n \to \R\cup\{+\infty\}$ such that $D^2V \leq \alpha_V I_n$ and $D^2W \geq \beta_W I_n$ with $\alpha_V,\beta_W>0$. Let $b : \R^n \to \R$ be an $L$-Lipschitz function and let $\nu_b$ be the probability distribution with density proportional to $e^{-(b+W)}$. Then
the entropic Kantorovich potential $\phi_{\varepsilon}$ for the transport of $\mu$ onto $\nu_b$ is $\alpha_\epsilon \frac{|\,\cdot\,|^2}{2}$-smooth, with
\[
\alpha_\epsilon\leq \frac{(C-\epsilon^2\alpha_V\beta_W) + \sqrt{(C-\epsilon^2\alpha_V\beta_W)^2 + 4C\epsilon^2 \alpha_V\beta_W}}{2\epsilon \beta_W}
\]
and 
\[
C = 64 \frac{L^2}{\beta_W} +\frac{16L}{\beta_W^{1/2}} \sqrt{2/\pi}.
\]
\end{corollary}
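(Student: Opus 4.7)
The plan is to recast $\nabla^2 \phi_\epsilon$ as a rescaled covariance matrix and then feed Corollary \ref{cor:Gisop} into the resulting implicit inequality, in the spirit of \cite{Conforti2024}. Starting from the Sinkhorn identity \eqref{eq:sinkhorn1} applied to the pair $(\mu,\nu_b)$, namely $\phi_\epsilon(x) = \epsilon \log \int e^{(\langle x,y\rangle - \psi_\epsilon(y))/\epsilon}\,\nu_b(dy)$, differentiating twice under the integral yields the by-now standard entropic OT identity
\[
\nabla^2 \phi_\epsilon(x) = \frac{1}{\epsilon}\,\mathrm{Cov}_{\pi_{\epsilon,x}}(Y), \qquad \pi_{\epsilon,x}(dy)\propto e^{(\langle x,y\rangle -\psi_\epsilon(y))/\epsilon}\,\nu_b(dy).
\]
Bounding $\nabla^2 \phi_\epsilon$ in operator norm therefore amounts to controlling $\mathrm{Var}_{\pi_{\epsilon,x}}(\langle v,\cdot\rangle)$ uniformly over unit vectors $v\in \R^n$.

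Next, I would write the density of $\pi_{\epsilon,x}$ as $\propto e^{-b(y)}\,e^{-U_\epsilon(y)}$ with $U_\epsilon(y) := W(y) + \psi_\epsilon(y)/\epsilon - \langle x,y\rangle/\epsilon$. Convexity of $\psi_\epsilon$ (automatic for entropic potentials) combined with $\nabla^2 W \geq \beta_W I_n$ already gives $\nabla^2 U_\epsilon \geq \beta_W I_n$, but one can sharpen this by exploiting the duality. Indeed, under the running hypothesis that $\phi_\epsilon$ is $\frac{\alpha_\epsilon}{2}\vert\cdot\vert^2$-smooth (to be closed by a fixed-point argument), the sum $\phi_\epsilon + \epsilon V$ is $\frac{\alpha_\epsilon+\epsilon\alpha_V}{2}\vert\cdot\vert^2$-smooth, and Proposition \ref{prop:sigma-smooth} applied to the Schrödinger identity $\psi_\epsilon = \mathcal{L}_{\epsilon,\mathcal{H}^n}(\phi_\epsilon + \epsilon V)$ upgrades $\psi_\epsilon$ to being $(\alpha_\epsilon+\epsilon\alpha_V)^{-1}$-strongly convex. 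Consequently
\[
\nabla^2 U_\epsilon \geq \tilde\beta_\epsilon I_n,\qquad \tilde\beta_\epsilon := \beta_W + \frac{1}{\epsilon(\alpha_\epsilon + \epsilon \alpha_V)},
\]
and $\pi_{\epsilon,x}$ fits exactly the setting of Corollary \ref{cor:Gisop} applied to the base potential $U_\epsilon$ and $L$-Lipschitz perturbation $b$.

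Finally, applying the variance bound of Corollary \ref{cor:Gisop} to the linear $1$-Lipschitz functionals $y\mapsto\langle v,y\rangle$, $\vert v\vert = 1$, and using $\tilde\beta_\epsilon \geq \beta_W$ in the two $L$-dependent terms yields
\[
\|\nabla^2 \phi_\epsilon(x)\|_{\mathrm{op}} \;\leq\; \frac{1}{\epsilon}\left(\frac{1}{\tilde\beta_\epsilon} + \frac{C}{\beta_W}\right).
\]
Taking the supremum over $x$ and closing the fixed-point by setting $\alpha_\epsilon := \sup_x\|\nabla^2\phi_\epsilon(x)\|_{\mathrm{op}}$ leads to the implicit inequality $\epsilon \alpha_\epsilon \leq 1/\tilde\beta_\epsilon(\alpha_\epsilon) + C/\beta_W$, which, after clearing denominators via the explicit form of $\tilde\beta_\epsilon$, is a quadratic in $\alpha_\epsilon$ whose positive root is precisely the expression announced in the statement. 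The fixed-point is easy to initialize: the cruder choice $\tilde\beta_\epsilon = \beta_W$ in Corollary \ref{cor:Gisop} already shows $\alpha_\epsilon \leq (1+C)/(\epsilon\beta_W) < \infty$, after which the finer $\tilde\beta_\epsilon$ can be substituted in and the argument iterated. The only genuinely delicate point is to argue that this fixed-point closes cleanly, which rests on the monotonicity of the right-hand side of the implicit inequality in $\alpha_\epsilon$ through the decreasing dependence of $1/\tilde\beta_\epsilon$ on $\alpha_\epsilon$.
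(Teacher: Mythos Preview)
Your approach is essentially the paper's: represent $\nabla^2\phi_\epsilon$ as a covariance, apply Corollary \ref{cor:Gisop} to the conditional measure $\pi_{\epsilon,x}$ viewed as an $L$-Lipschitz perturbation of a $\tilde\beta_\epsilon$-strongly log-concave reference, and close the fixed point using the $\sigma^*$-convexity of $\psi_\epsilon$ coming from Proposition \ref{prop:sigma-smooth}. All the structural ingredients are identical.

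There is, however, one concrete slip in how you bound the $L$-dependent terms of the variance estimate. From Corollary \ref{cor:Gisop} applied with strong convexity parameter $\tilde\beta_\epsilon$ you get
\[
\mathrm{Var}_{\pi_{\epsilon,x}}(\langle v,\cdot\rangle) \leq \frac{1}{\tilde\beta_\epsilon} + \frac{16L}{\tilde\beta_\epsilon^{3/2}}\sqrt{2/\pi} + \frac{64L^2}{\tilde\beta_\epsilon^2}.
\]
You then replace $\tilde\beta_\epsilon$ by $\beta_W$ \emph{fully} in the last two terms, obtaining the additive bound $\frac{1}{\tilde\beta_\epsilon} + \frac{C}{\beta_W}$. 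The paper is slightly finer: it keeps one factor of $1/\tilde\beta_\epsilon$ in each term (using $\tilde\beta_\epsilon^{-2} \leq \tilde\beta_\epsilon^{-1}\beta_W^{-1}$ and $\tilde\beta_\epsilon^{-3/2} \leq \tilde\beta_\epsilon^{-1}\beta_W^{-1/2}$), arriving at the multiplicative form $\frac{1+C}{\tilde\beta_\epsilon}=\frac{D}{\tilde\beta_\epsilon}$. Since $\frac{C}{\tilde\beta_\epsilon}\leq\frac{C}{\beta_W}$, your bound is strictly weaker, and the resulting quadratic picks up an extra constant term $-\frac{C}{\epsilon\beta_W}$:
\[
\epsilon\beta_W\alpha_\epsilon^2 + (\epsilon^2\beta_W\alpha_V - C)\alpha_\epsilon - (1+C)\epsilon\alpha_V - \frac{C}{\epsilon\beta_W} \leq 0,
\]
whose positive root is \emph{not} the announced expression. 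The fix is exactly the one-line change above: factor out $1/\tilde\beta_\epsilon$ rather than $1/\beta_W$ in the $L$-terms, which yields $\epsilon\alpha_\epsilon \leq D/\tilde\beta_\epsilon$ and hence the paper's quadratic $\epsilon\beta_W\alpha_\epsilon^2 + (\epsilon^2\beta_W\alpha_V - C)\alpha_\epsilon - D\epsilon\alpha_V \leq 0$.
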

\proof
Let $\alpha_\epsilon \geq0$ and $\beta_\epsilon \geq0$ be the optimal constants such that $\varphi_\epsilon$ is $\alpha_\epsilon \frac{|\,\cdot\,|^2}{2}$-smooth  and $\psi_\epsilon$ is $\beta_\epsilon \frac{|\,\cdot\,|^2}{2}$-convex.
Reasoning as in the proof of Theorem \ref{thm:smoothness-phi-epsilon}, we see that
\[
\beta_\epsilon \geq \frac{1}{\alpha_\epsilon + \epsilon\alpha_V}.
\]
On the other hand, for all $x\in \R^n$,
\[
\phi_\epsilon(x) = \epsilon \log \int e^{\frac{\langle x,y \rangle - \psi_\epsilon(y) - \epsilon W(y) - \epsilon b(y)}{\epsilon}} \,dy
\]
and so, for all unit vector $u$, 
\[
u^T \mathrm{D^2}\phi_\epsilon(x)u = \frac{1}{\epsilon} u^T\mathrm{Cov}(\nu_{b,\epsilon,x})u,
\]
where $\nu_{b,\epsilon,x}(dy)$ is the probability distribution with a density proportional to $y\mapsto e^{\frac{\langle x,y \rangle - \psi_\epsilon(y) - \epsilon W(y) - \epsilon b(y)}{\epsilon}}$. We see that $\nu_{b,\epsilon,x}$ is a Lipschitz perturbation of the uniformly log-concave distribution $\tilde{\nu}(dy)$ whose density is proportional to $y\mapsto e^{\frac{\langle x,y \rangle - \psi_\epsilon(y) - \epsilon W(y)}{\epsilon}}$. Applying Corollary \ref{cor:Gisop} to $\tilde{\nu}$ yields that
\[
\mathrm{Var}_{\nu_{b,\epsilon,x}}(f) \leq \frac{1}{\frac{\beta_\epsilon}{\epsilon}+ \beta_W}+ 64 \frac{L^2}{(\frac{\beta_\epsilon}{\epsilon}+ \beta_ W)^2} + \frac{16L}{(\frac{\beta_\epsilon}{\epsilon}+\beta_W)^{3/2}} \sqrt{2/\pi},
\]
for all $1$-Lipschitz function $f.$ Applying this bound to $f(x)=\langle u,x\rangle$, $x\in \R^n$, and $|u|=1$, we get
\[
u^T \mathrm{D^2}\phi_\epsilon(x)u \leq \frac{D}{\beta_\epsilon + \epsilon \beta_W},
\]
with 
\[
D = 1+64 \frac{L^2}{\beta_W} +\frac{16L}{\beta_W^{1/2}} \sqrt{2/\pi}.
\]
Since this bound is independent of $x$, we conclude that
\[
\alpha_\epsilon \leq \frac{D}{\beta_\epsilon + \epsilon \beta_W},
\]
which together with the lower bound on $\beta_\epsilon$ yields
\[
\alpha_\epsilon \leq \frac{D}{\frac{1}{\alpha_\epsilon + \epsilon\alpha_V} + \epsilon \beta_W}
\]
and so
\begin{align*}
\alpha_\epsilon &\leq \frac{-(1+\epsilon^2\beta_W\alpha_V-D) + \sqrt{(1+\epsilon^2\beta_W\alpha_V-D)^2 + 4C\epsilon^2 \alpha_V\beta_W}}{2\epsilon \beta_W}\\
&= \frac{(C-\epsilon^2\beta_W\alpha_V) + \sqrt{(C-\epsilon^2\beta_W\alpha_V)^2 + 4C\epsilon^2 \alpha_V\beta_W}}{2\epsilon \beta_W},
\end{align*}
with $C=D-1.$
\endproof

\section{The divergence of Brenier transport map}
The aim of this section is to recover and extend the recent result by De Philippis and Shenfeld \cite{DePS24} about the divergence of the Brenier transport map between a log-subharmonic probability measure $\mu$ and a strongly log-concave measure $\nu$. Before stating our version of this result, we need to introduce some notation and definitions.

A lower semicontinuous function $f:\R^n \to \R$  is called $\alpha$-superharmonic, $\alpha \in \R$, whenever $\bar{f}:=f-\alpha\frac{\vert\,\cdot\,\vert^2}{2}$ is superharmonic. When $f$ is twice continuously differentiable, this means that $\Delta f \leq \alpha n$. In the general case, $f:\R^n\to \R$ is $\alpha$-superharmonic if, for all $x\in \R^n$ and $r\geq 0$, it holds
\[
\E[\bar{f}(x+rR)] \leq \bar{f}(x) 
\]
where $R$ is any rotationnaly invariant random vector.
In the sequel, we will always take $R\sim \mathcal{N}(0,I_n)$ for convenience. 
In terms of $f$, the latter condition writes
\begin{equation}\label{eq:super-h}
\E[f(x+rR)] \leq f(x) + \frac{\alpha}{2}\left(\E[|x+rR|^2] - |x|^2\right) = f(x) + \frac{\alpha nr^2}{2},\qquad \forall x\in \R^n, \forall r\geq0.
\end{equation}

\begin{remark}
In the usual definition of the super-harmonicity of $\bar{f}:\R^n\to \R$, \eqref{eq:super-h} is required to hold only for $R$ distributed according to the uniform probability measure on $\mathbb{S}^{n-1}$. In particular, the fact that $\E[\bar{f}(x+rR)]$ is well defined in this case, follows from the lower semicontinuity of $\bar{f}$ which implies that $\bar{f}$ is bounded from below on any compact set. It is not difficult to see, using polar coordinates, that whenever \eqref{eq:super-h} holds for $R$ uniformly distributed over $\mathbb{S}^{n-1}$, then \eqref{eq:super-h} holds for any rotationnaly invariant $R$. Details are left to the reader.
\end{remark}
Finally, let us introduce a class of potentials that will play an important role in what follows.
\begin{definition}Let $\beta>0$ ; a proper lower semicontinuous convex function $g:\R^n \to \R\cup \{+\infty\}$ belongs to the class $\mathrm{SH}^*(\beta)$ if $g^*$ is finite valued over $\R^n$ and $1/\beta$-superharmonic.
\end{definition}

Our aim is to establish the following result.

\begin{theorem}\label{thm:DePS}
Let $\mu(dx) = e^{-V(x)}\, dx$ and $\nu(dy) = e^{-W(y)}\,dy$ be two probability measures, with $V:\R^n \to \R$ being $\alpha_V$-superharmonic and $W:\R^n \to \R\cup \{+\infty\}$ being a lower semicontinuous convex function such that $W \in \mathrm{SH}^*(\beta)$. 
For all $\epsilon>0$, the entropic potential $\varphi_\epsilon$ is then $- \frac{\epsilon\alpha_V}{2} + \sqrt{\frac{\alpha_V}{\beta_W}+\epsilon^2\frac{\alpha_V^2}{4}}$-superharmonic. In particular, the Kantorovich potential $\varphi$ is $\sqrt{\alpha_V/\beta_W}$-superharmonic.
\end{theorem}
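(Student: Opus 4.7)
The plan is to mimic the strategy of Theorem \ref{thm:smoothness-phi-epsilon} and Theorem \ref{thm:CP}, but with the modulus of smoothness of $\phi_\epsilon$ replaced by its superharmonicity constant, and Prékopa--Leindler replaced by the modified version (Proposition \ref{prop:QPL}) that is adapted to Gaussian/rotationally invariant averages. Throughout, I would work with the Sinkhorn system, where $\phi_\epsilon, \psi_\epsilon$ are proper convex lower semi-continuous and satisfy
\[
\phi_\epsilon = \mathcal{L}_{\epsilon,\mathcal{H}^n}(\psi_\epsilon + \epsilon W),\qquad \psi_\epsilon = \mathcal{L}_{\epsilon,\mathcal{H}^n}(\phi_\epsilon + \epsilon V).
\]

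The technical core would be to establish two dual entropic statements that play the role of Propositions \ref{prop:rho-convex} and \ref{prop:sigma-smooth}: (D1) if $h:\R^n\to\R$ is $\alpha$-superharmonic with $\alpha>0$, then $\mathcal{L}_{\epsilon,\mathcal{H}^n}(h)\in\mathrm{SH}^*(1/\alpha)$; and (D2) if $h:\R^n\to\R\cup\{+\infty\}$ is convex and belongs to $\mathrm{SH}^*(\beta)$ with $\beta>0$, then $\mathcal{L}_{\epsilon,\mathcal{H}^n}(h)$ is $(1/\beta)$-super\-harmonic. Both statements recover, in the limit $\epsilon\to 0$, the classical duality between the superharmonic and $\mathrm{SH}^*$ classes under Legendre conjugation. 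They should follow from Proposition \ref{prop:QPL}, used with a Gaussian (or uniform-on-sphere) averaging so that the mean-value inequality characterising $\alpha$-superharmonicity is preserved through the entropic log-Laplace operator, exactly as Prékopa--Leindler preserved $\rho$-convexity in Proposition \ref{prop:rho-convex}.

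Let $\alpha_\epsilon\geq 0$ be the smallest constant such that $\phi_\epsilon$ is $\alpha_\epsilon$-superharmonic. Since the Laplacian is additive, $\phi_\epsilon+\epsilon V$ is $(\alpha_\epsilon+\epsilon\alpha_V)$-superharmonic, so (D1) yields $\psi_\epsilon\in\mathrm{SH}^*(1/(\alpha_\epsilon+\epsilon\alpha_V))$. Using the additivity of the $\mathrm{SH}^*$ class under sum, which on the quadratic model reads $\beta_{f+g}=\beta_f+\beta_g$ and which follows from the inf-convolution identity $(f+g)^*=f^*\square g^*$ together with a Laplacian estimate for inf-convolutions, one gets $\psi_\epsilon+\epsilon W\in\mathrm{SH}^*\!\bigl(\tfrac{1}{\alpha_\epsilon+\epsilon\alpha_V}+\epsilon\beta_W\bigr)$. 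Applying (D2) to $\psi_\epsilon+\epsilon W$ then gives that $\phi_\epsilon$ is $1/\bigl(\tfrac{1}{\alpha_\epsilon+\epsilon\alpha_V}+\epsilon\beta_W\bigr)$-superharmonic. By minimality of $\alpha_\epsilon$,
\[
\alpha_\epsilon \leq \frac{1}{\tfrac{1}{\alpha_\epsilon+\epsilon\alpha_V}+\epsilon\beta_W},
\]
which after clearing denominators simplifies to $\alpha_\epsilon^2+\epsilon\alpha_V\alpha_\epsilon-\alpha_V/\beta_W\leq 0$. The quadratic formula yields the announced bound $\alpha_\epsilon\leq -\epsilon\alpha_V/2+\sqrt{\alpha_V/\beta_W+\epsilon^2\alpha_V^2/4}$.

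To pass to the limit $\epsilon\to 0$ and deduce the claim for the Kantorovich potential $\varphi$, I would use the $\mu$-a.e.\ convergence (along a subsequence) $\phi_\epsilon\to\varphi$ from \cite{Nutz2021}, as already done in Corollary \ref{cor:OT-reg}. The defining inequality $\mathbb{E}[\phi_\epsilon(x+rR)]\leq \phi_\epsilon(x)+\alpha_\epsilon n r^2/2$ for $R\sim\mathcal{N}(0,I_n)$ passes to the limit by Fatou's lemma on the left (using lower semi-continuity) and by $\alpha_\epsilon\to\sqrt{\alpha_V/\beta_W}$ on the right, establishing that $\varphi$ is $\sqrt{\alpha_V/\beta_W}$-superharmonic. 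The main obstacle is step (D1): the classical Prékopa--Leindler controls line-segment convexity, while superharmonicity is a mean-value condition that detects the Laplacian rather than the Hessian, so a genuinely different Gaussian-average version (Proposition \ref{prop:QPL}) is needed. A secondary delicate point is the $\mathrm{SH}^*$ additivity invoked in the fixed-point step: natural in the smooth strongly convex case, it has to be established in the general convex setting, and if it fails in full generality one would need to replace it by a composite entropic lemma ingesting $\psi_\epsilon$ and $W$ simultaneously.
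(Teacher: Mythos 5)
Your proposal reproduces the paper's fixed-point scheme exactly: the two dual entropic lemmas (D1) (\(\alpha\)-superharmonic implies entropic Legendre transform in \(\mathrm{SH}^*(1/\alpha)\)) and (D2) (\(\mathrm{SH}^*(\beta)\) implies entropic Legendre transform \((1/\beta)\)-superharmonic) are precisely the paper's Propositions~\ref{lem:Lemma1} and \ref{prop:SH^*}, the inequality
\(\alpha_{\varphi_\epsilon}\le\bigl(\tfrac{1}{\alpha_{\varphi_\epsilon}+\epsilon\alpha_V}+\epsilon\beta_W\bigr)^{-1}\)
is the paper's, and the quadratic solve and limit step are the same.

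Two places where your assessment of the difficulties is inverted or heavier than the paper's actual argument. First, you flag (D1) as ``the main obstacle'' and say it needs the modified Pr\'ekopa--Leindler inequality (Proposition~\ref{prop:QPL}); in the paper, (D1) is the \emph{easy} direction, proved purely by Jensen/convexity of the log-Laplace functional (translate by \(sR\), bring the expectation inside the \(\log\int\), and use \(\alpha\)-superharmonicity of \(f\) pointwise under the integral), in exact analogy with the H\"older proof of Proposition~\ref{prop:sigma-smooth}. It is (D2) that is the hard direction, and it does not reduce to a clean Pr\'ekopa--Leindler statement: the paper first characterizes \(\mathrm{SH}^*(\beta)\) via a Jensen-with-defect inequality against arbitrary square-integrable couplings \((U,R)\) (Lemma~\ref{lem:CharSH^*}), then needs a quantitative Pr\'ekopa--Leindler over \(N\) marginals coupled by a multimarginal barycentric transport plan (Lemma~\ref{lem:lemma2}), plus a martingale approximation to pass from finite-range to Gaussian \(X\). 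Second, your worry about \(\mathrm{SH}^*\)-additivity and your proposed inf-convolution route (which requires attainment of the inf-convolution and a Laplacian estimate for the infimal convolution of superharmonic functions) are unnecessary: once \(\mathrm{SH}^*(\beta)\) is rewritten as in Lemma~\ref{lem:CharSH^*}, \(f\in\mathrm{SH}^*(\beta_1)\), \(g\in\mathrm{SH}^*(\beta_2)\implies f+g\in\mathrm{SH}^*(\beta_1+\beta_2)\) is immediate by adding the two Jensen-defect inequalities; this is how the paper justifies \(\psi_\epsilon+\epsilon W\in\mathrm{SH}^*\bigl(\tfrac{1}{\alpha_{\varphi_\epsilon}+\epsilon\alpha_V}+\epsilon\beta_W\bigr)\).
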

This result extends \cite[Theorem 1.4]{DePS24} where the superharmonicity of $\varphi$ was obtained under the  assumption $\nabla^2W \geq \beta I_n$ which is strictly stronger according to what follows.  
\begin{lemma}\label{lem:SH^*hessian0}
Let $g:\R^n \to \R$ be a twice continuously differentiable function. If $\nabla^2g \geq \beta I_n$, with $\beta>0$, then $g \in \mathrm{SH}^*(\beta)$.
\end{lemma}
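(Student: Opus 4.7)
The plan is to show the two requirements in the definition of $\mathrm{SH}^*(\beta)$ separately: finiteness of $g^*$ on $\R^n$, and $1/\beta$-superharmonicity of $g^*$. Both will follow from a direct computation of the Hessian of $g^*$, the main tool being the identity $\nabla^2 g^*(y) = (\nabla^2 g(x))^{-1}$ whenever $y=\nabla g(x)$, together with the elementary fact that the trace is monotone with respect to the Loewner order on symmetric positive definite matrices.

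First, since $\nabla^2 g \geq \beta I_n$ with $\beta > 0$, a standard Taylor expansion gives the coercivity estimate
\begin{equation*}
g(x) \geq g(0) + \langle \nabla g(0), x\rangle + \frac{\beta}{2}|x|^2, \qquad \forall x\in \R^n.
\end{equation*}
Consequently, for every $y\in \R^n$ the function $x\mapsto \langle x,y\rangle - g(x)$ is bounded above, so $g^*(y) < +\infty$, giving the required finiteness. Moreover, $g$ is strictly convex with $|\nabla g(x)|\to +\infty$ as $|x|\to +\infty$, so by the inverse function theorem the map $\nabla g : \R^n \to \R^n$ is a $C^1$-diffeomorphism.

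Next, we observe that $g^*$ is of class $C^2$ on $\R^n$. Indeed, by the usual Legendre duality, $\nabla g^* = (\nabla g)^{-1}$, which is $C^1$ by the inverse function theorem, and
\begin{equation*}
\nabla^2 g^*(y) = \bigl(\nabla^2 g(x)\bigr)^{-1}, \qquad \text{where } x=\nabla g^*(y).
\end{equation*}
Since $\nabla^2 g(x) \geq \beta I_n$ for every $x\in \R^n$, taking matrix inverses (which reverses the Loewner order on positive definite matrices) yields $\nabla^2 g^*(y) \leq \frac{1}{\beta} I_n$ for all $y \in \R^n$.

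Finally, taking the trace of this matrix inequality gives
\begin{equation*}
\Delta g^*(y) = \mathrm{tr}\bigl(\nabla^2 g^*(y)\bigr) \leq \frac{n}{\beta}, \qquad \forall y\in \R^n,
\end{equation*}
so that $g^* - \frac{1}{\beta} \frac{|\,\cdot\,|^2}{2}$ is a $C^2$ function with nonpositive Laplacian and is therefore superharmonic. This shows $g^*$ is $1/\beta$-superharmonic, and hence $g\in \mathrm{SH}^*(\beta)$. No real obstacle is expected here: the only point that requires mild care is ensuring the well-definedness of $\nabla^2 g^*$ on all of $\R^n$, which is precisely why we invoked the global invertibility of $\nabla g$ granted by strong convexity.
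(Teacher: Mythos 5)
Your proof is correct, but it takes a genuinely different route than the paper. The paper's proof is a two-line application of the moduli-duality machinery it has already set up: $\nabla^2 g \geq \beta I_n$ gives $\rho_g(r) \geq \beta r^2/2$, Proposition~\ref{prop:AP2} then yields $\sigma_{g^*}(s) \leq s^2/(2\beta)$, i.e.\ $g^* - |\,\cdot\,|^2/(2\beta)$ is concave, and concave functions are superharmonic. You instead compute $\nabla^2 g^*$ directly via the identity $\nabla^2 g^*(y) = (\nabla^2 g(\nabla g^*(y)))^{-1}$, invert the Loewner inequality, and take traces. Both are valid; the trade-offs are as follows. The paper's approach completely sidesteps the question of whether $g^*$ is differentiable at all, since $\sigma$-smoothness is a pointwise inequality and superharmonicity is defined here by mean-value comparison \eqref{eq:super-h} rather than via the Laplacian. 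Your approach needs $\nabla g$ to be a global $C^1$-diffeomorphism; your invocation of ``the inverse function theorem'' is a slight shorthand, since the inverse function theorem only gives local invertibility — global bijectivity of $\nabla g$ is what requires the strong convexity (for injectivity of the monotone gradient) together with the coercivity estimate (for surjectivity), both of which you do have. Once that is in place the computation is clean, and you in fact establish the stronger Hessian bound $\nabla^2 g^* \leq \beta^{-1}I_n$, which is the same concavity statement the paper records, even though you only draw the trace conclusion from it.
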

\proof
If $\nabla^2g \geq \beta I_n$ then the convexity modulus of $g$ is such that $\rho_g(r) \geq \beta r^2/2$, $r\geq0$. So, according to Proposition \ref{prop:AP2}, the smoothness modulus of $g^*$ is such that $\sigma_{g^*}(s) \leq s^2/(2\beta)$, $s \geq0$. This means that $g^* - |\,\cdot\,|^2/(2\beta)$ is concave, and thus superharmonic. 
\endproof
Even though, as stated above, the converse implication does not hold, (smooth) functions in $\mathrm{SH^\ast(\beta)}$ have a Hessian that satisfies a dimension-dependent lower bound.

\begin{lemma}\label{lem:SH^*hessian}
If $g\in \mathrm{SH}^\ast(\beta)$, with $\beta > 0$, then $g - \frac{\beta}{2n}\vert \,.\, \vert^2$ is convex. In particular, if $g$ is twice continuously differentiable, then  $\nabla^2 g \geq \frac{\beta}{n}$. 
\end{lemma}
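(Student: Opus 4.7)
\proof[Proof plan.]
The plan is to show first that $g^\ast$ is $\sigma$-smooth with $\sigma(r) = \frac{n}{2\beta} r^2$, and then to transfer this into a convexity modulus for $g = g^{\ast\ast}$ via the duality of Proposition \ref{prop:AP2}. The monotone conjugate of $\sigma$ is $\sigma^\ast(u) = \frac{\beta}{2n} u^2$, so Proposition \ref{prop:AP2}$(i)$ applied to $g^\ast$ yields that $g$ is $\frac{\beta}{2n}r^2$-convex, i.e.
\[
g((1-t)x_0+tx_1) + t(1-t) \frac{\beta}{2n} \vert x_1-x_0 \vert^2 \leq (1-t)g(x_0) + t g(x_1)
\]
for all $x_0,x_1\in\R^n$ and $t\in[0,1]$. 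Combined with the polarization identity $(1-t)\vert x_0\vert^2 + t\vert x_1\vert^2 - \vert(1-t)x_0+tx_1\vert^2 = t(1-t)\vert x_1-x_0\vert^2$, this rearranges exactly into the convexity of $g - \frac{\beta}{2n}\vert\cdot\vert^2$. The ``in particular'' statement is then immediate by differentiating twice.

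The heart of the argument is therefore the smoothness bound on $g^\ast$. By definition of $\mathrm{SH}^\ast(\beta)$, the function $g^\ast$ is convex, finite valued on $\R^n$ (hence continuous), and $g^\ast - \frac{\vert\,\cdot\,\vert^2}{2\beta}$ is superharmonic, which amounts to $\Delta g^\ast \leq n/\beta$ in the distributional sense. I would regularize by setting $g^\ast_\epsilon = g^\ast \star \eta_\epsilon$, where $\eta_\epsilon$ is a smooth non-negative compactly supported mollifier: then $g^\ast_\epsilon$ is smooth, still convex (as an average of translates of a convex function), and the distributional Laplacian bound is transferred pointwise, $\Delta g^\ast_\epsilon \leq n/\beta$. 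Convexity gives $\nabla^2 g^\ast_\epsilon \geq 0$, so the eigenvalues of $\nabla^2 g^\ast_\epsilon(x)$ are non-negative and sum to at most $n/\beta$; in particular each eigenvalue is at most $n/\beta$, so $\nabla^2 g^\ast_\epsilon \leq \frac{n}{\beta} I_n$ everywhere. The bounded-Hessian bound from Section \ref{sec:ex-R-conv-S-smooth} then gives $\sigma_{g^\ast_\epsilon}(r) \leq \frac{n}{2\beta} r^2$ for every $r\geq 0$.

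Finally, since $g^\ast$ is continuous, $g^\ast_\epsilon \to g^\ast$ locally uniformly as $\epsilon\to 0$, and the defining inequality for $\sigma$-smoothness (applied with fixed $x_0,x_1,t$) passes to the pointwise limit. Therefore $\sigma_{g^\ast}(r) \leq \frac{n}{2\beta} r^2$ for all $r\geq 0$, which closes the argument. The only delicate step is the regularization in the second paragraph: one has to verify simultaneously that Gaussian convolution preserves both the convexity of $g^\ast$ and the distributional Laplacian inequality, but both facts are standard for non-negative mollifiers, so this is more a matter of care than of a real obstacle.
\endproof
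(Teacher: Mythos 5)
Your proof is correct and follows essentially the same route as the paper: deduce $\nabla^2 g^\ast \leq \frac{n}{\beta}I_n$ from convexity plus the Laplacian bound, conclude $g^\ast$ is $\frac{n}{2\beta}r^2$-smooth, and dualize via Proposition \ref{prop:AP2} to get the $\frac{\beta}{2n}r^2$-convexity of $g$. The paper's one-line version leaves the mollification and polarization identity implicit, which you have rightly made explicit; for what it's worth, the paper also indicates a second, more self-contained proof in the remark following Lemma \ref{lem:CharSH^*}, obtained by testing inequality \eqref{eq:SH^*} against $U=\frac{x_0+x_1}{2}+\frac{|x_1-x_0|}{2}\,\mathrm{sign}(e\cdot R)\,e$.
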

\proof
By definition of $\mathrm{SH}^\ast(\beta)$ the Legendre transform of $g$ is $\frac{1}{\beta}$-superharmonic. Since $g^\ast$ is convex we deduce that $\frac{n}{2\beta}\vert\, .\, \vert^2 - g^\ast$ is convex. Thus Proposition \ref{prop:AP2} ensures that $g - \frac{\beta}{2n}\vert \,.\, \vert^2$ is convex.
\endproof
In particular functions in $\mathrm{SH}^\ast(\beta)$ have superlinear growth.

Before turning to the proof, let us highlight a simple corollary.

\begin{corollary}\label{cor:V-V*}
Let $\mu(dx) = e^{-V(x)}\, dx$ be a probability measure, with $V:\R^n \to \R$ convex and   $\alpha_V$-superharmonic and let $\nu(dy) = \frac{1}{Z}e^{-V^*(y)}\,dy$, where $Z$ is a normalizing constant. Then, the Kantorovich potential $\varphi$ is $\alpha_V$-superharmonic.
\end{corollary}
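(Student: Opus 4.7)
The natural approach is to apply Theorem \ref{thm:DePS} directly, taking for the target potential $W = V^{*} + \log Z$ so that $\nu(dy) = e^{-W(y)}\,dy$. The assumption on $\mu$ already matches that theorem's hypothesis on the source. The work is therefore entirely on the target side: I need to verify that $W$ is a proper lower semicontinuous convex function, that it lies in $\mathrm{SH}^{*}(\beta_W)$ for an appropriate value of $\beta_W$, and then to track the resulting constant.

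Convexity and lower semicontinuity of $W$ are automatic, since $V^{*}$ is convex and l.s.c.\ as a Legendre transform and shifting by a constant preserves both properties. To identify $\beta_W$, I would compute $W^{*}$: by definition $W^{*} = V^{**} - \log Z$. Because $V : \R^n \to \R$ is finite valued and convex, it is continuous, hence proper l.s.c., so the biconjugation theorem gives $V^{**} = V$. Thus $W^{*} = V - \log Z$, which is finite valued on $\R^n$ (the first half of the $\mathrm{SH}^{*}$ definition) and is $\alpha_V$-superharmonic since superharmonicity is invariant under addition of constants. Reading the definition of the class $\mathrm{SH}^{*}$, this means precisely $W \in \mathrm{SH}^{*}(1/\alpha_V)$, i.e.\ $\beta_W = 1/\alpha_V$.

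Applying Theorem \ref{thm:DePS} with the pair $(\alpha_V, \beta_W = 1/\alpha_V)$ yields that $\varphi$ is $\sqrt{\alpha_V/\beta_W}$-superharmonic. Since
\[
\sqrt{\alpha_V/\beta_W} = \sqrt{\alpha_V \cdot \alpha_V} = \alpha_V,
\]
this is exactly the conclusion of the corollary.

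There is no substantive obstacle: the argument is essentially a bookkeeping application of Theorem \ref{thm:DePS}, and the only point requiring a moment of care is the use of $V^{**} = V$, which is justified by the fact that any real-valued convex function on $\R^n$ is automatically continuous and proper.
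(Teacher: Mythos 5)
Your argument is correct and is essentially the paper's own proof, just with the verification that $W = V^* + \log Z \in \mathrm{SH}^*(1/\alpha_V)$ spelled out explicitly (via $V^{**}=V$ and constant-invariance of superharmonicity). The paper compresses this verification into one sentence and then invokes Theorem \ref{thm:DePS} exactly as you do.
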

In particular, under the assumptions of the preceding result, the Brenier map $\nabla \varphi$ is $n\alpha_V$-Lipschitz, which means that $\nabla^2 \varphi \leq n\alpha_V$. This latter bound also follows from Caffarelli's theorem, since $\nabla^2 V \leq n\alpha_V$ and $\nabla^2 W \geq (n\alpha_V)^{-1}$. Corollary \ref{cor:V-V*} improves the bound $\nabla^2 \varphi \leq n\alpha_V$ into $\Delta \varphi \leq n\alpha_V$ which is strictly better.
\proof
By definition $W = V^* + \log Z$ belongs to $\mathrm{SH}^*(1/\alpha_V)$. So, the conclusion follows immediately from Theorem \ref{thm:DePS}.
\endproof

\begin{remark}\ 
\begin{itemize}
\item Let us compare Theorem \ref{thm:DePS} to Theorem \cite[Theorem 1.4]{DePS24}.
Let $\mu$ be a probability measure as in Theorem \ref{thm:DePS}, and let $\nu$ be defined through 
\[
W(y) =\sum_{i=1}^{n-1}\frac{y_i^2}{2}+  \epsilon\frac{y_n^2}{2} +C, \qquad y\in \R^n,
\] 
where $C$ is a normalizing constant and $\epsilon \in (0,1)$. Then $\nabla^2W \geq \epsilon I_n$ is the best possible lower bound on the Hessian matrix of $W$. Applying \cite[Theorem 1.4]{DePS24} thus yields in this case, that the Kantorovich potential $\varphi$ for the transport between $\mu$ and $\nu$ is $\sqrt{\alpha_V/\epsilon}$-superharmonic.  On the other hand, 
\[
W^*(x) =\sum_{i=1}^{n-1}\frac{x_i^2}{2}+\frac{1}{\epsilon} \frac{x_n^2}{2} - C, \qquad x\in \R^n
\] 
and so $\Delta W^* =(n-1)+\frac{1}{\epsilon}$ and $W^*$ is $\frac{(n-1)}{n}+\frac{1}{n\epsilon}$-superharmonic. So $W \in \mathrm{SH}^*(\frac{n\epsilon}{\epsilon(n-1)+1})$ and applying Theorem \ref{thm:DePS} yields that $\varphi$ is $\sqrt{\alpha_V\left(\frac{(n-1)}{n}+\frac{1}{n\epsilon}\right)}$-superharmonic. Since $0<\epsilon <1$, this bound is always strictly better than the previous one, and even improves when the dimension $n$ is large.
\item Moreover, note that in the case where $\mu$ is precisely the Gaussian measure with potential 
\[
V(x) = W^*(x) + D,\qquad x \in \R^n,
\]
with $D$ another normalizing constant, then $\alpha_V = \frac{(n-1)}{n}+\frac{1}{n\epsilon}$ and so, according to Corollary \ref{cor:V-V*}, the Kantorovich potential $\varphi$ is $\frac{(n-1)}{n} + \frac{1}{n\epsilon}$-superharmonic. Since, in this case, the optimal transport map is given by 
\[T(x) = (x_1,\ldots,x_{n-1}, x_n/\epsilon),\qquad x\in \R^n\]
we have $\mathrm{div}(T) = \Delta \varphi = n\left(\frac{(n-1)}{n} + \frac{1}{n\epsilon}\right)$. This shows that the bound given in Corollary \ref{cor:V-V*} is optimal in this case.
\end{itemize}
\end{remark}

The following result gives an alternative characterization of the class $\mathrm{SH}^*(\beta)$, that will be used in the proof of Theorem \ref{thm:DePS}.

\begin{lemma}\label{lem:CharSH^*}
Let $\beta>0$ ; a proper lower semicontinuous function $g:\R^n \to \R\cup\{+\infty\}$ belongs to $\mathrm{SH}^*(\beta)$ if, and only if, for any couple of square integrable random vectors $(U,R)$ with $R \sim \mathcal{N}(0,I_n)$, it holds 
\begin{equation}\label{eq:SH^*}
\E[g(U)] \geq g(\E[U]) + \frac{\beta}{2n} \E[\langle U,R\rangle]^2.
\end{equation}
\end{lemma}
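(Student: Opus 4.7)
My approach rests on two standard tools: the Fenchel-Young inequality $g(u) + g^*(v) \ge \langle u,v\rangle$ and the biconjugation identity $g = g^{**}$, valid since $g$ is convex lsc proper.

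For the direction $(\Rightarrow)$, assume $g \in \mathrm{SH}^*(\beta)$. For any square integrable couple $(U,R)$ with $R \sim \mathcal{N}(0,I_n)$ and any $y \in \R^n$, $s \in \R$, Fenchel-Young applied to $(U,\,y+sR)$ followed by taking expectations yields
\[
\E[g(U)] \ge \langle \E[U], y\rangle + s\E[\langle U, R\rangle] - \E[g^*(y+sR)].
\]
The $1/\beta$-superharmonicity of $g^*$ gives $\E[g^*(y+sR)] \le g^*(y) + ns^2/(2\beta)$. I would then take the supremum first over $y$ (which produces $g^{**}(\E[U]) = g(\E[U])$) and then over $s \in \R$ (a scalar quadratic maximization producing $\frac{\beta}{2n}\E[\langle U,R\rangle]^2$) to obtain \eqref{eq:SH^*}.

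For the direction $(\Leftarrow)$, I would verify the three requirements for $g \in \mathrm{SH}^*(\beta)$ in turn. Convexity of $g$ (together with finite-valuedness, using constant $U$) comes from applying \eqref{eq:SH^*} with $U$ independent of $R$, since then $\E[\langle U, R\rangle]=0$ and one recovers Jensen's inequality. To show $g^*$ is finite everywhere, I would test \eqref{eq:SH^*} on $U = x_0 + \alpha \langle R, v\rangle v$ with $v$ a unit vector: setting $Z = \langle R, v\rangle \sim \mathcal{N}(0,1)$ and $h(t) = g(x_0+tv)$, one obtains $\E[h(\alpha Z)] \ge h(0) + \beta\alpha^2/(2n)$, and shifting $x_0$ gives the analogous inequality centered at any point on the line. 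A convex $h$ with finite asymptotic slope on either side would yield only a linear-in-$\alpha$ contribution on the left, contradicting the quadratic lower bound; hence $h$ grows superlinearly on both sides, and since this holds along every line $g^*$ must be finite. Finally, for $1/\beta$-superharmonicity of $g^*$, I would fix $y$ and $s>0$, pick a measurable selection $U_R \in \partial g^*(y+sR)$ so that $g^*(y+sR) = \langle y+sR, U_R\rangle - g(U_R)$, and after taking expectations apply \eqref{eq:SH^*} to the couple $(U_R, R)$ to obtain
\[
\E[g^*(y+sR)] \le \bigl(\langle y, \E[U_R]\rangle - g(\E[U_R])\bigr) + \bigl(s\E[\langle R, U_R\rangle] - \tfrac{\beta}{2n}\E[\langle R, U_R\rangle]^2\bigr) \le g^*(y) + \frac{ns^2}{2\beta},
\]
where the last bound uses Fenchel-Young on the first parenthesis and scalar quadratic maximization on the second.

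The main technical obstacle is the construction and square integrability of the selection $U_R$ in the last step. Since $g^*$ is convex with full domain, $\partial g^*$ admits a measurable selector (for instance the minimum-norm element, by Kuratowski–Ryll-Nardzewski), and square integrability can be secured by approximating $g^*$ by its Moreau-Yosida regularization $(g^*)_\lambda$, whose gradient is globally Lipschitz, proving the desired inequality at level $\lambda$, and letting $\lambda \to 0$ via monotone convergence. A similar care is needed to make the superlinear-growth argument in the second step fully rigorous, exploiting the shifted inequality to rule out finite one-sided asymptotic slopes.
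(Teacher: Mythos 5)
Your forward direction is correct and coincides with the paper's proof: Fenchel--Young against $y+sR$, the $1/\beta$-superharmonicity of $g^*$ to bound $\E[g^*(y+sR)]$, then suprema over $y$ (biconjugation) and over $s$. Your reverse direction also has the right overall skeleton --- convexity from Jensen, then a subgradient selector in $\partial g^*$ reinserted into \eqref{eq:SH^*} to extract the superharmonicity bound, with a regularization to secure measurability and square integrability of the selector --- and the Moreau--Yosida regularization you suggest for that last step is a viable alternative to the truncation $g_M = g + \chi_{B_M}$ that the paper uses.

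Your separate argument that $g^*$ is finite everywhere, however, has a genuine gap. Testing \eqref{eq:SH^*} with $U = x_0 + \alpha\langle R,v\rangle v$ is only informative when $\E[g(U)]<+\infty$. If $g$ has a finite asymptotic slope in the direction $-v$ but grows superexponentially in the direction $+v$ (say $h(t)\sim e^{t^2}$ as $t\to+\infty$), then $\E[h(\alpha Z)]=+\infty$ for every $x_0$ once $\alpha$ is of order one, the inequality \eqref{eq:SH^*} holds vacuously, and the claimed contradiction with the quadratic lower bound never materialises. Shifting $x_0$ far along $-v$ does not repair this, since the Gaussian tail of $Z$ always probes the superexponential region; one would have to truncate $U$ (clamp $Z$), at which point the asymptotic-slope bookkeeping becomes quite delicate. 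The cleaner resolution --- and what the paper in effect does --- is to drop this step entirely: once the superharmonicity bound $\E[g^*(a+rR)]\le g^*(a)+\tfrac{nr^2}{2\beta}$ is in hand (via your regularize-and-pass-to-the-limit scheme, or the paper's ball truncation), applying it at any $a\in\partial g(x_0)$ for $x_0\in \mathrm{ri}\,\mathrm{dom}\,g$, where $g^*(a)<+\infty$, gives $g^*<+\infty$ a.e., and a proper lower semicontinuous convex function that is finite on a set of full Lebesgue measure must be finite on all of $\R^n$. Finiteness of $g^*$ is therefore a corollary of step three, not a prerequisite for it.
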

\begin{remark}
Note that the conclusions of Lemmas \ref{lem:SH^*hessian0} and \ref{lem:SH^*hessian} easily follow from Lemma \ref{lem:CharSH^*}. For instance, to get back the conclusion of Lemma \ref{lem:SH^*hessian}, it is enough to apply the preceding result to 
\[
U= \frac{x_0+x_1}{2} + \frac{|x_1-x_0|}{2} \mathrm{sign}(e\cdot R)e,
\] 
with $x_1\neq x_0$ and $e = \frac{x_1-x_0}{|x_1-x_0|}$. Then \eqref{eq:SH^*} yields
\[
\frac{1}{2}(g(x_0)+g(x_1)) - g(\frac{x_0+x_1}{2}) \geq \frac{\beta}{8n} |x_1-x_0|^2.
\]
\end{remark}

\proof
First, let us assume that $g$ satisfies \eqref{eq:SH^*}. Then it is clear that $g$ is convex. Since $g$ is proper there is $x \in \mathbb{R}^n$ such that $g(x)< + \infty$. Thus $g^\ast(y) \geq \langle x,y \rangle -g(x)$ for every $y\in \mathbb{R}^n$. Let $M > \vert x \vert$ and set $g_M = g + \chi_{B_M}$, where $\chi_{B_M}$ is the convex indicator function of $B_M$. Then $g_M^\ast(y) \geq \langle x,y \rangle - g_M(x)$. Let $a\in\mathbb{R}^n,r\geq 0$ and $R \sim \mathcal{N}(0,I_n)$. Consider a square integrable random vector $U$ such that 
\begin{equation*}
    g_M^\ast(a+rR) = \langle a+rR,U\rangle - g(U)
\end{equation*}
almost surely. The existence of such a $U$ is detailed in Appendix \ref{app:measurability}.
This is possible because $g_M^\ast$ is proper thus $U$ won't take values outside $B_M$ which in turns ensures the square integrability of $U$ and allows to replace $g_M$ by $g$ in the right hand side.
Then applying \eqref{eq:SH^*},
\begin{align*}
    \E[g_M^*(a+rR)] &\leq \langle a,\E[U] \rangle - g(\E[U]) + r \E[\langle R,U \rangle] - \frac{\beta}{2n}\E[\langle R,U \rangle]^2\\
& \leq g_M^*(a) + \frac{r^2n}{2\beta} \leq g^\ast(a) + \frac{r^2n}{2\beta}.
\end{align*}
Now remark that $g_M^\ast$ converges pointwise in a non-decreasing fashion towards $g^\ast$. By the monotone convergence theorem, which is applicable here because all the functions are lower bounded by the same integrable function $y \mapsto \langle x,y\rangle - g(x)$, we deduce that
\begin{equation*}
    \E[g^\ast(a+rR)] \leq g^\ast(a) + \frac{r^2n}{2\beta}.
\end{equation*}
In particular, $g^*$ takes finite values over $\R^n$ and is $1/\beta$-superharmonic.

Conversely, let $g \in \mathrm{SH}^*(\beta)$. Let us prove that $g$ satisfies \eqref{eq:SH^*}. Since $g$ is convex and lower semicontinuous, it satisfies $g(u) = \sup_{v\in \R^n} \langle u,v \rangle - g^\ast(v)$, $u\in \R^n$. Therefore, if $(U,R)$ is a couple of square integrable random vectors with $R \sim \mathcal{N}(0,I_n)$, one gets for all $a \in \R^n$ and $r\geq0$,
\begin{align*}
\E[g(U)] & \geq \E[\langle U, a+rR \rangle - g^*(a+rR)]\\
& \geq \langle a,\E[U] \rangle - g^*(a) + r\E[\langle R,U \rangle]  - \frac{r^2n}{2\beta},
\end{align*}
where the second inequality comes from the fact that $g^*$ is $1/\beta$-superharmonic.
Optimizing over $a$ and $r$ yields \eqref{eq:SH^*}.
\endproof

To prove Theorem \ref{thm:DePS}, we need to adapt the method developed in the preceding sections. The first task is to understand the effect of the entropic Legendre transform on superharmonic functions.
\begin{proposition}\label{lem:Lemma1}
If $f$ is $\alpha$-superharmonic with $\alpha >0$,  then $\mathcal{L}_{\epsilon,\mathcal{H}^n}(f)$ belongs to $\mathrm{SH}^*(1/\alpha)$.
\end{proposition}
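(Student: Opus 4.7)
The natural approach is to work with the Legendre conjugate $g^\ast := (\mathcal{L}_{\epsilon,\mathcal{H}^n}(f))^\ast$ and use the relative entropy representation of log-Laplace transforms. Setting $\nu_0 := e^{-f/\epsilon}\mathcal{H}^n$ (a positive $\sigma$-finite measure), the Donsker--Varadhan variational identity reads
\[
g(x) = \sup_{\mu \in \mathcal{P}(\R^n)} \left\{\left\langle x, \int y\,d\mu\right\rangle - \epsilon H(\mu \mid \nu_0)\right\},
\]
so that, by Fenchel duality,
\[
g^\ast(u) = \inf\left\{\epsilon H(\mu \mid \nu_0) : \mu \in \mathcal{P}(\R^n),\ \textstyle\int y\,d\mu = u\right\}
\]
(modulo verifying convexity and lower semicontinuity of the right-hand side in $u$).

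The key observation is the following translation formula for $H(\,\cdot\,\mid \nu_0)$: if $\mu_r := (\mathrm{id}+r)_\#\mu$ is the translate of $\mu$ by $r \in \R^n$, translation invariance of $\mathcal{H}^n$ gives
\[
H(\mu_r \mid \nu_0) = H(\mu \mid \nu_0) + \tfrac{1}{\epsilon}\E_\mu\bigl[f(Y+r)-f(Y)\bigr].
\]
Fix $u\in\R^n$, $t\geq 0$ and $R \sim \mathcal{N}(0, I_n)$, and let $\mu$ be any probability with barycenter $u$. Since $\mu_{tR}$ has barycenter $u+tR$, the variational bound $g^\ast(u+tR) \leq \epsilon H(\mu_{tR} \mid \nu_0)$ holds. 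Averaging over $R$ via Fubini and invoking the $\alpha$-superharmonicity hypothesis $\E_R[f(y+tR)] \leq f(y) + \alpha n t^2/2$ pointwise in $y$ yields
\[
\E_R[g^\ast(u+tR)] \leq \epsilon H(\mu \mid \nu_0) + \tfrac{\alpha n t^2}{2}.
\]
Taking the infimum over $\mu$ on the right and invoking the duality identity above gives
\[
\E_R[g^\ast(u+tR)] \leq g^\ast(u) + \tfrac{\alpha n t^2}{2},
\]
which is precisely the mean-value characterization of $\alpha$-superharmonicity of $g^\ast$, hence $g \in \mathrm{SH}^\ast(1/\alpha)$.

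The main technical obstacle is to justify the full duality $g^\ast = \inf_{\mu:\,\int y\,d\mu = u} \epsilon H(\mu \mid \nu_0)$, as opposed to just the easy inequality ``$\leq$''; this reduces to establishing convexity and lower semicontinuity in $u$ of this infimum, which follow from convexity of the relative entropy and weak compactness of its sub-level sets. A secondary issue is the finite-valuedness of $g^\ast$ on all of $\R^n$, required by the definition of $\mathrm{SH}^\ast$: using $\mu = \mathcal{N}(u,\sigma^2 I_n)$ as a test measure, a direct computation combined with the superharmonicity bound gives the explicit estimate
\[
\epsilon H(\mathcal{N}(u,\sigma^2 I_n)\mid\nu_0) \leq -\tfrac{\epsilon n}{2}\log(2\pi e\sigma^2) + f(u) + \tfrac{\alpha n\sigma^2}{2} < +\infty
\]
for every $u \in \R^n$ and $\sigma>0$, showing $g^\ast(u) < +\infty$ everywhere.
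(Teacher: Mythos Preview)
Your approach is correct in outline but takes a genuinely different route from the paper. You work directly with $g^\ast$ and verify the defining mean-value inequality \eqref{eq:super-h} via the entropic variational formula $g^\ast(u)=\inf\{\epsilon H(\mu\mid\nu_0):\text{bar}(\mu)=u\}$, exploiting the translation identity for relative entropy. The paper instead never touches $g^\ast$: it uses the equivalent characterization of $\mathrm{SH}^\ast(1/\alpha)$ given in Lemma~\ref{lem:CharSH^*}, namely $\E[g(U)]\geq g(\E[U])+\tfrac{1}{2n\alpha}\E[\langle U,R\rangle]^2$ for all square integrable $(U,R)$ with $R\sim\mathcal{N}(0,I_n)$. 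The paper's computation is then a three-line chain: translation invariance of Lebesgue to insert a shift by $sR$ inside the log-Laplace, Jensen for the convex log-Laplace functional to pull $\E$ inside, and the superharmonicity of $f$ to remove the shift at the cost of $\alpha ns^2/2$; optimizing over $s$ yields exactly the quadratic term $\tfrac{1}{2n\alpha}\E[\langle U,R\rangle]^2$.

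The trade-off: your argument is the ``primal'' one and is conceptually transparent (translate the optimal $\mu$), but it forces you to justify the full duality $g^\ast=F$ rather than just $g^\ast\leq F$. You correctly flag this as the main obstacle. Note that your lower-semicontinuity sketch via compactness of entropy sublevel sets is delicate here because $\nu_0=e^{-f/\epsilon}\mathcal{H}^n$ need not be finite and barycenters need not pass to weak limits without a priori moment control; the cleaner route is the one you implicitly have available: once you know $g^\ast$ is finite everywhere (your Gaussian test), for each $u$ there is $x\in\partial g^\ast(u)$, the Gibbs measure $d\mu_x\propto e^{(\langle x,\cdot\rangle-f)/\epsilon}$ has barycenter $\nabla g(x)=u$ and $\epsilon H(\mu_x\mid\nu_0)=\langle x,u\rangle-g(x)=g^\ast(u)$, so $F(u)\leq g^\ast(u)$ and hence equality. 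The paper's approach sidesteps all of this by working on the $g$-side, which is arguably the main advantage of passing through Lemma~\ref{lem:CharSH^*}.
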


\proof
Fix $x\in \R^n$  and let $(U,R)$ be a couple of square integrable random vectors with $\E[U]=x$ and $R \sim \mathcal{N}(0,I_n)$. 
Due to the translation invariance of the Lebesgue measure, we get for all $s \geq0$
\[
\epsilon\E\left[\log \left( \int e^{\frac{\langle U,z\rangle- f(z)}{\epsilon} }\,dz\right) \right] = \epsilon\E\left[\log \left( \int e^{\frac{\langle U,z+sR\rangle - f(z+sR)}{\epsilon} }\,dz\right) \right].
\]
Using the convexity of the Log-Laplace functional at the first line, the fact that $\E[U-x] = \E[R] = 0$ at the second, the $\alpha$ superharmonicity property of $f$ at the third, and the fact that $\E[R] = 0$ at the last, we get
\begin{align*}
\epsilon\E\left[\log \left( \int e^{\frac{\langle U,z+sR\rangle -f(z+sR)}{\epsilon} }\,dz\right) \right] &\geq \epsilon\log \left( \int e^{\E[\frac{\langle U-x,z+sR\rangle + \langle x,z+sR\rangle- f(z+sR)}{\epsilon} ]}\,dz\right)\\
& = s\E[\langle U-x,R\rangle] + \epsilon\log \left( \int e^{\frac{\langle x,z\rangle -\E[ f(z+sR)]}{\epsilon}}\,dz\right)\\
& \geq  s\E[\langle U-x,R\rangle] + \epsilon\log \left( \int e^{\frac{\langle x,z\rangle - f(z)}{\epsilon}}\,dz\right) - \frac{\alpha ns^2}{2}\\
& \geq \mathcal{L}_{\epsilon,\mathcal{H}^n}(f)(\E[U])+ s\E[\langle U,R\rangle]  - \frac{\alpha ns^2}{2}.
\end{align*}
Optimizing over $s$ yields that
\[
\E[\mathcal{L}_{\epsilon,\mathcal{H}^n}(f)(U)] \geq \mathcal{L}_{\epsilon,\mathcal{H}^n}(f)(\E[U]) + \frac{1}{2n\alpha}\E[\langle U,R \rangle]^2
\]
which, according to Lemma \ref{lem:CharSH^*}, completes the proof.
\endproof

Proposition \ref{lem:Lemma1} shows that the entropic Legendre transform maps a superharmonic function onto an element of $\mathrm{SH}^*$. The next result explores the converse direction.
\begin{proposition}\label{prop:SH^*}
If $g\in \mathrm{SH}^*(\beta)$ for some $\beta>0$, then $\mathcal{L}_{\epsilon, \mathcal{H}^n}(g)$ is $1/\beta$-superharmonic.
\end{proposition}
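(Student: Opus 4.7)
The plan is to establish the pointwise upper bound $\Delta \mathcal{L}_{\epsilon,\mathcal{H}^n}(g) \leq n/\beta$, which is equivalent to saying that $\mathcal{L}_{\epsilon,\mathcal{H}^n}(g) - |\cdot|^2/(2\beta)$ is superharmonic, i.e.\ that $\mathcal{L}_{\epsilon,\mathcal{H}^n}(g)$ is $1/\beta$-superharmonic. The main analytic tool will be the matricial Brascamp--Lieb covariance inequality applied to the log-concave Gibbs measure $\mu_x(dy)\propto e^{(\langle x,y\rangle-g(y))/\epsilon}\,dy$ naturally associated with $\mathcal{L}_{\epsilon,\mathcal{H}^n}(g)$.

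First I would reduce to the case where $g$ is $C^2$ and strongly convex with $\nabla^2 g$ bounded above and below by positive multiples of $I_n$. Under this regularity, differentiating twice under the integral sign in $\mathcal{L}_{\epsilon,\mathcal{H}^n}(g)(x) = \epsilon \log \int e^{(\langle x,y\rangle - g(y))/\epsilon}dy$ yields
\begin{equation*}
\nabla^2 \mathcal{L}_{\epsilon,\mathcal{H}^n}(g)(x) = \tfrac{1}{\epsilon}\,\mathrm{Cov}(\mu_x),
\end{equation*}
so $\Delta \mathcal{L}_{\epsilon,\mathcal{H}^n}(g)(x) = \epsilon^{-1}\,\mathrm{tr}\,\mathrm{Cov}(\mu_x)$. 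Since $\mu_x$ has potential $(g - \langle x,\cdot\rangle)/\epsilon$ with Hessian $\nabla^2 g/\epsilon$, the Brascamp--Lieb covariance inequality gives $\mathrm{Cov}(\mu_x) \preceq \epsilon \int (\nabla^2 g(y))^{-1}\,d\mu_x(y)$ in the positive semidefinite order, and after taking trace
\begin{equation*}
\mathrm{tr}\,\mathrm{Cov}(\mu_x) \leq \epsilon \int \mathrm{tr}\bigl((\nabla^2 g(y))^{-1}\bigr)\,d\mu_x(y).
\end{equation*}
Differentiating the duality relation $\nabla g^* \circ \nabla g = \mathrm{id}$ gives $(\nabla^2 g(y))^{-1} = \nabla^2 g^*(\nabla g(y))$, hence $\mathrm{tr}((\nabla^2 g(y))^{-1}) = \Delta g^*(\nabla g(y)) \leq n/\beta$ pointwise by the assumption $g \in \mathrm{SH}^*(\beta)$. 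Combining, $\mathrm{tr}\,\mathrm{Cov}(\mu_x) \leq n\epsilon/\beta$ and hence $\Delta \mathcal{L}_{\epsilon,\mathcal{H}^n}(g) \leq n/\beta$, which settles the smooth strongly-convex case.

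To treat general $g \in \mathrm{SH}^*(\beta)$, I would approximate by regularizing $g^*$. Set $h_k := (g^*\ast \phi_{1/k}) + |\cdot|^2/(2k)$ where $\phi_{1/k}$ is the centered Gaussian density on $\R^n$ of covariance $k^{-1}I_n$. Then $h_k$ is $C^\infty$ and convex, with $k^{-1}I_n \preceq \nabla^2 h_k$ coming from the quadratic term, and since the Laplacian commutes with convolution, $\Delta h_k \leq n/\beta + n/k$; combined with positive semidefiniteness this also yields $\|\nabla^2 h_k\|_{\mathrm{op}} \leq n(1/\beta + 1/k)$. Setting $g_k := h_k^*$ gives a smooth strongly convex function $g_k \in \mathrm{SH}^*(\beta_k)$ with $\beta_k := \beta k/(\beta+k) \to \beta$, to which the smooth-case argument applies: $\mathcal{L}_{\epsilon,\mathcal{H}^n}(g_k)$ is $1/\beta_k$-superharmonic. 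Pointwise convergence $h_k \to g^*$ then yields pointwise convergence of the conjugates $g_k \to g^{**} = g$, and the uniform quadratic minorization $g_k(y) \geq (\beta_k/(2n))|y|^2 - C_k$ furnished by Lemma \ref{lem:SH^*hessian} provides an integrable dominating envelope for $e^{-g_k(y)/\epsilon}$, so dominated convergence gives $\mathcal{L}_{\epsilon,\mathcal{H}^n}(g_k) \to \mathcal{L}_{\epsilon,\mathcal{H}^n}(g)$ pointwise. Passing to the limit in the defining inequality of $1/\beta_k$-superharmonicity concludes the argument.

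The main obstacle will be the approximation step: constructing smooth approximants $g_k$ of $g$ that remain inside $\mathrm{SH}^*(\beta_k)$ with $\beta_k \to \beta$ while being regular enough for Brascamp--Lieb to apply, and then justifying the pointwise convergence of both the $g_k$'s and of their entropic Legendre transforms. The computational core of the proof --- Brascamp--Lieb combined with the Legendre identity $\mathrm{tr}((\nabla^2 g)^{-1}) = \Delta g^* \circ \nabla g$ --- is straightforward once the regularity is in hand.
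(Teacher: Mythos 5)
Your proof is correct but takes a genuinely different route from the paper's. The paper proves Proposition \ref{prop:SH^*} through a quantitative Pr\'ekopa--Leindler inequality (Proposition \ref{prop:QPL}), which is used in Lemma \ref{lem:lemma2} to express $\E[\mathcal{L}_{\epsilon,\mathcal{H}^n}(g)(X)] - \mathcal{L}_{\epsilon,\mathcal{H}^n}(g)(\E X)$ in terms of a coupling $(Y,Z)$; the $\mathrm{SH}^*(\beta)$ hypothesis then enters via the probabilistic characterization of Lemma \ref{lem:CharSH^*}, and the general case is handled by a martingale approximation of $X$ by finitely valued random vectors. Your argument instead bypasses Pr\'ekopa--Leindler entirely: it differentiates $\mathcal{L}_{\epsilon,\mathcal{H}^n}(g)$ twice to get $\tfrac1\epsilon\mathrm{Cov}(\mu_x)$, invokes the Brascamp--Lieb covariance inequality, and converts the resulting trace into $\Delta g^*\circ\nabla g \le n/\beta$ via the Legendre identity $(\nabla^2 g)^{-1}=\nabla^2 g^*\circ\nabla g$. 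This is more in the spirit of Chewi--Pooladian's covariance-inequality approach (mentioned in the introduction) than of the Pr\'ekopa--Leindler mechanism the paper uses throughout. What it buys you is a shorter, more computational core argument; what it costs is the regularization step, which the paper's probabilistic proof avoids. Your regularization is sound: $h_k = g^**\phi_{1/k}+|\cdot|^2/(2k)$ is a \emph{decreasing} sequence of smooth strongly convex functions (the Gaussian convolution of a convex function is monotone in the variance by Jensen), so $g_k=h_k^*$ increases to $g^{**}=g$, and the quadratic minorization of $g_1$ supplied by Lemma \ref{lem:SH^*hessian} gives the integrable envelope; note that since $g_k\nearrow g$ one can appeal to monotone convergence rather than a uniform quadratic bound with $k$-dependent constants. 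The one point that deserves an explicit sentence is why pointwise convergence of $h_k$ passes to the conjugates: this is not automatic for general pointwise-convergent convex sequences, but holds here precisely because $(h_k)_k$ is decreasing, which gives $\sup_k h_k^* = (\inf_k h_k)^* = g^{**}$.
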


In what follows, we assume that we work on a probability space $(\Omega,\mathcal{A},\mathbb{P})$ that is sufficiently large to define all the required random variables.
The following lemma will be crucial to establish Proposition \ref{prop:SH^*}.

\begin{lemma}\label{lem:lemma2}
Given $g : \R^n \to \R$ measurable and such that $\int (1+|z|^2+|g(z)|)e^{\langle x,z \rangle - g(z)}\,dz<+\infty$, for all $x\in \R^n$. If $X$ is a random vector taking values in a finite set of cardinality $N$, one can construct a random vector $Y$ taking values in $\R^n$ and a random vector $Z$ independent of $X$ and taking values in  $(\R^n)^N$ such that
\[
\E[\mathcal{L}_{\epsilon,\mathcal{H}^n}(g)(X)] \leq \mathcal{L}_{\epsilon,\mathcal{H}^n}(g) (\E[X]) + \E[\langle X-\E[X], Y-\E[Y|Z] \rangle]- \E[g(Y)-g(\E[Y|Z])].
\]
\end{lemma}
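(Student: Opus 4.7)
The plan is to realize $Y$ and $Z$ as outputs of a common-noise coupling of the Schr\"odinger-type probability measures $\nu_i$ with densities $\rho_i(z)\propto e^{(\langle x_i,z\rangle-g(z))/\epsilon}$ associated with each atom $x_i$ of $X$, and then to derive the inequality by combining the Gibbs variational representation of $\mathcal{L}_{\epsilon,\mathcal{H}^n}(g)$ with a Pr\'ekopa--Leindler-type entropy comparison.

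I would set things up as follows. Let $p_i=P(X=x_i)$ and let $I$ be a random index with $P(I=i)=p_i$, so $X=x_I$. Rather than the naive choice of taking $Z_1,\dots,Z_N$ independent with $Z_i\sim\nu_i$ (which a direct check on the quadratic case shows gives the wrong sign in the end), I would introduce an auxiliary random vector $W$ independent of $I$ together with transport maps $T_i$ such that $T_i(W)\sim\nu_i$ (for instance, translations in the quadratic case or Brenier-type maps from a common reference in general), and set $Z_i=T_i(W)$, $Z=(Z_1,\dots,Z_N)$ and $Y=T_I(W)=Z_I$. Then $Z$ is a function of $W$ and hence independent of $X$, while $\E[Y\mid Z]=\sum_i p_i Z_i=:\bar Z$. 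Using the pointwise identity $\epsilon\log\rho_i(z)=\langle x_i,z\rangle-g(z)-\mathcal{L}_{\epsilon,\mathcal{H}^n}(g)(x_i)$, integrating against $\nu_i$ and summing with weights $p_i$ yields
\[
\E[\mathcal{L}_{\epsilon,\mathcal{H}^n}(g)(X)]=\E[\langle X,Y\rangle]-\E[g(Y)]-\epsilon\sum_ip_i H(\nu_i\mid\mathcal{H}^n),
\]
while the Gibbs variational principle applied to $q$ equal to the density of $\bar Z$ gives
\[
\mathcal{L}_{\epsilon,\mathcal{H}^n}(g)(\E[X])\geq\langle\E[X],\E[Y]\rangle-\E[g(\bar Z)]-\epsilon H(\bar Z\mid\mathcal{H}^n).
\]
Subtracting, and using $\E[\langle X-\E[X],Y-\bar Z\rangle]=\E[\langle X,Y\rangle]-\langle\E[X],\E[Y]\rangle$ (which follows from $X\perp Z$), produces the desired inequality up to an error $\epsilon\bigl[H(\bar Z\mid\mathcal{H}^n)-\sum_ip_iH(\nu_i\mid\mathcal{H}^n)\bigr]$.

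The hard part is to make this entropy defect non-positive, and this is precisely what dictates the choice of the common-noise coupling: the inequality $H(\bar Z\mid\mathcal{H}^n)\leq\sum_i p_i H(\nu_i\mid\mathcal{H}^n)$ is false for independent $Z_i$'s (as simple Gaussian examples show), so the $T_i$'s must genuinely couple the fibers. I would establish the needed comparison through the Pr\'ekopa--Leindler inequality in its displacement-interpolation form: when the $\nu_i$ are log-concave (which holds here because $g\in\mathrm{SH}^*(\beta)$ is convex by Lemma~\ref{lem:SH^*hessian}), an appropriately chosen barycentric coupling yields the required entropy bound. Making this step rigorous, i.e.\ specifying the coupling $W\mapsto(T_1(W),\dots,T_N(W))$ and controlling the density of $\bar Z$ via Pr\'ekopa--Leindler, is the main technical obstacle; once it is in place, plugging back into the previous step delivers exactly the inequality claimed by the lemma.
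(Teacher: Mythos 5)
Your route is in fact the same as the paper's: you expand $\E[\mathcal{L}_{\epsilon,\mathcal H^n}(g)(X)]$ via the entropy identity $\epsilon H(\nu_i\mid\mathcal H^n)=\int f_i\,d\nu_i-\mathcal L_{\epsilon,\mathcal H^n}(g)(x_i)$, lower-bound $\mathcal L_{\epsilon,\mathcal H^n}(g)(\E[X])$ via the Donsker--Varadhan/Gibbs variational formula tested against the law of $\bar Z=\sum_i p_i Z_i$, and the whole inequality reduces to the displacement-convexity estimate $H(\bar Z\mid\mathcal H^n)\le\sum_i p_i H(\nu_i\mid\mathcal H^n)$ for the barycentric multi-marginal coupling. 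The paper encapsulates precisely this chain in Proposition~\ref{prop:QPL}, applied with $f_i(z)=\langle x_i,z\rangle-g(z)$ and $h(z)=\langle \bar x,z\rangle-g(z)$, and then reads off the random-variable reformulation by setting $Z\sim\bar\pi$ and $Y=Z_I$; your $W$ and transport maps $T_i$ are just one way of realizing the coupling $\bar\pi$, and your identity $\E[\langle X-\E X,\,Y-\bar Z\rangle]=\E[\langle X,Y\rangle]-\langle\E X,\E Y\rangle$ (using $X\perp Z$) is correct.

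The one genuine error is in what you call the ``hard part.'' You assert that $H(\bar Z\mid\mathcal H^n)\le\sum_i p_iH(\nu_i\mid\mathcal H^n)$ requires the $\nu_i$ to be log-concave, which you try to obtain from ``$g\in\mathrm{SH}^*(\beta)$ is convex by Lemma~\ref{lem:SH^*hessian}.'' Two problems. First, that hypothesis is not in the statement of Lemma~\ref{lem:lemma2}: the lemma assumes only that $g$ is measurable with the stated integrability, so you may not invoke membership of $g$ in $\mathrm{SH}^*(\beta)$ inside its proof (the paper only records in Remark~\ref{rem:measurability} that one may \emph{additionally} assume $g$ convex when convenient, by replacing $Y$ with $\E[Y\mid X,Z]$; this is about measurability of $Y$, not about making the entropy comparison work). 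Second, and more importantly, the entropy comparison does \emph{not} require any log-concavity of the marginals: the convexity of $\nu\mapsto H(\nu\mid\mathcal H^n)$ along Wasserstein barycenters (McCann displacement convexity, in the multi-marginal form of Agueh--Carlier~\cite{AC11}) holds for \emph{arbitrary} marginals with finite second moments and finite entropy, which the stated integrability condition guarantees. That general fact is exactly what the paper's Proposition~\ref{prop:QPL} is built on. So the obstacle you flagged does not exist, and no further structural assumption on $g$ is needed to close the argument.
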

\begin{remark}\label{rem:measurability}
By construction, $Y = F(X,Z)$ for some measurable function $F$ on $\R^n \times (\R^n)^N \to \R^n$. 
Note that when $g$ is convex (which will always be the case for us), one can always assume that $Y$ is of this form. Indeed, if it is not the case, then it is easy to check that $\tilde{Y} = \E[Y | X,Z]$ still satisfies the desired inequality.
\end{remark}

\proof It is enough to prove the result for $\epsilon=1$. We will simply denote $\mathcal{L}$ instead of $\mathcal{L}_{1,\mathcal{H}^n}$.
Let $x_1,\ldots,x_N$ the possible distinct outcomes of $X$ and set $\lambda_i = \mathbb{P}(X_i=x_i)$, $1\leq i \leq N$, and $\bar{x} = \E[X].$
Let $h(z) = \langle \bar{x},z\rangle - g(z)$, $z\in \R^n$, and for all $i$, denote by $\nu_i$ the probability distribution with a density proportional to $e^{f_i}$ with $f_i(z) = \langle x_i,z\rangle - g(z)$, $z\in \R^n$. Thanks to the integrability condition on $g$, the following integrals are finite $\int e^{f_i},\int e^h < + \infty$ and $H(\nu_i):= \int \log \frac{d\nu_i}{dz}\,d\nu_i$ is finite.

Thus a direct application of the quantitative Prékopa-Leindler inequality of Proposition \ref{prop:QPL} grants
\begin{equation*}
\E[\mathcal{L}(g)(X)] - \mathcal{L}(g)(\bar{x}) \leq  \int \sum_{i=1}^N\lambda_i  \langle x_i-\bar{x},z_i-\bar{z}\rangle+g(\bar{z})- \sum_{i=1}^N\lambda_ig(z_i)\,\bar{\pi}(dz),
\end{equation*}
where $\bar{\pi}$ is the solution of the multimarginal transport problem with marginals $\nu_1,\ldots,\nu_N$ for the cost $(z_1,\ldots,z_N)\mapsto \sum_{i\neq j} \lambda_i\lambda_j\vert z_i-z_j\vert^2$.
Now, for all $z \in (\R^n)^N$, define $T_z: \{x_1,\ldots,x_N\} \to \{z_1,\ldots,z_N\}$ by $T_z(x_i)=z_i$, for all $1\leq i\leq N$ and set $Y = T_Z(X)$ where $Z$ is independent of $X$ and is distributed according to $\bar{\pi}.$ With these notations, we get
\[
\sum_{i=1}^N\lambda_i  \langle x_i-\bar{x},z_i-\bar{z}\rangle = \sum_{i=1}^N\lambda_i  \langle x_i-\bar{x},T_z(x_i)-\E[T_z(X)]\rangle = \E[\langle X-\bar{x},T_z(X)-\E[T_z(X)]\rangle]
\]
and so
\[
 \int \sum_{i=1}^N\lambda_i  \langle x_i-\bar{x},z_i-\bar{z}\rangle\,\bar{\pi}(dz) = \E[\langle X-\bar{x},Y-\E[Y|Z]\rangle].
\]
Moreover, 
\[
 \int g(\bar{z})- \sum_{i=1}^N\lambda_ig(z_i)\,\bar{\pi}(dz) = \E[g(\E[Y|Z]) -g(Y)],
\]
which completes the proof.
\endproof

\proof[Proof of Proposition \ref{prop:SH^*}]
Since $g \in \mathrm{SH}^\ast$, Lemma \ref{lem:SH^*hessian} ensures that $g$ has superlinear growth (at least quadratic) and thus it is possible to apply Lemma \ref{lem:lemma2}. To explain the argument with more clarity, let us first assume that the conclusion of Lemma \ref{lem:lemma2} remains valid without the assumption that $X$ takes a finite number of values. 
Let $X =a+rR$ with $a \in \R^n$, $r\geq 0$ and $R \sim \mathcal{N}(0,I_n)$. 

Applying Lemma \ref{lem:lemma2} to $g$ which is convex, we get
\[
\E[\mathcal{L}_{\epsilon, \mathcal{H}^n}(g)(a+rR)] \leq \mathcal{L}_{\epsilon, \mathcal{H}^n}(g)(a)+r\E[\langle R, Y-\E[Y|Z] \rangle]- \E[g(Y)-g(\E[Y|Z])],
\]
with $Y,Z$ two random vectors defined on the same probability space and $Z$ independent of $X$, with $Y=F(X,Z)$ for some measurable function $F$ (see Remark \ref{rem:measurability}).

Applying \eqref{eq:SH^*} to $U = F(X,z)$ which is distributed according to the law of $Y$ knowing $Z=z$, one gets
\begin{align*}
\E[g(Y) | Z=z] &\geq g(\E[Y| Z=z]) + \frac{\beta}{2n}\E[\langle F(X,z) ,R\rangle]^2\\
&= g(\E[Y| Z=z]) + \frac{\beta}{2n}\E[\langle Y ,R\rangle | Z=z]^2
\end{align*}
and so, taking the expectation and applying Jensen inequality yields
\begin{align*}
\E[g(Y) -g(\E[Y| Z])]&\geq \frac{\beta}{2n}\E[\E[\langle Y,R\rangle|Z]^2] \\
&\geq \frac{\beta}{2n}\E[\langle Y,R\rangle]^2\\
&=\frac{\beta}{2n}\E[\langle Y - \E[Y|Z],R\rangle]^2.
\end{align*}
Therefore, 
\begin{align*}
\E[\mathcal{L}_{\epsilon, \mathcal{H}^n}(g)(a+rR)] &\leq \mathcal{L}_{\epsilon, \mathcal{H}^n}(g)(a)+r\E[\langle R, Y-\E[Y|Z] \rangle]- \frac{\beta}{2n}\E[\langle Y - \E[Y|Z],R\rangle]^2\\
& \leq  \mathcal{L}_{\epsilon, \mathcal{H}^n}(g)(a)+ \frac{n}{2\beta}r^2,
\end{align*}
since $ur- \frac{\beta}{2d} u^2 \leq \frac{d}{2\beta}r^2$, for all $u\in \R.$ This shows that $\mathcal{L}_{\epsilon, \mathcal{H}^n}(g)$ is $1/\beta$-superharmonic.

Let us now complete the argument by approximating $X$ by finite range random variables. Fix $a\in \R^n$ and $r\geq0$.
Let $R\sim \mathcal{N}(0,I_n)$ and $X = a + r R$. 
Consider a family $(\mathcal{P}_k)_{k\geq 0}$ of finite partitions of $\R^n$ such that $\mathcal{P}_{k+1}$ is thinner than $\mathcal{P}_k$ for all $k\geq0.$
Suppose that  $\cup_{k\geq 0} \mathcal{P}_k$ generates the Borel $\sigma$-field on $\R^n$.
Denote by $\mathcal{F}_k$ the sub sigma field of $\mathcal{A}$ generated by $\{ \{R \in A\} : A \in \mathcal{P}_k\}$ and define
\[
R_k = \E[R \mid \mathcal{F}_k],\qquad k\geq0.
\]
By construction, $R_k$ only takes a finite number of values and is such that $(R_k)_{k\geq0}$ is a martingale for the filtration $(\mathcal{F}_k)_{k\geq 0}$ and $R_k \to R$ as $k \to \infty$ almost surely and in all the $L^p$ spaces $p\geq 1$. Define $X_k = a+rR_k$, $k\geq0$. Applying Lemma \ref{lem:lemma2} to $g$, we get for all $k\geq 0$
\[
\E[\mathcal{L}_{\epsilon, \mathcal{H}^n}(g)(X_k)] \leq \mathcal{L}_{\epsilon, \mathcal{H}^n}(g)(a)+r\E[\langle R_k, Y_k-\E[Y_k|Z_k] \rangle]- \E[g(Y_k)-g(\E[Y_k|Z_k])],
\]
with $Z_k$ a random vector defined on the same probability space independent of $R$ (and so of $R_k$) and $Y_k= F_k(X_k,Z_k)$ for some measurable function $F_k$ taking values in $\R^n.$
Applying \eqref{eq:SH^*} to $U= F_k(X_k,z)$ which is distributed according to the law of $Y_k$ knowing $Z_k=z$, one gets
\[
\E[g(Y_k) -g(\E[Y_k| Z_k])]\geq \frac{\beta}{2n}\E[\langle Y_k - \E[Y_k| Z_k],R\rangle]^2.
\]
Note that
\begin{align*}
\E[\langle Y_k - \E[Y_k| Z_k],R\rangle] &= \E[\langle Y_k ,R\rangle] = \int \E[\langle F_k(z,X_k) ,R\rangle] \,\mathbb{P}_{Z_k}(dz)\\
&\stackrel{(*)}{=} \int \E[\langle F_k(z,X_k) ,R_k\rangle] \,\mathbb{P}_{Z_k}(dz) = \E[\langle Y_k ,R_k\rangle] =\E[\langle Y_k - \E[Y_k| Z_k],R_k\rangle] ,
\end{align*}
where $(*)$ follows from the fact that $(R_k)_{k\geq 0}$ is a martingale.
So we get, 
\begin{align*}
\E[\mathcal{L}_{\epsilon, \mathcal{H}^n}(g)(a+rR_k)] &\leq \mathcal{L}_{\epsilon, \mathcal{H}^n}(g)(a)+r\E[\langle R_k, Y_k-\E[Y_k|Z_k] \rangle]- \frac{\beta}{2n}\E[\langle Y_k - \E[Y_k| Z_k],R_k\rangle]^2\\
& \leq \mathcal{L}_{\epsilon, \mathcal{H}^n}(g)(a) + \frac{n}{2\beta}r^2.
\end{align*}
Letting $k \to \infty$, we conclude by Fatou's lemma that $\mathcal{L}_{\epsilon, \mathcal{H}^n}(g)$ is $1/\beta$-superharmonic.
\endproof

We are now ready to prove Theorem \ref{thm:DePS}.

\proof[Proof of Theorem \ref{thm:DePS}]
The entropic potentials $\varphi_\epsilon,\psi_\epsilon$  are related by $\varphi_\epsilon = \mathcal{L}_{\epsilon,\mathcal{H}^n}(\psi_\epsilon + \epsilon W)$ and $\psi_\epsilon = \mathcal{L}_{\epsilon,\mathcal{H}^n}(\varphi_\epsilon + \epsilon V)$.
 First, observe that since $\psi_\epsilon$ is convex, the function $\psi_\epsilon + \epsilon W$ belongs to $\mathrm{SH}^*(\epsilon \beta_W)$. So according to Proposition \ref{prop:SH^*}, $\varphi_{\epsilon}$ is $1/(\epsilon \beta_W)$-superharmonic. Let us denote by $\alpha_{\varphi_\epsilon} \geq 0$ the smallest constant $\alpha \geq0$ such that $\varphi_\epsilon$ is $\alpha$-superharmonic. The function $\varphi_\epsilon + \epsilon V$ is then $\alpha_{\varphi_\epsilon} + \epsilon \alpha_V$-superharmonic. So, according to Proposition \ref{lem:Lemma1}, the function $\psi_\epsilon$ belongs to the class $\mathrm{SH}^*(\frac{1}{\alpha_{\varphi_\epsilon} + \epsilon \alpha_V})$. By assumption on $W$, the function $\psi_\epsilon + \epsilon W$ belongs to the class $\mathrm{SH}^*(\frac{1}{\alpha_{\varphi_\epsilon} + \epsilon \alpha_V} + \epsilon \beta_W)$. According to Proposition \ref{prop:SH^*}, the function $\varphi_\epsilon$ is therefore $\frac{1}{\frac{1}{\alpha_{\varphi_\epsilon} + \epsilon \alpha_V} + \epsilon \beta_W}$-superharmonic. Hence, it holds
\[
\alpha_{\varphi_\epsilon} \leq \frac{1}{\frac{1}{\alpha_{\varphi_\epsilon} + \epsilon \alpha_V} + \epsilon \beta_W}.
\]
Thus,
\[
\alpha_{\varphi_\epsilon} \leq - \frac{\epsilon\alpha_V}{2} + \sqrt{\frac{\alpha_V}{\beta_W}+\epsilon^2\frac{\alpha_V^2}{4}}.
\]
Sending $\epsilon \to 0$ and reasoning as in the preceding sections, we conclude that the (up to constant) unique Kantorovich potential $\varphi$ for the transport from $\mu$ to $\nu$ is $\sqrt{\frac{\alpha_V}{\beta_W}}$-superharmonic.
\endproof

%
%

\begin{thebibliography}{10}

\bibitem{AC11}
Martial Agueh and Guillaume Carlier.
\newblock Barycenters in the {Wasserstein} space.
\newblock {\em SIAM J. Math. Anal.}, 43(2):904--924, 2011.

\bibitem{AP}
Dominique Az{\'e} and Jean-Paul Penot.
\newblock Uniformly convex and uniformly smooth convex functions.
\newblock {\em Ann. Fac. Sci. Toulouse, Math. (6)}, 4(4):705--730, 1995.

\bibitem{BGN}
Espen Bernton, Promit Ghosal, and Marcel Nutz.
\newblock Entropic optimal transport: geometry and large deviations.
\newblock {\em Duke Math. J.}, 171(16):3363--3400, 2022.

\bibitem{BL00}
S.~G. Bobkov and M.~Ledoux.
\newblock From {B}runn-{M}inkowski to {B}rascamp-{L}ieb and to logarithmic
  {S}obolev inequalities.
\newblock {\em Geom. Funct. Anal.}, 10(5):1028--1052, 2000.

\bibitem{Bor75}
Christer Borell.
\newblock The {B}runn-{M}inkowski inequality in {G}auss space.
\newblock {\em Invent. Math.}, 30(2):207--216, 1975.

\bibitem{Bre91}
Yann Brenier.
\newblock Polar factorization and monotone rearrangement of vector-valued
  functions.
\newblock {\em Comm. Pure Appl. Math.}, 44(4):375--417, 1991.

\bibitem{BP24}
G.~Brigati and F.~Pedrotti.
\newblock Heat flow, log-concavity, and lipschitz transport maps, 2024.

\bibitem{Caf92}
Luis~A. Caffarelli.
\newblock The regularity of mappings with a convex potential.
\newblock {\em J. Am. Math. Soc.}, 5(1):99--104, 1992.

\bibitem{Caffarelli2000}
Luis~A. Caffarelli.
\newblock Monotonicity properties of optimal transportation and the fkg and
  related inequalities.
\newblock {\em Communications in Mathematical Physics}, 214(3):547–563,
  November 2000.

\bibitem{Caf02}
Luis~A. Caffarelli.
\newblock Erratum: ``{M}onotonicity of optimal transportation and the {FKG} and
  related inequalities'' [{C}omm. {M}ath. {P}hys. {\bf 214} (2000), no. 3,
  547--563; {MR}1800860 (2002c:60029)].
\newblock {\em Comm. Math. Phys.}, 225(2):449--450, 2002.

\bibitem{Cannarsa2004}
Piermarco Cannarsa and Carlo Sinestrari.
\newblock {\em Semiconcave Functions, Hamilton—Jacobi Equations, and Optimal
  Control}.
\newblock Birkh\"{a}user Boston, 2004.

\bibitem{Carlier2017}
Guillaume Carlier, Vincent Duval, Gabriel Peyré, and Bernhard Schmitzer.
\newblock Convergence of entropic schemes for optimal transport and gradient
  flows.
\newblock {\em SIAM Journal on Mathematical Analysis}, 49(2):1385–1418,
  January 2017.

\bibitem{CFS24}
Guillaume Carlier, Alessio Figalli, and Filippo Santambrogio.
\newblock On {Optimal} {Transport} {Maps} {Between} 1 /d-{Concave} {Densities}.
\newblock Preprint, {arXiv}:2404.05456 [math.{AP}] (2024), 2024.

\bibitem{Chewi2022entropic}
Sinho Chewi and Aram-Alexandre Pooladian.
\newblock An entropic generalization of caffarelli’s contraction theorem via
  covariance inequalities.
\newblock {\em Comptes Rendus. Mathématique}, 361(G9):1471–1482, November
  2023.

\bibitem{CF21}
Maria Colombo and Max Fathi.
\newblock Bounds on optimal transport maps onto log-concave measures.
\newblock {\em J. Differential Equations}, 271:1007--1022, 2021.

\bibitem{CFJ17}
Maria Colombo, Alessio Figalli, and Yash Jhaveri.
\newblock Lipschitz changes of variables between perturbations of log-concave
  measures.
\newblock {\em Ann. Sc. Norm. Super. Pisa Cl. Sci. (5)}, 17(4):1491--1519,
  2017.

\bibitem{Conforti2024}
Giovanni Conforti.
\newblock Weak semiconvexity estimates for schr\"{o}dinger potentials and
  logarithmic sobolev inequality for schr\"{o}dinger bridges.
\newblock {\em Probability Theory and Related Fields}, 189(3–4):1045–1071,
  February 2024.

\bibitem{CEF19}
D.~Cordero-Erausquin and A.~Figalli.
\newblock Regularity of monotone transport maps between unbounded domains.
\newblock {\em Discrete Contin. Dyn. Syst.}, 39(12):7101--7112, 2019.

\bibitem{Cor02}
Dario Cordero-Erausquin.
\newblock Some applications of mass transport to {G}aussian-type inequalities.
\newblock {\em Arch. Ration. Mech. Anal.}, 161(3):257--269, 2002.

\bibitem{Cordero2017}
Dario Cordero-Erausquin.
\newblock Transport inequalities for log-concave measures, quantitative forms,
  and applications.
\newblock {\em Canadian Journal of Mathematics}, 69(3):481–501, June 2017.

\bibitem{CFP18}
Thomas~A. Courtade, Max Fathi, and Ashwin Pananjady.
\newblock Quantitative stability of the entropy power inequality.
\newblock {\em IEEE Trans. Inform. Theory}, 64(8):5691--5703, 2018.

\bibitem{DPF17}
Guido De~Philippis and Alessio Figalli.
\newblock Rigidity and stability of {C}affarelli's log-concave perturbation
  theorem.
\newblock {\em Nonlinear Anal.}, 154:59--70, 2017.

\bibitem{DePS24}
Guido De~Philippis and Yair Shenfeld.
\newblock Optimal transport maps, majorization, and log-subharmonic measures.
\newblock Preprint, {arXiv}:2411.12109 [math.{AP}] (2024), 2024.

\bibitem{DZ}
Amir Dembo and Ofer Zeitouni.
\newblock {\em Large deviations techniques and applications.}, volume~38 of
  {\em Stoch. Model. Appl. Probab.}
\newblock Berlin: Springer, 2nd ed., corrected 2nd printing edition, 2010.

\bibitem{DL82}
D.~C. Dowson and B.~V. Landau.
\newblock The {F}r\'echet distance between multivariate normal distributions.
\newblock {\em J. Multivariate Anal.}, 12(3):450--455, 1982.

\bibitem{fathi2024}
Max Fathi.
\newblock Growth estimates on optimal transport maps via concentration
  inequalities, 2024.

\bibitem{FGP20}
Max Fathi, Nathael Gozlan, and Maxime Prod'homme.
\newblock A proof of the {Caffarelli} contraction theorem via entropic
  regularization.
\newblock {\em Calc. Var. Partial Differ. Equ.}, 59(3):18, 2020.
\newblock Id/No 96.

\bibitem{FMS24}
Max Fathi, Dan Mikulincer, and Yair Shenfeld.
\newblock Transportation onto log-{L}ipschitz perturbations.
\newblock {\em Calc. Var. Partial Differential Equations}, 63(3):Paper No. 61,
  25, 2024.

\bibitem{Figalli2017}
Alessio Figalli.
\newblock {\em The Monge–Ampère Equation and Its Applications}.
\newblock EMS Press, January 2017.

\bibitem{Gel90}
Matthias Gelbrich.
\newblock On a formula for the {$L^2$} {W}asserstein metric between measures on
  {E}uclidean and {H}ilbert spaces.
\newblock {\em Math. Nachr.}, 147:185--203, 1990.

\bibitem{Gozlan2010}
Nathael Gozlan and Christian Léonard.
\newblock Transport inequalities. a survey.
\newblock {\em Markov Processes And Related Fields}, 2010.

\bibitem{Har99}
Gilles Harg\'{e}.
\newblock A particular case of correlation inequality for the {G}aussian
  measure.
\newblock {\em Ann. Probab.}, 27(4):1939--1951, 1999.

\bibitem{Har01}
Gilles Harg\'{e}.
\newblock Inequalities for the {G}aussian measure and an application to
  {W}iener space.
\newblock {\em C. R. Acad. Sci. Paris S\'{e}r. I Math.}, 333(8):791--794, 2001.

\bibitem{KM12}
Young-Heon Kim and Emanuel Milman.
\newblock A generalization of {C}affarelli's contraction theorem via (reverse)
  heat flow.
\newblock {\em Math. Ann.}, 354(3):827--862, 2012.

\bibitem{Kol10}
A.~V. Kolesnikov.
\newblock Global {H}\"older estimates for optimal transportation.
\newblock {\em Mat. Zametki}, 88(5):708--728, 2010.

\bibitem{Kuratowski1965}
K.~Kuratowski and C.~Ryll-Nardzewski.
\newblock A general theorem on selectors.
\newblock {\em Bulletin de l’Académie Polonaise des Sciences; Serie des
  Sciences Mathématiques, Astronomiques et Physiques}, 13:397--403, 1965.

\bibitem{Ledoux}
Michel Ledoux.
\newblock {\em The concentration of measure phenomenon}, volume~89 of {\em
  Math. Surv. Monogr.}
\newblock Providence, RI: American Mathematical Society (AMS), 2001.

\bibitem{Lei72}
L.~Leindler.
\newblock On a certain converse of {H}\"older's inequality. {II}.
\newblock {\em Acta Sci. Math. (Szeged)}, 33(3-4):217--223, 1972.

\bibitem{Leo14}
Christian L\'eonard.
\newblock A survey of the {S}chr\"odinger problem and some of its connections
  with optimal transport.
\newblock {\em Discrete Contin. Dyn. Syst.}, 34(4):1533--1574, 2014.

\bibitem{MS23}
Dan Mikulincer and Yair Shenfeld.
\newblock On the {L}ipschitz properties of transportation along heat flows.
\newblock In {\em Geometric aspects of functional analysis}, volume 2327 of
  {\em Lecture Notes in Math.}, pages 269--290. Springer, Cham, [2023]
  \copyright 2023.

\bibitem{MS24}
Dan Mikulincer and Yair Shenfeld.
\newblock The {B}rownian transport map.
\newblock {\em Probab. Theory Related Fields}, 190(1-2):379--444, 2024.

\bibitem{Mil18}
Emanuel Milman.
\newblock Spectral estimates, contractions and hypercontractivity.
\newblock {\em J. Spectr. Theory}, 8(2):669--714, 2018.

\bibitem{Nutz21}
Marcel Nutz.
\newblock Introduction to entropic optimal transport, lecture notes, columbia
  university, 2021.

\bibitem{Nutz2021}
Marcel Nutz and Johannes Wiesel.
\newblock Entropic optimal transport: convergence of potentials.
\newblock {\em Probability Theory and Related Fields}, 184(1–2):401–424,
  November 2021.

\bibitem{Pre71}
Andr\'as Pr\'ekopa.
\newblock Logarithmic concave measures with application to stochastic
  programming.
\newblock {\em Acta Sci. Math. (Szeged)}, 32:301--316, 1971.

\bibitem{Pre73}
Andr\'as Pr\'ekopa.
\newblock On logarithmic concave measures and functions.
\newblock {\em Acta Sci. Math. (Szeged)}, 34:335--343, 1973.

\bibitem{ST74}
Vladimir~N. Sudakov and Boris~S. Cirel{\cprime}son.
\newblock Extremal properties of half-spaces for spherically invariant
  measures.
\newblock {\em Zap. Nauchn. Semin. Leningr. Otd. Mat. Inst. Steklova},
  41:14--24, 165, 1974.
\newblock Problems in the theory of probability distributions, II.

\bibitem{Val07}
Stef\'an~Ingi Valdimarsson.
\newblock On the {H}essian of the optimal transport potential.
\newblock {\em Ann. Sc. Norm. Super. Pisa Cl. Sci. (5)}, 6(3):441--456, 2007.

\bibitem{VNC78}
A.~A. Vladimirov, Yu.~E. Nesterov, and Yu.~N. Chekanov.
\newblock On uniformly convex functionals.
\newblock {\em Mosc. Univ. Comput. Math. Cybern.}, 1978(3):10--21, 1978.

\bibitem{Zal83}
C.~Zalinescu.
\newblock On uniformly convex functions.
\newblock {\em J. Math. Anal. Appl.}, 95:344--374, 1983.

\bibitem{Zal02}
C.~Z{\u{a}}linescu.
\newblock {\em Convex analysis in general vector spaces}.
\newblock Singapore: World Scientific, 2002.

\end{thebibliography}

\def\cprime{$'$}

\appendix

\section{Proof of Proposition \ref{prop:VNC78}}\label{app:proof}
\proof[Proof of Proposition \ref{prop:VNC78}]
Below we will prove that the function $f_\alpha$ is $\rho$-convex, with $\rho(r) = A(r/2)$, $r\geq0$. This is slightly worse than the bound stated in Proposition \ref{prop:VNC78}. We refer to \cite{VNC78} for a proof of this sharper bound.
According to Lemma \ref{lem:BL}, it suffices to prove that, for all $x,y\in \R^n$, we have
\begin{equation}\label{eq:falpha}
f_\alpha\left(\frac{x+y}{2}\right)+\frac{1}{2}A\left(\frac{\vert x-y\vert}{2}\right) \leq \frac{1}{2}f_\alpha(x)+\frac{1}{2}f_\alpha(y).
\end{equation}




Suppose that \eqref{eq:falpha} holds for all $x,y \in \R^n$ such that $x\cdot y \geq0$. Then, if $x,y$ are such that $x\cdot y\leq 0$, applying \eqref{eq:falpha} to $x,-y$ yields 
\[
f_\alpha\left(\frac{x-y}{2}\right)+\frac{1}{2}f_\alpha\left(\frac{x+y}{2}\right) \leq \frac{1}{2}f_\alpha(x)+\frac{1}{2}f_\alpha(y).
\]
Since, in this case, $\vert x+y\vert \leq \vert x-y\vert$, we get that
\begin{align*}
f_\alpha\left(\frac{x-y}{2}\right) &\geq \frac{1}{2}f_\alpha\left(\frac{x+y}{2}\right) + \frac{1}{2}f_\alpha\left(\frac{x-y}{2}\right)\\
&=\frac{1}{2}f_\alpha\left(\frac{x+y}{2}\right)+\frac{1}{2}A\left(\frac{\vert x-y\vert}{2}\right)
\end{align*}
and so \eqref{eq:falpha} also holds in this case. 

To conclude the proof, let us prove \eqref{eq:falpha} for all $x,y$ such that $x\cdot y \geq0$.
Let $u\in [0,1]$ be such that $x \cdot y = \vert x\vert\vert y\vert u$. Then \eqref{eq:falpha} is equivalent to \[
L(u):=A(N(u))+\frac{1}{2}A(N(-u)) \leq \frac{1}{2}f_\alpha(x)+\frac{1}{2}f_\alpha(y),
\]
with $N(u) = \frac{1}{2}\sqrt{\vert x\vert^2+\vert y\vert^2 + 2 \vert x\vert\vert y\vert u}$.
The derivative of the function $L$ is given by 
\[
L'(u) = \frac{\vert x\vert\vert y\vert}{4}\left[ \frac{\alpha(N(u))}{N(u)} - \frac{1}{2}\frac{\alpha(N(-u))}{N(-u)} \right]
\]
For $u\in [0,1]$, $N(u) \geq N(-u)$, and so since the function $\alpha(t)/t$ is non-decreasing, we get that $L'(u)\geq0$. So, it is enough to prove the inequality for $u=1$, that is when $y = \lambda x$ for some $\lambda \geq0$. Set $a = \vert x\vert$, $b=\vert y\vert$ and assume without loss of generality that $a\leq b$. We are reduced to prove that
\[
\int_0^{\frac{a+b}{2}}\alpha(t)\,dt + \frac{1}{2} \int_0^{\frac{b-a}{2}}\alpha(t)\,dt \leq \frac{1}{2}\int_0^{a}\alpha(t)\,dt+\frac{1}{2}\int_0^{b}\alpha(t)\,dt.
\]
Since the right hand side is equal to 
\[
\int_0^{a}\alpha(t)\,dt+\frac{1}{2}\int_{a}^{\frac{a+b}{2}}\alpha(t)\,dt+\frac{1}{2}\int_{\frac{a+b}{2}}^{b}\alpha(t)\,dt
\] this inequality amounts to 
\[
\int_a^{\frac{a+b}{2}}\alpha(t)\,dt +  \int_0^{\frac{b-a}{2}}\alpha(t)\,dt \leq \int_{\frac{a+b}{2}}^{b}\alpha(t)\,dt,
\]
that is 
\begin{equation}\label{eq:reduction}
\int_0^{\frac{b-a}{2}}\alpha(a+t)\,dt +  \int_0^{\frac{b-a}{2}}\alpha(t)\,dt \leq \int_0^{\frac{b-a}{2}}\alpha\left(\frac{a+b}{2}+t\right)\,dt.
\end{equation}
Since $t\mapsto \frac{\alpha (t)}{t}$ is non-decreasing, the function $\alpha$ is super-additive:
\[
\alpha(u+v) \geq \alpha(u)+\alpha(v),\qquad \forall u,v \geq0.
\]
In particular, if $t\in [0, \frac{b-a}{2}]$,
\[
\alpha\left(\frac{a+b}{2}+t\right) = \alpha\left(a+t + \frac{b-a}{2}\right) \geq \alpha(a+t) + \alpha\left(\frac{b-a}{2}\right) \geq \alpha(a+t) + \alpha\left(t\right).
\]
Integrating this inequality gives \eqref{eq:reduction} and completes the proof.
\endproof

\section{Measurability of the subdifferential}\label{app:measurability}

\begin{lemma}
Let $g$ be a compactly supported l.s.c convex function. Then for any Borel set $E$ the set $\partial g(E) = \bigcup_{x\in E} \partial g(x)$ is Borel.
\end{lemma}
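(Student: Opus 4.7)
My approach is to reduce the problem to controlling a projection from the subdifferential graph, using the compact support of $g$ to obtain the relevant compactness. First I would observe that $\Gamma := \{(x,y) \in \R^n \times \R^n : y \in \partial g(x)\}$ is a closed subset of $\R^n \times \R^n$: if $(x_k,y_k) \to (x,y)$ with $y_k \in \partial g(x_k)$, then the subgradient inequality $g(z) \geq g(x_k) + \langle y_k, z-x_k\rangle$ for all $z$ passes to the limit, using the lower semicontinuity of $g$, to yield $y \in \partial g(x)$. Since $g$ is compactly supported, $\mathrm{dom}\,g \subset K$ for some compact set $K$, and one has $\partial g(E) = \partial g(E \cap K)$, so one may assume $E \subset K$.

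The next step is to dispose of compact and open sets $E$ directly. If $C \subset K$ is compact, then for each $M > 0$ the set $\Gamma \cap (C \times \overline{B(0,M)})$ is a closed subset of the compact product $K \times \overline{B(0,M)}$, hence compact; its image under the second-coordinate projection is exactly $\partial g(C) \cap \overline{B(0,M)}$, which is therefore compact. Writing $\partial g(C) = \bigcup_{M \in \N} \partial g(C) \cap \overline{B(0,M)}$ shows that $\partial g(C)$ is $\sigma$-compact, hence Borel. For open $E$ one writes $E = \bigcup_n C_n$ as a countable union of compact sets and uses that $E \mapsto \partial g(E)$ distributes over unions.

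The main obstacle is the extension to an arbitrary Borel $E$, since $E \mapsto \partial g(E)$ does not commute with complementation, so a naive monotone-class argument does not close. My plan is to pass to the Fenchel dual and write $\partial g(E) = \{y \in \R^n : \partial g^*(y) \cap E \neq \emptyset\}$. Since $g$ is compactly supported, $g^*$ is finite on all of $\R^n$ and locally Lipschitz, so $\Phi(y) := \partial g^*(y)$ is a nonempty, compact-, convex-valued, upper semicontinuous set-valued map, which lifts to a Borel-measurable map into the Polish space $\mathcal K(\R^n)$ of compact subsets of $\R^n$ endowed with the Hausdorff metric. The problem is then reduced to showing that the ``hit set'' $\mathcal H_E := \{K \in \mathcal K(\R^n) : K \cap E \neq \emptyset\}$ is Borel in $\mathcal K(\R^n)$ for every Borel $E \subset \R^n$.

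This last reduction is the delicate point. The collection $\mathcal C := \{E \subset \R^n : \mathcal H_E \text{ is Borel in } \mathcal K(\R^n)\}$ contains the open sets (by definition of the Effros/Hausdorff Borel structure) and the closed sets (since $K \mapsto d(K,F)$ is continuous in Hausdorff distance for closed $F$); it is stable under countable unions because $\mathcal H_{\bigcup E_n} = \bigcup \mathcal H_{E_n}$; and crucially, it is stable under countable decreasing intersections, because for a decreasing sequence $E_n \downarrow E$ and a compact $K$ the finite-intersection property gives $K \cap E \neq \emptyset$ as soon as $K \cap E_n \neq \emptyset$ for every $n$. Iterating these closure properties over the Borel hierarchy produces all Borel sets, which concludes the proof.
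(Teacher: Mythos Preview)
Your argument breaks at the decreasing-intersection step. You claim that for a decreasing sequence $E_n \downarrow E$ and a compact $K$, the finite intersection property yields $K \cap E \neq \emptyset$ whenever $K \cap E_n \neq \emptyset$ for every $n$. But the finite intersection property requires the sets $K \cap E_n$ to be \emph{closed} in $K$; for arbitrary Borel $E_n$ this fails. Concretely, take $K=[0,1]$ and $E_n=(0,1/n)$: each $K\cap E_n$ is nonempty, yet $\bigcap_n E_n=\emptyset$. So $\bigcap_n \mathcal H_{E_n}\supsetneq \mathcal H_{\bigcap_n E_n}$ in general, and your class $\mathcal C$ is not stable under countable decreasing intersections. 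Even bracketing this, the closure properties you list (countable unions, decreasing intersections, containing open and closed sets) do not generate the Borel $\sigma$-algebra without first securing stability under \emph{finite} intersections, which you have not established; the transfinite iteration you gesture at therefore does not close.

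The paper avoids the hyperspace route entirely. It shows directly that $\mathcal F=\{E\text{ Borel}:\partial g(E)\text{ Borel}\}$ is a $\sigma$-algebra. Countable unions are immediate; the substantive step is complementation, handled via the identity
\[
\partial g(\R^n\setminus E)=\bigl(\partial g(\R^n)\setminus\partial g(E)\bigr)\cup\bigl(\partial g(\R^n\setminus E)\cap\partial g(E)\bigr),
\]
together with the observation that any $y$ in the second set lies in $\partial g(x_1)\cap\partial g(x_2)$ for some $x_1\neq x_2$, hence belongs to the Lebesgue-null set where $g^*$ fails to be differentiable. This use of the a.e.\ single-valuedness of $\partial g^*$ is the structural input from convexity that your approach never exploits; without it, $\partial g(E)$ for Borel $E$ is a priori only analytic (as a coordinate projection of a Borel subset of the closed graph $\Gamma$), which is exactly the obstruction your hit-set reduction runs into.
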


\begin{proof}
The proof is an adaptation of the one found in \cite[Theorem 2.3]{Figalli2017}.
Let us define
\begin{equation*}
    \mathcal{F} = \left\{E\mid E \text{ Borel such that  } \partial g(E) \text{ is Borel}\right\}.
\end{equation*}
We will show that $\mathcal{F}$ is a $\sigma$-algebra containing the Borel sets. It is clearly stable by countable union. If $K$ is a compact set then $\partial g(K)$ is closed since $\partial g$ is upper hemicontinuous. Since $\mathbb{R}^n$ is a countable union of compact sets it is in $\mathcal{F}$. It remains to show that $\mathcal{F}$ is closed under complement. Remark that 
\begin{equation*}
    \partial g(\mathbb{R}^n \setminus E) = \left(\partial g(\mathbb{R}^n) \setminus \partial g(E)\right) \cup \left(\partial g(\mathbb{R}^n \setminus E)\cap \partial g (E)\right).
\end{equation*}
The first set is Borel and the second is of null Lebesgue measure because $g$ is convex, thus the union is Borel. Finally $\mathcal{F}$ is a $\sigma$-algebra containing all compact sets. Consequently $\mathcal{F}$ contains all Borel sets.
\end{proof}

\begin{lemma}
Let $X$ be a random variable valued in $\mathbb{R}^n$ and $g$ be an l.s.c proper convex function with compact domain on $\mathbb{R}^n$. Then there is a bounded random variable $U$ valued in $\mathbb{R}^n$ such that 
\begin{equation}
    g^\ast(X) + g(U) - \langle X,U\rangle = 0
\end{equation}
almost surely.
\end{lemma}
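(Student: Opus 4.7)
The plan is to obtain $U$ as $u(X)$ for a Borel measurable selection $u:\R^n\to\R^n$ of the set-valued map $x\mapsto \partial g^\ast(x)$, via the Kuratowski--Ryll-Nardzewski theorem \cite{Kuratowski1965}, with measurability of the preimage deduced from the preceding lemma.

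First, I would check the basic properties of $\partial g^\ast$. Since $g$ is proper l.s.c.\ convex with compact domain $K := \mathrm{dom}\,g$, the conjugate $g^\ast$ is real-valued, convex and continuous on the whole of $\R^n$ (the defining supremum is taken over the compact set $K$ on which $g$ is bounded below). Hence $\partial g^\ast(x)$ is nonempty and compact for every $x\in\R^n$. Moreover, by the Fenchel--Young equality,
\begin{equation*}
u\in\partial g^\ast(x) \iff g^\ast(x)+g(u)-\langle x,u\rangle = 0 \iff x\in\partial g(u),
\end{equation*}
which in particular forces $u\in K$, so $\partial g^\ast(x)\subset K$ is uniformly bounded.

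Next, I would verify the measurability condition needed by Kuratowski--Ryll-Nardzewski: for every open (hence Borel) subset $O\subset\R^n$, the set
\begin{equation*}
\{x\in\R^n : \partial g^\ast(x)\cap O\neq\emptyset\} = \{x\in\R^n : \exists u\in O,\ x\in\partial g(u)\} = \partial g(O)
\end{equation*}
is Borel, thanks to the preceding lemma (which applies since $g$ has compact support). Together with the fact that $\partial g^\ast$ is closed-valued (and nonempty), this is exactly the hypothesis of the Kuratowski--Ryll-Nardzewski measurable selection theorem, which yields a Borel measurable map $u:\R^n\to\R^n$ with $u(x)\in \partial g^\ast(x)$ for every $x\in\R^n$.

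Finally, setting $U := u(X)$ gives a random vector taking values in the compact set $K$, hence bounded, and satisfying
\begin{equation*}
g^\ast(X)+g(U)-\langle X,U\rangle = 0
\end{equation*}
pointwise (and so almost surely). The only real point requiring care is the measurability of $\partial g^\ast$; once rephrased via Fenchel--Young as $\partial g(O)$, this reduces directly to the previous lemma, so there is no serious obstacle beyond invoking the right selection theorem.
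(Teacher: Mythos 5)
Your proposal is correct and follows essentially the same route as the paper: identify $\{x : \partial g^\ast(x)\cap E\neq\emptyset\}$ with $\partial g(E)$ via Fenchel--Young, invoke the preceding lemma for Borel measurability, and then apply the Kuratowski--Ryll-Nardzewski selection theorem to produce the bounded selection $u$ and set $U=u(X)$. The only difference is that you spell out a few intermediate verifications (finiteness and continuity of $g^\ast$, nonemptiness and compactness of $\partial g^\ast(x)$) that the paper states more tersely.
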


\begin{proof}
Since $g$ has compact domain and is l.s.c the correspondence $y \to \partial g^\ast(y)$ has nonempty closed values, it is also valued in the compact domain. Moreover for any Borel set $E$ the set $\{y \mid \partial g^\ast(y) \cap E\}$  is exactly the set $\partial g(E)$ which is measurable thus $\partial g^\ast$ is measurable and thus weakly measurable. The Kuratowski–Ryll-Nardzewski Selection Theorem \cite{Kuratowski1965} ensures the existence of a measurable function $p:\mathbb{R}^n \to \mathbb{R}^n$ such that $p(y) \in \partial g^\ast(y)$ for every $y \in \mathbb{R}^n$. Now define $U = p(X)$. Then since $X$ is a random variable and $p$ is measurable $U$ is a random variable. $\partial g^\ast$ is valued in a compact set thus $U$ is bounded. Finally by construction $U \in \partial g^\ast(X)$ almost surely and thus
\begin{equation*}
    g^\ast(X) + g(U) - \langle X,U\rangle = 0
\end{equation*}
almost surely.
\end{proof}

\section{A quantitative Prékopa-Leindler inequality}

As Proposition \ref{prop:rho-convex} was a consequence of Prékopa-Leindler inequality, Proposition \ref{prop:SH^*} is a consequence (via Lemma \ref{lem:lemma2}) of the following modified Prékopa-Leindler inequality. Indeed, the use of the classical Prékopa-Leindler inequality required a uniform control on the convexity of the function which is not available in the context of Proposition \ref{prop:SH^*}. The inequality obtained in the result below is a simple consequence of the convexity of the entropy functional $H(\nu) := \int \log \frac{d\nu}{dz}\,d\nu$ with respect to Wasserstein barycenters \cite{AC11}.

\begin{proposition}\label{prop:QPL}
Let $N \in \mathbb{N}^\ast$. Let $f_1,\ldots,f_N$ and $h$ be measurable functions on $\mathbb{R}^n$ such that $\int e^{f_i},\int e^h < + \infty$ and $\lambda_1,\ldots, \lambda_N \geq 0$ be such that $\sum_{i=1}^N \lambda_i = 1$. For all $i\in \{1,\ldots, N\}$, let $\nu_i$ be the probability measure on $\mathbb{R}^n$ with density proportional to $e^{f_i}$ and assume that $H(\nu_i)$ is finite and $\int |z|^2\,\nu_i(dz) <+\infty$. Then,
\begin{equation}\label{eq:PLQ}
    \sum_{i=1}^N \lambda_i \log\left(\int e^{f_i}\right) \leq \log\left(\int e^h\right) + \int \sum_{i=1}^{N} \lambda_i f_i(z_i) - h\left(\sum_{i=1}^N \lambda_i z_i\right) \,\bar{\pi}(dz_1,\ldots,dz_N)
\end{equation}
where $\bar{\pi}$ is the barycentric coupling between the measures $\nu_1,\ldots,\nu_N$ with weights $\lambda_1,\ldots, \lambda_N$. More precisely $\bar{\pi})$ is a solution to 
\begin{equation}\label{eq:MM}
    \inf_{\pi \in \Pi(\nu_1,\ldots,\nu_N)} \int \sum_{i\neq j} \lambda_i \lambda_j \vert z_i-z_j\vert^2 \,\pi(dz_1,\ldots,dz_N),
\end{equation}
where $\Pi(\nu_1,\ldots,\nu_N)$ is the set of probability measures on $(\R^n)^N$ admitting $\nu_1,\ldots, \nu_N$ as marginals.
\end{proposition}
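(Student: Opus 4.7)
The plan is to combine two well-known ingredients: the displacement convexity of the Boltzmann entropy (Agueh--Carlier) and the Donsker--Varadhan variational principle for the log-Laplace functional. Set $Z_i := \int e^{f_i}$, $Z_h := \int e^h$, and denote by $\rho_i := e^{f_i}/Z_i$ the density of $\nu_i$. Let $\bar{\nu}$ be the Wasserstein barycenter of $\nu_1,\ldots,\nu_N$ with weights $\lambda_1,\ldots,\lambda_N$. According to the Agueh--Carlier duality between the multimarginal formulation \eqref{eq:MM} and the barycenter problem, $\bar{\nu}$ coincides with the pushforward of $\bar{\pi}$ by the barycentric map $T(z_1,\ldots,z_N) = \sum_i \lambda_i z_i$, and $\bar{\nu}$ is absolutely continuous with finite entropy (at least one of the marginals $\nu_i$ having a density).

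First I would compute the Boltzmann entropy of each $\nu_i$ explicitly:
\begin{equation*}
H(\nu_i) = \int \rho_i \log \rho_i \,dz = \int f_i \,d\nu_i - \log Z_i.
\end{equation*}
Multiplying by $\lambda_i$ and summing yields
\begin{equation*}
\sum_{i=1}^N \lambda_i \log Z_i = \sum_{i=1}^N \lambda_i \int f_i \,d\nu_i - \sum_{i=1}^N \lambda_i H(\nu_i).
\end{equation*}
Now I would invoke the displacement convexity of the Boltzmann entropy along Wasserstein barycenters (Theorem~5.1 of \cite{AC11}), which gives $H(\bar{\nu}) \leq \sum_i \lambda_i H(\nu_i)$. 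Combined with the Donsker--Varadhan inequality $H(\bar{\nu}) \geq \int h \,d\bar{\nu} - \log Z_h$ (which is nothing but the nonnegativity of the relative entropy of $\bar{\nu}$ with respect to the probability measure proportional to $e^h$), one obtains
\begin{equation*}
\sum_{i=1}^N \lambda_i \log Z_i \leq \sum_{i=1}^N \lambda_i \int f_i \,d\nu_i - \int h \,d\bar{\nu} + \log Z_h.
\end{equation*}

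Finally I would rewrite both integrals in terms of $\bar{\pi}$: since $\nu_i$ is the $i$-th marginal of $\bar{\pi}$, one has $\int f_i \,d\nu_i = \int f_i(z_i) \,\bar{\pi}(dz_1,\ldots,dz_N)$; and since $\bar{\nu} = T_\# \bar{\pi}$, one has $\int h \,d\bar{\nu} = \int h\bigl(\sum_i \lambda_i z_i\bigr)\,\bar{\pi}(dz_1,\ldots,dz_N)$. Rearranging yields \eqref{eq:PLQ}. There is no serious obstacle, only a handful of verifications: the finiteness of $H(\nu_i)$ (which follows from the standing integrability hypotheses), the absolute continuity and finite entropy of $\bar{\nu}$ (guaranteed by Agueh--Carlier once one of the marginals is absolutely continuous with finite second moment, which is our setting), and the applicability of the Donsker--Varadhan bound to $h$ (which uses only $\int e^h < +\infty$). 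The essential work has already been done in \cite{AC11}; the statement is really a repackaging, via Legendre duality for the log-Laplace functional, of the displacement convexity of the entropy along barycenters.
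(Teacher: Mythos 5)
Your proof is correct and follows essentially the same route as the paper: express $H(\nu_i)$ via the log-partition function, apply the Agueh--Carlier displacement convexity of entropy along Wasserstein barycenters, bound $H(\bar{\nu})$ from below by the Donsker--Varadhan (Gibbs) variational inequality applied to $h$, and rewrite the resulting integrals through the multimarginal coupling $\bar{\pi}$. The only cosmetic difference is that you name the duality step explicitly as Donsker--Varadhan and spell out the identification $\bar{\nu}=T_\#\bar{\pi}$, both of which the paper uses implicitly.
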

When $N=2$ and under the conditions of the classical Prékopa-Leindler inequality $h((1-t)z_1+tz_2) \geq (1-t) f_1(z_1) + t f_2(z_2)$, $z_1,z_2 \in \R^n$, we  recover the usual form of the Prékopa-Leindler inequality.

\begin{proof}
According to \cite{AC11}, the entropy functional $H$ is convex along Wasserstein barycenters between $\nu_1,\ldots,\nu_N$. 
More precisely, if $\bar{\pi}$ is solution of \eqref{eq:MM} and $\nu_\Lambda$ is the pushforward of $\bar{\pi}$ by the map $(z_1,\ldots,z_N) \mapsto \sum_{i=1}^N \lambda_i z_i$, then
\begin{equation*}
    H(\nu_\Lambda) \leq \sum_{i=1}^N \lambda_i H(\nu_i) < + \infty.
\end{equation*}
Since the entropy is finite we have $H(\nu_i) = \int f_i d\nu_i - \log\left(\int e^{f_i} \right)$.  Moreover by duality, 
\[
H(\nu_\Lambda) \geq \int h \,d\nu_\Lambda - \log\left(\int e^h \right)
\]
because $\int e^h < + \infty$. By combining the two observations, we have
\begin{equation*}
     \int h \,d\nu_\Lambda - \log\left(\int e^h\right) \leq \sum_{i=1}^N \lambda_i \left[\int f_i d\nu_i - \log\left(\int e^{f_i} \right)\right].
\end{equation*}
All the terms in this inequality are well defined, thus we can rearrange them and get the wanted inequality. 
\end{proof}

\end{document}